\newcommand{\ics}{\frac{1}{(2\pi\I)^2}}
\newcommand{\Lpf}{L^{\rm pf}_{N,N-n}}
\newcommand{\iid}{i.i.d.\,}
\newcommand{\Or}{\mathcal{O}}
\newcommand{\Pb}{\mathbb{P}}
\newcommand{\E}{\mathbbm{E}}
\newcommand{\Id}{\mathbbm{1}}
\newcommand{\e}{\varepsilon}
\newcommand{\I}{{\rm i}}
\newcommand{\R}{\mathbb{R}}
\newcommand{\N}{\mathbb{N}}
\newcommand{\Z}{\mathbb{Z}}
\renewcommand{\Re}{\mathrm{Re}\,}
\DeclareMathOperator{\sgn}{sgn}
\DeclareMathOperator*{\pf}{\mathrm{pf}}
\newcommand{\bra}[1]{\left\langle #1 \right|}
\newcommand{\ket}[1]{\left| #1 \right\rangle}
\newcommand{\braket}[2]{\left\langle #1 \left| #2 \right\rangle \right.}
\newcommand{\ketbra}[2]{\left| #1 \right\rangle \left\langle #2 \right|}
\newcommand{\zcd}{\raisebox{-0.5\height}{\includegraphics[scale=0.04]{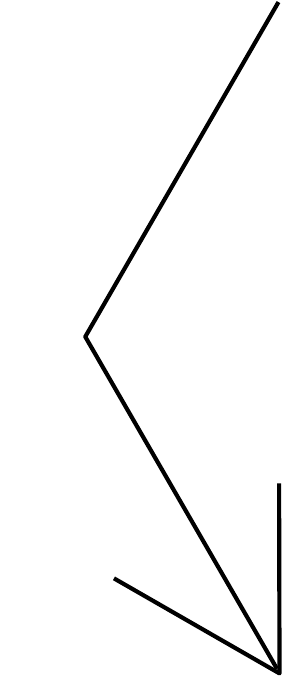}}}
\newcommand{\wcu}{\raisebox{-0.5\height}{\includegraphics[scale=0.04]{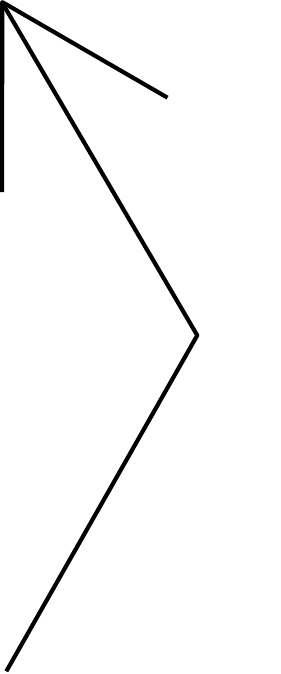}}}
\DeclareMathAlphabet{\mathpzc}{OT1}{pzc}{m}{it}
\newtheorem{prop}{Proposition}[section]
\newtheorem{thm}[prop]{Theorem}
\newtheorem{lem}[prop]{Lemma}
\newtheorem{cor}[prop]{Corollary}
\newtheorem{cla}[prop]{Claim}
\newtheorem{remark}[prop]{Remark}
\newenvironment{rem}{\begin{remark}\normalfont}{\end{remark}}
\numberwithin{equation}{section}
\title{Stationary half-space last passage percolation}
\author{D. Betea\thanks{Institute for Applied Mathematics, Bonn University, Endenicher Allee 60, 53115 Bonn, Germany. E-mail: {\tt dan.betea@gmail.com}}
\and P. L. Ferrari\thanks{Institute for Applied Mathematics, Bonn University, Endenicher Allee 60, 53115 Bonn, Germany. E-mail: {\tt ferrari@uni-bonn.de}}
\and A. Occelli\thanks{Institute for Applied Mathematics, Bonn University, Endenicher Allee 60, 53115 Bonn, Germany. E-mail: {\tt occelli@iam.uni-bonn.de}}}
\date{January 21, 2020}
\begin{document}
\sloppy

\maketitle

\begin{abstract}
In this paper we study stationary last passage percolation (LPP) with exponential weights and in half-space geometry. We determine the limiting distribution of the last passage time in a critical window close to the origin. The result is a new two-parameter family of distributions: one parameter for the strength of the diagonal bounding the half-space (strength of the source at the origin in the equivalent TASEP language) and the other for the distance of the point of observation from the origin. It should be compared with the one-parameter family giving the Baik--Rains distributions for full-space geometry. We finally show that far enough away from the characteristic line, our distributions indeed converge to the Baik--Rains family. We derive our results using a related inhomogeneous integrable model having Pfaffian correlations, together with careful analytic continuation, and steepest descent analysis.
\end{abstract}

\section{Introduction}

\paragraph{Background and motivation.} A stochastic growth model in the one-dimensional Kardar--Parisi--Zhang (KPZ) universality class~\cite{KPZ86} describes the evolution of a height function $h(x, t)$ at position $x$ and time $t$ subject to a stochastic and local microscopic evolution. On a macroscopic scale, that is with space of order $t$, the evolution of the height function evolves according to a certain PDE and one has a non-random limit shape.

The following, among others, belong to the the KPZ class: the KPZ equation, directed random polymer models (where the free energy plays the role of the height function), their zero-temperature limits known as last passage percolation, and interacting particle systems like the exclusion process. Some of these models have been analyzed in the last two decades for many classes of initial conditions and/or boundary conditions. The fluctuations of the height function are of order $t^{1/3}$ and the correlation length scales as $t^{2/3}$, as conjectured in~\cite{FNS77,BKS85}\footnote{This holds true around points with smooth limit shape. Around shocks there are some differences, see e.g.~\cite{FN13,FN16,FF94b,FGN17,Nej17}.}.

In particular, it is known that the limiting processes depend on subclasses of initial conditions. In full-space, that is for $x\in\R$ or $x\in\Z$, one sees the Airy$_2$ process around curved limit shape points~\cite{PS02,Jo03b,BF07}, with one-point distribution the GUE Tracy--Widom distribution~\cite{TW94}, discovered in~\cite{BDJ99} and shown later to hold for a variety of models in the KPZ class~\cite{Jo00b,SS10,ACQ10,SS10b,FV13,BCF12,Bar14}. For flat limit shapes and non-random initial conditions, the limit process is known as the Airy$_1$ process~\cite{Sas05,BFPS06} with the GOE Tracy--Widom as one-point distribution~\cite{TW96}, see~\cite{BR99,BR99b,PS00}. Finally, stationary initial conditions also lead to flat limit shapes and the Airy$_{\rm stat}$ process~\cite{BFP09}, having the Baik--Rains distribution as the one-point distribution~\cite{BR00,FS05a,BCFV14,IS12,Agg16,SI17,SI17b,IMS19}. The stationary model is obtained as a limit of some specific two-sided random initial condition\footnote{The random initial condition on the two sides is recovered using boundary sources, by the use of some Burke-type property~\cite{Bur56,DMO05}, as shown for the exclusion process in~\cite{PS02b}.}. Further results for random but not necessarily stationary initial conditions are also known~\cite{CLW16,FO17,CFS16,QR16}.

For further details and recent developments around the KPZ universality class, see also the following surveys and lecture notes:~\cite{FS10,Cor11,QS15,BG12,Qua11,Fer10b,Tak16,Zyg18}.

In this paper we consider a stationary model in half-space, where the latter means having a height function $h(x,t)$ defined only on $x\in\N$ (or $x\in\R_+$). Our model, called stationary half-space last passage percolation (LPP), is defined in Section~\ref{sec:LPP}. In this geometry there are considerably fewer results compared to the case of full-space geometry. Of course, one has to prescribe the dynamics at site $x=0$. If the influence on the height function of the growth mechanism at $x=0$ is very strong, then close to the origin one will essentially see fluctuations induced by it, and since the dynamics in KPZ models has to be local (in space but also in time), one will observe Gaussian fluctuations. If the influence of the origin is small, then it will not be seen in the asymptotic behavior. Between the two situations there is typically a critical value where a third different distribution function is observed. Furthermore, under a critical scaling, one obtains a family of distributions interpolating between the two extremes. For some versions of half-space LPP and related stochastic growth models (with non-random initial conditions) this has indeed been proven: one has a transition of the one-point distribution from Gaussian to GOE Tracy--Widom at the critical value, and GSE Tracy--Widom distribution~\cite{BR99b,SI03,BBC18,KLD19}\footnote{The results are obtained through a description in terms of Pfaffian structures~\cite{BR01b, RB04, Ra00, FR07, Gho17, BBCW17, BBNV18,BZ17,BZ19}.} Furthermore, the limit process under critical scaling around the origin is also analyzed and the transition processes have been characterized~\cite{SI03,BBCS17,BBNV18,BBCS17b}.

However, the limiting distribution of the stationary LPP in half-space remained unresolved. In this paper we close this gap: in Theorem~\ref{thm:main_thm_fin} we determine the distribution function of the stationary LPP for the finite size system and in Theorem~\ref{thm:main_thm_asymp} we determine the large time limiting distribution under critical scaling.

The physical difference with the full-space problem is the source at the origin, and this makes the half-space problem richer. The influence of the boundary process is still visible in the limiting distribution: we have a new two-parameter family of distribution functions, one parameter for the strength of the source at the origin, the second for the distance from the diagonal. As a comparison, in the full-space analogue it was a one-parameter family of distributions with the parameter describing the distance from the characteristic line.

The limiting distribution we observe should be universal within the KPZ universality class to the same extent that the Baik--Rains distribution is for the case of full-space. By choosing the observation position far from the diagonal and setting the strength of the source to stay around a characteristic line from the origin, we furthermore recover, in the limit, the aforementioned Baik--Rains distribution from full-space. See Theorem~\ref{thm:LimitToBR}.

A second reason for the study of the stationary case is the following. In the full-space case, it was shown in~\cite{FO18} that the first order correction of the time-time covariance for times close to each others on a macroscopic scale is governed by the variance of the Baik--Rains distribution. The reason is that the system locally converges to equilibrium. Thus, for any comparable study in half-space geometry, the knowledge of the stationary limiting distribution and/or process is necessary.

Concerning the methods used in this paper, there are some similarities but also important differences with respect those from the case of full-space geometry studied in~\cite{FS05a,BFP09}. To identify the stationary model, it has been useful to start from the exclusion process analogy, for which the stationary measures in half-space were obtained in~\cite{Lig75,Lig77}. Next we observe that, as in full-space, the desired distribution function can be obtained in a two-step procedure. First we study a two-parameter integrable model which has a Pfaffian structure. By a so-called shift argument, we can write the two-parameter distribution function in terms of the distribution function of the Pfaffian model. Finally we need to perform a limit when the sum of the two parameters goes to zero. This is achieved through analytic continuation. This last step turned out to be considerably more complicated that in the full-space case~\cite{FS05a} as some exact cancellations of diverging terms happened only using the $2\times 2$ structure of the Pfaffian kernel. Such issues did not show up in the full-space analysis.

\paragraph{Outline.} In Section~\ref{sec:Results} we define last passage percolation and its half-space stationary version (Sections~\ref{sec:LPP} and~\ref{sec:stat_res}), and state the main results of this paper: the finite size result in Sections~\ref{sec:fin_time_res}, the asymptotic result in \ref{sec:ass_res}, and the limit transition to Baik--Rains in Section~\ref{sec:limitTransitionBR}. The finite-time formula for the stationary LPP in half-space, Theorem~\ref{thm:main_thm_fin}, is derived in Section~\ref{sec:proofFiniteN}: we start from a half-space integrable model (Section~\ref{sec:int}) with known Pfaffian correlations; a shift argument and further finite-rank kernel decomposition give amenable formulas for taking the limit to the stationary case (Section~\ref{sec:int_to_stat}); we take the latter limit using a careful analytic continuation argument (Section~\ref{sec:analytic_continuation}) and this yields the finite result. In Section~\ref{sec:proofAsymptotic} we prove the asymptotic result of Theorem~\ref{thm:main_thm_asymp}. The limit to Baik--Rains, namely Theorem~\ref{thm:LimitToBR}, is proven in Section~\ref{sec:limitToBR}. We list, in the Appendices, some background material on Pfaffian processes and some otherwise useful auxiliary results, including the limit from geometric to exponential model.

\paragraph{Notations.} Throughout this work we handle numerous complex integrals. To simplify matters, we choose the following special notation for types of contours we will often encounter. First, $\Gamma_{I}$ will indicate any simple counter-clockwise contour around the set of points $I$. We remark that sometimes such a contour will just be a disjoint union of simple counter-clockwise contours each encircling one of the points in $I$. In the large time asymptotics sections we use the following notation for the typical Airy contours, denoting with ${}_I\raisebox{-0.5\height}{\includegraphics[scale=0.06]{z_cont_down}}\,{}_J$ a down-oriented contour coming in a straight line from $\exp(\pi i /3)\infty$ to a point on the real line to the right of $I$ and to the left of $J$, and continuing in a straight line to $\exp(5 \pi i /3)\infty$, and with ${}_{I}\,\raisebox{-0.5\height}{\includegraphics[scale=0.06]{w_cont_up}} {}_{J}$ an up-oriented contour from $\exp(4 \pi i /3)\infty$ to $\exp(2\pi i/3)\infty$. Examples are depicted in Figure~\ref{fig:airy_contours}.

For two functions $f, g$ we use (the usual) bra-ket notation as follows: the scalar product on $L^2(s,\infty)$ is denoted by
\begin{equation}
 \braket{f}{g} = \int_s^\infty f(x) g(x) dx
\end{equation}
while by $\ketbra{f}{g}$ we denote the outer product kernel
\begin{equation}
 \ketbra{f}{g} (x, y) = f(x) g(y).
\end{equation}
We extend this notation to arguments of bras and kets that might be row or column vectors. For instance
\begin{equation}
\braket{f_1 \quad f_2} { \begin{pmatrix} a_{11} & a_{12} \\ a_{21} & a_{22} \end{pmatrix} \begin{pmatrix} g_1 \\ g_2 \end{pmatrix} } = \braket{f_1}{a_{11} g_1 + a_{12} g_2} + \braket{f_2}{a_{21} g_1 + a_{22} g_2}
\end{equation}
is a sum of four scalar products (where the $a_{ij}:=a_{ij}(x,y)$'s are integral operators), while
\begin{equation}
\ketbra{\begin{array}{c} f_1 \\ f_2 \end{array} } {g_1 \quad g_2} (x, y) = \begin{pmatrix} f_1(x) g_1(y) & f_1(x) g_2(y) \\ f_2(x) g_1(y) & f_2(x) g_2(y) \end{pmatrix}
\end{equation}
is a $2 \times 2$ matrix kernel.

In this paper we will define many kernels and functions that depend on the parameters $\alpha$ and $\beta$, and will consider their limits for $\beta\to-\alpha$. In order to distinguish these cases, we will denote in up-right sans-serif $\mathsf{font}$, the limits as $\beta \to - \alpha$ of the various objects, which will depend explicitly on $\beta$. The ones that are independent of $\beta$ are left with their font unchanged. With this choice, for example, we'll use
 \begin{equation}
 {\mathsf{K}} = \lim_{\beta \to -\alpha} {K}
 \end{equation}
 where implicitly $K$ depends on parameters $\alpha$ and $\beta$.

\paragraph{Acknowledgements.} This work is supported by the German Research Foundation through the Collaborative Research Center 1060 ``The Mathematics of Emergent Effects'', project B04, and by the Deutsche Forschungs-gemeinschaft (DFG, German Research Foundation) under Germany's Excellence Strategy - GZ 2047/1, Projekt ID 390685813.

The authors would furthermore like to thank G. Barraquand, E. Bisi, J. Bouttier, P. Le Doussal, E. Emrah, S. Grosskinsky, P. Nejjar, T. Sasamoto, H. Spohn, B. Vet\H{o}, and N. Zygouras for useful comments and suggestions regarding this manuscript and/or their related papers. We are also very grateful to an anonymous referee for careful reading of our manuscript and a number of constructive remarks.

\section{Model and main results} \label{sec:Results}

\subsection{Last passage percolation} \label{sec:LPP}

Before going to the specific model studied in this paper, let us recall the more generic last passage percolation (LPP) model on $\Z^2$ and explain where the denomination half-space comes from.

Consider independent random variables $\{\omega_{i,j},\,i,j\in\Z\}$. An \emph{up-right path} $\pi$ on $\Z^2$ from a point $A$ to a point $E$ is a sequence of points $(\pi(0),\pi(1),\ldots,\pi(n))$ in $\Z^2$ such that \mbox{$\pi(k+1)-\pi(k)\in \{(0,1),(1,0)\}$}, with $\pi(0)=A$ and $\pi(n)=E$, and where $n$ is called the length $\ell(\pi)$ of $\pi$. Now, given a set of points $S_A$ and $E$, one defines the \emph{last passage time} $L_{S_A\to E}$ as
\begin{equation}\label{eq:3.2}
L_{S_A\to E}=\max_{\begin{subarray}{c}\pi:A\to E\\A\in S_A\end{subarray}} \sum_{1\leq k\leq \ell(\pi)} \omega_{\pi(k)}.
\end{equation}
Finally, we denote by $\pi^{\rm max}_{S_A\to E}$ any maximizer of the last passage time $L_{S_A\to E}$. For continuous random variables, the maximizer is a.s.\,unique. In this paper we consider exponentially distributed random variables, which give the well-known connection with the totally asymmetric simple exclusion process (TASEP).

TASEP is an interacting particle system on $\Z$ with state space $\Omega=\{0,1\}^\Z$. For a configuration $\eta\in\Omega$, $\eta=(\eta_j,j\in\Z)$, $\eta_j$ is the occupation variable at site $j$, which is $1$ if and only if $j$ is occupied by a particle. TASEP has generator $\cal L$ given by~\cite{Li99}
\begin{equation}\label{eq:1.1}
{\cal L}f(\eta)=\sum_{j\in\Z}\eta_j(1-\eta_{j+1})\big(f(\eta^{j,j+1})-f(\eta)\big)
\end{equation}
where $f$'s are local functions (depending only on finitely many sites) and $\eta^{j,j+1}$ denotes the configuration $\eta$ with occupations at sites $j$ and \mbox{$j+1$} interchanged. Notice that for TASEP the ordering of particles is preserved. That is, if initially one orders particles from right to left as
\begin{equation}
  \ldots < x_2(0) < x_1(0) < 0 \leq x_0(0)< x_{-1}(0)< \ldots
\end{equation}
then for all times $t\geq 0$ also $x_{n+1}(t)<x_n(t)$, $n\in\Z$.

The connection between TASEP and LPP is as follows. Take $\omega_{i,j}$ to be the waiting time of particle $j$ to jump from site $i-j-1$ to site $i-j$. Then $\omega_{i,j}$ are exponential random variables. Further, choosing the set $S_A=\{(u,k)\in\Z^2: u=k+x_k(0), k\in\Z\}$, we have that
\begin{equation}\label{eq:2.4}
\Pb\left(L_{{S_A}\to (m,n)}\leq t\right)=\Pb\left(x_n(t)\geq m-n\right)=\Pb\left(h_t(m-n)\geq m+n\right),
\end{equation}
where $h_t$ is the standard height function associated with TASEP.

The denomination full-space (respectively half-space) LPP comes from the fact that the height function and particles live on $\Z$ (respectively $\N$). The relation~\eqref{eq:2.4} means that for half-space, i.e.\ $x=m-n\geq 0$, the random variables in LPP are restricted to $\{(m,n)| m\geq n\}$ (equivalently, we can think that the other random variables are set to be $0$).

In the framework of some interacting particle systems, with TASEP being the simplest case, Liggett studied the invariant measures for the full-space geometry~\cite{Lig75,Lig77}. To achieve his result, he first considered a finite system from which the half-space model is a simple limiting case. In particular, for TASEP defined on $\N$ with particles entering at the origin with a given rate $\lambda\in[0,1]$, i.e.\,the origin playing the role of a reservoir of particles, he showed that the stationary measure with particle density $\rho=\lambda$ on $\N$ is a product measure. For this reason, the LPP analogue is obtained by considering weights on the diagonal as being exponentially distributed of parameter $\rho$ (below we set $\rho=\tfrac12+\alpha$), while the random initial condition in $\N$ can be replaced by Burke's theorem~\cite{Bur56} with a first row in the LPP geometry having exponentially distributed random variables of parameter $1-\rho$.

It is worth mentioning that for half-space TASEP the stationary measures are not unique. In particular, there are also not product measures, see Theorem~1.8 of~\cite{Lig75}. A representation using matrix product ansatz is given in~\cite[Theorem 3.2]{Gro04}. The mapping between LPP and TASEP would imply that the $\omega_{ij}$ of the corresponding LPP are not independent random variables anymore. Our techniques do not apply however in such cases.

\subsection{The stationary half-space model} \label{sec:stat_res}

Let us now focus on the half-space LPP model. On the set ${\cal D}=\{(i,j)\in\Z^2 | 1\leq j \leq i\}$ we consider independent non-negative random variables $\{\omega_{i,j}\}_{(i,j)\in {\cal D}}$. Then, the half-space LPP time to the point $(n,m)$ (for $m\leq n$), denoted $L_{n,m}$, is given by
\begin{equation}
L_{n,m}=\max_{\pi:(1,1)\to (n,m)} \sum_{(i,j)\in\pi} \omega_{i,j}
\end{equation}
where the maximum is over up-right paths in ${\cal D}$ from $(1, 1)$ to $(m,n)$, i.e.\,paths with increments in $\{(0,1),(1,0)\}$.

We are interested in the stationary version of this model which, as we will see, can be obtained as follows. Let us write $\mathrm{Exp}(a)$ for an exponential random variable with parameter $a>0$. Then, the stationary version is given by setting
\begin{equation} \label{eq:stat_wts}
  \omega_{i, j} = \begin{cases}
    \mathrm{Exp}\left( \frac{1}{2} + \alpha \right), & i=j>1,\\
    \mathrm{Exp}\left( \frac{1}{2} - \alpha \right), & j=1, i>1, \\
    0, & \textrm{if}\ i=j=1, \\
    \mathrm{Exp}(1), &\textrm{otherwise}
  \end{cases}
\end{equation}
where $\alpha \in (-1/2, 1/2)$ is a fixed parameter. A schematic depiction is drawn in Figure~\ref{fig:stat_lpp}.

\begin{figure}[t!]
  \centering
   \includegraphics[height=5.5cm]{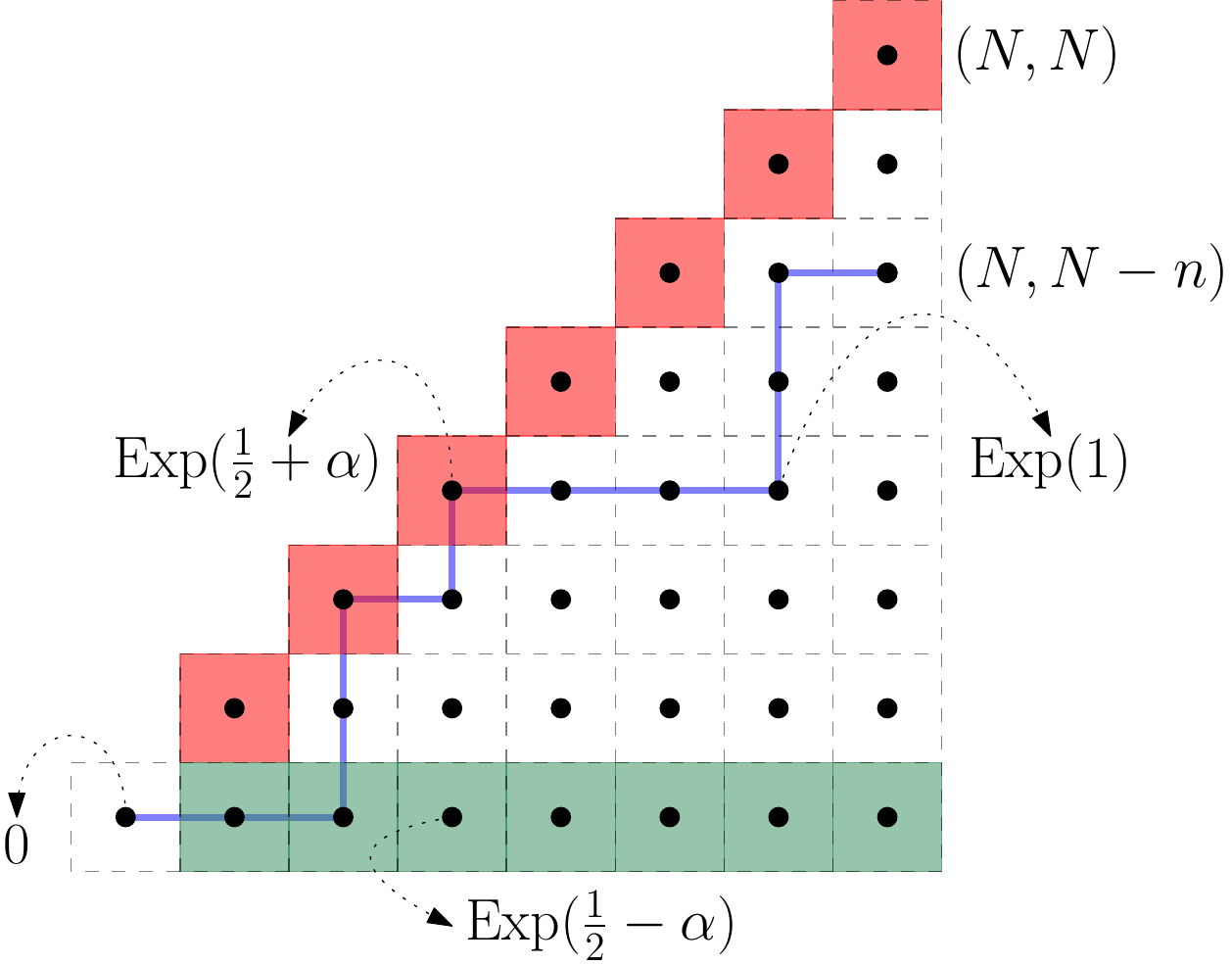}
   \caption{A possible LPP path (polymer) from $(1, 1)$ to $(N, N-n)$ for $(N, n) = (8, 2)$ in the stationary case. The dots are independent random variables: deterministically 0 at the origin, $\mathrm{Exp}(\tfrac12+\alpha)$ (respectively $\mathrm{Exp}(\tfrac12 - \alpha)$) on the rest of the diagonal (respectively the bottom line), and $\mathrm{Exp}(1)$ everywhere else in the bulk.}
   \label{fig:stat_lpp}
\end{figure}

This model is stationary in the sense of~\cite{BCS06}, i.e.\,it has stationary increments as stated in the following lemmas.

\begin{lem}(Half-space version of~\cite[Lemma 4.1]{BCS06})\label{lem:stationarity}
  Fix any $i, j\geq1$ with $i<j$. The following three random variables are jointly independent and distributed as follows: the increment along the horizontal direction
  \begin{equation}
   H_{i+1, j+1} = L_{i+1,j+1}-L_{i,j+1}
  \end{equation}
 is an $\textup{Exp} \left(\tfrac12-\alpha \right)$ random variable; the increment along the vertical direction
  \begin{equation}
   V_{i+1, j+1} = L_{i+1, j+1}-L_{i+1, j}
  \end{equation}
 is an $\textup{Exp} \left( \tfrac12+\alpha \right)$ random variable; finally, the minimum of the horizontal and vertical increments at a vertex $(i,j)$, defined by
  \begin{equation}
   X_{i, j} = \min(H_{i+1, j}, V_{i, j+1}) = \min( L_{i+1, j} - L_{i, j}, L_{i, j+1} - L_{i, j} )
  \end{equation}
 is an $\textup{Exp(1)}$ random variable.
 \end{lem}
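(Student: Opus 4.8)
The plan is to establish the three claims simultaneously by an induction on $i+j$, exploiting the recursive (max-plus) structure of the last passage time together with the well-known memoryless and exchange properties of exponential random variables. The base case is at the bottom-left corner of the admissible region: for $j=1$ one has $L_{i,1} = \sum_{k=2}^{i}\omega_{k,1}$, a sum of i.i.d.\ $\mathrm{Exp}(\tfrac12-\alpha)$ variables, and for the diagonal $i=j$ the recursion $L_{i,i} = L_{i,i-1} + \omega_{i,i}$ is forced (no horizontal step into $(i,i)$ is possible inside ${\cal D}$), so the diagonal increments are $\mathrm{Exp}(\tfrac12+\alpha)$. From here one checks the joint law of $(H,V,X)$ at the first relevant vertex by a direct computation.

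For the inductive step I would use the basic identity, valid whenever the vertex $(i+1,j+1)$ lies strictly inside the bulk,
\begin{equation}
 L_{i+1,j+1} = \omega_{i+1,j+1} + \max\bigl(L_{i+1,j},\,L_{i,j+1}\bigr),
\end{equation}
and rewrite it in increment variables around $(i,j)$. Writing $H := H_{i+1,j}$, $V := V_{i,j+1}$, and using $X = \min(H,V)$, the differences $H_{i+1,j+1}$ and $V_{i+1,j+1}$ become $\omega_{i+1,j+1} + \max(H,V) - H$ and $\omega_{i+1,j+1} + \max(H,V) - V$ respectively, i.e.\ $\omega_{i+1,j+1} + (V-H)^+$ and $\omega_{i+1,j+1} + (H-V)^+$. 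By the inductive hypothesis (applied at $(i,j)$, or at a neighbouring vertex — one has to be slightly careful which instance of the hypothesis feeds which quantity, cf.\ the full-space argument in~\cite{BCS06}) the pair $(H,V)$ is independent with the stated exponential marginals, and $\omega_{i+1,j+1} \sim \mathrm{Exp}(1)$ is independent of everything. The claim then reduces to the following elementary lemma about exponentials: if $H\sim\mathrm{Exp}(b)$, $V\sim\mathrm{Exp}(a)$, $W\sim\mathrm{Exp}(1)$ are independent with $a+b=1$, then $W+(V-H)^+$, $W+(H-V)^+$, and $\min(H,V)$ are jointly independent with laws $\mathrm{Exp}(b)$, $\mathrm{Exp}(a)$, $\mathrm{Exp}(1)$. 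This is checked by computing the joint Laplace transform (or density): conditioning on the sign of $H-V$, using memorylessness of the larger of the two and the fact that $\min(H,V)\sim\mathrm{Exp}(a+b)=\mathrm{Exp}(1)$ is independent of $\mathrm{sgn}(H-V)$ with $\Pb(H<V)=b$, the convolution with $W$ produces exactly the claimed product form.

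The main obstacle is bookkeeping rather than any deep analytic difficulty: one must propagate the induction through the half-space region while respecting the boundary vertices, where the recursion degenerates (on the diagonal there is no incoming horizontal edge, on the bottom row no incoming vertical edge). At these boundary vertices the weights $\mathrm{Exp}(\tfrac12\pm\alpha)$ are tuned precisely so that the degenerate recursion still produces the right marginal, which is the whole point of the choice~\eqref{eq:stat_wts}; verifying that the induction closes up consistently along the diagonal and along the first row — and that the three-way independence is not spoiled when one of $H$, $V$ comes from a boundary vertex — is the delicate part. A clean way to organize this is to first prove, as a separate sub-induction, that $L_{i+1,j}-L_{i,j}\sim\mathrm{Exp}(\tfrac12-\alpha)$ and $L_{i,j+1}-L_{i,j}\sim\mathrm{Exp}(\tfrac12+\alpha)$ with the two independent for every admissible $(i,j)$, including the boundary, and only then deduce the joint statement with $X_{i,j}$; this mirrors the structure of the proof of~\cite[Lemma 4.1]{BCS06} and reduces the half-space case to checking the two boundary-edge identities by hand.
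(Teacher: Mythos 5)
Your proof is correct and takes essentially the same route as the paper's: an induction whose step is exactly the elementary exponential ``Burke-type'' lemma of~\cite[Lemma 4.1]{BCS06} applied to $H_{i+1,j}$, $V_{i,j+1}$, $\omega_{i+1,j+1}$, with the half-space feature entering only through the boundary (the degenerate recursion on the diagonal and on the bottom row) and the tuned weights $\mathrm{Exp}(\tfrac12\pm\alpha)$. The paper's actual proof is considerably terser --- it simply cites the full-space lemma for the inductive step and records the base case $H_{2,1}=\omega_{2,1}$, $V_{2,2}=\omega_{2,2}$ --- so your write-up is a faithful expansion of the same argument rather than an alternative one, and your explicit flagging of the boundary bookkeeping (and of the fact that the joint independence of the inputs feeding the step really requires the down-right-path statement in the background) correctly identifies the only point the paper leaves implicit.
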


 \begin{proof}
 The result is an immediate consequence of Lemma 4.1 of~\cite{BCS06} for the full-space model. It follows by induction: if $H_{i+1, j}$ and $V_{i, j+1}$ are independent $\textup{Exp} \left(\tfrac12-\alpha \right)$ and $\textup{Exp} \left(\tfrac12 + \alpha \right)$ respectively, then the conclusion follows. The base case is simple: $H_{2, 1} = \omega_{2,1} \sim \textup{Exp} \left(\tfrac12 - \alpha \right)$ and $V_{2,2} = \omega_{2,2} \sim \textup{Exp} \left(\tfrac12 + \alpha \right)$ and both are independent.
 \end{proof}

\begin{lem} (Half-space version of~\cite[Lemma 4.2]{BCS06})\label{lem:stationarity_2}
  Fix $\Pi$ any down-right path in half-space\footnote{A path of adjacent vertices with down steps $(-1, 0)$ or right steps $(0, 1)$.}, from the diagonal to the horizontal axis. The increments along $\Pi$ are jointly independent, the horizontal ones being $\textup{Exp} \left(\tfrac12-\alpha \right)$ and the vertical ones $\textup{Exp} \left(\tfrac12 + \alpha \right)$ random variables. Moreover, they are independent from the \iid random variables $X_{i, j} = \min( L_{i+1, j} - L_{i, j}, L_{i, j+1} - L_{i, j} )$ for $(i, j)$ any point strictly below and to the left of $\Pi$.
\end{lem}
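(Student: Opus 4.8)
The plan is to adapt the proof of the full-space statement~\cite[Lemma 4.2]{BCS06}: one proves the lemma by induction over the down-right path $\Pi$, with Lemma~\ref{lem:stationarity} --- or rather the deterministic, measure-preserving map on three exponential variables that underlies its proof --- playing the role of the elementary step. The combinatorial input is that any two down-right paths from the diagonal to the horizontal axis sharing their endpoints are joined by a finite chain of \emph{elementary moves}, a single move replacing, inside one unit cell, the two-step detour through that cell's lower-left corner $(i,j)$ by the two-step detour through its upper-right corner $(i+1,j+1)$ (or conversely), with every other vertex of the path left in place. It therefore suffices to (i) verify the claim for one reference down-right path and (ii) show that an elementary move preserves it.

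For (i) I would take as reference the down-right path for which the set of vertices strictly below and to the left is empty, so that the $X$-part of the statement is vacuous. Along that path the horizontal and vertical increments form an explicit finite family which, upon unwinding the boundary instances of the LPP recursion $L_{i,j}=\max(L_{i-1,j},L_{i,j-1})+\omega_{i,j}$ (with $L_{i-1,j}$, respectively $L_{i,j-1}$, absent on the diagonal, respectively on the horizontal axis), is seen to be a collection of independent $\mathrm{Exp}(\tfrac12+\alpha)$ and $\mathrm{Exp}(\tfrac12-\alpha)$ random variables as prescribed by~\eqref{eq:stat_wts}; this is the base of the induction.

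For (ii), consider an elementary move that exposes the cell with lower-left corner $(i,j)$: it trades the two path-increments $L_{i+1,j}-L_{i,j}$ and $L_{i,j+1}-L_{i,j}$ of the old path $\Pi$ for the increments $H_{i+1,j+1}$ and $V_{i+1,j+1}$ of the new path $\Pi'$, and it enlarges the below-and-left set by the single vertex $(i,j)$; all other increments of $\Pi'$ are, as random variables, unchanged from $\Pi$, and the family of $X$-variables below and to the left of $\Pi'$ is that of $\Pi$ together with $X_{i,j}$. Using $L_{i+1,j+1}=\max(L_{i+1,j},L_{i,j+1})+\omega_{i+1,j+1}$ one checks that $(H_{i+1,j+1},V_{i+1,j+1},X_{i,j})$ is the image of $(L_{i+1,j}-L_{i,j},\,L_{i,j+1}-L_{i,j},\,\omega_{i+1,j+1})$ under the map $(a,b,c)\mapsto(\max(a,b)+c-b,\ \max(a,b)+c-a,\ \min(a,b))$, i.e.\ exactly the transformation whose invariance on $\mathrm{Exp}(\tfrac12-\alpha)\otimes\mathrm{Exp}(\tfrac12+\alpha)\otimes\mathrm{Exp}(1)$ is established in the proof of Lemma~\ref{lem:stationarity}. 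By the induction hypothesis the pair $(L_{i+1,j}-L_{i,j},\,L_{i,j+1}-L_{i,j})$ consists of independent $\mathrm{Exp}(\tfrac12-\alpha)$, $\mathrm{Exp}(\tfrac12+\alpha)$ variables, jointly independent of all other increments of $\Pi$ and of all $X$-variables below $\Pi$; and $\omega_{i+1,j+1}$ is an independent $\mathrm{Exp}(1)$ variable which, being a weight at a site dominated by no vertex of $\Pi$, is by locality of the LPP recursion independent of every $L$-value on or below $\Pi$. Feeding this into the identity of Lemma~\ref{lem:stationarity}, with the rest of the collection carried along as an inert spectator, yields precisely the asserted joint law along $\Pi'$, which closes the induction.

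The genuinely delicate points are confined to step (ii): that an elementary move affects none of the variables other than those listed (the locality invoked above) and that Lemma~\ref{lem:stationarity} applies at \emph{every} cell that arises, including those abutting the diagonal --- which is exactly why that lemma was stated for all admissible pairs of indices rather than only in the bulk. The connectivity of down-right paths under elementary moves is classical, so the lemma is in the end a bookkeeping consequence of Lemma~\ref{lem:stationarity}, recorded separately here because it is the form used by the later shift argument.
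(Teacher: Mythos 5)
Your argument is correct, and it is exactly the proof the paper is invoking when it writes ``Lemma 4.2 of~\cite{BCS06} applies mutatis mutandis'': an induction over down-right paths by elementary cell flips, with the base case the boundary path (where the $X$-variable set is empty and the increments reduce to the boundary weights of~\eqref{eq:stat_wts}) and the inductive step the three-variable Burke-type invariance underlying Lemma~\ref{lem:stationarity}, applied to the triple $(L_{i+1,j}-L_{i,j},\,L_{i,j+1}-L_{i,j},\,\omega_{i+1,j+1})$. You have simply filled in the details that the paper leaves to the reader by reference to~\cite{BCS06}.
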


\begin{proof}
  Lemma 4.2 of~\cite{BCS06} applies mutatis mutandis.
\end{proof}

\begin{rem}
For the stationary model, we do not have a good formula to study the statistics of $L_{N,N-n}$. However, as it was already the case for the full-space stationary LPP, there is a way to recover it in a two-step procedure. First we consider a related LPP model, with two parameters, $\alpha,\beta$, for which we have an explicit formula of the distribution in terms of a Fredholm Pfaffian. The distribution of the original model is recovered by a standard shift argument and an analytic continuation. This last step turns out to be far from trivial and different from both the one for the full-space stationary model~\cite{FS05a} and its multi-point extension~\cite{BFP09}.
\end{rem}

\subsection{Finite time distribution for the stationary LPP} \label{sec:fin_time_res}

The first result of this paper is a formula for the distribution of the stationary LPP time $L_{N, N-n}$. In order to state the result, we need to introduce a few functions and kernels which appear in the final expression. Let us define the following $f$ functions
\begin{equation}
  f_{+}^{-\alpha}(x) = \Phi(x, -\alpha) \left( \tfrac12 + \alpha \right)^n, \quad f_{-}^{-\alpha}(x) = \frac{\Phi(x,-\alpha)} {\left( \tfrac12 -\alpha \right)^n}
\end{equation}
where $\Phi(x, z) = e^{-xz} \left[ \frac{\tfrac12 + z}{\tfrac12 - z} \right]^{N-1}$, the following $g$ functions
\begin{alignat}{2}
 g_1(x)& = \oint\limits_{\Gamma_{1/2}} \frac{dz}{2\pi\I} \Phi (x,z)\left( \tfrac12-z \right)^n\frac{z+\alpha}{2z},\quad &g_2(x)& = \oint\limits_{\Gamma_{1/2,\alpha}} \frac{dz}{2\pi\I} \frac{\Phi(x,z)}{\left( \tfrac12+z \right)^n}\frac{1}{z-\alpha}, \nonumber \\
\mathsf{g}_3(x) &= \oint\limits_{\Gamma_{1/2}} \frac{dz}{2\pi\I} \Phi(x, z) \left( \tfrac12-z \right)^n  \frac{1}{z+\alpha},\quad &\mathsf{g}_4(x) &= \!\!\oint\limits_{\Gamma_{1/2, \pm \alpha}} \!\!\frac{dz}{2\pi\I} \frac{\Phi(x, z)}{\left( \tfrac12+z \right)^n} \frac{2z}{(z - \alpha) (z + \alpha)^2}
\end{alignat}
and finally let
\begin{equation}
e^{\alpha} (s) = - \oint\limits_{\Gamma_{1/2,\alpha}}\frac{dz}{2\pi\I} \frac{\Phi(s,z)}{\Phi(s,\alpha)}\frac{(\tfrac12+\alpha)^n}{(\tfrac12+z)^n}\frac{1}{(z-\alpha)^2}.
\end{equation}
Also, define the following \emph{anti-symmetric} kernel $\overline{\mathsf{K}}$:
 \begin{equation}
  \begin{split}
   \overline{\mathsf{K}}_{11}(x,y) =& - \!\! \oint\limits_{\Gamma_{1/2}} \frac{dz}{2\pi\I} \!\! \oint\limits_{\Gamma_{-1/2}} \!\! \frac{dw}{2\pi\I}\frac{\Phi(x,z)}{\Phi(y,w)}\left[(\tfrac12-z)(\tfrac12+w)\right]^n\frac{(z-\alpha)(w+\alpha)(z+w)}{4zw(z-w)}, \\
   \overline{\mathsf{K}}_{12}(x,y) =& -\!\!\! \oint\limits_{\Gamma_{1/2}} \frac{dz}{2\pi\I} \!\!\! \oint\limits_{\Gamma_{-1/2,\alpha}} \!\!\! \frac{dw}{2\pi\I}\frac{\Phi(x,z)}{\Phi(y,w)}\left[\frac{\tfrac12-z}{\tfrac12-w}\right]^n\frac{z-\alpha}{w-\alpha}\frac{z+w}{2z(z-w)} \\
               =& -\overline{\mathsf{K}}_{21} (y,x),\\
   \overline{\mathsf{K}}_{22}(x,y) =&\ \varepsilon(x,y)+\oint \frac{dz}{2\pi\I}\oint\frac{dw}{2\pi\I} \frac{\Phi(x,z)}{\Phi(y,w)} \frac{1}{\left[(\tfrac12+z)(\tfrac12-w)\right]^n}\frac{1}{z-w}\left(\frac{1}{z+\alpha}+\frac{1}{w-\alpha}\right)
  \end{split}
 \end{equation}
  where the integration contours for $\overline{\mathsf{K}}_{22}$ are $\Gamma_{1/2,-\alpha}\times\Gamma_{-1/2}$ for the term with $1/(z+\alpha)$ and $\Gamma_{1/2}\times\Gamma_{-1/2,\alpha}$ for the term with $1/(w-\alpha)$, and where $\varepsilon = \varepsilon_0+\varepsilon_1$ with
\begin{equation}
\varepsilon_0(x,y) =  - \sgn(x-y) \frac{ e^{\alpha |x-y|} } {\left( \frac{1}{4} - \alpha^2 \right)^n}, \quad  \varepsilon_1(x, y) =  - \sgn(x-y)\oint\limits_{\Gamma_{1/2}} \frac{dz}{2\pi\I} \frac{2z e^{-z |x-y|}}{(z^2-\alpha^2) \left( \frac{1}{4} - z^2 \right)^n}.
\end{equation}
Finally, define
\begin{equation}\label{eq2.16}
 \begin{aligned}
 \widetilde{\mathsf{K}}_{12}(x,y) &=- \!\! \oint\limits_{\Gamma_{1/2}} \!\! \frac{dz}{2\pi\I} \!\! \oint\limits_{\Gamma_{-1/2}} \!\! \frac{dw}{2\pi\I}\frac{\Phi(x,z)}{\Phi(y,w)} \left[\frac{\tfrac12-z}{\tfrac12-w}\right]^n\frac{z-\alpha}{w-\alpha}  \frac{z+w}{2z(z-w)},\\
 \widetilde{\mathsf{K}}_{22}(x,y) &= \!\!\! \oint\limits_{\Gamma_{1/2, - \alpha}} \!\!\! \frac{dz}{2\pi\I} \!\!\! \oint\limits_{\Gamma_{-1/2}} \!\!\! \frac{dw}{2\pi\I} \frac{\Phi(x,z)}{\Phi(y,w)} \frac{1}{\left[(\tfrac12+z)(\tfrac12-w)\right]^n}\frac{1}{(z+\alpha)(w-\alpha)} \frac{z+w}{z-w}
 \end{aligned}
 \end{equation}
and
  \begin{equation}
  \begin{split}
  h_1 &= \widetilde{\mathsf{K}}_{22} f_{+}^{-\alpha} + \varepsilon_1 f_{+}^{-\alpha} - \mathsf{g}_4 - j^\alpha(s, \cdot), \\
  h_2 &= \widetilde{\mathsf{K}}_{12} f_{+}^{-\alpha} + \mathsf{g}_3
  \end{split}
 \end{equation}
 with
 \begin{equation}\label{eq:214}
  j^{\alpha}(s, y) =\left( \frac{\sinh \alpha (y-s)}{\alpha} + (y-s)e^{\alpha(y-s)}\right)f_{-}^{-\alpha} (s).
 \end{equation}
In \eqref{eq2.16} the product between kernel and functions are on $L^2(s,\infty)$.

Our first main theorem, a finite size result, is as follows.

\begin{thm} \label{thm:main_thm_fin}
 Let $\alpha \in (-1/2, 1/2)$ be a real number and $1 \leq N$, $0 \leq n \leq N-1$ be positive integers. Let $L_{N,N-n}$ be the stationary LPP time from $(1, 1)$ to $(N, N-n)$ in the model of weights given by~\eqref{eq:stat_wts}. Then
 \begin{equation}\label{eq:final_dist}
 \begin{split}
  \Pb (L_{N,N-n} \leq s) = \partial_s   \Bigg\{ \pf(J - \overline{\mathsf{K}}) \cdot \Bigg[ e^{\alpha}(s) - \braket{-g_1 \quad g_2} {(\Id-J^{-1}\overline{\mathsf{K}})^{-1} \begin{pmatrix} -h_1 \\ h_2 \end{pmatrix} } \Bigg] \Bigg\}
  \end{split}
 \end{equation}
 where the Fredholm Pfaffian is taken over $L^2(s,\infty) \times L^2(s,\infty)$ and where $J = \left( \begin{smallmatrix} 0 & 1 \\ -1 & 0 \end{smallmatrix} \right)$.
\end{thm}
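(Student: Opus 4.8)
The plan is to follow the two-step route announced in the introduction: realise the stationary law as the $\beta\to-\alpha$ limit of a genuinely two-parameter integrable model, and carry out that limit by analytic continuation. \textbf{Step 1 (Pfaffian input).} First I would record the Fredholm Pfaffian formula for the inhomogeneous half-space model of Section~\ref{sec:int}: for $\beta>-\alpha$ (the regime in which that formula applies and no contour pinches), the last passage time $\Lpf$ satisfies $\Pb(\Lpf\le s)=\pf(J-K)$ for an explicit $2\times 2$ matrix kernel $K=K^{(\alpha,\beta)}$ on $L^2(s,\infty)$, built from the same double contour integrals that appear (before the $\beta$-limit) in $\overline{\mathsf{K}}$ and $\widetilde{\mathsf{K}}$; if convenient this is reached from a geometric-weight model via the geometric-to-exponential limit of the appendix. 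This is the exact analogue of the Fredholm determinant input used for full-space stationary LPP in~\cite{FS05a}.

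\textbf{Step 2 (shift argument and finite-rank reduction).} Next, via the shift (Burke-type) argument of Section~\ref{sec:int_to_stat} --- which writes the stationary passage time as a maximum over the exit point of the optimal path from the boundary diagonal and bottom row and resolves that maximum by an $s$-derivative --- I would rewrite, for $\beta$ close to $-\alpha$, the quantity $\Pb(L_{N,N-n}\le s)$ as $\partial_s$ of a Fredholm Pfaffian of $K^{(\alpha,\beta)}$ modified by an explicit finite-rank perturbation carrying the boundary data (the $\beta$-dependent precursors of $g_1,g_2,h_1,h_2$ and of $e^{\alpha}(s)$). Applying the rank-update identity for Fredholm Pfaffians --- $\pf(J-K-\ketbra{\psi}{\phi}+\ketbra{\phi}{\psi})=\pf(J-K)\cdot(\text{finite determinant in the resolvent elements } \braket{\phi}{(J-K)^{-1}\psi})$ --- and splitting $K^{(\alpha,\beta)}=\overline{K}+(\text{finite rank})$ with $\overline{K}$ the part that stays bounded as $\beta\to-\alpha$, produces precisely the product structure $\pf(J-\overline{K})\cdot\big[e^{\alpha}(s)-\braket{-g_1\quad g_2}{(\Id-J^{-1}\overline{K})^{-1}\begin{pmatrix}-h_1\\ h_2\end{pmatrix}}\big]$, still with every object depending on $\beta$.

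\textbf{Step 3 (analytic continuation $\beta\to-\alpha$).} The real content is to take $\beta\to-\alpha$ inside this expression. Individual contour integrals then pinch --- poles at $z=-\beta$ or $w=-\beta$ collide with the point $\alpha$ encircled by neighbouring contours, producing $1/(\alpha+\beta)$ and $1/(\alpha+\beta)^2$ blow-ups in separate matrix entries of the finite-rank block. I would show these cancel only when the full $2\times 2$ antisymmetric block structure is kept: the divergent contributions enter the Pfaffian in pairs with opposite signs and drop, while the finite remainders come from L'Hôpital/Taylor expansion in $\beta$. This is exactly what creates the genuinely new features of the limit --- the double poles $1/(z+\alpha)^2$ in $\mathsf{g}_4$ and $e^{\alpha}$, the split $\varepsilon=\varepsilon_0+\varepsilon_1$ of the antisymmetric piece $\overline{\mathsf{K}}_{22}$, and the $(y-s)e^{\alpha(y-s)}$ term in $j^{\alpha}(s,y)$ (the $\beta$-derivative of $e^{-\beta(y-s)}$). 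Residue computations then identify the surviving contours $\Gamma_{1/2},\Gamma_{1/2,\alpha},\Gamma_{1/2,\pm\alpha}$ and the limits $f^{-\alpha}_{\pm},\mathsf{g}_i,\overline{\mathsf{K}}_{ij},\widetilde{\mathsf{K}}_{ij},h_i$ as written, and interchanging the $\beta\to-\alpha$ limit with the Fredholm Pfaffian series --- justified by uniform trace-norm bounds on the kernels and their $\beta$-derivatives --- yields~\eqref{eq:final_dist}.

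\textbf{Main obstacle.} The delicate step, and the one with no full-space counterpart in~\cite{FS05a,BFP09}, is Step 3: the cancellation of the diverging terms is invisible at the level of any naive scalar reduction, so one must push the $2\times 2$ Pfaffian bookkeeping through the finite-rank update and keep track of which entry of which block each $1/(\alpha+\beta)$ sits in before the limit can be taken term by term. The remaining technical burden is to make the exchange of the $\beta\to-\alpha$ limit with the Fredholm Pfaffian expansion rigorous, i.e.\ trace-class convergence of all the kernels involved.
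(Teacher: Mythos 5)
Your plan is the same as the paper's: start from the Pfaffian input for the two-parameter integrable model, deconvolve the extra weight at the origin to get a $\partial_s$ formula (the shift argument), split the kernel as $K=\overline K+(\alpha+\beta)R$ with $R$ rank two, reduce the Fredholm Pfaffian via the rank-update identity, and analytically continue in $\beta$ to $\beta=-\alpha$ using the $2\times 2$ structure. The overall architecture is right and all the new features you identify in the limit ($1/(z\pm\alpha)^2$ double poles in $\mathsf g_4$ and $e^\alpha$, the split $\varepsilon=\varepsilon_0+\varepsilon_1$, the $\beta$-derivative giving $(y-s)e^{\alpha(y-s)}$ in $j^\alpha$) are correct.

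Two imprecisions in Step 3 are worth flagging, because a verbatim execution of your sketch would miss the actual mechanism. First, the shift argument in the paper is purely the deconvolution $\Lpf=\widetilde L+\tilde\omega_{1,1}$ with independence; it is not an exit-point decomposition. Second, and more substantively: you describe the obstruction as pole collisions producing $1/(\alpha+\beta)$ and $1/(\alpha+\beta)^2$ blow-ups that cancel in opposite-sign pairs. In fact there is no $1/(\alpha+\beta)^2$ anywhere --- the single $1/(\alpha+\beta)$ pole in $\braket{Y_2}{X_2}$ cancels the $1/(\alpha+\beta)$ prefactor directly (Lemma~\ref{lem:inner1}). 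The genuinely hard piece is $\braket{Y_2}{(\Id-\overline G)^{-1}\overline G X_2}$, whose obstruction is \emph{not} a divergence but a loss of definedness: $\overline K_{22}f_+^\beta$ and $\overline K_{12}f_+^\beta$ contain contributions proportional to $\int_s^\infty e^{-2\beta y}\,dy$, which are not in $L^2(s,\infty)$ unless $\beta>0$, whereas the model lives on all of $\alpha+\beta>0$. The resolution is the decomposition $\overline G=\widehat G+O$ (Lemma~\ref{lem:GbarToGhat} and Prop.~\ref{prop:Seconddecomposition}), where the ill-behaved rank-one part $O$ is shown to contribute exactly zero to the scalar product because of the symmetry of the resolvent entries ($A_{11}(x,y)=A_{22}(y,x)$, $A_{12}$ and $A_{21}$ antisymmetric, Prop.~\ref{prop:kernel_symm}). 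The leftover $\widehat G X_2$ is then genuinely analytic on $\alpha,\beta\in(-1/2,1/2)$, and the $(\alpha+\beta)$ prefactors in $\widetilde P$ cancel the $1/(\alpha+\beta)$ in $\braket{f_-^\alpha}{f_+^\beta}$. Without this specific $\overline G=\widehat G+O$ split and the resolvent symmetry lemma your Step 3 would stall, since the offending terms are not merely divergent but undefined and so cannot be regularized by L'H\^opital/Taylor in $\beta$.
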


The proof of Theorem~\ref{thm:main_thm_fin} is carried out in Section~\ref{sec:proofFiniteN}.

\begin{rem} \label{rem:k_bar}
 We remark that our kernel $\overline{\mathsf{K}}$ with parameter $\alpha$ and size $N$ is identical to the integrable kernel $K$ of Section~\ref{sec:int} with parameters $-\alpha$ and $\beta = 1/2$ and size $N-1$. The latter corresponds to a model that has been studied in~\cite{BBCS17}. As a consequence, the Pfaffian $\pf(J - \overline{\mathsf{K}})$ on $L^2(s,\infty) \times L^2(s,\infty)$ is the distribution of the corresponding LPP and is in $(0, 1)$ for any fixed $s$.
\end{rem}

\subsection{Asymptotic result} \label{sec:ass_res}

In order to discuss the scaling limit, we first have to determine: (a) the limit shape approximation and (b) the position of the end-point $(N,N-n)$ which is connected with the origin by the characteristic line. The reason is that if one does not take a point in a $N^{2/3}$ neighborhood of the characteristic leaving from the origin, then one will not see the $N^{1/3}$ fluctuations typical for KPZ models. Rather one will only see Gaussian fluctuations whose origin is in the boundary terms, namely the polymer spends a time much larger that $N^{2/3}$ either on the diagonal or in the first row.

Concerning the limit shape, notice that $L_{N,N-\eta N}= L_{N,1}+\left(L_{N,N-\eta N}-L_{N,1}\right)$. The two terms are not independent, but individually are sums of independent random variables, see Lemmas~\ref{lem:stationarity} and~\ref{lem:stationarity_2}. Thus one expects that for $N\gg 1$,
\begin{equation}\label{eq:216}
L_{N,N-\eta N}\simeq \frac{N}{\tfrac12-\alpha}+\frac{(1-\eta)N}{\tfrac12+\alpha} = \frac{N}{\tfrac14-\alpha^2}-\frac{\eta N}{\tfrac12+\alpha}.
\end{equation}

For a stationary situation in TASEP with particle density $\rho$, the characteristic line has speed $1-2\rho$. In terms of last passage percolation, the density $\rho$ becomes a parameter $\alpha=\rho-1/2$ (see e.g.~\cite{PS01}) and the speed $1-2\rho$ becomes a slope in the LPP geometry given by $y/x=\rho^2/(1-\rho)^2$ (see e.g.~\cite{BFP09}). Thus, with $x=N$ and $y=N-\eta N$ we have
\begin{equation}\label{eq:217}
n=\eta N=-\frac{2\alpha}{(\tfrac12-\alpha)^2}N.
\end{equation}
Therefore, to obtain a non-trivial scaling limit, given the value of $\alpha$, one needs to choose $n$ in a $\Or(N^{2/3})$-neighborhood of~\eqref{eq:217} and to consider fluctuations on a $N^{1/3}$-scale around~\eqref{eq:216}. Choosing an end-point order $N$ away from~\eqref{eq:217} leads to Gaussian behavior on the $N^{1/2}$ scale, for the maximizer will spend $\Or(N)$ of its time either in the first row or on the diagonal in that case. Recalling that $n\geq 0$, \eqref{eq:217} cannot hold for $\alpha>0$. In that case, the polymer will spend a macroscopic portion of its time on the diagonal and will have Gaussian fluctuations.

In this paper we consider the critical scaling where $\alpha$ is close to $0$, namely we set
\begin{equation}
\alpha =  \delta 2^{-4/3} N^{-1/3},\quad \eta N=n=u 2^{5/3} N^{2/3}.
\end{equation}
With this choice we have
\begin{equation}
L_{N,N-\eta N}\simeq 4 N -2 u 2^{5/3} N^{2/3} +\delta(2u+\delta) 2^{4/3} N^{1/3}
\end{equation}
and
\begin{equation}\label{eq:220}
-\frac{2\alpha}{(\tfrac12-\alpha)^2}N = -\delta 2^{5/3}N^{2/3}+\Or(N^{1/3}).
\end{equation}

\begin{rem} \label{rem:simple_scaling}
We decided not to include the $\Or(N^{1/3})$ term $\delta(2u+\delta) 2^{4/3} N^{1/3}$ of the limit shape approximation in our calculations below, since many formulas are more compact without it. That is, we consider the scaling
\begin{equation}\label{eq:ScalingS}
s=4 N -2 u 2^{5/3}N^{2/3} + S\, 2^{4/3}N^{1/3}.
\end{equation}
However, it has to be taken into account if one wants to determine various limits, e.g.\,$u\to\infty$ and/or $\delta\to\pm\infty$. In these limits, we first have to replace $S$ by $S+\delta(2u+\delta)$. Also, when taking $u\to\infty$, by \eqref{eq:220}, we will have to set $\delta=-u+\tau$ with fixed $\tau$. This is performed in Section~\ref{sec:limitTransitionBR}, where we recover the Baik--Rains distribution.
\end{rem}

As for the finite $N$ case, the main theorem in the $N\to\infty$ limit requires definitions of various objects. Let us define the functions
\begin{equation}
\begin{aligned}
\mathpzc{f}^{-\delta,u}(X)&=e^{-\frac{\delta^3}{3} - \delta^2 u + \delta X}, \\
\mathpzc{e}^{\delta, u} (S)&= -\int\limits_{\zcd\, {}_\delta} \frac{d \zeta}{2\pi\I} \frac{e^{\frac{\zeta^3}{3} + \zeta^2 u - \zeta S} }{e^{\frac{\delta^3}{3} + \delta^2 u - \delta S}} \frac{1}{(\zeta - \delta)^2}, \\
\mathpzc{j}^{\delta, u}(S,X)&=\left[\frac{\sinh{\delta(X-S)}}{\delta}+ (X-S)e^{\delta(X-S)}\right]\mathpzc{f}^{-\delta,-u}(S)
\end{aligned}
\end{equation}
as well as
\begin{alignat}{2}
\mathpzc{g}_1^{\delta, u} (X) &= \int\limits_{{ }_0\zcd} \frac{d \zeta}{2\pi\I} e^{\frac{\zeta^3}{3} - \zeta^2 u - \zeta X} \frac{\zeta + \delta}{2 \zeta},
\quad &  \mathpzc{g}_2^{\delta, u} (X) &= \int\limits_{\zcd\, {}_\delta} \frac{d \zeta}{2\pi\I} e^{\frac{\zeta^3}{3} + \zeta^2 u - \zeta X} \frac{1}{\zeta - \delta}, \nonumber \\
\mathpzc{g}_3^{\delta, u} (X) &= \int\limits_{{}_{-\delta}\zcd} \frac{d \zeta}{2\pi\I} e^{\frac{\zeta^3}{3} - \zeta^2 u - \zeta X} \frac{1}{\zeta+\delta},
\quad & \mathpzc{g}_4^{\delta, u} (X) &= \int\limits_{\zcd\, {}_{\pm\delta}} \frac{d \zeta}{2\pi\I} e^{\frac{\zeta^3}{3} + \zeta^2 u - \zeta X} \frac{2\zeta}{(\zeta-\delta)(\zeta + \delta)^2}.
\end{alignat}

\noindent The limit of $\overline{\mathsf{K}}$ is the following \emph{anti-symmetric} kernel $\overline{\mathcal{A}}$:
\begin{equation}\label{eq:227}
\begin{aligned}
\overline{\mathcal{A}}_{11} (X, Y) &= - \int\limits_{{ }_{0}\zcd } \frac{d \zeta}{2\pi\I} \int\limits_{\wcu\, {}_{0,\zeta}} \frac{d \omega}{2\pi\I}\frac{ e^{\frac{\zeta^3}{3} - \zeta^2 u - \zeta X} }{ e^{\frac{\omega^3}{3} + \omega^2 u - \omega Y} } (\zeta - \delta)(\omega + \delta) \frac{\zeta + \omega}{4 \zeta \omega (\zeta - \omega)},\\
\overline{\mathcal{A}}_{12} (X, Y) &= -\int\limits_{{}_{0}{\zcd}} \frac{d \zeta}{2\pi\I} \int\limits_{{}_{\delta}\wcu\, {}_\zeta} \frac{d \omega}{2\pi\I} \frac{ e^{\frac{\zeta^3}{3} - \zeta^2 u - \zeta X} }{ e^{\frac{\omega^3}{3} - \omega^2 u - \omega Y} } \frac{\zeta-\delta}{\omega-\delta} \frac{\zeta+\omega}{2 \zeta (\zeta-\omega)}\\
&= -\overline{\mathcal{A}}_{21} (Y, X),\\
\overline{\mathcal{A}}_{22} (X, Y) &= \mathcal{E}(X,Y) + \int \frac{d \zeta}{2\pi\I} \int \frac{d \omega}{2\pi\I} \frac{ e^{\frac{\zeta^3}{3} + \zeta^2 u - \zeta X} }{ e^{\frac{\omega^3}{3} - \omega^2 u - \omega Y} } \frac{1}{\zeta - \omega} \left(\frac{1}{ \zeta + \delta}+\frac{1}{\omega-\delta}\right)
\end{aligned}
\end{equation}
where in $\overline{\mathcal{A}}_{22}$ the integration contours, for $(\zeta,\omega)$, are $\zcd\, {}_{-\delta}\times \wcu\, {}_{\zeta}$ for the term $1/(\zeta+\delta)$, and $\zcd\, {}\times{}_{\delta}\wcu\, {}_\zeta$ for the term $1/(\omega-\delta)$. We have denoted $\mathcal{E} = \mathcal{E}_0+ \mathcal{E}_1$ with
\begin{equation}
  \begin{split}
\mathcal{E}_0 (X, Y) &= -\sgn(X - Y) e^{\delta |X - Y| + 2 \delta^2 u},\\
\mathcal{E}_1 (X, Y) &= -\sgn(X - Y) \int\limits_{{}_{\pm\delta}\zcd} \frac{d \zeta}{2\pi\I} e^{-\zeta |X - Y| + 2 \zeta^2 u}
\frac{2\zeta}{\zeta^2-\delta^2}.
  \end{split}
\end{equation}
Finally, we also set
\begin{equation}
\begin{aligned}
\widetilde{\mathcal{A}}_{12} (X, Y) &= -\int\limits_{{}_{0}{\zcd }} \frac{d \zeta}{2\pi\I} \int\limits_{\wcu\, {}_{\delta,\zeta}} \frac{d \omega}{2\pi\I} \frac{ e^{\frac{\zeta^3}{3} - \zeta^2 u - \zeta X} }{ e^{\frac{\omega^3}{3} - \omega^2 u -\omega Y} } \frac{\zeta-\delta}{\omega-\delta} \frac{\zeta+\omega}{2 \zeta (\zeta-\omega)},\\
\widetilde{\mathcal{A}}_{22} (X, Y) &= \int\limits_{\zcd\, {}_{-\delta}} \frac{d \zeta}{2\pi\I} \int\limits_{\wcu\, {}_{\delta,\zeta}} \frac{d \omega}{2\pi\I} \frac{ e^{\frac{\zeta^3}{3} + \zeta^2 u - \zeta X} }{ e^{\frac{\omega^3}{3} - \omega^2 u - \omega Y} } \frac{1}{(\zeta + \delta)(\omega - \delta)} \frac{\zeta + \omega}{\zeta - \omega}
\end{aligned}
\end{equation}
and
\begin{equation}
\begin{aligned}
\mathpzc{h}^{\delta,u}_1(Y) &= \int_{S}^\infty dV \widetilde{\mathcal{A}}_{22}(Y,V) \mathpzc{f}^{-\delta, u}(V) +\int_{S}^\infty dV \mathcal{E}_1(Y,V)\mathpzc{f}^{-\delta,u}(V) -\mathpzc{g}_4^{\delta,u}(Y) - \mathpzc{j}^{\delta, u} (S,Y),\\
\mathpzc{h}^{\delta, u}_2(Y)&= \int_S^\infty dV \widetilde{\mathcal{A}}_{12}(Y,V) \mathpzc{f}^{-\delta, u}(V)+ \mathpzc{g}_3^{\delta,u}(Y).
\end{aligned}
\end{equation}

Then, the limiting distribution of the rescaled last passage percolation in the stationary case is given as follows.

\begin{thm} \label{thm:main_thm_asymp}
 Let $\delta \in \R$, $u > 0$ be parameters. Consider the stationary LPP time $L_{N,N-n}$ from $(1, 1)$ to $(N, N-n)$ and the scaling
 \begin{equation}
  n =  u 2^{5/3} N^{2/3}, \quad \alpha =  \delta 2^{-4/3} N^{-1/3}.
 \end{equation}
We have that
 \begin{equation}
  \lim_{N \to \infty} \Pb \left( \frac{L_{N,N-n} - 4N + 4 u (2N)^{2/3}}{2^{4/3}N^{1/3}}  \leq S\right) = F^{(\delta, u)}_{0,\,\mathrm{half}}(S)
 \end{equation}
with
 \begin{equation}\label{eq:final_dist_asymp}
 \begin{split}
  F^{(\delta, u)}_{0,\,\mathrm{half}} (S) = \partial_S \Bigg\{ \pf(J - \overline{\mathcal{A}}) \cdot \Bigg[ \mathpzc{e}^{\delta, u}(S) - \braket{-\mathpzc{g}_1^{\delta, u} \quad \mathpzc{g}_2^{\delta, u} } { (\Id-J^{-1}\overline{\mathcal{A}})^{-1} \begin{pmatrix} -\mathpzc{h}^{\delta, u}_1 \\ \mathpzc{h}^{\delta, u}_2 \end{pmatrix} } \Bigg] \Bigg\}
  \end{split}
 \end{equation}
 where the Fredholm Pfaffian is taken over $L^2 (S, \infty) \times L^2 (S, \infty)$ and where $J = \left( \begin{smallmatrix} 0 & 1 \\ -1 & 0 \end{smallmatrix} \right)$.
\end{thm}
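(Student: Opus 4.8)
The plan is to derive Theorem~\ref{thm:main_thm_asymp} from the finite-$N$ formula~\eqref{eq:final_dist} of Theorem~\ref{thm:main_thm_fin} by a steepest descent analysis, carried out object by object, followed by a continuity argument that allows one to pass the limit inside the Fredholm Pfaffian and the finite-rank bracket. First I would substitute the critical scaling $\alpha = \delta 2^{-4/3} N^{-1/3}$, $n = u 2^{5/3} N^{2/3}$, and $s = 4N - 2u 2^{5/3} N^{2/3} + S\, 2^{4/3} N^{1/3}$ (with the matching rescaling $x = 4N - \dots + X\,2^{4/3}N^{1/3}$ for the kernel arguments, see Remark~\ref{rem:simple_scaling}) into the integrand $\Phi(x,z)(\tfrac12 \pm z)^n$. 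As usual in these problems, the change of variables $z = \tfrac12 + \zeta\, 2^{-4/3} N^{-1/3}$ (and analogously for $w$) turns the exponent into $N$ times a function with a double critical point, whose Taylor expansion produces the cubic $\tfrac{\zeta^3}{3} \pm \zeta^2 u - \zeta X$ appearing in $\mathpzc{f}, \mathpzc{g}_i, \overline{\mathcal{A}}_{ij}, \widetilde{\mathcal{A}}_{ij}, \mathcal{E}_i, \mathpzc{e}, \mathpzc{j}$; the poles at $z = \pm\alpha, \tfrac12$ rescale to poles at $\zeta = \pm\delta, 0$, and the contours $\Gamma_{1/2}, \Gamma_{-1/2}$ (and their variants including $\pm\alpha$) deform to the steep-descent Airy-type contours $\zcd, \wcu$ with the indicated pole-side conventions. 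Doing this termwise identifies the $N\to\infty$ limit of each of $\overline{\mathsf{K}}_{ij}$, $\widetilde{\mathsf{K}}_{ij}$, $g_i$, $\mathsf{g}_i$, $e^\alpha$, $\varepsilon_i$, $j^\alpha$, $f_\pm^{-\alpha}$, $h_1$, $h_2$ with $\overline{\mathcal{A}}_{ij}$, $\widetilde{\mathcal{A}}_{ij}$, $\mathpzc{g}_i$, $\mathpzc{e}$, $\mathcal{E}_i$, $\mathpzc{j}$, $\mathpzc{f}$, $\mathpzc{h}_i$ respectively, pointwise in the rescaled variables.

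The second ingredient is to promote these pointwise limits to the operator-level (trace-norm/Hilbert--Schmidt) convergence needed for the Fredholm Pfaffian and for the resolvent $(\Id - J^{-1}\overline{\mathsf{K}})^{-1}$ and scalar products $\braket{\cdot}{\cdot}$ in~\eqref{eq:final_dist}. This requires uniform (Gaussian/exponential) decay bounds on the rescaled kernels along the chosen contours, uniformly in $N$ large: one estimates $|\overline{\mathsf{K}}_{ij}(x,y)| \le C e^{-c(X+Y)}$ on $L^2(s,\infty)$, and likewise integrable bounds for the functions $g_i, h_i$, so that dominated convergence and the standard estimates $|\pf(J-K) - \pf(J-K')| \le \|K - K'\|_1 e^{1 + (\|K\|_1 + \|K'\|_1)/2}$ apply, together with the fact (Remark~\ref{rem:k_bar}) that $\pf(J - \overline{\mathsf{K}})$ is a bona fide distribution function in $(0,1)$ — which also follows on the limiting side from the identification with the Pfaffian kernel of~\cite{BBCS17} under $\alpha \mapsto -\alpha$, $\beta = 1/2$, $N \mapsto N-1$. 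Invertibility of $\Id - J^{-1}\overline{\mathcal{A}}$ in the limit follows from the same identification, so the bracket in~\eqref{eq:final_dist_asymp} is well-defined and is the limit of the bracket in~\eqref{eq:final_dist}.

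The final step is to justify interchanging $\lim_{N\to\infty}$ with $\partial_s$. The cleanest route is to prove the convergence of $\Pb(L_{N,N-n} \le s)$ (i.e.\ of the primitive $G_N(S) := \pf(J-\overline{\mathsf{K}})[e^\alpha(s) - \braket{\cdots}{\cdots}]$ written in the rescaled variable $S$) to $G_\infty(S) := \pf(J-\overline{\mathcal{A}})[\mathpzc{e}^{\delta,u}(S) - \braket{\cdots}{\cdots}]$ \emph{locally uniformly in $S$}, observe that the left side is monotone in $S$ (being a distribution function) with limit a continuous function, hence Dini-type reasoning gives the distributional convergence, and then argue that the limit $G_\infty$ is itself a distribution function (its total mass is $1$, by checking $S \to \pm\infty$ via the explicit large-$S$ decay of the constituent integrals), so that $\partial_S G_\infty = F^{(\delta,u)}_{0,\mathrm{half}}$ is indeed the limiting density/distribution in the sense claimed. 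I expect the main obstacle to be the uniform-in-$N$ tail estimates on the rescaled kernels $\overline{\mathsf{K}}$, $\widetilde{\mathsf{K}}$ and on $h_1$ (which contains the subtle term $j^\alpha$, itself a remnant of the delicate analytic continuation of Section~\ref{sec:analytic_continuation}): one must choose the pole-avoiding Airy contours carefully so that the $1/(z\pm\alpha)$, $1/(z-w)$ factors do not spoil the Gaussian decay coming from the cubic exponent, controlling in particular the regime where $\delta$ is near $0$ (so the poles $\pm\delta$ nearly collide with each other and with the critical point) and the regime where the contour passes close to the poles, which is exactly where the half-space problem differs from, and is harder than, the full-space steepest descent of~\cite{FS05a,BFP09}.
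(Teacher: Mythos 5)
Your overall strategy — substitute the critical scaling into the finite-$N$ formula of Theorem~\ref{thm:main_thm_fin}, do a steepest descent analysis term by term, obtain uniform integrable bounds to pass the limit inside the Fredholm Pfaffian, the resolvent, and the scalar products, and finally justify interchanging the limit with $\partial_S$ — is exactly the paper's approach (Lemmas~\ref{lem:AsymptoticsFunctions}--\ref{lem:AsymptoticsKernelsTilde}, Corollaries~\ref{cor:AsymptoticsFredholmPfaffian} and~\ref{cor:AsymptoticsHs}, and the assembly at the end of Section~\ref{sec:proofAsymptotic}).

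However, your change of variables is wrong for this model. You write $z = \tfrac12 + \zeta\,2^{-4/3}N^{-1/3}$ and describe a double critical point at $z=1/2$ with ``the poles at $z=\pm\alpha,\tfrac12$ rescaling to $\zeta=\pm\delta,0$.'' In this paper $\Phi(x,z) = e^{-xz}\bigl[(\tfrac12+z)/(\tfrac12-z)\bigr]^{N-1}$ and $x\sim 4N$, so the exponent $-xz + (N-1)\log\frac{1/2+z}{1/2-z}$ has its double critical point at $z=0$, not $z=1/2$; the paper's change of variables is $z=\zeta/(2^{4/3}N^{1/3})$. The essential singularities at $z=\pm1/2$ move off to $|\zeta|\to\infty$ and generate the cubic $\zeta^3/3$, while the simple poles at $z=0,\pm\alpha$ (the latter colliding with the critical point at speed $N^{-1/3}$) become the fixed poles at $\zeta=0,\pm\delta$. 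This also dissolves your worry about ``$\delta$ near $0$'': once $\delta$ is fixed, $\pm\delta$ sit at a fixed distance from the critical point after rescaling, so there is no collision to control; the delicate part is rather the $1/(z-w)$, $1/(z\pm\alpha)$ factors interacting with the contour choices, which the paper handles as in~\cite{BFP09} since the leading-order exponent $f_0$ is identical to the full-space case.

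For the final $\partial_S$ step, the paper does not use your CDF/monotonicity argument. It differentiates $G_N(S)$ directly: the $S$-derivatives of kernels and functions introduce only polynomial factors, which the exponential bounds absorb, so dominated convergence gives $\partial_S G_N \to \partial_S G_\infty$; the $\partial_S$ of the resolvent is handled via $\partial_s(\Id - J^{-1}\overline{\mathsf{K}})^{-1} = (\Id - J^{-1}\overline{\mathsf{K}})^{-1} J^{-1}(\partial_s\overline{\mathsf{K}})(\Id - J^{-1}\overline{\mathsf{K}})^{-1}$ and Lemma~\ref{lem:3.19}. Your route (uniform convergence of the primitive plus Helly-type monotone convergence) is also viable but would additionally require verifying a priori that the limiting primitive is a genuine CDF primitive, which the direct route avoids.
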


\begin{rem}
A numerical evaluation of the term inside the $\partial_S$ does not require the inversion of the kernel $J^{-1}\overline{\mathcal{A}}$, since it can be written as a linear combination of Fredholm Pfaffians by Lemma~\ref{lem:3.19}. This trick has been already observed by Imamura--Sasamoto~\cite{IS12} in the context of the stationary KPZ equation. Let $\xi_{(\delta, u)} \sim F^{(\delta, u)}_{0,\,\mathrm{half}}(S)=\partial_S G(S)$. Then the computation of moments can be made without taking $\partial_S$. Indeed, for the expected value, stationarity implies
\begin{equation}
\E(L_{N,N-n})=N/(1/2-\alpha)+(N-n)/(1/2+\alpha)\quad \Rightarrow\quad \E(\xi_{(\delta, u)})=\delta(2u+\delta)
\end{equation}
while for all $\ell\geq 2$, integration by parts gives
\begin{equation}
\E(\xi_{(\delta, u)}^\ell)=\ell(\ell-1) \int_{\R_+} S^{\ell-2} [G(S)-S] dS + \ell(\ell-1)\int_{\R_-} S^{\ell-2} G(S) dS.
\end{equation}
\end{rem}

\begin{rem}
The origin of the terms $\widetilde{\cal A}_{12}$ and $\widetilde{\cal A}_{22}$ in lieu of $\overline{\cal A}_{12}$ and $\overline{\cal A}_{22}$ stems from the fact that for $\delta \geq 0$, the product of the latter with $\mathpzc{f}^{-\delta,u}$ is not well-defined. However, for $\delta<0$ this is not the case and so using the tilde kernels is not necessary. For that reason and for $\delta<0$, we can thus simplify the expression of the $\mathpzc{h}_k^{\delta, u}$ entering equation \eqref{eq:final_dist_asymp} as follows.
\end{rem}

\begin{lem}\label{lem:SimplificationBR}
For $\delta<0$, the formula \eqref{eq:final_dist_asymp} holds also for $\mathpzc{h}_k^{\delta, u}$ replaced by $\tilde{\mathpzc{h}}_k^{\delta, u}$, $k=1,2$, with the latter defined by
\begin{equation}
\begin{aligned}
\tilde{\mathpzc{h}}^{\delta,u}_1(Y) &= \int_{S}^\infty dV \overline{\mathcal{A}}_{22}(Y,V) \mathpzc{f}^{-\delta, u}(V) -\mathpzc{g}_4^{\delta,u}(Y),\\
\tilde{\mathpzc{h}}^{\delta, u}_2(Y)&= \int_S^\infty dV \overline{\mathcal{A}}_{12}(Y,V) \mathpzc{f}^{-\delta, u}(V)+ \mathpzc{g}_3^{\delta,u}(Y).
\end{aligned}
\end{equation}
\end{lem}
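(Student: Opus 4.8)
\textbf{Proof plan for Lemma~\ref{lem:SimplificationBR}.}

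The plan is to show that, when $\delta<0$, the difference $\mathpzc{h}_k^{\delta,u}-\tilde{\mathpzc{h}}_k^{\delta,u}$ either vanishes identically or contributes nothing to the pairing in \eqref{eq:final_dist_asymp}. The starting observation is that for $\delta<0$ the function $\mathpzc{f}^{-\delta,u}(V)=e^{-\delta^3/3-\delta^2 u+\delta V}$ decays as $V\to+\infty$, so the integrals $\int_S^\infty dV\,\overline{\mathcal A}_{12}(Y,V)\mathpzc f^{-\delta,u}(V)$ and $\int_S^\infty dV\,\overline{\mathcal A}_{22}(Y,V)\mathpzc f^{-\delta,u}(V)$ converge: the poles at $\omega=\delta$ that forced the passage to the tilde kernels for $\delta\ge 0$ now lie to the left of the $\omega$-contour where the decay of $\mathpzc f^{-\delta,u}$ is harmless. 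Concretely, first I would carry out the $V$-integration inside $\overline{\mathcal A}_{12}$ and $\widetilde{\mathcal A}_{12}$ (respectively $\overline{\mathcal A}_{22}$ and $\widetilde{\mathcal A}_{22}$), producing a factor $1/(\omega-\delta)$ (up to an explicit $e^{(\delta-\omega)S}$ boundary term), so that $\overline{\mathcal A}_{1j}\mathpzc f^{-\delta,u}$ and $\widetilde{\mathcal A}_{1j}\mathpzc f^{-\delta,u}$ have identical integrands; the only difference is the position of the $\omega$-contour relative to the pole at $\omega=\delta$. Thus $\widetilde{\mathcal A}_{1j}\mathpzc f^{-\delta,u}-\overline{\mathcal A}_{1j}\mathpzc f^{-\delta,u}$ equals a residue at $\omega=\delta$.

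Next I would compute those residues explicitly. For the $k=2$ component this residue should recombine with the remaining $\mathpzc g_3^{\delta,u}$ term: one expects $\int_S^\infty\widetilde{\mathcal A}_{12}\mathpzc f^{-\delta,u}+\mathpzc g_3^{\delta,u}=\int_S^\infty\overline{\mathcal A}_{12}\mathpzc f^{-\delta,u}+\mathpzc g_3^{\delta,u}$ directly once the contour can be moved (i.e.\ the residue at $\omega=\delta$ is in fact zero because the numerator $\zeta+\omega$ times the relevant factor vanishes, or because the $\zeta$-contour can be simultaneously deformed). For $k=1$ the residue at $\omega=\delta$ should be exactly $\int_S^\infty dV\,\mathcal E_1(Y,V)\mathpzc f^{-\delta,u}(V)-\mathpzc j^{\delta,u}(S,Y)$ (minus what is already present in $\tilde{\mathpzc h}_1$), which is why these two terms disappear in the simplified $\tilde{\mathpzc h}_1^{\delta,u}$: the $\mathpzc j^{\delta,u}$ term, being $[\sinh\delta(Y-S)/\delta+(Y-S)e^{\delta(Y-S)}]\mathpzc f^{-\delta,-u}(S)$, is precisely a double-pole contribution at $\omega=\delta$ of the Airy-type double integral, and $\mathcal E_1\mathpzc f^{-\delta,u}$ supplies the simple-pole part. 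So the route is: (i) do the $V$-integral; (ii) identify $\widetilde{\mathcal A}_{kj}-\overline{\mathcal A}_{kj}$ (paired with $\mathpzc f^{-\delta,u}$) as an explicit residue at $\omega=\delta$ coming from a double pole $(\omega-\delta)^{-2}$ generated by the $1/(\omega-\delta)$ in the kernel times the $1/(\omega-\delta)$ from the $V$-integral; (iii) evaluate that residue and match it term-by-term with $\mathcal E_1 \mathpzc f^{-\delta,u}+\mathpzc j^{\delta,u}$ in the $k=1$ case, and with $0$ in the $k=2$ case; (iv) conclude that $\tilde{\mathpzc h}_k^{\delta,u}=\mathpzc h_k^{\delta,u}$ as elements of $L^2(S,\infty)$, so the pairing in \eqref{eq:final_dist_asymp} is unchanged.

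The main obstacle I anticipate is the bookkeeping of the $\mathcal E_1$ term together with the double pole: the kernel piece $\widetilde{\mathcal A}_{22}$ carries $1/((\zeta+\delta)(\omega-\delta))\cdot(\zeta+\omega)/(\zeta-\omega)$ and, after the $V$-integral inserts another $1/(\omega-\delta)$, one faces a genuine order-two pole at $\omega=\delta$; extracting its residue produces both the $(Y-S)e^{\delta(Y-S)}$ piece (from differentiating the exponential $e^{-\omega Y}$, equivalently the boundary term $e^{(\delta-\omega)S}$) and the $\sinh\delta(Y-S)/\delta$ piece (from the $\zeta$-integral after the $\omega$-residue collapses it onto an elementary contour, cf.\ the derivation of $\mathcal E_0,\mathcal E_1$), and checking that these combine exactly into $\mathpzc j^{\delta,u}(S,Y)$ with the stated prefactor $\mathpzc f^{-\delta,-u}(S)$ — note the sign flip $u\mapsto-u$ — requires care with the Gaussian $e^{\pm\omega^2 u}$ factors and with orientations of the $\zcd$ and $\wcu$ contours. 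A secondary but routine point is verifying that all contour deformations in step (ii) are legitimate for $\delta<0$, i.e.\ that no other poles (at $\zeta=0$, $\zeta=\pm\delta$, $\omega=0$) are crossed and that the deformed contours still close at the correct sectorial infinities so that convergence of the Airy integrals is preserved; this follows from the same estimates used in the proof of Theorem~\ref{thm:main_thm_asymp} in Section~\ref{sec:proofAsymptotic}.
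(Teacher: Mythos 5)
The approach through explicit $V$-integration and residue extraction at $\omega=\delta$ is sound as far as it goes (steps (i)--(ii), and your reading of the double-pole structure producing $\mathpzc{j}^{\delta,u}$ is on the right track), but step (iii) and especially step (iv) are wrong, and the gap is precisely the non-trivial part of the lemma. Carrying out your contour-crossing argument carefully gives (cf.\ the identities \eqref{eq:236}--\eqref{eq:238} in Lemma~\ref{lem:simplifications})
\begin{equation*}
\widetilde{\mathcal{A}}_{12}(X,Y) - \overline{\mathcal{A}}_{12}(X,Y) = \mathpzc{g}_1^{\delta,u}(X)\, e^{-\frac{\delta^3}{3}+\delta^2 u+\delta Y},
\end{equation*}
so the residue at $\omega=\delta$ in the $k=2$ case is \emph{not} zero; the numerator $\zeta+\omega$ does not vanish there and no deformation kills it. Pairing with $\mathpzc{f}^{-\delta,u}$ and integrating in $V$, one finds
\begin{equation*}
\mathpzc{h}_2^{\delta,u} - \tilde{\mathpzc{h}}_2^{\delta,u} = \mathpzc{g}_1^{\delta,u}\, \braket{\mathpzc{f}^{-\delta,-u}}{\mathpzc{f}^{-\delta,u}}, \qquad
\mathpzc{h}_1^{\delta,u} - \tilde{\mathpzc{h}}_1^{\delta,u} = -\,\mathpzc{g}_2^{\delta,u}\, \braket{\mathpzc{f}^{-\delta,-u}}{\mathpzc{f}^{-\delta,u}},
\end{equation*}
and the scalar $\braket{\mathpzc{f}^{-\delta,-u}}{\mathpzc{f}^{-\delta,u}}$ is a finite \emph{nonzero} number for $\delta<0$. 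So your conclusion that $\tilde{\mathpzc{h}}_k^{\delta,u}=\mathpzc{h}_k^{\delta,u}$ as functions in $L^2(S,\infty)$ is false; the two vectors genuinely differ.

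The actual content of the lemma is that this nonzero difference nevertheless contributes nothing to the pairing in \eqref{eq:final_dist_asymp}, because it has the special form $\braket{\mathpzc{f}^{-\delta,-u}}{\mathpzc{f}^{-\delta,u}}\cdot\left(\begin{smallmatrix}\mathpzc{g}_2^{\delta,u}\\ \mathpzc{g}_1^{\delta,u}\end{smallmatrix}\right)$, i.e.\ the entries of $Y_2=(-\mathpzc{g}_1^{\delta,u}\ \mathpzc{g}_2^{\delta,u})$ swapped (with a sign). One then has to show
\begin{equation*}
\braket{-\mathpzc{g}_1^{\delta, u} \quad \mathpzc{g}_2^{\delta, u} } { (\Id-J^{-1}\overline{\mathcal{A}})^{-1} \begin{pmatrix} \mathpzc{g}_2^{\delta, u} \\ \mathpzc{g}_1^{\delta, u} \end{pmatrix} } = 0,
\end{equation*}
and this follows from the symmetries of the resolvent entries of the Pfaffian kernel (Proposition~\ref{prop:kernel_symm}: $R_{11}(x,y)=R_{22}(y,x)$, $R_{12}$ and $R_{21}$ anti-symmetric), exactly the computation already used to prove that \eqref{eq:diff_inner} vanishes in Lemma~\ref{lem:GbarToGhat}. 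Without this final symmetry/cancellation step, the proof is incomplete.
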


\begin{rem} \label{rem:a_bar}
 The kernel $\overline{\mathcal{A}}$ with parameter $\delta$ corresponds to the limiting crossover kernel from~\cite{BBCS17, BBNV18, SI03} after matching the $\delta$ parameter with notations therein. This arises when considering LPP in half-space with boundary term only along the diagonal. The respective distribution given by $\pf(J - \overline{\mathcal{A}})_{L^2(S,\infty) \times L^2(S,\infty)}$, for $u=0$, is the interpolating GOE to GSE distribution of Baik and Rains~\cite{BR99b}. Similar to Remark~\ref{rem:k_bar}, the Pfaffian $\pf(J - \overline{\mathcal{A}})_{L^2(S,\infty) \times L^2(S,\infty)}$ is in $(0, 1)$ for any fixed $S$.
\end{rem}

\subsection{Limit transition to the Baik--Rains distribution}\label{sec:limitTransitionBR}

One might wonder if the Baik--Rains distribution with parameter $\tau$ arises in some appropriate limit\footnote{The Baik--Rains distribution away from the characteristic line is given in~\cite{BR00}, formula (3.35).}. The answer is affirmative. Heuristically, when $u$ is increasing (the end-point of the LPP is moving away from the diagonal), the maximizer $\pi^{\rm max}$ of the LPP will eventually not visit anymore the diagonal at distance of much larger than $N^{2/3}$, since $\pi^{\rm max}$ stays in a $\Or(N^{2/3})$ distance from the straight line from the origin to the end point. Thus in the $u\to\infty$ limit, $\pi^{\rm max}$ will visit the diagonal only in a $\Or(N^{2/3})$-neighborhood of the origin. This geometry is similar to the full-space problem and consequently one might expect to recover the Baik--Rains distribution. This is the result presented in this section.

To state the theorem, we need to introduce a few functions and a limiting kernel. Let us define\footnote{Here we use notation as in~\cite{FS05a,BFP09} with one exception: for the functions involved, we keep their contour integral representations instead of rewriting them in terms of (integrals of) Airy and exponential functions. The original expression in~\cite{BR00} looks quite different, but as is shown in~\cite[Appendix A]{FS05a}, gives the same distribution.}
\begin{equation}
\begin{aligned}
{\cal R}_{\tau}(s)&=-e^{-\frac23 \tau^3 - s \tau} \int\limits_{\zcd\, {}_{-\tau}} \frac{dz}{2 \pi \I} e^{\frac{z^3}{3} - z (s+\tau^2)} \frac{1}{(z+\tau)^2},\\
\Psi_{\tau}(x)&=\int\limits_{\zcd\, {}_{-\tau}} \frac{dz}{2 \pi \I} e^{\frac{z^3}{3} - z (x+\tau^2)} \frac{1}{z+\tau},\\
\Phi_{\tau}(y)&=e^{-\frac23 \tau^3 -s \tau} \int\limits_{\zcd} \frac{d z}{2\pi\I} \int\limits_{{}_{\tau}\wcu\, {}_{z}} \frac{d w}{2\pi\I} \frac{ e^{\frac{z^3}{3} - z (y+\tau^2)} }{ e^{\frac{w^3}{3} - w (s+\tau^2)} } \frac{1}{(z-w) (w - \tau)}
\end{aligned}
\end{equation}
and the shifted Airy kernel\footnote{The reason for the minus sign in front is that our $z$ contour $\zcd$ is oriented downwards.}
\begin{equation}
\mathpzc{K}_{\ \rm Ai,\tau} (x, y) = -\int\limits_{\zcd} \frac{d z}{2\pi\I} \int\limits_{ \wcu\, {}_z} \frac{d w}{2\pi\I} \frac{ e^{\frac{z^3}{3}- z (x+\tau^2)} }{ e^{\frac{w^3}{3} - w (y+\tau^2)} } \frac{1}{ (z-w) }.
\end{equation}

\begin{thm}\label{thm:LimitToBR}
Let $S=s+\delta(2u+\delta)$ and $u+\delta=\tau$ fixed. Then we have:
\begin{equation}
\lim_{u\to\infty} F^{(\delta, u)}_{0,\,\mathrm{half}} (S) = F_{{\rm BR},\tau}(s)
\end{equation}
where $F_{{\rm BR},\tau}(s)$ is the extended Baik--Rains distribution, defined by
\begin{equation}
F_{{\rm BR},\tau}(s) = \partial_s \left[ F_{\rm GUE}(s+\tau^2) \cdot \left(
{\cal R}_\tau - \braket{ \Psi_\tau } {(\Id-\mathpzc{K}_{\ \rm Ai,\tau})^{-1} \Phi_\tau }
\right) \right]
\end{equation}
with the operators in the scalar product being on $L^2(s,\infty)$ and with $F_{\rm GUE}$ the Tracy--Widom distribution.
\end{thm}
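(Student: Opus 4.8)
The strategy is to take the $u\to\infty$ limit directly inside the finite-rank formula \eqref{eq:final_dist_asymp}, tracking how each ingredient behaves under the substitution $S=s+\delta(2u+\delta)$, $\delta=\tau-u$, with $\tau$ fixed. Since $\delta=\tau-u\to-\infty$, Lemma~\ref{lem:SimplificationBR} applies and we may work with the simpler functions $\tilde{\mathpzc{h}}_k^{\delta,u}$; this removes the $\widetilde{\mathcal{A}}$ kernels and the term $\mathpzc{j}^{\delta,u}$ from the start, which is a considerable simplification. The plan is then: (i) change variables in every contour integral so that the cubic exponents are centered appropriately and the poles at $\pm\delta$ are brought to a fixed location; (ii) identify the leading exponential asymptotics as $u\to\infty$ of each scalar prefactor, each function $\mathpzc{g}_i^{\delta,u}$, $\mathpzc{e}^{\delta,u}$, and of the kernel entries $\overline{\mathcal{A}}_{ij}$; (iii) show that the $2\times2$ Fredholm Pfaffian $\pf(J-\overline{\mathcal{A}})$ degenerates in the limit to the scalar Fredholm determinant $F_{\rm GUE}(s+\tau^2)=\det(\Id-\mathpzc{K}_{\rm Ai,\tau})_{L^2(s,\infty)}$, and that the bracket $\bigl[\mathpzc{e}^{\delta,u}-\braket{\cdots}{(\Id-J^{-1}\overline{\mathcal{A}})^{-1}(\cdots)}\bigr]$ converges to $\bigl({\cal R}_\tau-\braket{\Psi_\tau}{(\Id-\mathpzc{K}_{\rm Ai,\tau})^{-1}\Phi_\tau}\bigr)$; (iv) justify passing the limit through the $\partial_S$ (which becomes $\partial_s$ since $\partial_S=\partial_s$ under the shift).

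The structural mechanism behind (iii) is the expected one for half-space-to-full-space degenerations: the pole at $\omega=\delta\to-\infty$ (resp. $\zeta=\delta$) in $\overline{\mathcal{A}}_{12}$ and $\overline{\mathcal{A}}_{22}$ runs off to infinity, so the factors $\tfrac{\zeta-\delta}{\omega-\delta}$, $\tfrac1{(\zeta+\delta)(\omega-\delta)}$ etc.\ each tend to a constant ($1$, or $0$ after the appropriate rescaling is absorbed into the prefactor), and what survives of the $2\times2$ kernel is only the $(2,2)$-type entry, which collapses to a single copy of $\mathpzc{K}_{\rm Ai,\tau}$; meanwhile $\overline{\mathcal{A}}_{11}\to0$ and $\mathcal{E}\to0$ in the relevant region because of the growing Gaussian-in-$u$ factors being beaten by the cubic terms after recentering. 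The diagonal shift $s\mapsto s+\tau^2$ in the Airy kernel, and the prefactors $e^{-\frac23\tau^3-s\tau}$ in ${\cal R}_\tau,\Phi_\tau$, arise precisely from completing the cube in $e^{\zeta^3/3+\zeta^2u-\zeta X}$ after writing $\zeta=z-\tau$ and $X=x+(\text{shift})$: the $u$-dependence reorganizes into a $\tau$-dependence plus vanishing remainder. One must also check that $\mathpzc{e}^{\delta,u}\to{\cal R}_\tau$, $-\mathpzc{g}_1^{\delta,u}$ pairs with the limit to reproduce $\Psi_\tau$, and that $\mathpzc{g}_2^{\delta,u},\mathpzc{g}_3^{\delta,u},\mathpzc{g}_4^{\delta,u}$ and $\tilde{\mathpzc{h}}_k^{\delta,u}$ combine so that the surviving inner product has the single bra $\Psi_\tau$ and single ket $\Phi_\tau$ — i.e.\ the other three components of the vectors drop out in the limit. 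This bookkeeping, matched against \cite[Appendix A]{FS05a} where the same Baik--Rains object appears in contour-integral form, is the bulk of the work.

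The main obstacle is uniformity: the recentering $\delta=\tau-u$ sends poles to infinity and makes several exponential factors blow up or collapse \emph{pointwise}, so one must produce $u$-uniform decay bounds on the integrands (after deformation of the $\zeta,\omega$ contours to steepest-descent paths that are themselves $u$-dependent) in order to (a) invoke dominated convergence for each contour integral, (b) control the Fredholm Pfaffian/determinant expansions term-by-term and sum them (trace-norm convergence of $\overline{\mathcal{A}}\to$ the limiting block-diagonal kernel on $L^2(s,\infty)\times L^2(s,\infty)$, plus the fact that one block tends to $0$ in trace norm so the $2\times2$ Pfaffian factorizes into the scalar determinant), and (c) legitimately differentiate the convergence in $S$, for which it suffices to prove the convergence is locally uniform in $S$ together with a uniform bound on $\partial_S$ of the bracket. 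The delicate point within (a)–(b) is that $\mathpzc{f}^{-\delta,u}(V)=e^{-\delta^3/3-\delta^2u+\delta V}$ grows like $e^{+u^3/3}$ as $\delta\to-\infty$, and is only integrated against $\overline{\mathcal{A}}_{12},\overline{\mathcal{A}}_{22}$ which decay in a compensating way — so the products $\tilde{\mathpzc h}_k^{\delta,u}$ must be estimated \emph{as wholes}, by first doing the $V$-integral (closing/deforming to expose the cancellation of the $e^{\pm u^3/3}$), exactly the kind of exact cancellation the introduction flags as the subtle feature of this analysis. Once these uniform estimates are in hand, assembling the limit is routine matching of contour-integral formulas.
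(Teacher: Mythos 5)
Your proposal takes essentially the same route as the paper: start from Lemma~\ref{lem:SimplificationBR} (valid since $\delta=\tau-u\to-\infty$), conjugate and change variables so the cubics recentre at $s+\tau^2$ (this is exactly what Lemmas~\ref{lem:A_conj} and~\ref{lem:gh_conj} accomplish), take pointwise limits of the conjugated kernel entries and functions under dominated convergence, and observe that the $2\times2$ Pfaffian collapses to the scalar Fredholm determinant $F_{\rm GUE}(s+\tau^2)$ while the inner product reduces to $\braket{\Psi}{(\Id-\mathpzc{K}_{\rm Ai,\tau})^{-1}\Phi}$ because the $\mathpzc{g}_1^{\delta,u}$ component vanishes. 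Two small corrections to the bookkeeping you sketch: after conjugation it is $\overline{\mathcal{A}}_{11}$ \emph{and} $\overline{\mathcal{A}}_{22}$ that tend to zero, and the surviving entry is the off-diagonal $\overline{\mathcal{A}}_{12}\to\mathpzc{K}_{\rm Ai,\tau}$ (not a ``$(2,2)$-type'' entry), so that $J^{-1}\overline{\mathcal{A}}$ becomes block-diagonal with two copies of $\mathpzc{K}_{\rm Ai,\tau}$; and once the limits are computed the objects appearing are $\Psi_{-\tau},\Phi_{-\tau},\mathcal{R}_{-\tau}$, so the final step requires invoking the $\tau\to-\tau$ symmetry of the Baik--Rains family. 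Neither point changes the argument's shape, and the uniformity concerns you flag (the compensating blow-up/decay of $\mathpzc{f}^{-\delta,u}$ against $\overline{\mathcal{A}}_{12},\overline{\mathcal{A}}_{22}$, handled in the paper by first performing the $V$-integral and only then changing variables, and the trace-norm control needed to pass to the Fredholm limit) are exactly the ones the paper addresses, so the plan is sound.
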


\section{Finite system stationary model: proof of Theorem~\ref{thm:main_thm_fin}} \label{sec:proofFiniteN}

\subsection{The integrable model} \label{sec:int}

In this section we consider the slightly modified LPP model with weights
\begin{equation} \label{eq:int_wts}
  \tilde\omega_{i, j} = \begin{cases}
    \mathrm{Exp}\left( \frac{1}{2} + \alpha \right), & i=j>1,\\
    \mathrm{Exp}\left( \frac{1}{2} + \beta \right), & j=1, i>1, \\
    \mathrm{Exp}\left( \alpha + \beta \right), & i=j=1, \\
    \mathrm{Exp}(1), &\text{otherwise}
  \end{cases}
\end{equation}
where $\alpha \in (-1/2, 1/2), \beta \in (-1/2,1/2)$ are parameters satisfying $\alpha + \beta > 0$, see  Figure~\ref{fig:exp_lpp} for an illustration. We denote by $\Lpf$ the LPP with weights $\tilde\omega$, from $(1,1)$ to the point $(N,N-n)$.
\begin{figure}[t!]
  \centering
   \includegraphics[height=5.5cm]{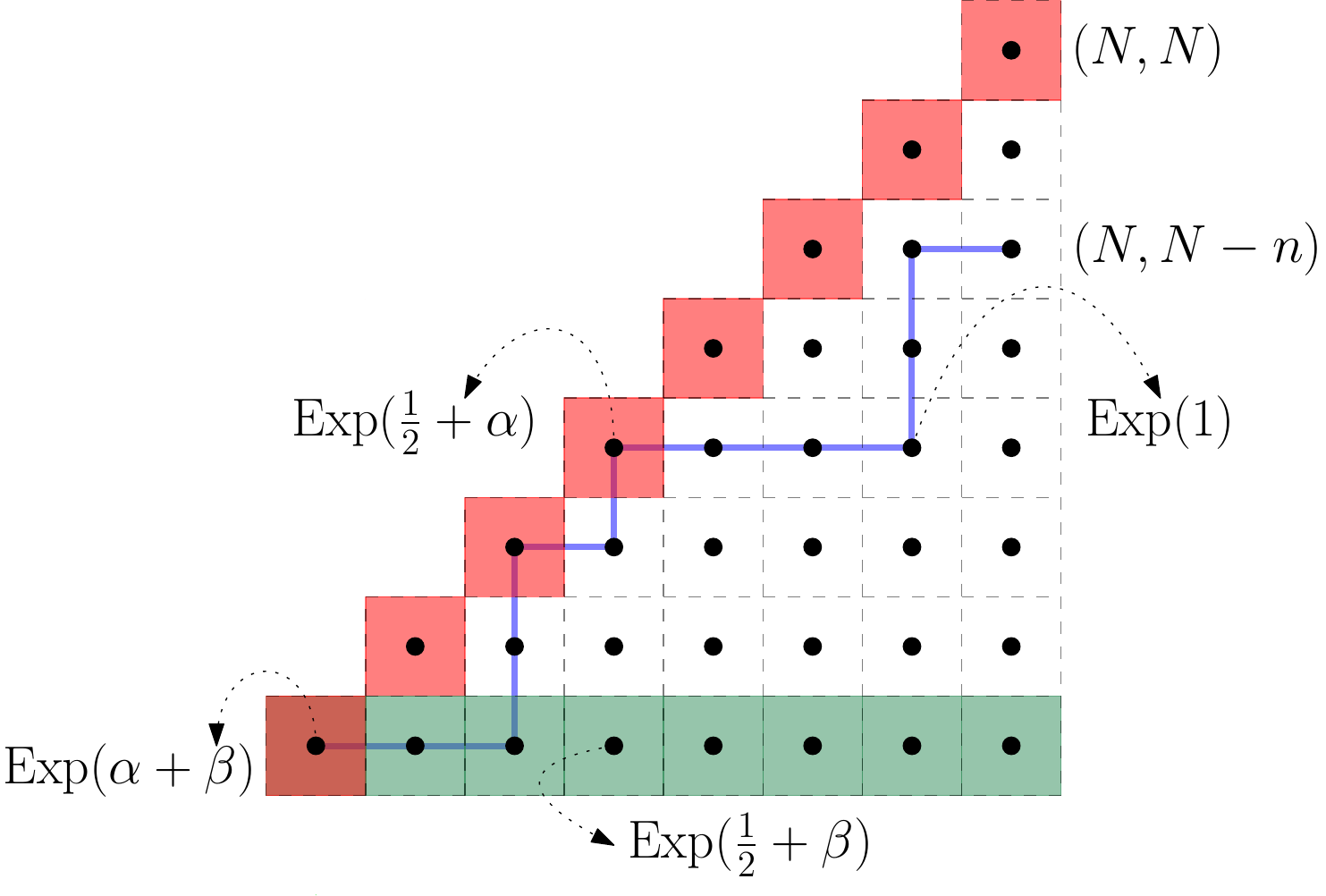}
   \caption{A possible LPP path (polymer) from $(1, 1)$ to $(N, N-n)$ for $(N, n) = (8, 2)$ in the integrable case. The dots are independent exponential random variables: $\mathrm{Exp}(\alpha + \beta)$ at the origin, $\mathrm{Exp}(\tfrac12+\alpha)$ (respectively $\mathrm{Exp}(\tfrac12 + \beta)$) on rest of the diagonal (respectively the bottom line), and $\mathrm{Exp}(1)$ everywhere else in the bulk.}
   \label{fig:exp_lpp}
\end{figure}

For the case of $\beta>0$, it has been shown that the distribution of $\Lpf$ is given by a Fredholm Pfaffian. The next theorem, which is the starting point of our analysis, is obtained as a limit of a geometric version of the model, whose correlation kernel was obtained in the work of Baik--Barraquand--Corwin--Suidan~\cite{BBCS17} and Betea--Bouttier--Nejjar--Vuleti\'c~\cite{BBNV18}. The limit from geometric to exponential in the $\beta=1/2$ case was carried out in~\cite{BBCS17} using steepest descent analysis (Laplace method). This is actually not strictly necessary. We shall see this in the proof of Theorem~\ref{thm:exp_corr}, presented in Appendix~\ref{sec:geom_wts}.
\begin{thm} \label{thm:exp_corr}
Let $\beta\in (0,1/2)$, $\alpha\in (-1/2,1/2)$ with $\alpha+\beta>0$. Then, for $s\in\R_+$,
  \begin{equation}
  \Pb (\Lpf \leq s) = \pf (J - K)_{L^2 (s, \infty) \times L^2 (s, \infty)}
  \end{equation}
where $K=K(x, y)$ is the following $2 \times 2$ matrix kernel:
  \begin{align} \label{eq:kernel}
   K_{11}(x,y) =& - \oint \frac{dz}{2\pi\I} \oint\frac{dw}{2\pi\I}\frac{\Phi(x,z)}{\Phi(y,w)}\left[(\tfrac12-z)(\tfrac12+w)\right]^n\frac{(z+\beta)(w-\beta)}{(z-\beta)(w+\beta)}\frac{(z+\alpha)(w-\alpha)(z+w)}{4zw(z-w)}, \nonumber \\
   K_{12}(x,y) =& - \oint \frac{dz}{2\pi\I} \oint \frac{dw}{2\pi\I}\frac{\Phi(x,z)}{\Phi(y,w)}\left[\frac{\tfrac12-z}{\tfrac12-w}\right]^n\frac{z+\alpha}{w+\alpha}\frac{z+\beta}{z-\beta}\frac{w-\beta}{w+\beta}\frac{z+w}{2z(z-w)} \nonumber \\
               =& -K_{21} (y,x),\\
   K_{22}(x,y) =& \oint \frac{dz}{2\pi\I} \oint \frac{dw}{2\pi\I} \frac{\Phi(x,z)}{\Phi(y,w)} \frac{1}{\left[(\tfrac12+z)(\tfrac12-w)\right]^n}\frac{1}{(z-\alpha)(w+\alpha)} \frac{z+\beta}{z-\beta}\frac{w-\beta}{w+\beta}\frac{z+w}{z-w} \nonumber \\
   & + \tilde \varepsilon(x, y). \nonumber
  \end{align}
The integration contours are the union of the following:
\begin{itemize}
\item for $K_{11}$, $(z,w) \in \Gamma_{1/2}\times\Gamma_{-1/2}$ and $(z,w) \in \Gamma_{1/2}\times\Gamma_{-\beta}$ and $(z,w) \in \Gamma_{\beta}\times\Gamma_{-1/2}$;
\item for $K_{12}$, $(z,w) \in \Gamma_{1/2}\times\Gamma_{-1/2,-\alpha,-\beta}$ and $(z,w) \in \Gamma_\beta\times\Gamma_{-1/2,-\alpha}$;
\item for $K_{22}$, $(z,w) \in \Gamma_{1/2,\alpha,\beta}\times\Gamma_{-1/2}$ and $(z,w) \in \Gamma_{1/2,\beta}\times\Gamma_{-\alpha}$ and $(z,w) \in \Gamma_{1/2,\alpha}\times\Gamma_{-\beta}$.
\end{itemize}
Here we have denoted
\begin{equation} \label{eq:phi}
  \Phi(x, z)= e^{-xz} \phi (z), \quad \textrm{with } \phi(z)= \left[ \frac{ \tfrac12 + z } { \tfrac12-z }\right]^{N-1}
\end{equation}
and $\tilde \varepsilon = \varepsilon_1 + \varepsilon_2$ with
\begin{equation}
 \varepsilon_1(x, y) =  - \sgn(x-y)\oint\limits_{\Gamma_{1/2}} \frac{dz}{2\pi\I} \frac{2z e^{-z |x-y|}}{(z^2-\alpha^2) \left( \frac{1}{4} - z^2 \right)^n}, \quad \varepsilon_2(x,y) =  - \sgn(x-y) \frac{ e^{-\alpha |x-y|} } {\left( \frac{1}{4} - \alpha^2 \right)^n}.
\end{equation}
\end{thm}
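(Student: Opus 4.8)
The plan is to obtain Theorem~\ref{thm:exp_corr} as an exponential degeneration of the already-known \emph{geometric} half-space LPP model, whose correlation kernel (in the Pfaffian-Schur/Pfaffian-process formalism) was computed in~\cite{BBCS17,BBNV18}. Concretely, I would introduce geometric weights $\tilde\omega^{\rm geo}_{i,j}$ on $\mathcal D$ with success parameters $q^{1/2+\alpha}$ on the diagonal bulk, $q^{1/2+\beta}$ on the bottom row, $q^{\alpha+\beta}$ at the corner, and $q$ in the bulk, and then send $q\to1$ after the standard rescaling $\omega_{i,j}=(1-q)\tilde\omega^{\rm geo}_{i,j}$, so that a $\mathrm{Geom}(1-q^a)$ variable converges to $\mathrm{Exp}(a)$. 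The Fredholm Pfaffian formula is stable under this limit, so $\Pb(\Lpf\le s)=\lim_{q\to1}\Pb((1-q)L^{\rm geo}\le s)=\pf(J-K)$, provided one can (i) identify the $q\to1$ limit of the geometric kernel entrywise, and (ii) justify the interchange of the limit with the Fredholm Pfaffian expansion.

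For step (i) I would start from the double-contour-integral form of the geometric kernel and perform the substitutions $z\mapsto$ (scaled spectral variable near $1$), matching the $q$-Pochhammer products to the rational and exponential factors appearing in~\eqref{eq:kernel}: the ``bulk'' factors produce $\Phi(x,z)=e^{-xz}\left[\tfrac{1/2+z}{1/2-z}\right]^{N-1}$ and the $[(\tfrac12\mp z)]^{\pm n}$ powers, while the boundary parameters $q^{1/2+\alpha}$, $q^{1/2+\beta}$, $q^{\alpha+\beta}$ produce exactly the rational prefactors $\tfrac{z+\alpha}{\,\cdot\,}$, $\tfrac{z+\beta}{z-\beta}$, etc., together with the $\varepsilon_1+\varepsilon_2$ correction term coming from the antisymmetric ``$\epsilon$'' part of the Pfaffian kernel. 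The contours $\Gamma_{1/2}$, $\Gamma_{-1/2}$ with the extra marked poles at $\pm\alpha,\pm\beta$ arise from tracking which poles the geometric contours enclose as $q\to1$; one must be careful that $\beta\in(0,1/2)$ and $\alpha+\beta>0$ keep the relevant poles on the correct side so no residues are lost or spuriously added in the limit. I would state this as the key computational lemma and relegate the (routine but lengthy) algebra to Appendix~\ref{sec:geom_wts}, as the text already announces.

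For step (ii), the interchange of limit and Fredholm Pfaffian, I would prove a uniform trace-norm (or at least dominated-convergence) bound on the geometric kernels: the Gaussian-type decay of $e^{-xz}$ along the steep-descent contours gives integrable tails uniformly in $q$ near $1$, and the finite-rank structure of the extra terms poses no problem; then $\pf(J-K^{\rm geo}_q)\to\pf(J-K)$ follows from continuity of the Fredholm Pfaffian in Hilbert--Schmidt (resp.\ trace) norm. The text hints that steepest-descent analysis is ``not strictly necessary'' here — indeed, because we are only taking $q\to1$ at \emph{fixed} $N,n$ (not a large-$N$ limit), one can avoid Laplace asymptotics entirely and argue by direct estimates on the explicit integrands, which is the route I would take.

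The main obstacle I anticipate is \textbf{bookkeeping of the contour/pole structure} in step (i): ensuring that the marked-pole decorations on $\Gamma_{\pm1/2}$ in each of $K_{11},K_{12},K_{22}$ are precisely those forced by the $q\to1$ degeneration, and that the split $\tilde\varepsilon=\varepsilon_1+\varepsilon_2$ emerges correctly (including the sign and the $\sgn(x-y)$ factor) from the antisymmetric block of the Pfaffian correlation kernel. A secondary technical point is checking that the constraint $\alpha+\beta>0$ (rather than just $\beta>0$) is exactly what guarantees all integrals converge and all contours can be chosen consistently — this is the hypothesis under which the degeneration is clean, and I would verify it does the job at each step.
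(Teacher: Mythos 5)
Your proposal is correct and follows essentially the same route as the paper's Appendix~\ref{sec:geom_wts}: specialize the known geometric half-space Pfaffian kernel to inhomogeneous parameters (the paper writes $a=1-\epsilon\alpha$, $b=1-\epsilon\beta$, $q=1-\epsilon$, which is asymptotically equivalent to your $q^{1/2+\alpha}$-style parametrization), reformulate the contour integrals so the relevant poles are explicit, then send $\epsilon\to0$ at fixed $N,n$ with entrywise convergence plus a Hadamard/dominated-convergence bound on the Fredholm expansion, deliberately avoiding steepest descent. The contour/pole bookkeeping you flag as the main obstacle is indeed where the paper's work concentrates, including the step where exchanging $z$ and $w$ contours in $K_{22}$ picks up the residue at $w=z$ that produces the $\tilde\varepsilon=\varepsilon_1+\varepsilon_2$ correction.
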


\begin{rem}
  We remark the following trivial but useful identities
  \begin{equation}\label{eq:phi_symm}
   \phi(-z)=\phi(z)^{-1},\quad \Phi(x, -z)=\Phi(x, z)^{-1}
  \end{equation}
  which we shall use throughout many times without explicit reference. Further, note that for $n=0$, $\tilde \varepsilon$ simplifies to $\tilde \varepsilon(x, y) = -\sgn(x-y) e^{-\alpha |x-y|}$ since the pole at $z=1/2$ vanishes.
\end{rem}

\subsection{From integrable to stationary} \label{sec:int_to_stat}

\subsubsection{Shift argument} \label{sec:shift}

To recover the desired distribution, we need to remove $\tilde \omega_{1,1}$ and then take the $\beta \to -\alpha$ limit. The former is achieved by a standard shift argument, used already in the full-space stationary LPP problem~\cite{BR00, FS05a, BFP09, SI04}\footnote{Baik--Rains~\cite{BR00} treat the Poisson case instead of the exponential one but the shift argument is similar.}. We present the short proof for completeness.

We recall that $\Lpf$ denotes the LPP time for the random variables $\tilde{\omega}_{i,j}$ of~\eqref{eq:int_wts}. Denote by $\widetilde L_{N,N-n} = \Lpf - \tilde\omega_{1,1}$ and recall that $L_{N,N-n}$ is the $\beta\to-\alpha$ limit of $\widetilde L_{N,N-n}$. The shift argument is captured by the following lemma.
\begin{lem} \label{lem:shift}
Let $\alpha,\beta\in(-1/2,1/2)$ with $\alpha+\beta>0$. Then
\begin{equation}\label{eq:shift3}
 \left(1+\frac{1}{\alpha+\beta}\partial_s\right)\Pb(\Lpf\leq s)=\Pb(\widetilde L_{N,N-n}\leq s).
\end{equation}
\end{lem}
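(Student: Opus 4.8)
The statement to prove is the shift identity \eqref{eq:shift3}, which relates the distribution of $\Lpf$ (with a nonzero weight $\tilde\omega_{1,1}\sim\mathrm{Exp}(\alpha+\beta)$ at the origin) to that of $\widetilde L_{N,N-n}=\Lpf-\tilde\omega_{1,1}$. The key structural observation is that every up-right path from $(1,1)$ to $(N,N-n)$ passes through the origin $(1,1)$, so the weight $\tilde\omega_{1,1}$ is added to every path. Hence $\Lpf = \widetilde L_{N,N-n} + \tilde\omega_{1,1}$, where $\tilde\omega_{1,1}$ is independent of all the other weights and therefore independent of $\widetilde L_{N,N-n}$, which is a function only of $\{\tilde\omega_{i,j}: (i,j)\neq(1,1)\}$.

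\textbf{Key steps.} First I would record the decomposition $\Lpf = \widetilde L_{N,N-n} + Z$ with $Z\sim\mathrm{Exp}(\alpha+\beta)$ independent of $\widetilde L_{N,N-n}$. Next, condition on $Z$ and write, for the density $\rho(z)=(\alpha+\beta)e^{-(\alpha+\beta)z}\Id_{z\geq 0}$,
\begin{equation}
\Pb(\Lpf\leq s)=\int_0^\infty (\alpha+\beta)e^{-(\alpha+\beta)z}\,\Pb(\widetilde L_{N,N-n}\leq s-z)\,dz.
\end{equation}
Then substitute $t=s-z$ to get $\Pb(\Lpf\leq s)=\int_{-\infty}^{s}(\alpha+\beta)e^{-(\alpha+\beta)(s-t)}\Pb(\widetilde L_{N,N-n}\leq t)\,dt$, i.e. $\Pb(\Lpf\leq s) = (\alpha+\beta)e^{-(\alpha+\beta)s}\int_{-\infty}^s e^{(\alpha+\beta)t}\,G(t)\,dt$ writing $G(t):=\Pb(\widetilde L_{N,N-n}\leq t)$. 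Finally, apply the operator $1+\tfrac{1}{\alpha+\beta}\partial_s$: the term $\partial_s$ hitting the prefactor $e^{-(\alpha+\beta)s}$ produces $-(\alpha+\beta)$ times the original expression, cancelling the ``$1+$'' part, while $\partial_s$ hitting the integral upper limit produces $(\alpha+\beta)e^{-(\alpha+\beta)s}\cdot e^{(\alpha+\beta)s}G(s)=(\alpha+\beta)G(s)$, and dividing by $\alpha+\beta$ yields exactly $G(s)=\Pb(\widetilde L_{N,N-n}\leq s)$. This is the claimed identity.

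\textbf{Main obstacle.} This argument is genuinely routine — the only points requiring a word of care are (i) checking that $Z=\tilde\omega_{1,1}$ really is independent of $\widetilde L_{N,N-n}$, which follows because $\widetilde L_{N,N-n}$ is obtained by a last-passage maximization over the remaining (independent) weights only, and because every path through $(1,1)$ picks up exactly one copy of $\tilde\omega_{1,1}$; and (ii) a mild regularity check justifying differentiation under the integral sign, which is immediate here since $G$ is a bounded nondecreasing function and the integrand decays exponentially. There is no serious technical difficulty; the lemma is stated and proved ``for completeness'' precisely because it is the standard shift trick of~\cite{BR00,FS05a,BFP09}. The substantive work in the paper lies in the subsequent $\beta\to-\alpha$ analytic continuation, not here.
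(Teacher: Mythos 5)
Your proof is correct and follows essentially the same approach as the paper: both start from the decomposition $\Lpf = \widetilde L_{N,N-n} + \tilde\omega_{1,1}$ with $\tilde\omega_{1,1}\sim\mathrm{Exp}(\alpha+\beta)$ independent, and then invert the resulting exponential convolution. The only difference is cosmetic — you differentiate the convolution formula directly (via the change of variables $t=s-z$ and the fundamental theorem of calculus), whereas the paper takes a Laplace transform in $s$ and integrates by parts to convert the resulting multiplication by $t$ into $\partial_s$, using that $\Pb(\Lpf\leq 0)=0$; both are routine and land at the same identity.
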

\begin{proof}
Due to the independence of $\widetilde L_{N,N-n}$ and $\tilde\omega_{1,1}$, we have
\begin{equation}
\Pb(\Lpf\leq s)=\Pb(\tilde\omega_{1,1}+\widetilde L_{N,N-n}\leq s)=(\alpha+\beta)\int_0^\infty d\lambda e^{-\lambda(\alpha+\beta)}\Pb(\widetilde L_{N,N-n}\leq s-\lambda).
\end{equation}
Performing the Laplace transform and the change of variables $s-\lambda=u$, we obtain
\begin{equation}
\begin{aligned}\label{eq:shift1}
 \int_0^\infty ds e^{-ts}\Pb(\Lpf\leq s) &= \int_0^\infty du\int_0^\infty d\lambda (\alpha+\beta)e^{-ts}e^{-\lambda(\alpha+\beta)}\Pb(\widetilde L_{N,N-n}\leq s-\lambda)\\
 &= (\alpha+\beta)\int_0^\infty d\lambda e^{-\lambda(\alpha+\beta+t)} \int_0^\infty du e^{-tu}\Pb(L\leq u).
\end{aligned}
\end{equation}
Computing the first integral on the right-hand side of \eqref{eq:shift1}, and then integrating by parts, we obtain
\begin{equation}
\begin{aligned}\label{eq:shift2}
 \int_0^\infty ds e^{-ts}\Pb(\widetilde L_{N,N-n} \leq s) =& \left(1+\frac{t}{\alpha+\beta}\right)\int_0^\infty ds e^{-ts}\Pb(\Lpf\leq s)\\
   =& \int_0^\infty ds e^{-ts}\Pb(\Lpf\leq s)+ \frac{1}{\alpha+\beta} e^{-ts}\Pb(\Lpf\leq s)\big|_0^\infty \\
  &+\frac{1}{\alpha+\beta} \int_0^\infty ds e^{-ts}\frac{d}{ds}\Pb(\Lpf\leq s)
\end{aligned}
\end{equation}
which gives \eqref{eq:shift3} since the second term is $0$.
\end{proof}

\subsubsection{Kernel decomposition} \label{sec:kernel_decomp}

From Lemma~\ref{lem:shift}, we need to find a decomposition of $\frac{1}{\alpha+\beta} \Pb(\Lpf\leq s)$ which has a well-defined limit as $\alpha+\beta\to 0$. For that purpose, we first decompose the kernel by separating the contributions of the different poles in a way that will be convenient for future computations. The result will be given in terms of the following functions:
\begin{alignat}{2} \label{eq:fg_def}
 f_{+}^{\beta}(x)& = \Phi(x, \beta)\left( \tfrac12-\beta \right)^n,
 & f_{-}^{\beta}(x)&= \frac{\Phi(x, \beta)}{\left( \tfrac12+\beta \right)^n}, \nonumber \\
 g_1(x)& = \oint\limits_{\Gamma_{1/2}} \frac{dz}{2\pi\I} \Phi (x,z)\left( \tfrac12-z \right)^n\frac{z+\alpha}{2z},
  & g_2(x)& = \!\!\! \oint\limits_{\Gamma_{1/2,\alpha}} \!\!\!\frac{dz}{2\pi\I} \frac{\Phi(x,z)}{\left( \tfrac12+z \right)^n}\frac{1}{z-\alpha}, \\
g_3(x)&= \oint\limits_{\Gamma_{1/2}} \frac{dz}{2\pi\I} \Phi (x,z)\left( \tfrac12-z \right)^n\frac{1}{z-\beta},
 & g_4(x)&= \!\!\!\!\!\! \oint\limits_{\Gamma_{1/2,\pm \alpha,\beta}} \!\!\!\!\!\! \frac{dz}{2\pi\I} \frac{\Phi(x,z)}{\left( \tfrac12+z \right)^n}\frac{2z}{(z-\alpha)(z+\alpha)(z-\beta)}, \nonumber \\
 g_5(x)& = \oint\limits_{\Gamma_{1/2}} \frac{dz}{2\pi\I} \Phi(x,z)\left( \tfrac12-z \right)^n \frac{(z-\alpha)(z+\beta)}{2z(z-\beta)},
& &  g_6(x) = \oint\limits_{\Gamma_{1/2}} \frac{dz}{2\pi\I} \frac{\Phi(x,z)}{\left( \tfrac12+z \right)^n} \frac{z+\beta}{(z+\alpha) (z-\beta)}. \nonumber
\end{alignat}

With these notations we can now write the kernel decomposition used later.
\begin{prop} \label{prop:decomposition} Let $\alpha \in (-1/2,1/2)$, $\beta\in (0,1/2)$ with $\alpha+\beta>0$ and $\alpha\neq\beta$. Then the kernel $K$ splits as
 \begin{equation}
  K = \overline{K} + (\alpha+\beta) R
 \end{equation}
 where
 \begin{equation} \label{eq:h_bar_def}
  \begin{split}
   \overline{K}_{11}(x,y) =& - \!\!\! \oint\limits_{\Gamma_{1/2}} \!\!\! \frac{dz}{2\pi\I} \!\!\! \oint\limits_{\Gamma_{-1/2}} \!\!\! \frac{dw}{2\pi\I}\frac{\Phi(x,z)}{\Phi(y,w)}\left[(\tfrac12-z)(\tfrac12+w)\right]^n\frac{(z+\beta)(w-\beta)}{(z-\beta)(w+\beta)}\frac{(z+\alpha)(w-\alpha)(z+w)}{4zw(z-w)}, \\
   \overline{K}_{12}(x,y) =& - \!\!\! \oint\limits_{\Gamma_{1/2}} \!\!\! \frac{dz}{2\pi\I} \!\!\! \oint\limits_{\Gamma_{-1/2,-\alpha,-\beta}} \!\!\!\!\!\!\!\!\!\! \frac{dw}{2\pi\I}\frac{\Phi(x,z)}{\Phi(y,w)} \left[ \frac{\tfrac12-z}{\tfrac12-w}\right]^n\frac{z+\alpha}{w+\alpha}\frac{z+\beta}{z-\beta}\frac{w-\beta}{w+\beta}\frac{z+w}{2z(z-w)} \\
               =& -\overline{K}_{21} (y,x),\\
   \overline{K}_{22}(x,y) =&\ \tilde\e(x,y)+\oint \frac{dz}{2\pi\I}\oint\frac{dw}{2\pi\I} \frac{\Phi(x,z)}{\Phi(y,w)} \frac{1}{\left[(\tfrac12+z)(\tfrac12-w)\right]^n}\frac{1}{(z-\alpha)(w+\alpha)} \frac{z+\beta}{z-\beta}\frac{w-\beta}{w+\beta}\frac{z+w}{z-w}
  \end{split}
 \end{equation}
and where the integration contours for $\overline{K}_{22}$ are, for $(z,w)$, the union of $\Gamma_{1/2,\alpha,\beta}\times\Gamma_{-1/2}$, $\Gamma_{1/2,\beta}\times \Gamma_{-\alpha}$, and $\Gamma_{1/2,\alpha}\times\Gamma_{-\beta}$.

The operator $R$ is of rank two and given by
 \begin{equation}
  R=\begin{pmatrix}
        \ketbra{g_1}{f_{+}^{\beta}} - \ketbra{f_{+}^{\beta}}{g_1}  & \ketbra{f_{+}^{\beta}} {g_2} \\
       - \ketbra{g_2}{f_{+}^{\beta}}  & 0
 \end{pmatrix}.
 \end{equation}
\end{prop}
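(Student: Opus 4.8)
The plan is a residue bookkeeping, analogous to (but more involved than) the corresponding step for the full-space stationary model. The starting remark is that, entry by entry, the integrand defining $\overline K_{ij}$ is \emph{identical} to the one defining $K_{ij}$ in Theorem~\ref{thm:exp_corr}; only the integration contours differ. For the $(2,2)$ entry the two contour prescriptions coincide verbatim, so $K_{22}=\overline K_{22}$ and the $(2,2)$ block of $R$ is zero, as stated. For the $(1,1)$ entry the contours of $K$ are those of $\overline K$ together with the two extra pieces $\Gamma_{1/2}\times\Gamma_{-\beta}$ and $\Gamma_\beta\times\Gamma_{-1/2}$; for the $(1,2)$ entry, those of $\overline K$ together with the extra piece $\Gamma_\beta\times\Gamma_{-1/2,-\alpha}$; the $(2,1)$ entry then follows for free from $K_{21}(x,y)=-K_{12}(y,x)$ and $\overline K_{21}(x,y)=-\overline K_{12}(y,x)$. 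So $K-\overline K$ reduces to the integrals of the $K_{11}$- and $K_{12}$-integrands over these three extra contours, and it remains to compute them.

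On each extra contour the integrand has exactly one enclosed pole — in $z$ at $z=\beta$ (from $1/(z-\beta)$) or in $w$ at $w=-\beta$ (from $1/(w+\beta)$) — and the other variable stays bounded away from the diagonal $z=w$, so no residue at $z=w$ is picked up. Computing $\mathrm{Res}_{z=\beta}$ and $\mathrm{Res}_{w=-\beta}$, the decisive algebraic fact is that the surviving $\beta$-dependent rational factor collapses, $\tfrac{(w-\beta)(\beta+w)}{(w+\beta)(\beta-w)}=-1$ (and symmetrically in $z$), while the factor $z+\alpha$ (resp. $w-\alpha$) at the pole equals $+(\alpha+\beta)$ (resp. $-(\alpha+\beta)$); the residue thus gains the overall prefactor $\alpha+\beta$ and factorizes into a function of $x$ times a function of $y$. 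In the remaining single integral one then substitutes $w\mapsto -z$, using $\Phi(x,-z)=\Phi(x,z)^{-1}$ and being careful that this reflection reverses the orientation of the contour (hence contributes a minus sign); the factors are thereby recognized as $f_{+}^{\beta}$ paired with $g_1$ (from the two $K_{11}$ pieces) and with $g_2$ (from the $K_{12}$ piece), the leftover numerical factor $\tfrac12$ in the $K_{11}$ residues being exactly what is needed to reconstruct the $\tfrac{z+\alpha}{2z}$ in the definition of $g_1$.

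Collecting everything yields $K_{11}=\overline K_{11}+(\alpha+\beta)\bigl(\ketbra{g_1}{f_{+}^{\beta}}-\ketbra{f_{+}^{\beta}}{g_1}\bigr)$, $K_{12}=\overline K_{12}+(\alpha+\beta)\ketbra{f_{+}^{\beta}}{g_2}$, $K_{21}=\overline K_{21}-(\alpha+\beta)\ketbra{g_2}{f_{+}^{\beta}}$ and $K_{22}=\overline K_{22}$, i.e. $K=\overline K+(\alpha+\beta)R$ with $R$ as claimed. For the rank statement, note that every entry of $R$ is a linear combination of outer products built from the three functions $f_{+}^{\beta}$, $g_1$, $g_2$, so that $R$ maps $L^2(s,\infty)\oplus L^2(s,\infty)$ into the span of $\bigl(\begin{smallmatrix}g_1\\-g_2\end{smallmatrix}\bigr)$ and $\bigl(\begin{smallmatrix}f_{+}^{\beta}\\0\end{smallmatrix}\bigr)$, hence $R$ has rank two. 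The hypotheses $\beta\in(0,1/2)$ and $\alpha\neq\beta$ enter only to guarantee that $1/2$, $\alpha$, $\beta$ are pairwise distinct so that all the small contours above (and those appearing in $\overline K$) can be chosen disjoint. The one real pitfall is the sign bookkeeping — especially the orientation reversal under $w\mapsto -z$ and the alternating signs from the residue simplifications — which must be tracked consistently across all entries; the fact that $K_{11}$ and $\overline K_{11}$ are antisymmetric forces the $(1,1)$ contribution into the antisymmetric combination $\ketbra{g_1}{f_{+}^{\beta}}-\ketbra{f_{+}^{\beta}}{g_1}$, providing a useful check.
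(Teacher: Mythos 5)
Your proposal is correct and follows essentially the same route as the paper: identify the extra contour pieces $\Gamma_{1/2}\times\Gamma_{-\beta}$, $\Gamma_\beta\times\Gamma_{-1/2}$ (for $K_{11}$) and $\Gamma_\beta\times\Gamma_{-1/2,-\alpha}$ (for $K_{12}$), compute the residues at $z=\beta$ (resp.\ $w=-\beta$), observe the factor $z+\alpha$ (resp.\ $w-\alpha$) produces $\pm(\alpha+\beta)$, and recognize the remaining single integrals as $g_1$, $g_2$ via $w\mapsto-z$. One small imprecision in your justification: the substitution $w\mapsto -z$ is multiplication by $e^{\I\pi}$, a holomorphic rotation, so it \emph{preserves} orientation---a counterclockwise circle around $-1/2$ maps to a counterclockwise circle around $1/2$. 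The minus sign you need (which is what produces the antisymmetric combination $\ketbra{g_1}{f_+^\beta}-\ketbra{f_+^\beta}{g_1}$) comes from the Jacobian $dw=-dz$, not from an orientation flip. The final accounting is unaffected.
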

\begin{proof}
The proof amounts to residue computations. In $K_{11}$ the terms coming into $R$ are the residue at $(z=\beta,w=-1/2)$ and at $(z=1/2,w=-\beta)$. For $K_{12}$, the terms in $R$ are the residues from $(z=\beta,w=-1/2)$ and $(z=\beta,w=-\alpha)$.
\end{proof}

\begin{rem}
The condition $\alpha \neq \beta$ is not restrictive since in the desired limit we deal with $\beta \to -\alpha$. Even for $\alpha=0$, at this stage, we need to have $\alpha+\beta=\beta>0$. Of course, the kernel is well-defined also for $\alpha=\beta$. It is enough to take the contours for $(z,w)$ as $\Gamma_{1/2,\alpha=\beta}\times\Gamma_{-1/2,-\alpha=-\beta}$, which can be taken non-intersecting as $\beta>0$.
\end{rem}

\subsubsection{Decomposition of the rank-two perturbation $R$} \label{sec:R_decomp}

Since we are going to work with Fredholm determinants instead of Pfaffians for a while, we define
\begin{equation}
 G = J^{-1} K, \quad \overline{G} = J^{-1} \overline{K}, \quad T = J^{-1} R
\end{equation}
where $J(x,y)=\delta_{x,y} \left( \begin{smallmatrix} 0 & 1 \\ -1 & 0 \end{smallmatrix} \right)$ so that, for instance, the matrix kernel $G$ is given by
\begin{equation}
 G (x,y)=\begin{pmatrix}
             -K_{21}(x,y) & -K_{22}(x,y) \\ K_{11}(x,y) & K_{12}(x,y)
         \end{pmatrix}
\end{equation}
while $T$ is given by
\begin{equation}
 T = \ketbra{X_1}{Y_1} + \ketbra{X_2}{Y_2}
\end{equation}
with
\begin{alignat}{2} \label{eq:S_fact}
  X_1 &= \ket{ \begin{array}{c} g_2 \\ g_1 \end{array} }, \quad & X_2 &= \ket{ \begin{array}{c} 0 \\ f_{+}^\beta \end{array} }, \\
  Y_1 &= \bra{f_{+}^\beta \quad 0}, \quad & Y_2 &= \bra{-g_1 \quad g_2}.
\end{alignat}
Since all Fredholm determinants/Pfaffians as well as (scalar) products will be on $L^2(s, \infty) \times L^2(s, \infty)$, we consider all operators to be defined on this space and omit it from the notation for brevity. Recall also that $\pf (J-K) = \sqrt{\det(\Id-G)}$.

From the shift argument Lemma~\ref{lem:shift}, we need to determine the $\alpha+\beta\to 0$ limit of
\begin{equation}
\frac{1}{\alpha+\beta}\pf(J-K)=\frac{1}{(\alpha+\beta)}\sqrt{\det(\Id-G)}
\end{equation}
where
\begin{equation}
 \det(\Id - G) = \det(\Id - \overline{G}) \cdot \det (\Id - (\alpha+\beta) (\Id-\overline{G})^{-1} T).
\end{equation}
Setting
\begin{equation}
Z_i= (\Id - \overline{G})^{-1} X_i, \quad i=1,2,
\end{equation}
we get
\begin{equation}\label{eq:F_det}
\begin{aligned}
(\Id - (\alpha+\beta) (\Id-\overline{G})^{-1} T)  &= \det\left(\Id-(\alpha+\beta)
                \begin{pmatrix}
                    \braket{Y_1}{Z_1} & \braket{Y_2}{Z_1}\\
                    \braket{Y_1}{Z_2} & \braket{Y_2}{Z_2}
                \end{pmatrix}
                \right) \\
    &= \det(\Id-\overline{G}) (1-(\alpha+\beta)\braket{Y_2}{Z_2})^2.
\end{aligned}
\end{equation}
In~\eqref{eq:F_det}, the first equality is just a rewriting and holds in any inner product space and for any vectors $Y_i, Z_i, i=1,2$, see for instance~\cite{TW96} for more details, while for the second we used the equalities
\begin{equation}
  \braket{Y_1}{Z_2} = \braket{Y_2}{Z_1} = 0, \quad \braket{Y_1}{Z_1} = \braket{Y_2}{Z_2}
\end{equation}
proven in Appendix~\ref{sec:det_comp}.

Summarizing, we need to determine the $\beta\to -\alpha$ limit of
\begin{equation} \label{eq:lim}
\begin{aligned}
\frac{1}{\alpha+\beta}\pf(J-K) &= \pf(J-\overline{K}) \left(\frac{1}{\alpha+\beta}- \braket{Y_2}{Z_2}\right)\\
&=\pf(J-\overline{K}) \left(\frac{1}{\alpha+\beta}- \braket{Y_2}{X_2} - \braket{Y_2}{(\Id-\overline G)^{-1}\overline G X_2}\right).
\end{aligned}
\end{equation}

\subsection{Analytic continuation} \label{sec:analytic_continuation}

Recall that we started with our kernels defined for $\beta>0$ only. Now we have to deal with the analyticity of the right-hand side of~\eqref{eq:lim} and determine the desired limit.

Throughout, when we say that a function is analytic in $\alpha,\beta\in (-1/2,1/2)$, we mean that for any $0 < \epsilon \ll 1$, the function is analytic in $\alpha, \beta \in [-1/2+\epsilon,1/2-\epsilon]$.

\begin{rem} \label{rem:mathsf}
  Hereinafter, as anticipated in the introduction, we will denote, in up-right sans-serif $\mathsf{font}$, the limits as $\beta \to - \alpha$ of the various kernels and functions we use and which depend explicitly on $\beta$. The ones that are independent of $\beta$ we leave unchanged. Thus by definition:
  \begin{equation}
    (\mathsf{g}_3, \mathsf{g}_4, \mathsf{g}_5, \mathsf{g}_6) = \lim_{\beta \to -\alpha} (g_3, g_4, g_5, g_6), \quad (\overline{\mathsf{K}}, \widetilde{\mathsf{K}}, \widehat{\mathsf{G}}) = \lim_{\beta \to -\alpha} (\overline{K}, \widetilde{K}, \widehat{G})
  \end{equation}
  where the $g$'s are defined in~\eqref{eq:fg_def}, $\overline{K}$ in~\eqref{eq:h_bar_def}, and $\widetilde{K}, \widehat{G}$ will be defined below.
\end{rem}

\subsubsection{Analyticity of the Fredholm Pfaffian} \label{sec:analyticity_Fredholm}

\begin{lem} \label{lem:Kbar}
  The kernel $\overline{K}$ is analytic for $\alpha, \beta \in (-1/2, 1/2)$. The limit kernel $\overline{\mathsf{K}}=\lim_{\beta\to-\alpha} \overline{K}$
   has the following entries:
    \begin{equation}\label{eq:336}
  \begin{split}
   \overline{\mathsf{K}}_{11}(x,y) =& - \oint\limits_{\Gamma_{1/2}} \frac{dz}{2\pi\I} \oint\limits_{\Gamma_{-1/2}} \frac{dw}{2\pi\I}\frac{\Phi(x,z)}{\Phi(y,w)}\left[(\tfrac12-z)(\tfrac12+w)\right]^n\frac{(z-\alpha)(w+\alpha)(z+w)}{4zw(z-w)}, \\
   \overline{\mathsf{K}}_{12}(x,y) =& - \!\!\! \oint\limits_{\Gamma_{1/2}}  \frac{dz}{2\pi\I} \!\!\! \oint\limits_{\Gamma_{-1/2,\alpha}} \!\!\! \frac{dw}{2\pi\I}\frac{\Phi(x,z)}{\Phi(y,w)}\left[\frac{\tfrac12-z}{\tfrac12-w}\right]^n\frac{z-\alpha}{w-\alpha}\frac{z+w}{2z(z-w)} \\
               =& -\overline{\mathsf{K}}_{21} (y,x),\\
   \overline{\mathsf{K}}_{22}(x,y) =&\ \e(x,y)+\oint \frac{dz}{2\pi\I}\oint\frac{dw}{2\pi\I} \frac{\Phi(x,z)}{\Phi(y,w)} \frac{1}{\left[(\tfrac12+z)(\tfrac12-w)\right]^n}\frac{1}{z-w}\left(\frac{1}{z+\alpha}+\frac{1}{w-\alpha}\right)
  \end{split}
 \end{equation}
 where the integration contours for $\overline{\mathsf{K}}_{22}$ are $\Gamma_{1/2,-\alpha}\times\Gamma_{-1/2}$ for the term with $1/(z+\alpha)$ and $\Gamma_{1/2}\times\Gamma_{-1/2,\alpha}$ for the term with $1/(w-\alpha)$.
 \end{lem}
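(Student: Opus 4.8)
The plan is to prove Lemma~\ref{lem:Kbar} in two parts: first establishing analyticity of $\overline{K}$ in $\alpha,\beta\in(-1/2,1/2)$, and then identifying the explicit form of the limit $\overline{\mathsf{K}}=\lim_{\beta\to-\alpha}\overline{K}$.

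For the analyticity claim, the key observation is that in the expression~\eqref{eq:h_bar_def} for $\overline{K}$ the apparently dangerous poles have been separated off into the rank-two part $R$ in Proposition~\ref{prop:decomposition}. Concretely, I would inspect each entry: in $\overline{K}_{11}$ the integrand has poles in $z$ at $z=\beta$ and in $w$ at $w=-\beta$, but the contours $\Gamma_{1/2}\times\Gamma_{-1/2}$ do \emph{not} enclose these points (for $\beta\in(-1/2,1/2)$), so the integral depends analytically on $\beta$ (and trivially on $\alpha$) by differentiation under the integral sign — the integrand is jointly analytic in $(z,w,\alpha,\beta)$ on a neighborhood of the compact contours. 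The only subtlety is the factor $1/(z-w)$, but the contours $\Gamma_{1/2}$ and $\Gamma_{-1/2}$ are disjoint and can be kept so uniformly. The same reasoning applies to $\overline{K}_{12}$: the $w$-contour $\Gamma_{-1/2,-\alpha,-\beta}$ is chosen to enclose $-\alpha$ and $-\beta$, so as $\beta\to-\alpha$ the poles at $w=-\alpha$ and $w=-\beta$ merge \emph{inside} the contour, and by the residue theorem the integral is continuous (indeed analytic) through the coalescence — a double pole inside a contour is no worse than two simple poles. For $\overline{K}_{22}$ one argues analogously with the three listed contour pairs, noting that $\tilde\varepsilon=\varepsilon_1+\varepsilon_2$ is manifestly analytic in $\alpha$ (the factor $1/(z^2-\alpha^2)$ in $\varepsilon_1$ has poles at $z=\pm\alpha$, but the contour is $\Gamma_{1/2}$ which does not enclose them for $\alpha\in(-1/2,1/2)$, and $\varepsilon_2$ depends analytically on $\alpha$ directly).

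For the identification of $\overline{\mathsf{K}}$, the strategy is to simply substitute $\beta=-\alpha$ into each entry of~\eqref{eq:h_bar_def} and simplify. In $\overline{K}_{11}$, the prefactor $\frac{(z+\beta)(w-\beta)}{(z-\beta)(w+\beta)}$ becomes $\frac{(z-\alpha)(w+\alpha)}{(z+\alpha)(w-\alpha)}$, which combines with the existing $(z+\alpha)(w-\alpha)$ to give $(z-\alpha)(w+\alpha)$ — with the $z=\pm\alpha$ and $w=\pm\alpha$ factors canceling — matching the claimed $\overline{\mathsf{K}}_{11}$; since no poles are crossed in this substitution, no residue corrections appear. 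For $\overline{K}_{12}$, after setting $\beta=-\alpha$ the prefactor collapses to $\frac{z-\alpha}{w-\alpha}\cdot\frac{w+\alpha}{w-\alpha}\cdot\frac{1}{z+\alpha}\cdot(z+\alpha)\cdots$ — here one must be careful: the $w$-contour $\Gamma_{-1/2,-\alpha,-\beta}$ becomes $\Gamma_{-1/2,\alpha}$ (wait — $-\beta=\alpha$, so actually the contour encloses $-1/2$, $-\alpha$, and $\alpha$ in the limit). The double pole at $w=-\alpha$ must then be evaluated; the claimed form has contour $\Gamma_{-1/2,\alpha}$ which notably does \emph{not} enclose $-\alpha$, so the simplification of the rational prefactor must remove the pole at $w=-\alpha$ entirely. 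I expect that the factor $\frac{w-\beta}{w+\alpha}\big|_{\beta=-\alpha}=\frac{w+\alpha}{w+\alpha}=1$ is exactly what kills that pole, leaving only $\frac{z-\alpha}{w-\alpha}\frac{z+w}{2z(z-w)}$ — matching $\overline{\mathsf{K}}_{12}$. The $\overline{\mathsf{K}}_{22}$ case is the most involved: setting $\beta=-\alpha$ in the prefactor $\frac{z+\beta}{z-\beta}\frac{w-\beta}{w+\beta}=\frac{z-\alpha}{z+\alpha}\frac{w+\alpha}{w-\alpha}$ and combining with the $\frac{1}{(z-\alpha)(w+\alpha)}$ gives $\frac{1}{(z+\alpha)(w-\alpha)}$, and then a partial-fraction identity $\frac{1}{(z+\alpha)(w-\alpha)}\cdot\frac{z+w}{z-w}$ must be rewritten as $\frac{1}{z-w}\left(\frac{1}{z+\alpha}+\frac{1}{w-\alpha}\right)$ — this is the algebraic identity $\frac{z+w}{(z+\alpha)(w-\alpha)}=\frac{1}{z+\alpha}+\frac{1}{w-\alpha}$ when... actually $\frac{1}{z+\alpha}+\frac{1}{w-\alpha}=\frac{(w-\alpha)+(z+\alpha)}{(z+\alpha)(w-\alpha)}=\frac{z+w}{(z+\alpha)(w-\alpha)}$, exactly. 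One then reconciles the three contour pairs of $\overline{K}_{22}$ with the two claimed ones, which requires tracking which residues (if any) are picked up when deforming contours — this bookkeeping, together with checking that $\tilde\varepsilon\to\varepsilon=\varepsilon_0+\varepsilon_1$ (noting $\varepsilon_2$ with $\beta$-independent form becomes $\varepsilon_0$), is where I expect the bulk of the careful work to lie.

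The main obstacle is the contour bookkeeping in the $\beta\to-\alpha$ limit of $\overline{K}_{12}$ and $\overline{K}_{22}$: one must verify that the pole coalescences at $w=-\alpha$ (resp. $z=\alpha$, $w=-\alpha$) are genuinely removable after the rational prefactors simplify, so that the limiting contours can be shrunk to the claimed $\Gamma_{-1/2,\alpha}$ etc. without spurious residue contributions — and conversely that the splitting of the single $\overline{K}_{22}$ double integral into two integrals with different contours (one for $\frac{1}{z+\alpha}$, one for $\frac{1}{w-\alpha}$) is consistent with the original contour prescription. Everything else is routine residue calculus and differentiation under the integral sign; I would organize the proof so that analyticity is dispatched quickly via the compactness-of-contours argument, and then devote the remainder to the three entry-by-entry simplifications, displaying the partial-fraction identity explicitly and relegating the $\varepsilon$ computation to a one-line remark.
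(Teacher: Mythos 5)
Your treatment of $\overline{K}_{11}$ and $\overline{K}_{12}$ — both the analyticity argument (contours of constant shape, poles at $\pm\beta$ either absent or merging harmlessly inside the $w$-contour) and the identification of the limit by cancellation of the factor $\frac{w-\beta}{w+\alpha}$ at $\beta=-\alpha$ — matches the paper and is correct. The gap is in $\overline{K}_{22}$, in two places. First, your claim that analyticity is "argued analogously" does not hold: the contour prescription for $\overline{K}_{22}$ in Proposition~\ref{prop:decomposition} includes $\Gamma_{1/2,\beta}\times\Gamma_{-\alpha}$ and $\Gamma_{1/2,\alpha}\times\Gamma_{-\beta}$, and as $\beta\to-\alpha$ these two pairs force the $z$- and $w$-contours to collide (e.g.\,the $z$-contour must enclose $\beta\to-\alpha$ while the $w$-contour must enclose $-\alpha$), which is fatal because of the $1/(z-w)$ in the integrand. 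The paper resolves this by first rewriting the rational prefactor via two algebraic identities,
\[
\frac{(z+\beta)(w-\beta)}{(z-\beta)(w+\beta)(z-w)}=\frac{1}{z-w}+\frac{2\beta}{(w+\beta)(z-\beta)},\qquad
\frac{z+w}{(z-\alpha)(w+\alpha)}=\frac{1}{z-\alpha}+\frac{1}{w+\alpha},
\]
which isolates the $1/(z-w)$ factor into a term whose remaining poles can be held apart uniformly, while the other term has no $1/(z-w)$ and hence tolerates crossing contours. Your compactness-of-contours argument has no substitute for this decomposition, so the analyticity claim for $\overline{K}_{22}$ is unsupported.

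Second, your remark in parentheses that "$\varepsilon_2$ with $\beta$-independent form becomes $\varepsilon_0$" is false: $\varepsilon_2(x,y)=-\sgn(x-y)\,e^{-\alpha|x-y|}/(\tfrac14-\alpha^2)^n$ whereas $\varepsilon_0(x,y)=-\sgn(x-y)\,e^{+\alpha|x-y|}/(\tfrac14-\alpha^2)^n$; they differ by the sign of $\alpha$ in the exponent, and since $\varepsilon_2$ does not depend on $\beta$ it cannot "become" $\varepsilon_0$ under $\beta\to-\alpha$. What actually happens, per the paper's residue bookkeeping, is that the seven residue pairs from the three contour products split as follows: the pairs $(z,w)=(1/2,-\alpha)$ and $(\alpha,-1/2)$ vanish as $\beta\to-\alpha$ because of the factor $(\alpha+\beta)$ from $\frac{w-\beta}{w+\beta}\big|_{w=-\alpha}$ (resp.\,$\frac{z+\beta}{z-\beta}\big|_{z=\alpha}$); the pairs $(1/2,-1/2)$, $(1/2,-\beta)$, $(\beta,-1/2)$ produce the stated double integral; and the pairs $(\alpha,-\beta)$, $(\beta,-\alpha)$ produce $\sgn(x-y)\,(e^{-\alpha|x-y|}-e^{\alpha|x-y|})/(\tfrac14-\alpha^2)^n$, which only after being \emph{added} to $\varepsilon_2$ yields $\varepsilon_0$. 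You anticipate "bookkeeping" is needed here, but the specific accounting you sketch omits exactly the residue pairs that produce the nontrivial correction, so the argument as proposed would not close.
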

\begin{proof}
Analyticity for $\overline{K}_{11}$ (respectively $\overline{K}_{12}$) is obvious since we can take the integration contour for $z$ as close to $1/2$ as desired and the contour for $w$ to include $-1/2$ (respectively $-1/2,-\alpha,-\beta$) without crossing $z$. For $\overline{K}_{22}$, we can decompose the kernel using the identities
\begin{equation}
\frac{(z+\beta)(w-\beta)}{(z-\beta)(w+\beta)(z-w)}=\frac{1}{z-w}+\frac{2\beta}{(w+\beta)(z-\beta)},\quad \frac{z+w}{(z-\alpha)(w+\alpha)}=\frac{1}{z-\alpha}+\frac{1}{w+\alpha}.
\end{equation}
We get that the double integral part $\overline{K}_{22} - \tilde \varepsilon$ becomes the sum of
\begin{equation}\label{eq:term_1}
\oint \frac{dz}{2\pi\I}\oint\frac{dw}{2\pi\I} \frac{\Phi(x,z)}{\Phi(y,w)} \frac{1}{\left[(\tfrac12+z)(\tfrac12-w)\right]^n}\frac{1}{z-w}\left(\frac{1}{z-\alpha}+\frac{1}{w+\alpha}\right)
\end{equation}
and
\begin{equation}\label{eq:term_2}
\oint \frac{dz}{2\pi\I}\oint\frac{dw}{2\pi\I} \frac{\Phi(x,z)}{\Phi(y,w)} \frac{1}{\left[(\tfrac12+z)(\tfrac12-w)\right]^n} \frac{2\beta}{(w+\beta)(z-\beta)}\left(\frac{1}{z-\alpha}+\frac{1}{w+\alpha}\right).
\end{equation}
The integration contour for the term in~\eqref{eq:term_1} with $1/(z-\alpha)$ is $\Gamma_{1/2,\alpha}\times\Gamma_{-1/2}$, while the one for the term with $1/(w+\alpha)$ is $\Gamma_{1/2}\times\Gamma_{-1/2,-\alpha}$. These can be chosen non-intersecting for all $\alpha$ in any subset of $[-1/2+\epsilon,1/2-\epsilon]$ for $0 < \epsilon \ll 1$. The contours for the term in~\eqref{eq:term_2} with $1/(z-\alpha)$ are $\Gamma_{1/2,\alpha,\beta}\times\Gamma_{-1/2,-\beta}$, while the ones with $1/(w+\alpha)$ are $\Gamma_{1/2,\beta}\times\Gamma_{-1/2,-\alpha,-\beta}$. Notice that since the term $1/(z-w)$ is absent, the contours can cross without problems. Thus this term is also clearly analytic.

Comparing~\eqref{eq:336} with~\eqref{eq:h_bar_def}, one notices the change of $\tilde\varepsilon$ into $\varepsilon$, which corresponds to replacing $\varepsilon_2$ with $\varepsilon_0$. Let us start with~\eqref{eq:h_bar_def}. The contributions of the poles at $(z,w)=(1/2,-\alpha)$ and at $(z,w)=(\alpha,-1/2)$ vanish as $\beta\to -\alpha$. The contributions from $(z,w)=(1/2,-1/2)$, $(z,w)=(1/2,-\beta)$ and $(z,w)=(\beta,-1/2)$ give, in the limit $\beta\to-\alpha$, the double integral in~\eqref{eq:336}. Finally, the contributions from $(z,w)=(\alpha,-\beta)$ and $(z,w)=(\beta,-\alpha)$ become, in the $\beta\to-\alpha$ limit, as follows:
\begin{equation}
\frac{e^{-\alpha(x-y)}-e^{\alpha(x-y)}}{(\tfrac14-\alpha^2)^n}= \sgn(x-y) \frac{e^{-\alpha|x-y|}-e^{\alpha|x-y|}}{(\tfrac14-\alpha^2)^n}.
\end{equation}
Summing this term with $\varepsilon_2$ gives $\varepsilon_0$.
\end{proof}

\begin{prop}\label{prop:cvgOfFredPf}
$\pf (J- \overline{K})$ is analytic in $\alpha, \beta \in (-1/2, 1/2)$ and with a well-defined limit
\begin{equation}
\lim_{\beta\to -\alpha}  \pf (J- \overline{K}) = \pf (J - \overline{\mathsf{K}}).
\end{equation}
\end{prop}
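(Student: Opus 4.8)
The plan is to upgrade the entrywise analyticity and the $\beta\to-\alpha$ convergence of the kernel, already supplied by Lemma~\ref{lem:Kbar}, to the level of the Fredholm Pfaffian, by means of a \emph{uniform} trace-class bound. We use the Fredholm Pfaffian in its series form, which — like the Fredholm determinant $\det(\Id-J^{-1}\overline{K})$ to which it is tied via $\pf(J-\overline{K})=\sqrt{\det(\Id-J^{-1}\overline{K})}$ — is entire in its kernel on the ideal of trace-class operators and is continuous in trace norm. Hence the proposition reduces to showing that $\overline{K}$, viewed as a $2\times2$ matrix operator on $L^2(s,\infty)\times L^2(s,\infty)$ after the standard $\psi$-conjugation $\psi(x)=e^{\gamma x}$ (under which $\overline{K}_{11}\mapsto e^{\gamma(x+y)}\overline{K}_{11}$, $\overline{K}_{12}\mapsto e^{\gamma(x-y)}\overline{K}_{12}$, $\overline{K}_{21}\mapsto e^{-\gamma(x-y)}\overline{K}_{21}$, $\overline{K}_{22}\mapsto e^{-\gamma(x+y)}\overline{K}_{22}$, and $\pf(J-\overline{K})$ is unchanged), is trace class with trace norm bounded \emph{uniformly} for $\alpha\in[-\tfrac12+\epsilon,\tfrac12-\epsilon]$ and $\beta$ in a small complex neighbourhood of $-\alpha$, and depends analytically on $\beta$ there (by a vector-valued Morera argument, entrywise analyticity from Lemma~\ref{lem:Kbar} plus the uniform bound suffices). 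Granting this, analyticity of $\pf(J-\overline{K})$ in $(\alpha,\beta)$ is immediate by composing this analytic trace-class-valued map with the entire Fredholm Pfaffian, while the uniform bound turns the pointwise convergence $\overline{K}\to\overline{\mathsf{K}}$ of Lemma~\ref{lem:Kbar} into convergence in trace norm, whence $\pf(J-\overline{K})\to\pf(J-\overline{\mathsf{K}})$.

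The substance is the uniform trace-class estimate, which I would obtain entry by entry from the contour representations in Lemma~\ref{lem:Kbar}. For the double-contour ``bulk'' parts, place the $z$-contour inside $\{\Re z\ge c\}$ and the $w$-contour inside $\{\Re w\le -c\}$ for a fixed $c\in(0,\tfrac12)$; then $|\Phi(x,z)|\le Ce^{-cx}$, $|1/\Phi(y,w)|\le Ce^{-cy}$, while the surviving rational and power factors ($[(\tfrac12\mp z)(\tfrac12\pm w)]^n$, $\tfrac{z\mp\alpha}{w\mp\alpha}$, $\tfrac{z+w}{z-w}$, and so on) are bounded uniformly in the parameters since $z,w$ stay a fixed distance from $\pm\tfrac12,\pm\alpha,\pm\beta$; this gives $|\overline{K}^{\rm bulk}_{ij}(x,y)|\le Ce^{-c(x+y)}$, manifestly trace class. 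The poles at $w\in\{-\alpha,-\beta\}$ in $\overline{K}_{12}$, and at $z\in\{\alpha,\beta\}$ and $w\in\{-\alpha,-\beta\}$ in $\overline{K}_{22}$, must first be separated off as residues; each yields a finite-rank kernel $\ketbra{a}{b}$ in which $a,b$ are bulk integrals times exponentials $e^{\mp\alpha x},e^{\mp\beta x}$, so that — after the conjugation above, with $\gamma$ chosen to satisfy $\max(|\alpha|,|\beta|)<\gamma<c<\tfrac12$, which is possible uniformly since $|\alpha|,|\beta|$ stay bounded away from $\tfrac12$ — they lie in $L^2(s,\infty)$ with norms bounded uniformly. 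The same conjugation renders $\varepsilon_0$ and $\varepsilon_1$ bounded with the required decay, e.g.\ $e^{-\gamma(x+y)}\sgn(x-y)e^{\alpha|x-y|}$ decays exponentially as soon as $\gamma>|\alpha|$. Summing the pieces exhibits the conjugated $\overline{K}$ as a uniformly trace-class, $\beta$-analytic operator-valued function on the stated region, and Hadamard's inequality for Pfaffians then gives absolute and locally uniform convergence of the Fredholm Pfaffian series.

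I expect the only delicate point — and the main obstacle — to be the coalescence of the poles $-\beta$ and $-\alpha$ (respectively $\beta$ and $\alpha$) as $\beta\to-\alpha$: taken individually those residues are $\Or(1/(\alpha+\beta))$ and blow up. This, however, is exactly the cancellation already carried out inside the proof of Lemma~\ref{lem:Kbar}, where the combination $\tfrac{z+\alpha}{w+\alpha}\tfrac{z+\beta}{z-\beta}\tfrac{w-\beta}{w+\beta}$ collapses in the limit to $\tfrac{z-\alpha}{w-\alpha}$, removing the offending double pole. What remains is therefore to check that the \emph{finite} surviving residue contributions, and the $\varepsilon$-part, satisfy parameter-uniform $L^2$ bounds throughout a genuine (two-sided) neighbourhood of $\beta=-\alpha$, and that a single weight $\gamma$ works simultaneously for all the exponential factors $e^{\pm\alpha x},e^{\pm\beta x}$ — which holds because these exponents remain bounded away from $\pm\tfrac12$. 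With this in hand, both the analyticity of $\pf(J-\overline{K})$ and the limit $\lim_{\beta\to-\alpha}\pf(J-\overline{K})=\pf(J-\overline{\mathsf{K}})$ follow.
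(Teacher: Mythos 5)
Your proof is correct and follows essentially the same route as the paper: conjugate the kernel to obtain uniform two-sided exponential decay on $\alpha,\beta$ in a compact subset of $(-\tfrac12,\tfrac12)$, verify this by choosing contours at a fixed distance from the poles, then conclude absolute and locally uniform convergence of the Pfaffian series via Hadamard's inequality, with analyticity and the $\beta\to-\alpha$ limit following termwise. The paper does exactly this, using $\mu=1/2-3\epsilon/4$ where you use $\gamma$, and writing $\pf = (\det)^{1/2}$ before applying Hadamard rather than invoking a Pfaffian Hadamard bound directly. The one place where you genuinely diverge is the intermediate framing in terms of trace-class operators and continuity/analyticity of the Fredholm Pfaffian on that ideal; this is a cleaner conceptual package but is not actually needed, since your own Hadamard step already bypasses it. It also introduces a small loose end: a pointwise bound $|K(x,y)|\le Ce^{-c(x+y)}$ gives Hilbert--Schmidt, not trace class, and establishing trace class properly would require a factorization through an $L^2(\Gamma)$ space (standard for contour-integral kernels, but not automatic from the estimate you state). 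Since you do not actually use the trace-class conclusion once Hadamard enters, this is a harmless redundancy rather than a gap; I would simply drop the trace-class layer, or, if you keep it, supply the factorization.
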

\begin{proof}
Fix a $0<\epsilon\ll 1$ and consider $\alpha,\beta\in [-1/2+\epsilon,1/2-\epsilon]$. Take $\mu=1/2-3\epsilon/4$. Then, for some constant $C$ independent of $x,y$, we have the bounds
\begin{equation}\label{eq:246}
\begin{aligned}
|\overline{K}_{11}(x,y)|& \leq C e^{-(1/2-\epsilon/2)x} e^{-(1/2-\epsilon/2)y}=C e^{-\mu (x+y)}e^{-\epsilon (x+y)/4},\\
|\overline{K}_{12}(x,y)|&\leq C e^{-(1/2-\epsilon/2)x} e^{(1/2-\epsilon)y}=C e^{-\mu (x-y)}  e^{-\epsilon (x+y)/4},\\
| \overline{K}_{21}(x,y)|&\leq C e^{(1/2-\epsilon)x} e^{-(1/2-\epsilon/2)y}=C e^{\mu (x-y)}e^{-\epsilon (x+y)/4},\\
|\overline{K}_{22}(x,y)|&\leq C e^{(1/2-\epsilon)x} e^{(1/2-\epsilon)y}=C e^{\mu (x+y)} e^{-\epsilon (x+y)/4}.
\end{aligned}
\end{equation}
This is achieved as follows: for $\overline{K}_{11}$, choose the contours as $|z-1/2|=\epsilon/2$ and $|w+1/2|=\epsilon/2$; for $\overline{K}_{12}$, take $|z-1/2|=\epsilon/2$ and notice the poles at $w=-\alpha,-\beta$ give the leading asymptotic behavior in $y$, namely $e^{-\min\{\alpha,\beta\} y}$. This is controlled by the $e^{-\mu y}$ from the conjugation. The situation is similar for $\overline{K}_{22}$. In this case the leading behavior is given by the residues at $\pm\alpha$ and $\pm\beta$.

Then
\begin{equation}
\begin{aligned}
\pf (J- \overline{K}) &= \sum_{n\geq 0} \frac{(-1)^n}{n!} \int_s^\infty dx_1\cdots \int_s^\infty dx_n \pf[\overline{K}(x_i,x_j)]_{1\leq i<j\leq 2n}\\
&=\sum_{n\geq 0} \frac{(-1)^n}{n!} \int_s^\infty dx_1\cdots \int_s^\infty dx_n \left(\det[\overline{G}(x_i,x_j)]_{1\leq i,j\leq 2n}\right)^{1/2}.
\end{aligned}
\end{equation}
Using the standard Hadamard bound on the $2n\times 2n$ determinant in the sum together with the estimates from~\eqref{eq:246}, we have that the Fredholm expansion of $\pf(J-\overline{K})$ is absolutely convergent. Furthermore, each entry of the series is analytic in the claimed domain, and thus so is $\pf(J-\overline{K})$.
\end{proof}

\subsubsection{Analyticity of the term $\frac{1}{\alpha+\beta} - \braket{Y_2}{X_2}$} \label{sec:analyticity_1}

The analyticity of the term $\frac{1}{\alpha+\beta} - \braket{Y_2}{X_2}$ is relatively easy.
 \begin{lem} \label{lem:inner1}
  The term $\frac{1}{\alpha+\beta} - \braket{Y_2}{X_2}$ is analytic for $\alpha, \beta \in (-1/2, 1/2)$, with
  \begin{equation}
    \begin{split}
     \lim_{\beta\to-\alpha} \left( \frac{1}{\alpha+\beta}  -  \braket{Y_2}{X_2} \right) = -\oint\limits_{\Gamma_{1/2,\alpha}}\frac{dz}{2\pi\I} \frac{\Phi(s,z)}{\Phi(s,\alpha)}\frac{(\tfrac12+\alpha)^n}{(\tfrac12+z)^n}\frac{1}{(z-\alpha)^2}.
    \end{split}
   \end{equation}
 \end{lem}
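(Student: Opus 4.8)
The plan is to make the term $\frac{1}{\alpha+\beta}-\braket{Y_2}{X_2}$ explicit by evaluating the inner product, seeing the $1/(\alpha+\beta)$ divergence as a single residue, and then cancelling it by hand. Recall from~\eqref{eq:S_fact} that $X_2=\ket{\binom{0}{f_+^\beta}}$ and $Y_2=\bra{-g_1\ \ g_2}$, so that $\braket{Y_2}{X_2}=\braket{g_2}{f_+^\beta}=\int_s^\infty g_2(x)\,f_+^\beta(x)\,dx$. Substituting the contour-integral representation of $g_2$ from~\eqref{eq:fg_def} and the definition $f_+^\beta(x)=\Phi(x,\beta)(\tfrac12-\beta)^n$, the $x$-integral is of the form $\int_s^\infty \Phi(x,z)\Phi(x,\beta)\,dx=\int_s^\infty e^{-x(z+\beta)}\phi(z)\phi(\beta)\,dx = \frac{e^{-s(z+\beta)}}{z+\beta}\phi(z)\phi(\beta)$, which converges provided the $z$-contour $\Gamma_{1/2,\alpha}$ is chosen so that $\Re(z)>-\beta$ on it; since $\beta>0$ and we may keep $\Re(z)$ close to $1/2$ or to $\alpha$, this is fine. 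This produces a single $z$-contour integral for $\braket{Y_2}{X_2}$ with an extra factor $1/(z+\beta)$.

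Next I would locate the source of the divergence. As $\beta\to-\alpha$, the factor $1/(z+\beta)$ develops a pole at $z=-\beta\to\alpha$, which collides with the pole already encircled at $z=\alpha$ coming from $\Gamma_{1/2,\alpha}$ and the factor $1/(z-\alpha)$ in $g_2$. The residue of the integrand of $\braket{Y_2}{X_2}$ at $z=-\beta$ is, up to the factors that remain finite, proportional to $\frac{1}{-\beta-\alpha}=-\frac{1}{\alpha+\beta}$, and one checks that the residue evaluates precisely to $-\frac{1}{\alpha+\beta}$ times $1$ (all other factors, namely $\Phi(s,-\beta)/\Phi(s,\beta)=1$, $(\tfrac12-\beta)^n/(\tfrac12+(-\beta))^n$ cancelling against the $(\tfrac12+z)^{-n}$, etc., combine to $1$ in the limit). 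Hence $\braket{Y_2}{X_2}=\frac{1}{\alpha+\beta}+(\text{integral over }\Gamma_{1/2,\alpha}\text{ without the pole at }{-\beta})$, and the $\frac{1}{\alpha+\beta}$ terms cancel in $\frac{1}{\alpha+\beta}-\braket{Y_2}{X_2}$. What remains is a contour integral with both poles $z=\alpha$ and $z=-\beta$ inside; in the limit $\beta\to-\alpha$ these merge into a double pole at $z=\alpha$, producing the factor $1/(z-\alpha)^2$ in the claimed formula, with the rest of the integrand collapsing to $\frac{\Phi(s,z)}{\Phi(s,\alpha)}\frac{(\tfrac12+\alpha)^n}{(\tfrac12+z)^n}$.

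Concretely, the steps are: (i) write $\braket{Y_2}{X_2}=\braket{g_2}{f_+^\beta}$ using~\eqref{eq:S_fact}; (ii) insert the integral representations and perform the (convergent) $x$-integral to get a single $z$-integral over $\Gamma_{1/2,\alpha}$ with integrand containing $\frac{1}{(z-\alpha)(z+\beta)}$ and the ratio $\frac{\Phi(s,z)}{(\tfrac12+z)^n}\Phi(s,\beta)(\tfrac12-\beta)^n$; (iii) use $\Phi(s,\beta)(\tfrac12-\beta)^n = \Phi(s,-\beta)^{-1}(\tfrac12-\beta)^n$ and the identity $\phi(-\beta)=\phi(\beta)^{-1}$ to rewrite things in a form symmetric enough to extract the residue at $z=-\beta$ cleanly; (iv) deform to separate the residue at $z=-\beta$, identify it as $+\frac{1}{\alpha+\beta}$, so the singular parts cancel; (v) take $\beta\to-\alpha$ in the remaining regular contour integral, noting that the contour now encircles a double pole at $z=\alpha$, to obtain the stated limit; (vi) check analyticity in $\alpha,\beta$ on $[-1/2+\epsilon,1/2-\epsilon]$ by exhibiting the decay of the integrand (the conjugation by $\Phi(s,\cdot)$ gives exponential decay in $s$ uniformly in the parameters), which makes the $x$-integral and the residue computations legitimate on the whole strip.

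I expect the main obstacle to be purely bookkeeping: tracking the $n$-dependent prefactors $(\tfrac12\pm\beta)^{\pm n}$ and the $\Phi(s,\cdot)$ ratios carefully enough to see that the residue at $z=-\beta$ is exactly $\frac{1}{\alpha+\beta}$ with coefficient $1$ (no stray factor), so that the cancellation is exact rather than merely up to a regular term — and then, relatedly, correctly organizing the merging of the two simple poles into the $1/(z-\alpha)^2$ double pole in the $\beta\to-\alpha$ limit without dropping the contribution of the derivative term. There is no deep difficulty, since there is no $1/(z-w)$-type obstruction here (unlike in the analysis of the full kernel); the subtlety is entirely in the exact matching of the divergent pieces.
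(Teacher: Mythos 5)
Your proposal follows essentially the same route as the paper's proof: compute $\braket{Y_2}{X_2}=\braket{g_2}{f_+^\beta}$, perform the $x$-integral to obtain a single $z$-contour integral with a factor $1/(z+\beta)$, pull the pole at $z=-\beta$ inside the contour (picking up the correction $+\tfrac{1}{\alpha+\beta}$ that cancels the divergence), and then let $\beta\to-\alpha$ so the two simple poles at $\alpha$ and $-\beta$ merge into a double pole. The only slip is the parenthetical ``$\Phi(s,-\beta)/\Phi(s,\beta)=1$'' -- you presumably meant $\Phi(s,-\beta)\cdot\Phi(s,\beta)=1$ (equivalently, evaluating $\Phi(s,z)/\Phi(s,-\beta)$ at $z=-\beta$ gives $1$), which is what actually makes the residue exactly $-\tfrac{1}{\alpha+\beta}$.
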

\begin{proof}
We have
\begin{equation}
\begin{aligned}
\braket{Y_2}{X_2} = \braket{g_2}{f_{+}^\beta}&= \! \oint\limits_{\Gamma_{1/2,\alpha}} \! \frac{dz}{2\pi\I} \frac{\Phi(s,z)}{\Phi(s,-\beta)}\frac{(\tfrac12-\beta)^n}{(\tfrac12+z)^n}\frac{1}{(z-\alpha)(z+\beta)}\\
&= \!\!\! \oint\limits_{\Gamma_{1/2,\alpha,\beta}} \!\!\! \frac{dz}{2\pi\I} \frac{\Phi(s,z)}{\Phi(s,-\beta)}\frac{(\tfrac12-\beta)^n}{(\tfrac12+z)^n}\frac{1}{(z-\alpha)(z+\beta)}+\frac{1}{\alpha+\beta}
\end{aligned}
\end{equation}
where the last term is the residue at $z=-\beta$ chosen so that the first term is analytic. The residue exactly cancels the $1/(\alpha+\beta)$ in $\frac{1}{\alpha+\beta} - \braket{Y_2}{X_2}$. The latter is therefore analytic with limit
\begin{equation}
\lim_{\beta\to -\alpha} \frac{1}{\alpha+\beta} - \braket{Y_2}{X_2} = - \!\!\! \oint\limits_{\Gamma_{1/2,\alpha}} \!\!\! \frac{dz}{2\pi\I} \frac{\Phi(s,z)}{\Phi(s,\alpha)}\frac{(\tfrac12+\alpha)^n}{(\tfrac12+z)^n}\frac{1}{(z-\alpha)^2}.
\end{equation}
\end{proof}

\subsubsection{Analyticity of the term $\braket{ Y_2 } { (\Id-\overline G)^{-1}\overline G X_2 }$} \label{sec:analyticity_2}

Finding an analytic decomposition of $\braket{Y_2}{(\Id-\overline G)^{-1}\overline G X_2}$ turns out to be more intricate than in the full-space stationary case. Let us explain first where the problems are. We have
\begin{equation}
\overline G \ket{X_2} = \ket{\begin{array}{c}-\overline K_{22} f_{+}^\beta\\ \overline K_{12} f_{+}^\beta\end{array}}.
\end{equation}
The issues are the following:
\begin{itemize}
\item[(a)] The pole at $w=-\beta$ of $\overline K_{22}$ leads to a term of the form $\ketbra{a}{f_{-}^\beta}$ for some explicit function $a$, and similarly for $\overline K_{12}$. When these terms are multiplied by $\ket{f_{+}^\beta}$, they give terms proportional to $\int_s^\infty dy e^{-2\beta y}<\infty$ iff $\beta>0$ whereas the model is defined for any $\alpha, \beta$ with $\alpha+\beta>0$.
\item[(b)] The pole at $w=-\alpha$ of $\overline K_{22}$ leads to terms of the form $\ketbra{a}{f_{-}^\alpha}$, and similarly for $\overline K_{12}$. When multiplied with $\ket{f_{+}^\beta}$, and taking into account the prefactors, one gets a term proportional to $1/(\beta-\alpha)$. This is well-defined in the $\beta \to -\alpha$ limit, except when $\alpha=0$. Also, the single terms in $1/(\beta-\alpha)$ are not analytic at $\alpha=\beta$. This would not be a serious problem if we did not want to consider also the $\alpha=\beta=0$ case, which we of course do.
\end{itemize}

Thus what we have to prove is that the terms in (a) give a zero contribution, within the product $\braket{Y_2}{(\Id-\overline G)^{-1}\overline G X_2}$ for any $\beta>0$; we also need to rewrite (b) such that we do not have divergent terms for $\beta=\alpha$. These issues did not occur in the full-space stationary problem, but can be put under control using the $2\times 2$ structure of our kernels.

The idea to overcome this issue is the following. We decompose
\begin{equation}\label{eq:decomposition}
\overline{G}=\widehat{G}+O,\quad \textrm{with}\quad O = \ketbra{\begin{array}{c} g_2 \\ g_1 \end{array}}{a \quad b}
\end{equation}
for some functions $a,b$ to be written down in the sequel. The following lemma tells us that the matrix kernel $O$ is irrelevant in $\braket{Y_2}{(\Id-\overline G)^{-1}\overline G X_2}$.

\begin{lem} \label{lem:GbarToGhat}
  We have:
  \begin{equation}
  \braket{Y_2}{(\Id - \overline{G})^{-1} \overline{G} X_2} = \braket{Y_2}{(\Id - \overline{G})^{-1} \widehat{G} X_2}.
  \end{equation}
\end{lem}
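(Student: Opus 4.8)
The plan is to exploit the rank-two structure of $O = \ketbra{\,(g_2,g_1)^T\,}{a\quad b}$ together with the special form of the vectors $X_1,X_2,Y_1,Y_2$ from \eqref{eq:S_fact}. Writing $\overline{G}=\widehat{G}+O$, the resolvent identity gives
\begin{equation}
(\Id-\overline{G})^{-1}\overline{G} = (\Id-\overline{G})^{-1}\widehat{G} + (\Id-\overline{G})^{-1} O,
\end{equation}
so it suffices to show $\braket{Y_2}{(\Id-\overline{G})^{-1} O X_2}=0$. Since $O=\ketbra{X_1}{(a\ b)}$ (because $X_1=\ket{(g_2,g_1)^T}$), we have $O X_2 = \braket{(a\ b)}{X_2}\,\ket{X_1}$, which is a scalar multiple of $\ket{X_1}$. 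Hence the claim reduces to showing
\begin{equation}
\braket{Y_2}{(\Id-\overline{G})^{-1} X_1} = 0,
\end{equation}
i.e. $\braket{Y_2}{Z_1}=0$ in the notation $Z_i=(\Id-\overline{G})^{-1}X_i$.

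But this is precisely one of the three orthogonality relations $\braket{Y_1}{Z_2}=\braket{Y_2}{Z_1}=0$, $\braket{Y_1}{Z_1}=\braket{Y_2}{Z_2}$ that were already invoked in \eqref{eq:F_det} and are proven in Appendix~\ref{sec:det_comp}. So the first step is to identify $O X_2$ as proportional to $X_1$; the second is to quote $\braket{Y_2}{Z_1}=0$ from the appendix; and the conclusion is immediate. One should double-check the one subtlety: the identity $\braket{Y_2}{Z_1}=0$ is stated for the resolvent $(\Id-\overline{G})^{-1}$ with $\overline{G}$ the full limiting kernel before the $\widehat{G}$/$O$ split, which is exactly what appears here, so there is no circularity — the decomposition \eqref{eq:decomposition} only redistributes terms and does not change $\overline{G}$ itself.

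The only genuine point requiring care is verifying that $O$ really has the asserted form $\ketbra{(g_2,g_1)^T}{a\ b}$ with left factor exactly $X_1$; this is a bookkeeping matter of matching the column vector $(g_2,g_1)^T$ appearing in $O$ with the definition of $X_1$ in \eqref{eq:S_fact}, and it will be transparent once the functions $a,b$ are written down in the text following \eqref{eq:decomposition}. I expect no real obstacle here: the whole lemma is a structural consequence of rank-one algebra plus an orthogonality already established. If anything, the mild annoyance is purely notational — keeping straight which of $X_1,X_2,Y_1,Y_2$ carries which pair of functions, and remembering that $\overline{G}=J^{-1}\overline{K}$ so the entries get permuted and signed relative to $\overline{K}$. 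Once $O X_2\propto X_1$ is in hand, one line finishes the proof.
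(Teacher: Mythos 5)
Your proof is correct and follows essentially the same route as the paper: reduce to showing $\braket{Y_2}{(\Id-\overline{G})^{-1}O X_2}=0$, factor out the scalar $\braket{a\ b}{X_2}$ so the claim becomes $\braket{Y_2}{(\Id-\overline{G})^{-1}X_1}=\braket{Y_2}{Z_1}=0$. The only difference is that you cite the appendix identity $\braket{Y_2}{Z_1}=0$ directly (correctly noting there is no circularity), whereas the paper redoes the same symmetry computation — using $(\Id-\overline{G})^{-1}_{11}(x,y)=(\Id-\overline{G})^{-1}_{22}(y,x)$ and the antisymmetry of the off-diagonal resolvent entries — in the body of the proof.
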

\begin{proof}
First of all, notice that
\begin{equation}
(\Id - \overline{G})^{-1} \overline{G}-(\Id - \overline{G})^{-1} \widehat{G} = (\Id - \overline{G})^{-1}  O.
\end{equation}
Therefore
\begin{equation} \label{eq:diff_inner}
\braket{Y_2} {(\Id - \overline{G})^{-1} \overline{G} X_2} - \braket{Y_2}{(\Id - \overline{G})^{-1} \widehat{G} X_2} =  \textrm{const} \braket{-g_1 \quad g_2} {(\Id - \overline{G})^{-1}  \begin{pmatrix} g_2 \\ g_1 \end{pmatrix}}
\end{equation}
with ${\rm const}=\braket{a \quad b} { X_2}$. Multiplying the $2\times 2$ matrices, we have that the scalar product without the constant is given by
\begin{equation}
-\braket{g_1} {(\Id - \overline{G})^{-1}_{11} g_2} + \braket{g_2}{(\Id - \overline{G})^{-1}_{22} g_1} - \braket{g_1} {(\Id - \overline{G})^{-1}_{12} g_1} + \braket{g_2} {(\Id - \overline{G})^{-1}_{21} g_2}.
\end{equation}
The property $\overline{K}_{12}(x,y)=-\overline{K}_{21}(y,x)$ translates into $\overline{G}_{11}(x,y)=\overline{G}_{22}(y,x)$ and so into \mbox{$(\Id - \overline{G})^{-1}_{11}(x,y) = (\Id - \overline{G})^{-1}_{22}(y,x)$}, see Proposition~\ref{prop:kernel_symm} for details, implying the first two terms cancels each other. The anti-symmetry of $\overline{K}_{11}$ and $\overline{K}_{22}$ implies the anti-symmetry of $\overline{G}_{21}$ and $\overline{G}_{12}$ which in turn implies the same for the respective $(\Id-\overline{G})^{-1}$ entries. Thus the last two terms are each equal to zero and this finishes the proof.
\end{proof}

We now state the announced further decomposition of $\overline{K}$.

\begin{prop} \label{prop:Seconddecomposition} Let $\alpha \in (-1/2,1/2)$, $\beta>0$. Then the kernel $\overline{K}$ splits as
 \begin{equation}
  \overline{K}=\widetilde{K} + \begin{pmatrix} 0 & 0 \\ 0  & \tilde \varepsilon \end{pmatrix} +\widetilde O+\widetilde P
 \end{equation}
 where
 \begin{equation}
 \begin{aligned}
 \widetilde{K}_{11} &=\overline{K}_{11}, \quad \widetilde{K}_{21}=\overline{K}_{21},\\
 \widetilde{K}_{12}(x,y) &=-\oint\limits_{\Gamma_{1/2}} \frac{dz}{2\pi\I} \oint\limits_{\Gamma_{-1/2}} \frac{dw}{2\pi\I}\frac{\Phi(x,z)}{\Phi(y,w)} \left[ \frac{\tfrac12-z}{\tfrac12-w} \right]^n\frac{z+\alpha}{w+\alpha}\frac{z+\beta}{z-\beta}\frac{w-\beta}{w+\beta}\frac{z+w}{2z(z-w)},\\
 \widetilde{K}_{22}(x,y) &=\oint\limits_{\Gamma_{1/2,\alpha,\beta}} \frac{dz}{2\pi\I} \oint\limits_{\Gamma_{-1/2}} \frac{dw}{2\pi\I} \frac{\Phi(x,z)}{\Phi(y,w)} \frac{1}{\left[(\tfrac12+z)(\tfrac12-w)\right]^n}\frac{1}{(z-\alpha)(w+\alpha)} \frac{z+\beta}{z-\beta}\frac{w-\beta}{w+\beta}\frac{z+w}{z-w}
 \end{aligned}
 \end{equation}
and
 \begin{equation}
 \begin{aligned}
\widetilde O &= \ketbra{\begin{array}{c} -g_1 \\ g_2 \end{array}}{0 \quad \tfrac{2\beta}{\beta-\alpha}f_{-}^\beta-\tfrac{\alpha+\beta}{\beta-\alpha}f_{-}^\alpha},\\
\widetilde P &= \ketbra{\begin{array}{c} (\alpha+\beta)g_3 \\ -(\alpha+\beta) g_4-f_{-}^{-\alpha} \end{array}}{0 \quad f_{-}^\alpha}.
 \end{aligned}
 \end{equation}
 \end{prop}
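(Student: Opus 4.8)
The statement is an identity between $2\times 2$ matrix kernels, and the natural approach is to verify it entry by entry by residue calculus, in the same spirit as Proposition~\ref{prop:decomposition}. Note first that $\widetilde O$ and $\widetilde P$, each being a single outer product whose left ``bra'' has vanishing first component, contribute only to the $(1,2)$ and $(2,2)$ entries, and the same is true of the block $\left(\begin{smallmatrix}0&0\\0&\tilde\varepsilon\end{smallmatrix}\right)$. Since $\widetilde K_{11}=\overline K_{11}$ and $\widetilde K_{21}=\overline K_{21}$ by definition, the $(1,1)$ and $(2,1)$ entries hold trivially, so all the content lies in the remaining two entries.

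For the $(1,2)$ entry, $\widetilde K_{12}$ has exactly the integrand of $\overline K_{12}$ but with the $w$-contour $\Gamma_{-1/2}$ in place of $\Gamma_{-1/2,-\alpha,-\beta}$. Hence $\overline K_{12}-\widetilde K_{12}$ equals $-\oint_{\Gamma_{1/2}}\frac{dz}{2\pi\I}$ of the sum of the residues of the integrand at $w=-\alpha$ and $w=-\beta$. Computing these (using $\Phi(y,-\gamma)^{-1}=\Phi(y,\gamma)$ and $\Phi(y,\gamma)/(\tfrac12+\gamma)^n=f_-^\gamma(y)$), the $w=-\alpha$ residue contributes $\tfrac{\alpha+\beta}{\beta-\alpha}\,g_5(x)\,f_-^\alpha(y)$ and the $w=-\beta$ residue contributes $-\tfrac{2\beta}{\beta-\alpha}\,g_1(x)\,f_-^\beta(y)$. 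Substituting the elementary partial-fraction identity
\[
\frac{(z-\alpha)(z+\beta)}{2z(z-\beta)}=\frac{z+\alpha}{2z}+(\beta-\alpha)\frac{1}{z-\beta},
\qquad\text{i.e.}\qquad g_5=g_1+(\beta-\alpha)g_3,
\]
and regrouping, one recovers precisely $\widetilde O_{12}+\widetilde P_{12}$.

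For the $(2,2)$ entry, write $\overline K_{22}=\tilde\varepsilon+I_1+I_2+I_3$, where $I_1,I_2,I_3$ are the double integrals of the common integrand over $\Gamma_{1/2,\alpha,\beta}\times\Gamma_{-1/2}$, $\Gamma_{1/2,\beta}\times\Gamma_{-\alpha}$ and $\Gamma_{1/2,\alpha}\times\Gamma_{-\beta}$; by inspection $I_1=\widetilde K_{22}$, so it remains to evaluate $I_2+I_3$. In $I_2$ (resp.\ $I_3$) the $w$-contour encircles only $-\alpha$ (resp.\ only $-\beta$), so the $w$-integral reduces to a single residue, leaving a $z$-integral over $\Gamma_{1/2,\beta}$ (resp.\ $\Gamma_{1/2,\alpha}$). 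The latter is directly $\tfrac{2\beta}{\beta-\alpha}g_2(x)\,f_-^\beta(y)$; the former equals $-\tfrac{\alpha+\beta}{\beta-\alpha}f_-^\alpha(y)\bigl(g_6(x)+\tfrac{2\beta}{\alpha+\beta}f_-^\beta(x)\bigr)$, the extra $f_-^\beta(x)$ being the residue at $z=\beta$ picked up when shrinking $\Gamma_{1/2,\beta}$ to the $\Gamma_{1/2}$-contour of $g_6$. One then invokes the identity
\[
g_6+\frac{2\beta}{\alpha+\beta}f_-^\beta=g_2+(\beta-\alpha)g_4+\frac{\beta-\alpha}{\alpha+\beta}f_-^{-\alpha},
\]
which follows from $\frac{z+\beta}{(z+\alpha)(z-\beta)}=\frac{1}{z-\alpha}+(\beta-\alpha)\frac{2z}{(z-\alpha)(z+\alpha)(z-\beta)}$ together with the residues at $z=-\alpha$ and $z=\beta$ arising when enlarging the $g_6$-contour to $\Gamma_{1/2,\pm\alpha,\beta}$. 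Feeding this in, $\tilde\varepsilon+I_2+I_3$ collapses to exactly $\tilde\varepsilon+\widetilde O_{22}+\widetilde P_{22}$, finishing the verification.

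Throughout, the standing assumption $\alpha+\beta>0$ and the hypothesis $\beta>0$ are precisely what keep the poles $\pm\tfrac12,\pm\alpha,\pm\beta$ in the configuration implicit in every contour label, so that each deformation picks up only the residues listed and no contour is forced to cross another. The main obstacle is purely bookkeeping: one must carry the prefactors $\tfrac{1}{\beta-\alpha}$, $\tfrac{2\beta}{\alpha+\beta}$ and all signs through each residue and check that the $f_-^\alpha$-, $f_-^\beta$- and $f_-^{-\alpha}$-proportional pieces reassemble into the single outer products $\widetilde O$ and $\widetilde P$. The displayed $g_6$-identity is the crux: it is the one non-obvious step, and it is what lets the unwanted $g_6$- and $f_-^\beta(x)$-contributions of $I_2$ be traded for the combination $-(\alpha+\beta)g_4-f_-^{-\alpha}$ appearing in $\widetilde P$ (with the leftover $-\tfrac{\alpha+\beta}{\beta-\alpha}g_2$ absorbed into $\widetilde O$), which is exactly the regrouping needed for the subsequent analytic-continuation argument.
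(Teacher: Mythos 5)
Your proof is correct and follows essentially the same route as the paper's: a residue computation at $w=-\alpha$ and $w=-\beta$ for the $(1,2)$ and $(2,2)$ entries, followed by the two $g$-identities (the paper's Lemma~\ref{lem:identities}) to regroup the resulting outer-product terms into $\widetilde O$ and $\widetilde P$. Your account is somewhat more detailed than the paper's (e.g.\ explicitly tracing the extra $f_-^\beta(x)$ term to the residue at $z=\beta$ when passing from $\Gamma_{1/2,\beta}$ to $\Gamma_{1/2}$), but it is the same proof.
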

 \begin{proof}
We first have to compute the $(12)$ and $(22)$ components of $\widetilde O+\widetilde P$ and then divide up accordingly. We have
\begin{equation}
\widetilde O_{12} + \widetilde P_{12}=\oint\limits_{\Gamma_{1/2}}\oint\limits_{\Gamma_{-\alpha,-\beta}}\cdots.
\end{equation}
The residue computations at $w=-\alpha$ and $w=-\beta$ lead to
\begin{equation} \label{eq:g_eq_1}
\begin{aligned}
\widetilde O_{12} + \widetilde P_{12}&= -\frac{2\beta}{\beta-\alpha} \ketbra{g_1}{f_{-}^\beta}+\frac{\alpha+\beta}{\beta-\alpha}\ketbra{g_5}{f_{-}^\alpha}\\
&=-\frac{2\beta}{\beta-\alpha} \ketbra{g_1}{f_{-}^\beta}+\frac{\alpha+\beta}{\beta-\alpha}\ketbra{g_1}{f_{-}^\alpha}+(\alpha+\beta)\ketbra{g_3}{f_{-}^\alpha}
\end{aligned}
\end{equation}
where in the second equality we used Lemma~\ref{lem:identities}. Similarly,
\begin{equation}
\widetilde O_{22} + \widetilde P_{22}=\oint\limits_{\Gamma_{1/2,\alpha}}\oint\limits_{\Gamma_{-\beta}}\cdots + \oint\limits_{\Gamma_{1/2,\beta}}\oint\limits_{\Gamma_{-\alpha}}\cdots.
\end{equation}
Computing the residues at $w=-\alpha$ and $w=-\beta$ we get
\begin{equation} \label{eq:g_eq_2}
\begin{aligned}
\widetilde O_{22} + \widetilde P_{22}&= \frac{2\beta}{\beta-\alpha} \ketbra{g_2}{f_{-}^\beta}-\frac{2\beta}{\beta-\alpha}\ketbra{f_{-}^\beta}{f_{-}^\alpha}- \frac{\alpha+\beta}{\beta-\alpha}\ketbra{g_6}{f_{-}^\alpha}\\
&=\frac{2\beta}{\beta-\alpha} \ketbra{g_2}{f_{-}^\beta}
-\frac{\alpha+\beta}{\beta-\alpha}\ketbra{g_2}{f_{-}^\alpha}-(\alpha+\beta)\ketbra{g_4}{f_{-}^\alpha}-\ketbra{f_{-}^{-\alpha}}{f_{-}^\alpha}.
\end{aligned}
\end{equation}
Recombining the terms leads to the claimed result, provided we prove the two equalities used in equations~\eqref{eq:g_eq_1} and~\eqref{eq:g_eq_2}. We do this in the next lemma.
\end{proof}

Now we prove the two identities used in the proof of Proposition~\ref{prop:Seconddecomposition}.
\begin{lem} \label{lem:identities}
We have the following identities:
\begin{equation}
\begin{split}
g_5(x)&=g_1(x)+(\beta-\alpha) g_3(x), \\
g_6(x)+\frac{2\beta}{\alpha+\beta} f_{-}^\beta(x) &= g_2(x)+ (\beta-\alpha)\left(g_4(x) + \frac{1}{\alpha+\beta}f_{-}^{-\alpha}(x)\right).
\end{split}
\end{equation}
\end{lem}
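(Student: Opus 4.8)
The plan is to reduce each of the two identities to an elementary partial-fraction decomposition of the rational factor appearing in the integrand, and then, for the second identity, to do a short residue computation that accounts for the fact that $g_2$, $g_4$, $g_6$ are written over different contours.

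\medskip
\noindent\textbf{First identity.} I would start from the algebraic decomposition
\begin{equation}
 \frac{(z-\alpha)(z+\beta)}{2z(z-\beta)} = \frac{z+\alpha}{2z} + (\beta-\alpha)\,\frac{1}{z-\beta},
\end{equation}
which is checked by clearing denominators: the numerator on the right is $(z+\alpha)(z-\beta)+2z(\beta-\alpha)=z^2+(\beta-\alpha)z-\alpha\beta=(z-\alpha)(z+\beta)$. Since $g_1$, $g_3$, $g_5$ all use the same contour $\Gamma_{1/2}$ and the same weight $\Phi(x,z)(\tfrac12-z)^n$, inserting this decomposition into the integrand of $g_5$ and splitting the integral term by term gives $g_5 = g_1 + (\beta-\alpha)g_3$ immediately; no pole crosses the contour, so there is nothing further to check.

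\medskip
\noindent\textbf{Second identity.} Here I would use the companion decomposition
\begin{equation}
 \frac{z+\beta}{(z+\alpha)(z-\beta)} = \frac{1}{z-\alpha} + (\beta-\alpha)\,\frac{2z}{(z-\alpha)(z+\alpha)(z-\beta)},
\end{equation}
again verified by clearing denominators, the numerator on the right being $(z+\alpha)(z-\beta)+2z(\beta-\alpha)=(z+\beta)(z-\alpha)$. Inserting this into the integrand of $g_6$, whose contour is $\Gamma_{1/2}$, the right-hand side becomes $\widetilde g_2 + (\beta-\alpha)\widetilde g_4$, where $\widetilde g_2$, $\widetilde g_4$ denote the integrals with the same integrands as $g_2$, $g_4$ but with the contours shrunk to $\Gamma_{1/2}$. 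It then remains to restore the extra poles: since $g_2$ additionally encircles $z=\alpha$, and $g_4$ additionally encircles $z=\alpha,-\alpha,\beta$ (all simple poles, and none inside $\Gamma_{1/2}$ for $\alpha\neq\beta$ in the admissible range), a residue computation gives $\widetilde g_2 = g_2 - \tfrac{\Phi(x,\alpha)}{(\tfrac12+\alpha)^n}$ and
\begin{equation}
 \widetilde g_4 = g_4 - \frac{1}{\alpha-\beta}\,\frac{\Phi(x,\alpha)}{(\tfrac12+\alpha)^n} + \frac{1}{\alpha+\beta}\,f_{-}^{-\alpha} - \frac{2\beta}{\beta^2-\alpha^2}\,f_{-}^{\beta}.
\end{equation}
Substituting and using $(\beta-\alpha)\cdot\tfrac{-1}{\alpha-\beta}=1$, the two contributions proportional to $\tfrac{\Phi(x,\alpha)}{(\tfrac12+\alpha)^n}$ cancel exactly; the remaining terms collapse to $\tfrac{\beta-\alpha}{\alpha+\beta}f_{-}^{-\alpha} - \tfrac{2\beta}{\alpha+\beta}f_{-}^{\beta}$, and rearranging yields precisely $g_6 + \tfrac{2\beta}{\alpha+\beta}f_{-}^{\beta} = g_2 + (\beta-\alpha)\big(g_4 + \tfrac{1}{\alpha+\beta}f_{-}^{-\alpha}\big)$.

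\medskip
The only step requiring genuine care is the residue bookkeeping for the second identity: one must correctly track which of $z=\pm\alpha,\beta$ is enclosed by the contour of each $g_i$, get the signs right, and in particular evaluate $\tfrac{2z}{(z-\alpha)(z+\alpha)}$ at $z=-\alpha$ so that the $f_{-}^{-\alpha}$ residue carries the coefficient $-1/(\alpha+\beta)$, and then observe the cancellation of the $\tfrac{\Phi(x,\alpha)}{(\tfrac12+\alpha)^n}$ terms. Everything else is elementary algebra.
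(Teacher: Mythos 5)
Your proof is correct and takes essentially the same approach as the paper's: partial-fraction decomposition of the rational integrand followed by residue bookkeeping to reconcile the different contours. The only cosmetic difference is the order of operations — the paper first absorbs $\tfrac{2\beta}{\alpha+\beta}f_{-}^\beta$ into $g_6$ by enlarging the contour to $\Gamma_{1/2,\beta}$, so only the single residue at $z=-\alpha$ needs tracking, whereas you decompose over the small contour $\Gamma_{1/2}$ and then restore four residues, noting the cancellation of the $\Phi(x,\alpha)/(\tfrac12+\alpha)^n$ terms.
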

\begin{proof}
The first identity follows directly from the relation $\frac{(z-\alpha)(z+\beta)}{z-\beta}-(z+\alpha)=\frac{\beta-\alpha}{z-\beta}$. To prove the second, first rewrite
\begin{equation}\label{eq:g4}
g_6(x)+\frac{2\beta}{\alpha+\beta} f_{-}^\beta(x) =\oint\limits_{\Gamma_{1/2,\beta}}\frac{dz}{2\pi\I} \frac{\Phi(x,z)}{(\tfrac12+z)^n}\frac{z+\beta}{(z+\alpha)(z-\beta)}
\end{equation}
and recall
\begin{equation}\label{eq:g2}
 g_2(x) = \oint\limits_{\Gamma_{1/2,\alpha}}\frac{dz}{2\pi\I} \frac{\Phi(x,z)}{(\tfrac12+z)^n}\frac{1}{z-\alpha}.
\end{equation}
Taking the same contours (i.e.\,including $1/2,\alpha,\beta$ inside both) and then computing the difference~\eqref{eq:g4}$-$\eqref{eq:g2} leads to $(\beta-\alpha) g_4(x)$ minus the pole coming from $z=-\alpha$ in $g_4(x)$. The latter is $-\frac{1}{\alpha+\beta}f_{-}^{-\alpha}(x)$ and this finishes the proof.
\end{proof}

Coming back to the decomposition~\eqref{eq:decomposition}, namely
\begin{equation}
\overline G =\widehat G + O
\end{equation}
with $O=J^{-1} \widetilde O$, we notice the latter has exactly the form to apply Lemma~\ref{lem:GbarToGhat}. We also explicitly have
\begin{equation} \label{eq:g_hat_decomp}
\widehat G=\begin{pmatrix}
             -\widetilde K_{21} & -\widetilde K_{22} - \tilde \e \\ \widetilde K_{11} & \widetilde K_{12}
         \end{pmatrix}+ \begin{pmatrix}
             0 & \ketbra{(\alpha+\beta) g_4 + f_{-}^{-\alpha}} { f_{-}^\alpha} \\
             0 & (\alpha+\beta) \ketbra{g_3} {f_{-}^\alpha}
         \end{pmatrix}.
\end{equation}

What remains to be done is to show that $\braket{Y_2}{(\Id - \overline{G})^{-1} \widehat{G} X_2}$ is analytic for \mbox{$\alpha,\beta\in (-1/2,1/2)$} and determine its $\beta\to-\alpha$ limit. This will be accomplished in Proposition~\ref{prop:AnalyticityBigScalarProduct}, itself following from the following three lemmas.

\begin{lem} \label{lem:y2}
The vector $Y_2(x)= (-g_1(x) \quad g_2(x))$ is independent of $\beta$. Furthermore, for any $\alpha\in [-1/2+\epsilon,1/2-\epsilon]$,
\begin{equation}
    |g_1(x)| \leq C e^{-x (1/2-\epsilon/2)}, \quad
    |g_2(x)| \leq C e^{(1/2-\epsilon) x}
\end{equation}
for some constant $C$ uniform in $x$.
\end{lem}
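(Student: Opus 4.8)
The statement has two parts, and both are elementary. For the independence from $\beta$, the plan is simply to inspect the definitions in~\eqref{eq:fg_def}: the integrands of $g_1$ and $g_2$ are built only from $\Phi$, the factors $(\tfrac12\mp z)^{\pm n}$, and the parameter $\alpha$, with no occurrence of $\beta$, and their contours $\Gamma_{1/2}$ and $\Gamma_{1/2,\alpha}$ do not involve $\beta$ either. Hence $Y_2=(-g_1\quad g_2)$ is manifestly $\beta$-independent and nothing needs to be computed.

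For the bound on $g_1$, I would take the contour $\Gamma_{1/2}$ to be the fixed circle $|z-1/2|=\epsilon/2$. On it $\Re z\ge 1/2-\epsilon/2$, so for $x>0$ one has $|\Phi(x,z)|=|e^{-xz}|\,|\phi(z)|\le e^{-x(1/2-\epsilon/2)}\sup_{|z-1/2|=\epsilon/2}|\phi(z)|$; since this fixed circle avoids both $z=0$ (as $\epsilon<1$) and the branch-type singularity at $z=1/2$, the remaining factor $(\tfrac12-z)^n\tfrac{z+\alpha}{2z}$ is bounded on it by a constant depending only on $n,\alpha,\epsilon$. Integrating over the fixed-length contour yields $|g_1(x)|\le Ce^{-x(1/2-\epsilon/2)}$ with $C$ uniform in $x$, exactly in the spirit of the estimates~\eqref{eq:246}.

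For $g_2$, I would deform $\Gamma_{1/2,\alpha}$, which encircles the only two poles of the integrand $\tfrac{\Phi(x,z)}{(\tfrac12+z)^n(z-\alpha)}$ inside it — the pole at $z=1/2$ coming from $\phi$ and the simple pole at $z=\alpha$ (note $z=-1/2$ lies outside) — into the sum of the small circle $|z-1/2|=\epsilon/2$ and the residue at $z=\alpha$. The residue equals $\tfrac{e^{-x\alpha}\phi(\alpha)}{(\tfrac12+\alpha)^n}$, bounded by $Ce^{-x\alpha}\le Ce^{x(1/2-\epsilon)}$ since $\alpha\ge -1/2+\epsilon$; the circle contribution is bounded exactly as for $g_1$ by $Ce^{-x(1/2-\epsilon/2)}\le Ce^{x(1/2-\epsilon)}$ for $x\ge 0$. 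Adding the two pieces gives $|g_2(x)|\le Ce^{x(1/2-\epsilon)}$, again with $C$ uniform in $x$ (but possibly depending on $\alpha,n,N,\epsilon$).

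There is no genuine obstacle here; the only points deserving minor care are keeping the contours fixed rather than shrinking them, so that $\phi$ and the rational factors remain bounded on them uniformly in $x$; checking that $z=1/2$ and $z=\alpha$ are indeed the only poles enclosed by $\Gamma_{1/2,\alpha}$ (including the degenerate case $\alpha=0$, where the pole at $z=\alpha$ is the pole at the origin); and using that $x$ ranges over $(s,\infty)\subset\R_+$ so that $e^{-x(1/2-\epsilon/2)}\le 1$ and the two contributions to $g_2$ can be absorbed into a single exponential.
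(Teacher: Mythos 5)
Your proof is correct and takes essentially the same approach as the paper, whose proof is a one-liner: take the contour $|z-1/2|=\epsilon/2$ for $g_1$, and for $g_2$ note the leading asymptotics come from the pole at $z=\alpha$. Your write-up supplies exactly those details (the only minor slip is calling $z=1/2$ a ``branch-type singularity''---it is a pole of order $N-1$ of $\phi$---but the argument relies only on $\phi$ being bounded on the fixed circle, which is correct).
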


\begin{proof}
The bound on $g_1$ is simply obtained by taking the integration contour $|z-1/2|=\epsilon/2$, while for $g_2$ the leading asymptotics comes from the pole at $z=\alpha$.
\end{proof}

\begin{lem} \label{lem:g_x2}
$\widehat G X_2$ is analytic in $\alpha, \beta \in (-1/2, 1/2)$ and, for any $\alpha,\beta\in [-1/2+\epsilon,1/2-\epsilon]$, we have the following bounds:
\begin{equation}
\left|(\widehat G X_2)_1(y) \right| \leq C e^{y(1/2-\epsilon)}, \quad \left| (\widehat G X_2)_2(y) \right| \leq C e^{-y(1/2-\epsilon/2)}
\end{equation}
for some constant $C$ independent of $y$. Moreover we have
 \begin{equation}
  \lim_{\beta \to -\alpha} \widehat{G} \ket{X_2} = \ket{\begin{array}{c} -h_1 \\ h_2 \end{array}}
 \end{equation}
 where
 \begin{equation} \label{eq:h_def}
  \begin{split}
  h_1 &= \widetilde{\mathsf{K}}_{22} f_{+}^{-\alpha} + \varepsilon_1 f_{+}^{-\alpha} - \mathsf{g}_4 - j^\alpha(s, \cdot), \\
  h_2 &= \widetilde{\mathsf{K}}_{12} f_{+}^{-\alpha} + \mathsf{g}_3
  \end{split}
 \end{equation}
 with
 \begin{equation} \label{eq:j_def}
  j^{\alpha}(s, y) =  f_{-}^{-\alpha} (s) \left[\frac{\sinh(\alpha(y-s))}{\alpha} +(y-s) e^{\alpha (y-s)}  \right].
 \end{equation}
\end{lem}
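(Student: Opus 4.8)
\textbf{Proof plan for Lemma~\ref{lem:g_x2}.}
The plan is to compute $\widehat{G}\ket{X_2}$ component-wise starting from the explicit decomposition~\eqref{eq:g_hat_decomp}, recalling that $X_2 = \ket{\,0\;\;\; f_{+}^\beta\,}^T$, so only the second column of $\widehat G$ acts. Thus
\begin{equation}
\widehat G \ket{X_2} = \ket{\begin{array}{c} -(\widetilde K_{22}+\tilde\e) f_{+}^\beta + \big(\ketbra{(\alpha+\beta)g_4 + f_{-}^{-\alpha}}{f_{-}^\alpha}\big) f_{+}^\beta\\[4pt] \widetilde K_{12} f_{+}^\beta + (\alpha+\beta)\ketbra{g_3}{f_{-}^\alpha} f_{+}^\beta\end{array}}.
\end{equation}
The first task is to check that each of these terms is individually well-defined for all $\alpha,\beta$ with $\alpha+\beta>0$ (not merely $\beta>0$), since this is precisely the point of having passed from $\overline G$ to $\widehat G$: the dangerous pole at $w=-\beta$ of $\overline K_{22}$ (issue (a) in Section~\ref{sec:analyticity_2}) has been removed into $\widetilde O$, while $\widetilde K_{22}$ integrates $w$ over $\Gamma_{-1/2}$ only, so $\widetilde K_{22} f_{+}^\beta$ involves $\int_s^\infty dy\,\Phi(y,\beta)(\tfrac12-\beta)^n \overline{\Phi(y,w)}^{-1}\cdots$ with $w$ near $-1/2$, hence converges. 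For the outer-product terms, $\braket{f_{-}^\alpha}{f_{+}^\beta} = \int_s^\infty \Phi(x,\alpha)\Phi(x,\beta)(\tfrac12-\beta)^n(\tfrac12+\alpha)^{-n}\,dx$ converges exactly because $\alpha+\beta>0$, producing a factor proportional to $1/(\alpha+\beta)$ which, crucially, multiplies the $f_{-}^{-\alpha}$ piece and $(\alpha+\beta)g_4$ and $(\alpha+\beta)g_3$: the $(\alpha+\beta)$ prefactors kill the pole, and the $f_{-}^{-\alpha}$ term combines with the $1/(\alpha+\beta)$ to give the function $j^\alpha(s,\cdot)$ in the limit.

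Next I would carry out the two residue/limit computations. For the second component: $\widetilde K_{12} f_{+}^\beta \to \widetilde{\mathsf K}_{12} f_{+}^{-\alpha}$ is a direct consequence of Lemma~\ref{lem:Kbar}-style analyticity of $\widetilde K_{12}$ (its contours $\Gamma_{1/2}\times\Gamma_{-1/2}$ avoid all $\pm\alpha,\pm\beta$ poles), and $(\alpha+\beta)\braket{f_{-}^\alpha}{f_{+}^\beta}\,g_3 \to \mathsf g_3$ after evaluating the elementary Gaussian-type integral $\braket{f_{-}^\alpha}{f_{+}^\beta}$ and noting that $(\alpha+\beta)$ times it tends to $1$ as $\beta\to-\alpha$ (here one uses $\Phi(x,\alpha)\Phi(x,-\alpha)=1$). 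For the first component, write $\widetilde K_{22} f_{+}^\beta \to \widetilde{\mathsf K}_{22} f_{+}^{-\alpha}$ and $\tilde\e f_{+}^\beta = (\varepsilon_1+\varepsilon_2) f_{+}^\beta \to \varepsilon_1 f_{+}^{-\alpha} + (\text{limit of }\varepsilon_2 f_{+}^\beta)$; one must track the sign in the definition since $h_1$ carries $-\widetilde{\mathsf K}_{22}f_{+}^{-\alpha}-\varepsilon_1 f_{+}^{-\alpha}$ but the statement writes $+$, so I would be careful that the overall column is $\ket{-h_1\;\;h_2}^T$, i.e.\ the sign flips once more. The term $(\alpha+\beta)\ketbra{g_4}{f_{-}^\alpha}f_{+}^\beta$ gives $\mathsf g_4$ in the limit, and the key term $\ketbra{f_{-}^{-\alpha}}{f_{-}^\alpha}f_{+}^\beta = f_{-}^{-\alpha}(\cdot)\braket{f_{-}^\alpha}{f_{+}^\beta}$ requires expanding $\braket{f_{-}^\alpha}{f_{+}^\beta}$ to \emph{first order} in $(\alpha+\beta)$, because its leading $1/(\alpha+\beta)$ singularity, after multiplication by the $\beta$-dependent prefactor $(\tfrac12-\beta)^n/(\tfrac12+\beta)^n$ and by $f_{-}^{-\alpha}$, produces both the bounded part $\tfrac{\sinh(\alpha(y-s))}{\alpha}$-type contribution and the $(y-s)e^{\alpha(y-s)}$ contribution assembling into $j^\alpha(s,y)$. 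Concretely $\braket{f_{-}^\alpha}{f_{+}^\beta} = (\tfrac12-\beta)^n(\tfrac12+\alpha)^{-n}\int_s^\infty e^{-x(\alpha+\beta)}(\tfrac{\tfrac12+\alpha}{\tfrac12-\alpha})^{N-1}(\tfrac{\tfrac12+\beta}{\tfrac12-\beta})^{N-1}dx$; differentiating the elementary integral $\int_s^\infty e^{-x(\alpha+\beta)}dx = e^{-s(\alpha+\beta)}/(\alpha+\beta)$ and the prefactors in $\beta$ at $\beta=-\alpha$ yields precisely the bracket $[\sinh(\alpha(y-s))/\alpha + (y-s)e^{\alpha(y-s)}]$ after recognizing $f_{-}^{-\alpha}(s)$ as the evaluation factor.

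Finally, the exponential bounds: using the contour choices $|z-1/2|=\epsilon/2$, $|w+1/2|=\epsilon/2$ as in the proof of Proposition~\ref{prop:cvgOfFredPf}, together with the bounds on $g_3,g_4$ (obtained from the pole structure as in Lemma~\ref{lem:y2}) and the fact that $f_{+}^\beta(x)$ and $f_{-}^\alpha(x)$ have at most $e^{(1/2-\epsilon)|x|}$-type growth, the stated bounds $|(\widehat G X_2)_1(y)|\le Ce^{y(1/2-\epsilon)}$ and $|(\widehat G X_2)_2(y)|\le Ce^{-y(1/2-\epsilon/2)}$ follow from the conjugation factor $e^{-yw}$ with $w$ near $\mp 1/2$, exactly as the off-diagonal bounds~\eqref{eq:246} were derived. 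I expect the main obstacle to be the first-order expansion of $\braket{f_{-}^\alpha}{f_{+}^\beta}$ and the bookkeeping needed to see that the combination of the simple pole $1/(\alpha+\beta)$ with the $\beta$-dependent prefactors $(\tfrac12\mp\beta)^{\pm n}$ and the $N-1$ powers produces \emph{exactly} $j^\alpha(s,y)$ with no residual divergence — in particular that the $\alpha=0$ case is regular, the $\sinh(\alpha(y-s))/\alpha$ being interpreted as its limit $y-s$; all other steps are routine residue calculus and Hadamard-type estimates.
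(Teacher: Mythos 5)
Your overall organization matches the paper's: compute $\widehat G\ket{X_2}$ componentwise from~\eqref{eq:g_hat_decomp}, handle the $\widetilde K_{12}f_+^\beta$ and $\widetilde K_{22}f_+^\beta$ pieces by analyticity, note that $(\alpha+\beta)\braket{f_-^\alpha}{f_+^\beta}\to1$ makes the $g_3,g_4$ contributions converge, and establish the exponential bounds from the contour choices. These parts are fine.

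The gap is in how you treat the singular pieces in the first component. You write that $\tilde\e f_+^\beta = (\varepsilon_1+\varepsilon_2)f_+^\beta$ tends to $\varepsilon_1 f_+^{-\alpha}$ plus ``the limit of $\varepsilon_2 f_+^\beta$,'' and then, separately, that expanding $\braket{f_-^\alpha}{f_+^\beta}$ to first order in $(\alpha+\beta)$ already ``produces \ldots{} assembling into $j^\alpha(s,y)$.'' Neither claim is correct on its own. Computing $(\varepsilon_2 f_+^\beta)(y)$ explicitly, the $\int_y^\infty$ contribution yields a term proportional to $e^{-\beta y}/(\alpha+\beta)$, so $\varepsilon_2 f_+^\beta$ has a genuine simple pole at $\beta=-\alpha$ and no limit by itself. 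Likewise $\braket{f_-^\alpha}{f_+^\beta}=f_-^\alpha(s)f_+^\beta(s)/(\alpha+\beta)$, so $f_-^{-\alpha}(y)\braket{f_-^\alpha}{f_+^\beta}$ also diverges, and ``expanding to first order'' does not eliminate that singularity, it only exposes it. The actual mechanism, carried out in the paper's proof in display~\eqref{eq:eps_fh}, is that one must consider the \emph{combination}
\begin{equation*}
\braket{f_-^\alpha}{f_+^\beta}\,f_-^{-\alpha}(y)\;-\;(\varepsilon_2 f_+^\beta)(y),
\end{equation*}
compute both pieces explicitly, and observe that their $1/(\alpha+\beta)$ poles cancel identically, leaving a finite quantity whose $\beta\to-\alpha$ limit is $f_-^{-\alpha}(s)\left[\sinh(\alpha(y-s))/\alpha+(y-s)e^{\alpha(y-s)}\right]=j^\alpha(s,y)$. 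Without articulating this cancellation, the argument for the first component does not go through: you would be left with two divergent pieces and no proof that $(\widehat G X_2)_1$ has a finite $\beta\to-\alpha$ limit at all. Once that cancellation is made explicit, the remainder of your proposal aligns with the paper's proof.
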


\begin{proof}
We start with
 \begin{equation}
  \widehat{G} \ket{X_2} = \begin{pmatrix} * & a \\ * & b \end{pmatrix} \ket{ \begin{array}{c} 0 \\ f_{+}^\beta \end{array} } = \ket{ \begin{array}{c} a f_{+}^\beta \\ b f_{+}^\beta \end{array}  }
 \end{equation}
where the kernels $a, b$ are read from the decomposition~\eqref{eq:g_hat_decomp}, namely
 \begin{equation}
  a = -\widetilde K_{22} - \varepsilon_1 - \varepsilon_2 + (\alpha+\beta) \ketbra{g_4}{f_{-}^\alpha} + \ketbra{f_{-}^{-\alpha}}{f_{-}^\alpha}, \quad b =  \widetilde K_{12} + (\alpha+\beta) \ketbra{g_3} {f_{-}^\alpha}.
 \end{equation}

The terms which in the $y$ variable decay like $e^{-y(1-\e)/2}$, i.e.\,the ones for which in the integral representation we integrate only around the pole at $w=-1/2$, are clearly analytic when multiplied by $f_{+}^\beta$ and the limits are straightforward, namely
\begin{equation}
\begin{aligned}
\lim_{\beta\to -\alpha}\widetilde{K}_{12}f_{+}^\beta & = \widetilde{\mathsf{K}}_{12} f_{+}^{-\alpha},\\
\lim_{\beta\to -\alpha}-(\widetilde{K}_{22}f_{+}^\beta + \varepsilon_1 f_{+}^\beta) & =  -(\widetilde{\mathsf{K}}_{22} f_{+}^{-\alpha}+\varepsilon_1 f_{+}^{-\alpha}).
\end{aligned}
\end{equation}

Next we compute $\braket{f_{-}^\alpha}{f_{+}^\beta}$ with the result being
\begin{equation} \label{eq:inner_alpha_beta}
\begin{aligned}
\braket{f_{-}^\alpha}{f_{+}^\beta} &= \phi(\alpha) \phi(\beta) \left[ \frac{\tfrac12-\beta}{\tfrac12+\alpha} \right]^n \int_s^\infty e^{-(\alpha + \beta)x} dx\\
&= \phi(\alpha) \phi(\beta) \left[ \frac{\tfrac12-\beta}{\tfrac12+\alpha} \right]^n \frac{e^{-(\alpha + \beta) s}} {\alpha + \beta} = \frac{f_{-}^\alpha(s) f_{+}^\beta(s)}{\alpha+\beta}.
\end{aligned}
\end{equation}
Since $f_{-}^\alpha,f_{+}^\beta,g_3,g_4$ are clearly analytic, see their representations in~(\ref{eq:fg_def}), then also the two terms involving $g_3$ and $g_4$ are analytic as the $\alpha+\beta$ prefactor cancels with the one in~(\ref{eq:inner_alpha_beta}). Their limits are given by
\begin{equation}
  \lim_{\beta\to -\alpha}(\alpha+\beta) g_3 \ketbra{f_{-}^\alpha}{f_{+}^\beta} =\mathsf{g}_3, \quad
  \lim_{\beta\to -\alpha}(\alpha+\beta) g_4 \ketbra{f_{-}^\alpha}{f_{+}^\beta} =\mathsf{g}_4.
\end{equation}

Finally, it remains to analyze the term $-\varepsilon_2+\ketbra{f_{-}^{-\alpha}}{f_{-}^\alpha}$ applied to $f_{+}^\beta$. We have
 \begin{equation} \label{eq:eps_fh}
 \begin{split}
   \braket{f_{-}^\alpha}{f_{+}^\beta} f_{-}^{-\alpha}(y) - \varepsilon_2 f_{+}^\beta(y) &= \frac{\phi(\beta) (\tfrac12-\beta)^n}{(\tfrac14-\alpha^2)^n} \left[ \frac{e^{-(\alpha + \beta)s + \alpha y}}{\alpha+\beta} + \int_s^\infty\!\!\!\sgn(y-u) e^{-\alpha|y-u|-\beta u} du \right] \\
  &= \frac{\phi(\beta) (\tfrac12-\beta)^n}{(\tfrac14-\alpha^2)^n} \left[ \frac{e^{-(\alpha + \beta)s + \alpha y}}{\alpha+\beta} + \frac{e^{- \beta y}-e^{(\alpha - \beta) s - \alpha y}} {\alpha-\beta} - \frac{e^{- \beta y}}{\alpha+\beta} \right] \\
  &= \frac{\phi(\beta) (\tfrac12-\beta)^n e^{-\beta s}}{(\tfrac14-\alpha^2)^n} \left[ \frac{e^{\beta (s-y)}-e^{\alpha(s-y)}}{\alpha - \beta} + \frac{ e^{\alpha(y-s)} - e^{-\beta (y-s)}} {\alpha+\beta}\right]
 \end{split}
 \end{equation}
 where in the second line inside the brackets, the second and third terms come from explicitly integrating $\int_s^y du \cdots$ and $\int_y^\infty du \cdots$ respectively. The $\beta\to -\alpha$ of this last term is
 \begin{equation}
 \lim_{\beta\to -\alpha} \braket{f_{-}^\alpha}{f_{+}^\beta} f_{-}^{-\alpha}(y) - \varepsilon_2 f_{+}^\beta(y) = \frac{\phi(-\alpha)e^{\alpha s}}{(\tfrac12-\alpha)^n} \left[\frac{\sinh(\alpha(y-s))}{\alpha} +(y-s) e^{\alpha (y-s)}  \right].
\end{equation}

It remains to discuss the decay properties. The bound in the first component follows directly from the fact that in the representation of $\widetilde{K}_{12}$ and $g_3$ we can take the contour as $|z-1/2|=1/2-\epsilon/2$. This decay is also correct for the term involving $\varepsilon_1$. Furthermore, the asymptotic behavior of $\widetilde{K}_{22}$ is coming from the poles at $z=\alpha,\beta$, and thus is $e^{-\min\{\alpha,\beta\} y}$. Similarly for $g_4$, the behavior is $e^{-\min\{\alpha,\pm \beta\} y}$. Finally, the behavior of~\eqref{eq:eps_fh} is clear from its final form. Thus, by choosing $\alpha,\beta\in [-1/2+\epsilon,1/2-\epsilon]$, the claimed bounds holds.
\end{proof}

\begin{lem} \label{lem:g_hat}
The kernel $\overline{G}$ is analytic for $\alpha, \beta \in (-1/2, 1/2)$. Moreover, for any $\alpha, \beta \in [-1/2+\epsilon,1/2-\epsilon]$ we have the following bounds:
\begin{equation}
\begin{aligned}
|\overline{G}_{11}(x,y)| &\leq C e^{(1/2-\epsilon) x} e^{-(1/2-\epsilon/2) y}, \\
|\overline{G}_{12}(x,y)| &\leq C e^{(1/2-\epsilon) x} e^{(1/2-\epsilon) y}, \\
|\overline{G}_{21}(x,y)| &\leq C e^{-(1/2-\epsilon/2) x} e^{-(1/2-\epsilon/2) y}, \\
|\overline{G}_{22}(x,y)| &\leq C e^{-(1/2-\epsilon/2) x} e^{(1/2-\epsilon) y}
\end{aligned}
\end{equation}
for some constant $C$ independent of $x,y$.
\end{lem}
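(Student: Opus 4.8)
The plan is to recognize that Lemma~\ref{lem:g_hat} is nothing more than a restatement, in terms of $\overline{G}$, of facts about $\overline{K}$ that have already been established. Since $\overline{G}=J^{-1}\overline{K}$, one has entry-wise
\[
\overline{G}(x,y)=\begin{pmatrix} -\overline{K}_{21}(x,y) & -\overline{K}_{22}(x,y) \\ \overline{K}_{11}(x,y) & \overline{K}_{12}(x,y)\end{pmatrix},
\]
so I would simply transport the known information about each $\overline{K}_{ij}$ through this identification. For analyticity: each entry of $\overline{G}$ is, up to a sign, some $\overline{K}_{ij}$, and Lemma~\ref{lem:Kbar} already states (and its proof establishes, by exhibiting integration contours that stay locally uniformly away from all singularities of the integrands) that $\overline{K}$ is analytic in $\alpha,\beta\in(-1/2,1/2)$; hence the first assertion of the lemma is immediate.

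For the bounds I would invoke the estimates~\eqref{eq:246}, proved inside the proof of Proposition~\ref{prop:cvgOfFredPf} for $\alpha,\beta\in[-1/2+\epsilon,1/2-\epsilon]$, which give exactly $|\overline{K}_{11}(x,y)|\le C e^{-(1/2-\epsilon/2)x}e^{-(1/2-\epsilon/2)y}$, $|\overline{K}_{12}(x,y)|\le C e^{-(1/2-\epsilon/2)x}e^{(1/2-\epsilon)y}$, $|\overline{K}_{21}(x,y)|\le C e^{(1/2-\epsilon)x}e^{-(1/2-\epsilon/2)y}$, and $|\overline{K}_{22}(x,y)|\le C e^{(1/2-\epsilon)x}e^{(1/2-\epsilon)y}$. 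Reading these through $\overline{G}_{11}=-\overline{K}_{21}$, $\overline{G}_{12}=-\overline{K}_{22}$, $\overline{G}_{21}=\overline{K}_{11}$, $\overline{G}_{22}=\overline{K}_{12}$ produces precisely the four displayed inequalities, with the same constant $C$. (If one prefers, one may re-derive these bounds from scratch by estimating the double contour integrals of Lemma~\ref{lem:Kbar} directly, keeping the $z$-contour at distance $\epsilon/2$ from $1/2$ to dodge the order-$(N-1)$ blow-up of $\phi$, and noting that the leading $y$-asymptotics come from the simple poles of the $w$-integrand at $-1/2,-\alpha,-\beta$, all of which sit at real part of modulus $\le 1/2-\epsilon$.)

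There is no substantive obstacle here: the whole content is already contained in Lemma~\ref{lem:Kbar} and estimate~\eqref{eq:246}, and the only thing requiring care is the bookkeeping — confirming that estimate~\eqref{eq:246} is genuinely uniform over the full compact square $[-1/2+\epsilon,1/2-\epsilon]^2$ (it is, since it is stated and proved there for exactly that range and the argument never uses $\beta>0$), and that the signs and exponents match up correctly through the matrix identity $\overline{G}=J^{-1}\overline{K}$. The write-up can therefore be kept to a few lines.
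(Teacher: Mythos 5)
Your proposal is correct and coincides with the paper's own argument, which dispatches this lemma in a single sentence by appealing to Lemma~\ref{lem:Kbar} and Proposition~\ref{prop:cvgOfFredPf}. Your bookkeeping through $\overline{G}=J^{-1}\overline{K}$, identifying $(\overline{G}_{11},\overline{G}_{12},\overline{G}_{21},\overline{G}_{22})=(-\overline{K}_{21},-\overline{K}_{22},\overline{K}_{11},\overline{K}_{12})$ and reading off the four bounds from~\eqref{eq:246}, is exactly what is left implicit there, and your side remark that~\eqref{eq:246} was proved uniformly on $[-1/2+\epsilon,1/2-\epsilon]^2$ without requiring $\beta>0$ is the right check to perform.
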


\begin{proof}
It is a rewriting of the results of Lemma~\ref{lem:Kbar} and Proposition~\ref{prop:cvgOfFredPf}.
\end{proof}

We conclude with the analyticity of the sought inner product.

\begin{prop}\label{prop:AnalyticityBigScalarProduct}
The term $\braket{Y_2}{(\Id - \overline{G})^{-1} \widehat{G} X_2}$ is analytic for $\alpha, \beta \in (-1/2, 1/2)$. Its $\beta \to -\alpha$ limit is given by
\begin{equation}
\lim_{\beta\to -\alpha} \braket{Y_2}{(\Id - \overline{G})^{-1} \widehat{G} X_2} =
\braket{-g_1 \quad g_2} { \begin{pmatrix} \mathsf{R}_{11} & \mathsf{R}_{12} \\ \mathsf{R}_{21} & \mathsf{R}_{22} \end{pmatrix} \begin{pmatrix} -h_1 \\ h_2 \end{pmatrix} }
\end{equation}
where for brevity we denoted $\mathsf{R} = (\Id - \overline{\mathsf{G}})^{-1}$ and where $h_1, h_2$ are as in~\eqref{eq:h_def}.
\end{prop}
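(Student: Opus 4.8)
The plan is to combine the three preceding lemmas into an analyticity-plus-limit argument for the Neumann (Fredholm) series of $(\Id-\overline G)^{-1}$, then pass to the limit term by term. First I would write
\begin{equation}
\braket{Y_2}{(\Id-\overline G)^{-1}\widehat G X_2} = \sum_{k\geq 0} \braket{Y_2}{\overline G^{\,k}\,\widehat G X_2},
\end{equation}
which is legitimate provided the series converges. To see this, I would use the exponential bounds of Lemma~\ref{lem:g_hat} on the entries of $\overline G$, together with the bounds of Lemma~\ref{lem:y2} on $Y_2=(-g_1\ \ g_2)$ and of Lemma~\ref{lem:g_x2} on $\widehat G X_2$. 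The crucial bookkeeping point is that the four entries of $\overline G$ carry conjugation-type weights $e^{\pm(1/2-\epsilon)x}e^{\pm(1/2-\epsilon/2)y}$ whose signs are arranged exactly so that each composition $\overline G^{\,k}$ picks up, at every internal integration variable, a net decaying factor $e^{-\epsilon(\,\cdot\,)/4}$ on $(s,\infty)$; combined with the matching decay/growth of $Y_2$ on the left and of $\widehat G X_2$ on the right (the growth $e^{(1/2-\epsilon)y}$ of the bad components is always paired against a decay $e^{-(1/2-\epsilon/2)y}$), each term of the series is finite and the series is dominated by a convergent geometric series with ratio $o(1)$ uniformly for $\alpha,\beta\in[-1/2+\epsilon,1/2-\epsilon]$. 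Hence the sum is a locally uniform limit of analytic functions (each term is analytic by the analyticity statements in Lemmas~\ref{lem:g_x2} and~\ref{lem:g_hat}), so by Weierstrass it is analytic in $\alpha,\beta\in(-1/2,1/2)$.

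For the value of the limit, I would pass $\beta\to-\alpha$ inside the convergent series. Since the geometric domination is uniform in $\beta$ near $-\alpha$, dominated convergence for series applies and
\begin{equation}
\lim_{\beta\to-\alpha}\braket{Y_2}{(\Id-\overline G)^{-1}\widehat G X_2}
= \sum_{k\geq 0}\braket{-g_1\ \ g_2}{\,\overline{\mathsf G}^{\,k}\begin{pmatrix}-h_1\\ h_2\end{pmatrix}}
= \braket{-g_1\ \ g_2}{(\Id-\overline{\mathsf G})^{-1}\begin{pmatrix}-h_1\\ h_2\end{pmatrix}},
\end{equation}
using: $Y_2$ is independent of $\beta$ (Lemma~\ref{lem:y2}); $\overline G\to\overline{\mathsf G}$ entrywise with uniform bounds (Lemma~\ref{lem:g_hat} together with Lemma~\ref{lem:Kbar}/Proposition~\ref{prop:cvgOfFredPf}, which also identify the limiting entries of $\overline{\mathsf G}$ via $\overline{\mathsf K}$); and $\widehat G X_2\to\ket{(-h_1,\ h_2)^{\mathrm t}}$ with uniform bounds (Lemma~\ref{lem:g_x2}). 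Writing $\mathsf R=(\Id-\overline{\mathsf G})^{-1}$ and expanding the $2\times2$ matrix product gives the stated expression.

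The main obstacle I expect is not the analyticity bookkeeping but making the interchange of limit and inversion fully rigorous at $\alpha=\beta=0$ (and more generally along $\beta=\alpha$): the individual pieces of $\widehat G X_2$ produced in Lemma~\ref{lem:g_x2} are already arranged to be analytic there, but one must be careful that the Neumann series — rather than a closed-form resolvent — is what carries the limit, so that no spurious $1/(\beta-\alpha)$ or $1/(\alpha+\beta)$ singularity sneaks back in through $(\Id-\overline G)^{-1}$. This is handled precisely because $\overline G$ itself (as opposed to the discarded $O$ and the pieces absorbed into $\widehat G$) is manifestly analytic and uniformly bounded across $\beta=\alpha$ by Lemma~\ref{lem:g_hat}; the uniform geometric domination then legitimizes termwise passage to the limit. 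A secondary technical point is to justify that the limit can be moved inside the (infinitely many) spatial integrals defining each $\overline G^{\,k}$, which again follows from the uniform exponential bounds and dominated convergence. Once these are in place, the conclusion, and in particular the matrix form in the statement, is immediate.
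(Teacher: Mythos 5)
Your overall plan — combine the exponential bounds of Lemmas~\ref{lem:y2},~\ref{lem:g_x2} and~\ref{lem:g_hat} with a termwise passage to the $\beta\to-\alpha$ limit — is the right one, but there is a genuine gap in the device you use to handle the resolvent. You expand $(\Id-\overline G)^{-1}$ as the Neumann series $\sum_{k\ge 0}\overline G^{\,k}$ and assert domination by a convergent geometric series. Nothing in the cited lemmas justifies this. After the natural diagonal conjugation that cancels the $e^{\pm\mu x}$ growth against matching decay in adjacent factors, Lemma~\ref{lem:g_hat} gives a pointwise bound of order $C e^{-\epsilon(x+y)/4}$ on $(s,\infty)^2$, hence a Hilbert--Schmidt kernel, but its Hilbert--Schmidt norm is of order $C e^{-\epsilon s/2}/\epsilon$, which is \emph{not} small for moderate $s$, and the spectral radius of $\overline G$ is nowhere controlled. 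Invertibility of $\Id-\overline G$ does hold, because $\det(\Id-\overline G)=\pf(J-\overline K)^{2}$ is the square of a nonzero probability (cf.~Remark~\ref{rem:k_bar}), but that only excludes $1$ from the spectrum, which is strictly weaker than convergence of the Neumann series; both your analyticity argument (Weierstrass) and your interchange of limit and sum collapse without it.

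The paper is conscious of exactly this point: in the proof of Proposition~\ref{prop:kernel_symm} the Neumann expansion is invoked only after replacing $\overline G$ by $\omega\overline G$ with $\omega$ small and analytically continuing in $\omega$; and Remark~\ref{rem:Inverse} together with Lemma~\ref{lem:3.19} recasts $\pf(J-\overline K)\,\braket{Y_2}{(\Id-\overline G)^{-1}\widehat G X_2}$ as a difference of two Fredholm Pfaffians, whose series expansions always converge by Hadamard's bound and the $1/n!$ factor, and whose terms are manifestly analytic in $\alpha,\beta$. To repair your argument you should either route through Lemma~\ref{lem:3.19} (and then divide by the nonvanishing analytic factor $\pf(J-\overline K)$, using Proposition~\ref{prop:cvgOfFredPf}), or first establish your estimates for $s$ large enough that the geometric domination is genuine and then analytically continue in $s$. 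Once the resolvent is handled rigorously, the rest of your argument — independence of $Y_2$ from $\beta$, convergence $\widehat G X_2\to(-h_1,\,h_2)^{\mathrm t}$ and $\overline G\to\overline{\mathsf G}$ with uniform bounds, dominated convergence over the spatial integrals — is correct and parallels the paper's.
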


\begin{proof}
Due to the bounds of Lemma~\ref{lem:y2},~\ref{lem:g_x2}, and~\ref{lem:g_hat}, when multiplying the different terms, in each integral on $(s,\infty)$ we have integrands bounded for instance by $e^{-\epsilon x/2}$. Thus the product is well-defined. Analyticity follows from the analyticity of the different entries of the scalar product.
\end{proof}

\begin{rem}
  \label{rem:Inverse}
The formula we obtained might not look very practical to get numerical results due to the $(\Id-\overline{G})^{-1}=(\Id-J^{-1}\overline{K})^{-1}$ term. However, we can always write \mbox{$\pf(\Id-\overline{K})\braket{Y_2}{(\Id - J^{-1}\overline{K})^{-1} \widehat{G} X_2}$} as a difference of two Fredholm Pfaffians, see Lemma~\ref{lem:3.19}. This kind of property has been noticed already by Imamura--Sasamoto~\cite{IS12} in the context of the stationary KPZ equation. Thus one does not strictly speaking ever need to verify that the inverse is well-defined; the formulation with the inverse can be thought merely as compact notation for~\eqref{eq:383} below.
\end{rem}

\begin{lem}\label{lem:3.19}
Let $K$ be a $2\times 2$ anti-symmetric kernel and $J(x,y)=\delta_{x,y} \left( \begin{smallmatrix} 0 & 1 \\ -1 & 0 \end{smallmatrix} \right)$. Let $a,b,c,d$ be functions such that the scalar products in the following formulas are well-defined. Then,
\begin{multline}\label{eq:383}
\pf\left(J-K\right) \braket {c \quad d} { (\Id-J^{-1}K)^{-1} \begin{pmatrix} a \\ b \end{pmatrix} }  \\
=\pf\left(J-K\right)-\pf\bigg(J-K-\ketbra{\begin{array}{c} b \\ -a \end{array}}{c \quad d}-\ketbra{\begin{array}{c} c \\ d \end{array}}{-b \quad a}\bigg).
\end{multline}
\end{lem}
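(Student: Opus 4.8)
The plan is to reduce the identity to a purely algebraic fact about Fredholm Pfaffians under a rank-two perturbation, exactly analogous to the rank-one perturbation formula for Fredholm determinants. First I would pass from Pfaffians to determinants via the standard identity $\pf(J-M)^2 = \det(\Id - J^{-1}M)$, valid for antisymmetric $M$; here one must check that the perturbed kernel on the right of~\eqref{eq:383}, namely $K + \ketbra{(b,-a)^{\mathrm{t}}}{c\ d} + \ketbra{(c,d)^{\mathrm{t}}}{-b\ a}$, is again antisymmetric as a $2\times 2$ matrix kernel, i.e.\ that its transpose equals its negative. This is a direct computation: the two rank-one pieces are built precisely so that swapping $x\leftrightarrow y$ and transposing the $2\times2$ matrix structure sends the first term to minus the second and vice versa, while $K$ itself is antisymmetric by hypothesis. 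Once antisymmetry is in hand, the right-hand side squared equals $\det(\Id - J^{-1}K - J^{-1}P)$ where $P$ denotes the rank-two perturbation.

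Next I would expand $\det(\Id - \overline G - J^{-1}P)$ with $\overline G = J^{-1}K$, writing $J^{-1}P$ as a sum of two outer products $\ketbra{u_1}{v_1} + \ketbra{u_2}{v_2}$ read off from the explicit form of $P$ (after multiplying by $J^{-1}$, the column vectors $(b,-a)^{\mathrm{t}}$ and $(c,d)^{\mathrm{t}}$ get shuffled into the vectors appearing in the bra-ket of the left-hand side). Using the rank-two determinant identity — the same one invoked in~\eqref{eq:F_det}, namely that for a finite-rank perturbation $\det(\Id - A - \sum_i \ketbra{u_i}{v_i}) = \det(\Id - A)\det(\Id - [\braket{v_i}{(\Id-A)^{-1}u_j}]_{i,j})$ — one gets $\pf(J-K)^2$ times the determinant of a $2\times 2$ scalar matrix of inner products $\braket{v_i}{(\Id-\overline G)^{-1}u_j}$. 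The structural cancellations available here (the same symmetry relations $\overline G_{11}(x,y)=\overline G_{22}(y,x)$ and antisymmetry of $\overline G_{12},\overline G_{21}$ used in Lemma~\ref{lem:GbarToGhat}) should collapse that $2\times2$ determinant to a perfect square, say $(1 - \braket{c\ d}{(\Id-J^{-1}K)^{-1}(a,b)^{\mathrm{t}}})^2$, mirroring the passage from the first to the second line of~\eqref{eq:F_det}.

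Finally I would take square roots: the right-hand side of~\eqref{eq:383} equals $\pf(J-K) - \pf(J-K)\big(1 - \braket{c\ d}{(\Id-J^{-1}K)^{-1}(a,b)^{\mathrm{t}}}\big) = \pf(J-K)\braket{c\ d}{(\Id-J^{-1}K)^{-1}(a,b)^{\mathrm{t}}}$, which is the left-hand side. One must pin down the sign of the square root, which is the point requiring a little care: both Pfaffians are continuous in the perturbation parameters and agree (with the $+$ sign) when the perturbation vanishes, so the correct branch is forced by continuity, provided the scalar products remain well-defined along the way — which is exactly the hypothesis of the lemma. The main obstacle I anticipate is not any single step but the bookkeeping: correctly tracking how $J^{-1}$ acts on the $2\times2$ block vectors so that the four inner products coming out of the rank-two determinant identity are precisely arranged to factor as a square, rather than producing spurious cross terms. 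This is where the specific pairing of $(b,-a)^{\mathrm{t}}$ with $(c\ d)$ and $(c,d)^{\mathrm{t}}$ with $(-b\ a)$ in the statement is doing essential work, and verifying it is the crux of the proof.
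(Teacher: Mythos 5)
Your proposal is correct and takes essentially the same approach as the paper, which simply refers back to the rank-two perturbation computation of~\eqref{eq:F_det}: square the Pfaffian, apply the finite-rank determinant identity, use the symmetry relations of Proposition~\ref{prop:kernel_symm} to kill the cross terms and equate the diagonals of the resulting $2\times2$ matrix, and take the square root with the sign fixed by continuity. Your explicit check that the perturbation $\ketbra{(b,-a)^{\mathrm{t}}}{c\ d} + \ketbra{(c,d)^{\mathrm{t}}}{-b\ a}$ is antisymmetric, and that $J^{-1}$ maps its column vectors to $(a,b)^{\mathrm{t}}$ and $(-d,c)^{\mathrm{t}}$ in just the way needed to make the $2\times2$ determinant a perfect square, is exactly the bookkeeping the paper leaves implicit.
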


\begin{proof}
The proof consists of similar computations to those leading to~\eqref{eq:F_det}.
\end{proof}

\begin{proof}[Proof of Theorem~\ref{thm:main_thm_fin}]
The shift argument, Lemma~\ref{lem:shift}, and Theorem~\ref{thm:exp_corr} together give a formula for the distribution for $\alpha,\beta$ with $\beta>0$. The analytic continuation and the limits provided in Proposition~\ref{prop:cvgOfFredPf}, Lemma~\ref{lem:inner1}, and Proposition~\ref{prop:AnalyticityBigScalarProduct} imply the claimed result.
\end{proof}

\section{Large time asymptotics: proof of Theorem~\ref{thm:main_thm_asymp}} \label{sec:proofAsymptotic}

In this section we prove our main asymptotic result. Let us recall the scaling \eqref{eq:ScalingS}, namely
\begin{equation}
s=4 N -2 u 2^{5/3}N^{2/3} + S\, 2^{4/3}N^{1/3}.
\end{equation}
Accordingly, in the functions and/or kernels, we need to scale $x,y$ in the same way, i.e.
\begin{equation}
  (x,y)=4 N -2 u 2^{5/3}N^{2/3} + (X,Y) 2^{4/3} N^{1/3}.
\end{equation}
Also, in the integrals we will consider the change of variables
\begin{equation}
 z=\zeta/(2^{4/3}N^{1/3}), \quad w=\omega/(2^{4/3} N^{1/3}).
\end{equation}
Furthermore, since the Fredholm expansion
\begin{equation}
\pf(J - \overline{\mathsf{K}})_{L^2(s,\infty) \times L^2(s,\infty)}=\sum_{m\geq 0}\frac{(-1)^m}{m!} \int_{s}^\infty \dots \int_{s}^\infty \pf_{1 \leq i < j \leq m} \overline{\mathsf{K}}^{(m)} (x_i, x_j) \prod_{i=1}^m d x_i,
\end{equation}
the $m\times m$ Pfaffian has to be multiplied by the volume element $(2^{4/3}N^{1/3})^m$\footnote{Here $\overline{\mathsf{K}}^{(m)} (x_i, x_j)$ is the $2m \times 2m$ anti-symmetric matrix with $2 \times 2$ block at $(i, j)$ given by $\overline{\mathsf{K}} (x_i, x_j)$ for $1 \leq i, j \leq m$.}. As our Pfaffian kernel has a $2\times 2$ structure, not all the kernel elements have to be multiplied by the same volume element. Indeed, in our case, the rescaled and conjugated kernel elements are as follows:
\begin{equation}
 \begin{aligned}
  {\mathcal{K}}_{11}^{\rm resc}(X,Y) &= (2^{4/3} N^{1/3})^2 2^{2n} {\mathcal K}_{11}(x,y), \\
  {\mathcal{K}}_{12}^{\rm resc}(X,Y) &= 2^{4/3}N^{1/3}{\mathcal K}_{12}(x,y), \\
  {\mathcal{K}}_{22}^{\rm resc}(X,Y) &= 2^{-2n}{\mathcal K}_{22}(x,y)
 \end{aligned}
\end{equation}
where $\mathcal{K}$ can stand for $\overline{\mathsf{K}}$ or for $\widetilde{K}$. We also set ${\cal E}^{\rm resc}_k(X,Y)=2^{-2n}\varepsilon_{k}(x,y)$, $k=0,1$ or empty. Similarly, we rescale the functions
\begin{equation}
 f_{+}^{-\delta,{\rm resc}}(X) = 2^n f_{+}^{-\alpha}(x),\quad
  h^{\rm resc}_1(X)= 2^{-4/3} N^{-1/3}2^{-n} h_1(x), \quad h^{\rm resc}_2(X)=2^{n} h_2(x)
\end{equation}
as well as
\begin{alignat}{2}
  e^{\delta, {\rm resc}}(S) &=2^{-4/3} N^{-1/3} e^{\alpha}(s),\quad &
 j^{\delta, {\rm resc}}(S,X) &=2^{-4/3} N^{-1/3} 2^{-n}j^{\delta}(s,x), \nonumber\\
 g^{\rm resc}_1(X) &= 2^{4/3}N^{1/3}2^{-n} g_1(x), \quad & g^{\rm resc}_2 (X) &=2^{-n} g_2(x) , \\
  \mathsf{g}^{\rm resc}_3(X) &= 2^{4/3}N^{1/3}2^{-n} \mathsf{g}_3(x), \quad & \mathsf{g}^{\rm resc}_4(X) &= 2^{-n} \mathsf{g}_4(x).\nonumber
\end{alignat}

\begin{figure}[t!]
 \centering
 \includegraphics[height=4cm]{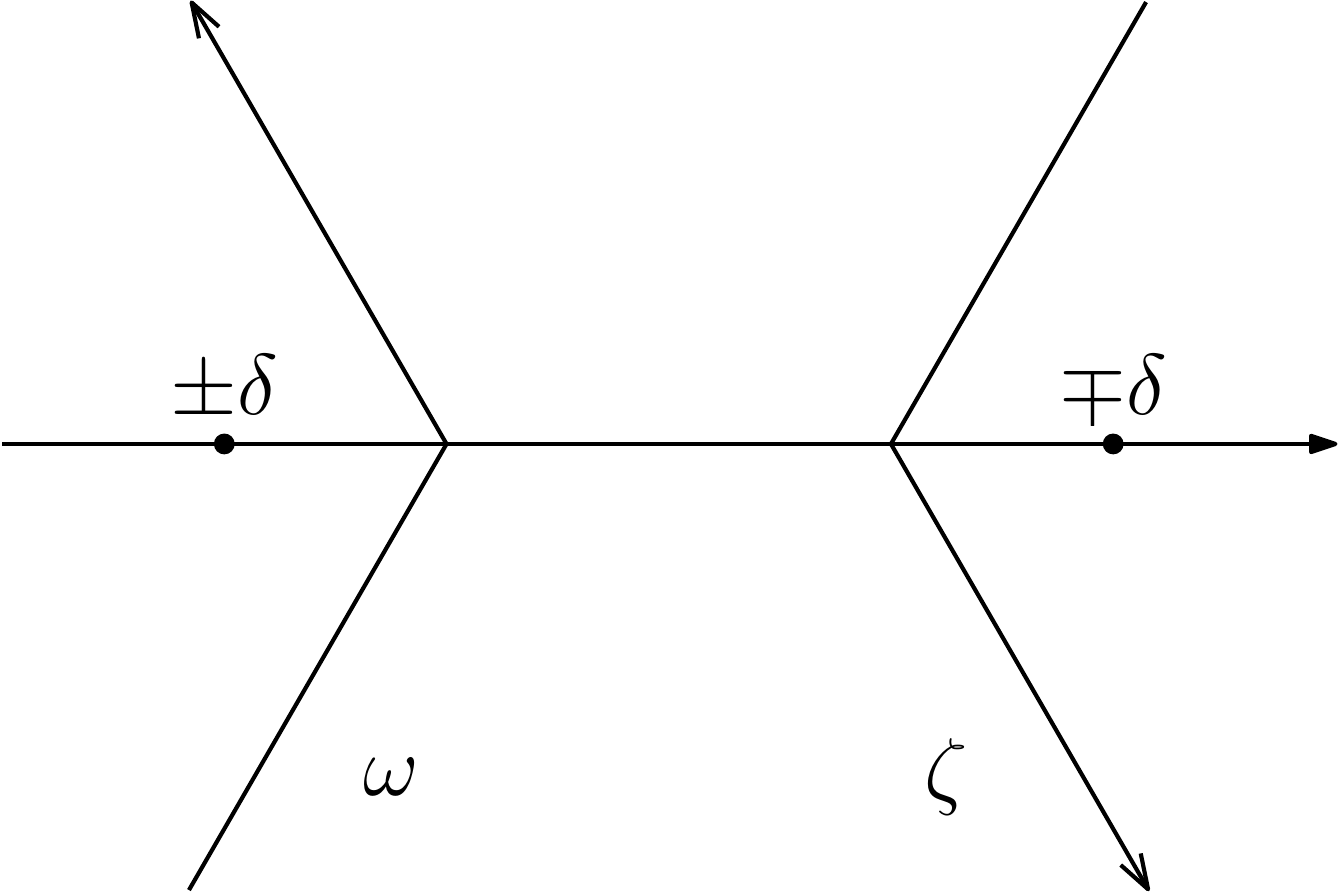}
 \caption{The two Airy integration contours with acute angles of $\pi/3$ with the horizontal axis. Note they have opposite orientations.}
 \label{fig:airy_contours}
\end{figure}

Both the functions and the kernels have a similar structure to the ones of the full-space stationary case, analyzed in great detail in~\cite{BFP09}. The integrals have terms of the form $e^{N f_0(z)+N^{2/3}f_1(z)+N^{1/3}f_2(z)}$, a similar term in $w$ in the case of double integrals, and a finite product of terms independent of $N$. Since the function $f_0$ is the same as the one in~\cite{BFP09}, the steepest descent paths used for the asymptotics and for the uniform bounds are the same. The only minor differences are in the functions $f_1$ and $f_2$ and in the $N$-independent terms, but these do not generate any issues in the asymptotic analysis. For this reason we are not going to repeat all the details of the asymptotic analysis, but rather indicate which Lemmas in~\cite{BFP09} we are using analogously. A detailed description on the general approach in the asymptotic of single integrals having Airy-type behaviors can be found for instance in \cite[Section 6.1]{BF08}. This follows the scheme introduced in our field by Gravner--Tracy--Widom in~\cite{GTW00}.

The limits of the functions entering in the statement of Theorem~\ref{thm:main_thm_fin} are the following.

\begin{lem}\label{lem:AsymptoticsFunctions}
For any given $L>0$, the following limits hold uniformly for $X\in [-L,L]$:
\begin{equation}
\lim_{N\to\infty} f^{-\delta,\rm resc}_+(X)= \mathpzc{f}^{-\delta,u}(X),\quad
\lim_{N\to\infty} e^{\delta,\rm resc}(S)=\mathpzc{e}^{\delta, u} (X),\quad
\lim_{N\to\infty} j^{\delta,\rm resc}(S,X)=  \mathpzc{j}^{\delta, u}(S,X)
\end{equation}
as well as
\begin{equation}
\begin{aligned}
\lim_{N\to\infty} g_1^{\rm resc}(X) =\mathpzc{g}_1^{\delta, u} (X),\quad
&\lim_{N\to\infty} g_2^{\rm resc}(X) = \mathpzc{g}_2^{\delta, u} (X),\\
\lim_{N\to\infty} \mathsf{g}^{\rm resc}_3(X)= \mathpzc{g}_3^{\delta, u} (X) ,\quad
&\lim_{N\to\infty} \mathsf{g}^{\rm resc}_4(X)= \mathpzc{g}_4^{\delta, u} (X).
\end{aligned}
\end{equation}
Furthermore, for any $X\geq -L$, we have the following bounds which hold uniformly in $N$:
\begin{equation}
|f^{-\delta,\rm resc}_+(X)|\leq C e^{\delta X},\quad
|\mathpzc{j}^{\delta, u}(S,X)|\leq C |X|e^{|\delta X|}
\end{equation}
for some constant $C$. For any $\kappa>0$ we have
\begin{equation}
\begin{aligned}
|g_1^{\rm resc}(X)|\leq C e^{-\kappa  X},\quad &|g_2^{\rm resc}(X)|\leq C (e^{-\delta X}+ e^{-\kappa X}),\\
|\mathsf{g}^{\rm resc}_3(X)|\leq C e^{-\kappa X},\quad & |\mathsf{g}^{\rm resc}_4(X)|\leq C (|X|e^{|\delta X|}+e^{-\kappa X}).
\end{aligned}
\end{equation}
\end{lem}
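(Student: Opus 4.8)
The plan is to carry out a standard steepest-descent analysis for each of the functions, all of which are (single) contour integrals of the form $\oint \frac{dz}{2\pi\I}\, e^{N f_0(z) + N^{2/3} f_1(z) + N^{1/3} f_2(z)} \cdot r(z)$ for an $N$-independent rational prefactor $r$. First I would record the key expansion: with the scaling $z = \zeta/(2^{4/3}N^{1/3})$ and $x = 4N - 2u 2^{5/3}N^{2/3} + X 2^{4/3}N^{1/3}$, one has
\begin{equation}
\Phi(x,z)\left(\tfrac12 - z\right)^n = e^{\frac{\zeta^3}{3} - \zeta^2 u - \zeta X}\big(1 + \Or(N^{-1/3})\big),
\end{equation}
and similarly $\Phi(x,z)/(\tfrac12+z)^n = e^{\frac{\zeta^3}{3} + \zeta^2 u - \zeta X}(1+\Or(N^{-1/3}))$, while the pole $z = \alpha = \delta 2^{-4/3}N^{-1/3}$ becomes $\zeta = \delta$, the pole $z = -\alpha$ becomes $\zeta = -\delta$, and the factor $2^{\pm n}$ in the rescaled definitions exactly absorbs the $(\tfrac14)^{\mp n/2}$-type constants. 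The prefactors scale as $\tfrac{z+\alpha}{2z} \to \tfrac{\zeta+\delta}{2\zeta}$, $\tfrac1{z-\alpha} \to \tfrac1{\zeta - \delta}$, $\tfrac{2z}{(z-\alpha)(z+\alpha)^2} \to 2^{4/3}N^{1/3}\cdot \tfrac{2\zeta}{(\zeta-\delta)(\zeta+\delta)^2}$ (the extra $N^{1/3}$ being compensated by the $2^{4/3}N^{1/3}$ in $\mathsf{g}_4^{\rm resc}$), and so on for each function. This is precisely the matching one checks term by term against the definitions of $\mathpzc{f}^{-\delta,u}$, $\mathpzc{e}^{\delta,u}$, $\mathpzc{j}^{\delta,u}$, $\mathpzc{g}_1^{\delta,u},\dots,\mathpzc{g}_4^{\delta,u}$, and it identifies the limiting Airy-type contours: a contour $\Gamma_{1/2}$ near $z = 1/2$ is irrelevant after rescaling (it contributes negligibly, since $\Re f_0 < 0$ there), whereas the portion of the contour that must pass to the correct side of $z = \pm\alpha$ becomes, after rescaling, the contour $\zcd$ passing to the correct side of $\zeta = \pm\delta$.

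Next I would justify the convergence rigorously. The function $f_0(z) = -4z + (N-1)/N \log\frac{1/2+z}{1/2-z}$ has a double critical point at $z=0$ (equivalently $\zeta = 0$) with $f_0'''(0) > 0$, exactly as in~\cite{BFP09}; so I would deform each contour onto the standard steepest-descent path through $0$ leaving at angles $\pm\pi/3$ (Figure~\ref{fig:airy_contours}), picking up along the way the residues at $\zeta = \pm\delta$ whenever the deformation crosses them — these residues are themselves $N$-independent in the limit and produce the explicit exponential/polynomial pieces appearing in $\mathpzc{e}^{\delta,u}$, $\mathpzc{j}^{\delta,u}$, and (via $\sinh$) the $\mathpzc{j}$ term. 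On the steepest-descent path, the contribution splits into a small neighborhood $|\zeta| \leq N^{\kappa}$ of the origin, where the Taylor expansion of $Nf_0 + N^{2/3}f_1 + N^{1/3}f_2$ gives $\frac{\zeta^3}{3} \mp \zeta^2 u - \zeta X + \Or(N^{-1/3})$ uniformly for $X \in [-L,L]$, and a tail, where $\Re f_0$ is bounded away from $0$ so the contribution is exponentially small. Dominated convergence (with the uniform bound $|e^{\frac{\zeta^3}{3}}| = e^{-\Re(\zeta^3)/3}$ decaying like $e^{-c|\zeta|^3}$ on the rays) then yields the claimed limits, uniformly on $X \in [-L,L]$. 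This is the scheme of Gravner--Tracy--Widom~\cite{GTW00} and~\cite[Section 6.1]{BF08}; since $f_0$ is identical to that of~\cite{BFP09}, I would cite the corresponding single-integral lemmas there rather than reproduce the estimates.

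Finally, the uniform-in-$N$ bounds for $X \geq -L$ come from the same contour decomposition, keeping track of which exponential dominates: the residue at $\zeta = \delta$ (when crossed) contributes $e^{\delta X}$-type growth, giving $|f^{-\delta,\rm resc}_+(X)| \leq C e^{\delta X}$ and the $|X| e^{|\delta X|}$ bound for $\mathpzc{j}^{\delta,u}$ and $\mathsf{g}_4^{\rm resc}$ (the factor $|X|$ arising from the double pole at $\zeta = \pm\delta$, i.e.\ the $(X-S)e^{\delta(X-S)}$ term); the integral along the steepest-descent contour through $0$ can instead be bounded, for any $\kappa > 0$, by shifting it slightly to $\Re\zeta = \kappa$ near the origin before it rejoins the rays, producing the $e^{-\kappa X}$ decay. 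For $g_1^{\rm resc}$ and $\mathsf{g}_3^{\rm resc}$ there is no relevant residue, so only the $e^{-\kappa X}$ term survives; for $g_2^{\rm resc}$ and $\mathsf{g}_4^{\rm resc}$ one gets the sum of the residue term and the shifted-contour term. The main obstacle, as in all such arguments, is the bookkeeping: making the contour deformations consistent so that the limiting contours $\zcd$ sit on the prescribed side of $\pm\delta$, checking that the $N^{2/3}$ and $N^{1/3}$ corrections $f_1, f_2$ really do converge to $\pm\zeta^2 u$ and $-\zeta X$ with the stated signs for each of the two conjugations ($f_+^{-\delta}$ uses one sign, $g_2,\mathsf{g}_4$ the other), and verifying the exact cancellation of the $2^{\pm n}$ and $N^{\pm 1/3}$ prefactors in the definitions of the rescaled objects — none of which is deep, but all of which must be done carefully to land on exactly the formulas stated.
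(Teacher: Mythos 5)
Your plan is correct and takes essentially the same route as the paper, which likewise invokes the single-integral steepest-descent lemmas of~\cite{BFP09} (Lemmas~4.6 and~4.7), evaluates $f^{-\delta,\rm resc}_+$ and $j^{\delta,\rm resc}$ directly, and obtains the bounds by separating the residues at $z=\pm\alpha$ (giving the $e^{\mp\delta X}$ and $|X|e^{|\delta X|}$ pieces) from the Airy-type decay along the contour near $1/2$ (giving $e^{-\kappa X}$). One bookkeeping caveat: the displayed expansion $\Phi(x,z)\left(\tfrac12-z\right)^n = e^{\zeta^3/3 - \zeta^2 u - \zeta X}(1+\Or(N^{-1/3}))$ should carry a $2^n$ prefactor on the left (since $(\tfrac12-z)^n\sim 2^{-n}$ near $z=0$), which is precisely the factor you later note is absorbed by the $2^{\pm n}$ in the rescaled definitions.
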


\begin{proof}
Inserting the new variables, we have $f^{-\delta,\rm resc}_+(X)= e^{\delta X} e^{Q_N}$ with $Q_N$ independent of $X$ and with $Q_N\to -\delta^3/3-\delta^2 u$ as $N\to\infty$. The limit of $e^{\delta,\rm resc}(S)$ follows the patterns of \cite[Lemma 4.6]{BFP09}. For $j^{\delta,\rm resc}(S,X)$ we have
\begin{equation}
  j^{\delta,\rm resc}(S,X) = \left[\frac{\sinh{\delta(X-S)}}{\delta}+ (X-S)e^{\delta(X-S)}\right] 2^{-n} f_-^{-\alpha}(s)
\end{equation}
and the last term is analyzed as $f^{-\delta,\rm resc}_+$.

The limits of the $g$ functions and their bounds are obtained as in~\cite[Lemma 4.7]{BFP09}. The terms $C e^{-\kappa X}$ come from the integrals with the contours to the right of the poles $\pm\alpha$ (if present), since the real decay is Airy-like, i.e.\,$e^{-c X^{3/2}}$. The contributions of the poles at $\alpha$ are bounded by $C e^{-\alpha X}$, while the pole of order $2$ in $-\alpha$ is bounded by $C |X| e^{\alpha X}$.
\end{proof}

The limits of the kernels are the following.

\begin{lem}\label{lem:AsymptoticsKernels}
For any given $L>0$, the following limits hold uniformly for $X,Y\in [-L,L]$:
\begin{equation}
\lim_{N\to\infty}\overline{\mathsf{K}}^{\rm resc}_{ij}(X,Y)= \overline{\mathcal{A}}_{ij} (X, Y),\quad i,j\in\{1,2\}.
\end{equation}
Furthermore, for any $X,Y\geq -L$ and $\kappa > 0$, we have the following bounds which hold uniformly in $N$:
{\small \begin{alignat}{2}
|\overline{\mathsf{K}}^{\rm resc}_{11}(X,Y)|&\leq C e^{-\kappa (X+Y)},\quad &|\overline{\mathsf{K}}^{\rm resc}_{12}(X,Y)|&\leq C (e^{-\kappa (X+Y)}+e^{-\kappa X} e^{\delta Y}),\nonumber\\
|\overline{\mathsf{K}}^{\rm resc}_{21}(X,Y)|&\leq C (e^{-\kappa (X+Y)}+e^{\delta X} e^{-\kappa Y}),\quad & |\overline{\mathsf{K}}^{\rm resc}_{22}(X,Y)|&\leq |{\cal E}^{\rm resc}(X,Y)|+C (e^{-\kappa X}e^{\delta Y}+e^{\delta X} e^{-\kappa Y}),\nonumber\\
|{\cal E}^{\rm resc}_0(X,Y)|&\leq C e^{\delta |X-Y|},\quad & |{\cal E}^{\rm resc}_1(X,Y)|&\leq C e^{-(|\delta|+\kappa) |X-Y|}
\end{alignat}}
for some constant $C$.
\end{lem}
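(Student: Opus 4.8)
The plan is to obtain both the pointwise limits and the uniform-in-$N$ bounds for the rescaled kernel elements $\overline{\mathsf{K}}^{\rm resc}_{ij}$ by the standard steepest-descent machinery, following the blueprint laid out in~\cite{BFP09} for the full-space stationary case and, for the general scheme of Airy-type single- and double-integral asymptotics, in~\cite[Section 6.1]{BF08} and~\cite{GTW00}. Concretely, I would insert the change of variables $z=\zeta/(2^{4/3}N^{1/3})$, $w=\omega/(2^{4/3}N^{1/3})$ and the scaling of $x,y,s$ into the contour-integral formulas for $\overline{\mathsf{K}}_{11},\overline{\mathsf{K}}_{12},\overline{\mathsf{K}}_{22}$ from Lemma~\ref{lem:Kbar}, and verify that in each case the exponent takes the form $N f_0(z)+N^{2/3}f_1(z)+N^{1/3}f_2(z)$ (plus a similar term in $w$ for the double integrals, and an $N$-independent rational prefactor). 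Since $f_0$ coincides with the function appearing in~\cite{BFP09}, the steepest-descent contours and the associated global bounds carry over verbatim; only $f_1,f_2$ and the rational prefactors differ, and these differences do not affect the location of the critical point or the decay along the tails.

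The key steps, in order: (i) rewrite each $\overline{\mathsf{K}}_{ij}$ after rescaling, isolating the exponential factor with cubic leading term and the bounded rational factor; (ii) deform the $z$- and $w$-contours to the steepest-descent paths through the double critical point at $\zeta=0$ (which, after rescaling, become the Airy contours $\zcd$ and $\wcu$ of Figure~\ref{fig:airy_contours}), keeping careful track of which poles ($\pm\alpha\to\pm\delta 2^{-4/3}N^{-1/3}$, i.e.\ $\pm\delta$ after rescaling) must be included inside or crossed during the deformation, matching the contour prescriptions in~\eqref{eq:227}; (iii) on the local part of the contour (within $\Or(1)$ of the critical point in rescaled variables) perform a dominated-convergence argument to get the pointwise limit to $\overline{\mathcal{A}}_{ij}$, while on the global (tail) part use the negativity of $\Re f_0$ along the steepest-descent direction to bound the contribution by a constant times $e^{-cN}$ times the decay factor; (iv) collect the contributions of the simple or double poles at $\pm\delta$ that were picked up — these yield precisely the $\mathcal{E}_0$, $\mathcal{E}_1$ terms and the $e^{\delta Y}$, $e^{-\kappa X}$ type contributions in the bounds; (v) assemble the pieces to read off both the limit and the claimed uniform bounds, with $\kappa>0$ arbitrary coming from the fact that the genuine decay along the Airy contour is super-exponential ($e^{-cX^{3/2}}$) and hence beats any $e^{-\kappa X}$, whereas the pole contributions only decay like $e^{\delta X}$ or $e^{-\delta X}$.

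The main obstacle, as in~\cite{BFP09}, is the bookkeeping of poles during the contour deformations, together with establishing the uniform-in-$N$ bounds in the regime $X,Y\geq -L$ rather than on a compact set: one must choose modified steepest-descent contours (straight Airy contours suitably truncated, then continued vertically or along rays) that simultaneously stay on the correct side of the moving poles $\pm\alpha$, produce the $e^{-cN|X|^{3/2}}$-type decay needed far out, and yield the stated exponential bounds with the correct signs of $\delta$ near the critical point. The $\overline{\mathsf{K}}_{22}$ entry is the most delicate since it contains the extra $\varepsilon=\varepsilon_0+\varepsilon_1$ term whose rescaled limit $\mathcal{E}_0+\mathcal{E}_1$ must be extracted separately (the $\varepsilon_0$ piece is an elementary exponential, while $\varepsilon_1$ is a single Airy-type integral handled as in~\cite[Lemma 4.6]{BFP09}), and since the contour for the double-integral part splits into two pieces according to whether one keeps $1/(z+\alpha)$ or $1/(w-\alpha)$. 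I expect steps (i)--(ii) and (iv) to be essentially mechanical once the $f_0$-identification with~\cite{BFP09} is invoked, and step (v) to reduce to matching notations; the real work is in justifying (iii) uniformly, which I would do by the same truncation-of-contour estimate used throughout~\cite{BFP09} and~\cite{BF08}.
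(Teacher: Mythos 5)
Your proposal takes essentially the same route as the paper: a steepest-descent analysis of the rescaled single and double integrals, re-using the $f_0$-identification with~\cite{BFP09} so that the contours, the $e^{-cN}$ tail bounds, and the dominated-convergence argument carry over, while the explicit pole contributions at $\pm\alpha$ (rescaled to $\pm\delta$) supply the $e^{\delta X}$, $e^{\delta Y}$ terms and the super-exponential Airy decay supplies the $e^{-\kappa X}$, $e^{-\kappa Y}$ terms. The paper's proof is just a compressed version of this, pointing to~\cite[Lemmas~4.4--4.5]{BFP09} for the double integrals and uniform bounds, and giving a one-line contour argument for $\mathcal{E}_1^{\rm resc}$.

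One small imprecision in your step~(iv): $\varepsilon_0,\varepsilon_1$ (hence $\mathcal{E}_0,\mathcal{E}_1$) are not residues picked up while deforming the double-integral contours; they are already explicit, separate summands in the definition of $\overline{\mathsf{K}}_{22}$ and must be scaled and bounded on their own terms — which you do acknowledge in your final paragraph, so this is more a wording slip than a gap. For $\mathcal{E}_1^{\rm resc}$ specifically, the paper's trick is to take a single contour passing to the right of $|\alpha|$ by $\kappa 2^{-4/3}N^{-1/3}$ and then straighten it to a vertical line, so that the quadratic exponent alone controls convergence; this is simpler than invoking the Airy-decay machinery of~\cite[Lemma~4.6]{BFP09} that you cite, but either works.
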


\begin{proof}
The asymptotics of the double integrals is as in~\cite[Lemma 4.4]{BFP09} and the uniform bounds as in~\cite[Lemma 4.5]{BFP09}. To get the bounds, we first compute explicitly the poles at $\pm \alpha$ (if they are inside the integration contours), while the rest has an Airy-like decay in both variables, from which we have the terms $e^{-\kappa X}$ and $e^{-\kappa Y}$. For ${\cal E}^{\rm resc}_1(X,Y)$, we take a contour passing on the right of $|\alpha|$ by an amount $\kappa 2^{-4/3} N^{-1/3}$, which can be deformed to become vertical, as the convergence comes from the quadratic term in $Z$.
\end{proof}

Finally, in order to define the limits of $h_1^{\rm resc}$ and $h_2^{\rm resc}$, we need the limits of $\widetilde K^{\rm resc}_{12}$ and $\widetilde K^{\rm resc}_{22}$, which are as follows.

\begin{lem}\label{lem:AsymptoticsKernelsTilde}
For any given $L>0$, the following limits hold uniformly for $X,Y\in [-L,L]$:
\begin{equation}
\lim_{N\to\infty} \widetilde{\mathsf{K}}^{\rm resc}_{12}(X,Y)=\widetilde{\mathcal{A}}_{12} (X, Y),\quad
\lim_{N\to\infty} \widetilde{\mathsf{K}}^{\rm resc}_{22}(X,Y)=\widetilde{\mathcal{A}}_{22} (X, Y).
\end{equation}
Furthermore, for any $X,Y\geq -L$ and $\kappa > 0$, we have the following bounds which hold uniformly in $N$:
\begin{equation}
|\widetilde{\mathsf{K}}^{\rm resc}_{12}(X,Y)|\leq C e^{-\kappa (X+Y)},\quad |\widetilde{\mathsf{K}}^{\rm resc}_{22}(X,Y)|\leq C (e^{-\kappa X}+e^{\delta X}) e^{-\kappa Y}
\end{equation}
for some constant $C$.
\end{lem}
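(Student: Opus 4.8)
The plan is to reduce the statement to the steepest-descent analysis already carried out for $\overline{\mathsf{K}}^{\rm resc}$ in Lemma~\ref{lem:AsymptoticsKernels}, since the kernels $\widetilde{\mathsf{K}}_{12}$ and $\widetilde{\mathsf{K}}_{22}$ of \eqref{eq2.16} have exactly the same exponential structure and differ only through the $N$-independent rational prefactors and — crucially — through which of the scaled poles $\pm\alpha$ lie inside the integration contours. Concretely, after inserting the scaling \eqref{eq:ScalingS}, its counterpart for $(x,y)$, the change of variables $z=\zeta/(2^{4/3}N^{1/3})$, $w=\omega/(2^{4/3}N^{1/3})$, and the conjugation/rescaling factors that define $\widetilde{\mathsf{K}}^{\rm resc}_{ij}$, each double integrand acquires the form $e^{N f_0(z)-N f_0(w)+N^{2/3}f_1(z)-N^{2/3}f_1(w)+N^{1/3}f_2(z)-N^{1/3}f_2(w)}$ times a rational function independent of $N$, where $f_0$ is precisely the function from \cite{BFP09}, with a double critical point at the origin, and $f_1,f_2$ differ from the ones there only by explicit $u$- and $\alpha$-dependent terms which do not affect the geometry of the steepest-descent contours.

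For the pointwise limits I would deform the $\zeta$-contour onto the descending Airy contour and the $\omega$-contour onto the ascending one, exactly as in \cite[Lemma 4.4]{BFP09} (see also \cite[Section~6.1]{BF08} and \cite{GTW00} for the general scheme). For $\widetilde{\mathsf{K}}_{12}$ the prescribed contours $\Gamma_{1/2}\times\Gamma_{-1/2}$ enclose no pole of the prefactor beyond those already accounted for, so the deformation is unobstructed, and localizing to a neighborhood of $\zeta=\omega=0$ together with a Taylor expansion there produces the limiting kernel $\widetilde{\mathcal{A}}_{12}$; the tails of the steepest-descent contours are negligible by the stretched-exponential (Airy-like) decay, which simultaneously yields the uniform bound $|\widetilde{\mathsf{K}}^{\rm resc}_{12}(X,Y)|\leq Ce^{-\kappa(X+Y)}$. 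For $\widetilde{\mathsf{K}}_{22}$ one must keep the $\zeta$-contour to the right of the scaled pole at $\zeta=-\delta$ and the $\omega$-contour to the left of $\omega=\delta$; after deformation these become exactly the contours in the definition of $\widetilde{\mathcal{A}}_{22}$, and the same localization-plus-Taylor argument, justified by dominated convergence against the Airy bounds, gives $\lim_{N\to\infty}\widetilde{\mathsf{K}}^{\rm resc}_{22}=\widetilde{\mathcal{A}}_{22}$ uniformly on compacts.

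The one place where a genuinely new estimate is needed — and the step I expect to be the main obstacle — is the uniform-in-$N$ bound on $\widetilde{\mathsf{K}}^{\rm resc}_{22}$, because its prefactor $1/((\zeta+\delta)(\omega-\delta))$ carries a pole at $\zeta=-\delta$ that prevents pushing the $\zeta$-contour fully onto the steepest-descent path without crossing it or letting it approach the critical point. As in the treatment of $\overline{\mathsf{K}}_{22}$ (Lemma~\ref{lem:AsymptoticsKernels}), I would split $\widetilde{\mathsf{K}}^{\rm resc}_{22}$ into the residue at $\zeta=-\delta$ plus a remaining double integral with $\zeta$ on the Airy contour. The residue is a single contour integral in $\omega$; its $\zeta$-evaluation contributes $\Phi(x,-\alpha)=\phi(-\alpha)e^{\alpha x}$, whose growing-in-$N$ exponential factors cancel against $2^{-2n}$ and the remaining conjugation factors, leaving leading behavior $e^{\delta X}$ in $X$ and an Airy-decaying factor in $Y$ after the $\omega$-contour is straightened, hence a bound $Ce^{\delta X}e^{-\kappa Y}$; the remaining double integral decays Airy-like in both variables and is bounded by $Ce^{-\kappa(X+Y)}$. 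Adding the two gives $|\widetilde{\mathsf{K}}^{\rm resc}_{22}(X,Y)|\leq C(e^{-\kappa X}+e^{\delta X})e^{-\kappa Y}$, as claimed. All remaining estimates — uniform control of the Airy tails and of the corrections to the Taylor expansion — are imported verbatim from \cite[Lemma 4.5]{BFP09} with $f_1,f_2$ replaced by their present analogues, and require no modification.
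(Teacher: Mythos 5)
Your proposal is correct and follows essentially the same route as the paper, which simply reduces the lemma to the asymptotic analysis of Lemma~\ref{lem:AsymptoticsKernels} (itself a reference to \cite[Lemmas 4.4--4.5]{BFP09}), noting that the only difference is that the pole at $w=\alpha$ is absent from the contours of $\widetilde{\mathsf{K}}_{12},\widetilde{\mathsf{K}}_{22}$; your treatment of the bound for $\widetilde{\mathsf{K}}^{\rm resc}_{22}$ by peeling off the residue at $\zeta=-\delta$ and estimating the remaining Airy-type integral is exactly what the paper does. One small slip: in the pointwise-convergence paragraph you describe the limiting $\zeta$-contour as lying \emph{to the right} of $-\delta$, whereas the contour $\zcd\,{}_{-\delta}$ of $\widetilde{\mathcal{A}}_{22}$ crosses the real axis to the \emph{left} of $-\delta$ (keeping the pole on its right/inside), consistent with the $z$-contour $\Gamma_{1/2,-\alpha}$ enclosing $-\alpha$ before rescaling; this does not affect your argument since the subsequent residue extraction for the bound is carried out correctly.
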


\begin{proof}
The proof is similar to that of Lemma~\ref{lem:AsymptoticsKernels}, with the only difference being that some poles are not present anymore.
\end{proof}

\begin{cor}\label{cor:AsymptoticsFredholmPfaffian}
For any given $S\in\R$, we have
\begin{equation}
\lim_{N\to\infty} \pf(J-\overline{\mathsf{K}}^{\rm resc})_{L^2(S,\infty) \times L^2(S,\infty)} = \pf(J-\overline{\mathcal{A}})_{L^2(S,\infty) \times L^2(S,\infty)}.
\end{equation}
\end{cor}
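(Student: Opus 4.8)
The plan is to pass to the limit directly inside the Fredholm Pfaffian series. Writing
\begin{equation*}
\pf(J-\overline{\mathsf{K}}^{\rm resc})_{L^2(S,\infty)\times L^2(S,\infty)}=\sum_{m\geq0}\frac{(-1)^m}{m!}\int_{(S,\infty)^m}\pf\big[\overline{\mathsf{K}}^{\rm resc}(X_i,X_j)\big]_{1\leq i<j\leq m}\,\prod_{i=1}^m dX_i
\end{equation*}
and the analogous series for $\pf(J-\overline{\mathcal{A}})$, the first point is that the change of variables $x\mapsto X$ together with the block-dependent rescaling of the entries into $\overline{\mathsf{K}}^{\rm resc}_{ij}$, carried out in the preliminary discussion of this section, has been arranged so that no $N$-dependent volume prefactor survives; thus the two series have the same shape, block by block. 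For each fixed $m$, Lemma~\ref{lem:AsymptoticsKernels} gives $\overline{\mathsf{K}}^{\rm resc}_{ij}(X,Y)\to\overline{\mathcal{A}}_{ij}(X,Y)$ uniformly on compacts, whence entrywise convergence of the $2m\times2m$ antisymmetric matrix and pointwise convergence of the integrand on $(S,\infty)^m$. It then suffices to exhibit, uniformly in $N$, a majorant for $|\pf[\overline{\mathsf{K}}^{\rm resc}(X_i,X_j)]|$ that is summable in $m$ and integrable in $(X_1,\dots,X_m)$, so that dominated convergence can be applied first to each integral and then to the sum over $m$.

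For the domination I would use the $N$-uniform exponential bounds of Lemma~\ref{lem:AsymptoticsKernels}, with its free parameter $\kappa$ chosen larger than $|\delta|$, combined with the invariance of the Fredholm Pfaffian under the diagonal conjugation $\overline{\mathsf{K}}_{11}(X,Y)\mapsto d(X)d(Y)\overline{\mathsf{K}}_{11}$, $\overline{\mathsf{K}}_{12}(X,Y)\mapsto\tfrac{d(X)}{d(Y)}\overline{\mathsf{K}}_{12}$, $\overline{\mathsf{K}}_{22}(X,Y)\mapsto\tfrac{1}{d(X)d(Y)}\overline{\mathsf{K}}_{22}$, which preserves both $J$ and the antisymmetry since the off-diagonal entries of $J$ are felt only on $X=Y$. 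After a suitable choice of $d$, each block decomposes into a ``main'' part bounded by $C\,G(X)G(Y)$ for a fixed exponentially decaying $G\in L^1(S,\infty)\cap L^2(S,\infty)$ with $C$ independent of $N$ --- to which the standard trace-class factorisation of these double-contour-integral kernels, equivalently Hadamard's inequality applied to $|\pf[\cdot]|=|\det(J^{-1}\overline{\mathsf{K}}^{\rm resc})|^{1/2}$, applies and yields an $m$-th term bounded by $C^m\|G\|_2^{2m}/m!$ up to harmless combinatorial factors --- plus finitely many residual pieces that do not decay in one of the two variables. As noted at the beginning of this section, this is literally the scheme of~\cite{BFP09}: our kernels share the exponential factor $f_0$ with theirs, hence the same steepest-descent contours and the same bound structure, the differences residing only in $f_1,f_2$ and in the $N$-independent rational prefactors, which do not affect the estimates.

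The residual, non-decaying pieces are where the work concentrates, and I expect the $\mathcal{E}_0^{\rm resc}$ contribution to $\overline{\mathsf{K}}^{\rm resc}_{22}$ --- which in the limit equals $-\sgn(X-Y)\,e^{\delta|X-Y|+2\delta^2u}$ and is turned into a decaying kernel by no diagonal conjugation --- to be the representative difficulty, together with the rank-one ``$f_-$''-type terms that are constant in one variable. This is precisely the stationary-kernel issue already present in full space, and I would dispose of it exactly as in~\cite{BFP09}: split off these pieces and use that, within each monomial of the Pfaffian expansion, an $\mathcal{E}_0^{\rm resc}$ (resp.\ $f_-$-type) entry is always paired with entries drawn from the strongly decaying $(1,1)$ and off-diagonal blocks, whose decay absorbs the offending growth; equivalently, peel the Volterra-type kernel $\mathcal{E}_0^{\rm resc}$ and the rank-one corrections off by the manipulation of Lemma~\ref{lem:3.19}, reducing to a Fredholm Pfaffian of a genuinely trace-class kernel to which the previous paragraph applies verbatim. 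Since moreover $\mathcal{E}_0^{\rm resc}\to\mathcal{E}_0$ and $\mathcal{E}_1^{\rm resc}\to\mathcal{E}_1$ with the uniform-on-compacts control of Lemma~\ref{lem:AsymptoticsKernels}, all of this is uniform in $N$, and interchanging $\lim_N$ with the series and the integrals completes the proof.
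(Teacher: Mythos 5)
Your first two paragraphs reproduce the paper's argument: write out the Fredholm Pfaffian expansion as in the proof of Proposition~\ref{prop:cvgOfFredPf}, combine the uniform-on-compacts convergence $\overline{\mathsf{K}}^{\rm resc}_{ij}\to\overline{\mathcal{A}}_{ij}$ with the $N$-uniform exponential bounds of Lemma~\ref{lem:AsymptoticsKernels} (taking $\kappa>|\delta|$), apply a diagonal conjugation and Hadamard's inequality to get a summable majorant, and conclude by dominated convergence. This is exactly the paper's route, and it is correct.

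Your third paragraph, however, raises concerns that are not actually present. There are no rank-one ``$f_{-}$-type'' pieces in $\overline{\mathsf{K}}^{\rm resc}$ --- those appear only in the functions $h_1, h_2$ (through $\mathsf{g}_4$ and $j^\alpha$) entering the scalar product, which is the content of Corollary~\ref{cor:AsymptoticsHs}, not of this corollary. And the $\mathcal{E}_0^{\rm resc}$ term is not a genuine obstacle: after conjugating the $(2,2)$ block by $e^{-\mu X}e^{-\mu Y}$ with $|\delta|<\mu<\kappa$ (and the $(1,1)$ block by $e^{\mu X}e^{\mu Y}$), one gets for $X>Y$
\begin{equation*}
e^{-\mu(X+Y)}\,e^{\delta|X-Y|}=e^{(\delta-\mu)X}\,e^{-(\mu+\delta)Y},
\end{equation*}
which decays exponentially in both variables precisely because $\mu>|\delta|$; the $(1,1)$ block keeps its super-exponential Airy decay since $\mu<\kappa$. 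So the direct Hadamard bound already controls the entire kernel, and there is no need to peel anything off via Lemma~\ref{lem:3.19} (which is in any case a device for extracting finite-rank perturbations, not for removing a $\sgn$-type kernel such as $\mathcal{E}_0^{\rm resc}$).
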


\begin{proof}
We write the Fredholm expansion as in the proof of Proposition~\ref{prop:cvgOfFredPf}. Then, taking $\kappa> |\delta|$, the bounds of Lemma~\ref{lem:AsymptoticsKernels} allow us to exchange the $N\to\infty$ limit with the sums/integrals by dominated convergence. The result follows.
\end{proof}

\begin{cor}\label{cor:AsymptoticsHs}
For any given $L>0$, the following limits hold uniformly for $Y\in [-L,L]$:
\begin{equation}
\lim_{N\to\infty} h_1^{\rm resc}(Y) = \mathpzc{h}^{\delta,u}_1(Y),\quad
\lim_{N\to\infty} h_2^{\rm resc}(Y) =  \mathpzc{h}^{\delta, u}_2(Y).
\end{equation}
Furthermore, for any $Y\geq -L$ and $\kappa > 0$, we have the following bounds which hold uniformly in $N$:
\begin{equation}
|h_1^{\rm resc}(Y)|\leq C |Y| e^{|\delta Y|},\quad |h_2^{\rm resc}(Y)|\leq C e^{-\kappa Y}
\end{equation}
for some constant $C$.
\end{cor}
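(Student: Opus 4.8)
The plan is to obtain Corollary~\ref{cor:AsymptoticsHs} by assembling, term by term, the asymptotics already established in Lemmas~\ref{lem:AsymptoticsFunctions}, \ref{lem:AsymptoticsKernels} and~\ref{lem:AsymptoticsKernelsTilde}. First I would unwind the rescalings against the defining formulas $h_1 = \widetilde{\mathsf{K}}_{22} f_{+}^{-\alpha} + \varepsilon_1 f_{+}^{-\alpha} - \mathsf{g}_4 - j^\alpha(s, \cdot)$ and $h_2 = \widetilde{\mathsf{K}}_{12} f_{+}^{-\alpha} + \mathsf{g}_3$, recalling that the products $\widetilde{\mathsf{K}}_{ij} f_+^{-\alpha}$ and $\varepsilon_1 f_+^{-\alpha}$ are scalar products on $L^2(s,\infty)$. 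Performing, inside each such scalar product, the change of variables $v = 4N - 2u2^{5/3}N^{2/3} + V\,2^{4/3}N^{1/3}$ (so $dv = 2^{4/3}N^{1/3}\,dV$), a direct bookkeeping check against the rescaling conventions of Section~\ref{sec:proofAsymptotic} gives
\begin{equation}
\begin{split}
h_1^{\rm resc}(Y) &= \int_{S}^\infty \!dV\, \widetilde{\mathsf{K}}^{\rm resc}_{22}(Y,V)\,f^{-\delta,\rm resc}_+(V) + \int_{S}^\infty \!dV\, {\cal E}^{\rm resc}_1(Y,V)\,f^{-\delta,\rm resc}_+(V) - \mathsf{g}^{\rm resc}_4(Y) - j^{\delta,{\rm resc}}(S,Y),\\
h_2^{\rm resc}(Y) &= \int_{S}^\infty \!dV\, \widetilde{\mathsf{K}}^{\rm resc}_{12}(Y,V)\,f^{-\delta,\rm resc}_+(V) + \mathsf{g}^{\rm resc}_3(Y),
\end{split}
\end{equation}
which are to be matched term by term with the definitions of $\mathpzc{h}^{\delta,u}_1$ and $\mathpzc{h}^{\delta,u}_2$.

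With this rewriting in place, each isolated function term converges uniformly on compacts and obeys the required bound directly by Lemma~\ref{lem:AsymptoticsFunctions}: $\mathsf{g}^{\rm resc}_3 \to \mathpzc{g}_3^{\delta,u}$, $\mathsf{g}^{\rm resc}_4 \to \mathpzc{g}_4^{\delta,u}$, $j^{\delta,{\rm resc}}(S,\cdot) \to \mathpzc{j}^{\delta,u}(S,\cdot)$, with $|\mathsf{g}^{\rm resc}_3(Y)|\leq Ce^{-\kappa Y}$, $|\mathsf{g}^{\rm resc}_4(Y)|\leq C(|Y|e^{|\delta Y|}+e^{-\kappa Y})$, $|j^{\delta,{\rm resc}}(S,Y)|\leq C|Y|e^{|\delta Y|}$. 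For the two kernel--function products in $h_1^{\rm resc}$ and the one in $h_2^{\rm resc}$ I would combine the pointwise limits $\widetilde{\mathsf{K}}^{\rm resc}_{22}\to\widetilde{\mathcal{A}}_{22}$, $\widetilde{\mathsf{K}}^{\rm resc}_{12}\to\widetilde{\mathcal{A}}_{12}$ (Lemma~\ref{lem:AsymptoticsKernelsTilde}), ${\cal E}^{\rm resc}_1\to\mathcal{E}_1$ (Lemma~\ref{lem:AsymptoticsKernels}) and $f^{-\delta,\rm resc}_+\to\mathpzc{f}^{-\delta,u}$ (Lemma~\ref{lem:AsymptoticsFunctions}) with dominated convergence in the $V$-integral: fixing $\kappa>|\delta|$ one has, uniformly in $N$, $|\widetilde{\mathsf{K}}^{\rm resc}_{22}(Y,V)f^{-\delta,\rm resc}_+(V)|\leq C(e^{-\kappa Y}+e^{\delta Y})e^{-(\kappa-\delta)V}$ and $|{\cal E}^{\rm resc}_1(Y,V)f^{-\delta,\rm resc}_+(V)|\leq Ce^{-(|\delta|+\kappa)|Y-V|}e^{\delta V}$, both integrable on $[S,\infty)$, and similarly for the $\widetilde{\mathsf{K}}^{\rm resc}_{12}$ term. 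This yields the claimed limits, and integrating the same dominating functions over $V\in[S,\infty)$ shows that the product terms contribute at most $C(e^{-\kappa Y}+e^{\delta Y})$ (resp.\ $Ce^{\delta Y}$ for the ${\cal E}^{\rm resc}_1$ term), whence $|h_2^{\rm resc}(Y)|\leq Ce^{-\kappa Y}$ and, adding the $\mathsf{g}^{\rm resc}_4$ and $j^{\delta,{\rm resc}}$ contributions, $|h_1^{\rm resc}(Y)|\leq C|Y|e^{|\delta Y|}$.

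The one step requiring genuine care — and the reason the statement features the tilde kernels $\widetilde{\mathcal{A}}_{12},\widetilde{\mathcal{A}}_{22}$ rather than $\overline{\mathcal{A}}_{12},\overline{\mathcal{A}}_{22}$ — is the integrability of these $V$-integrals when $\delta\geq 0$: there $f^{-\delta,\rm resc}_+(V)$ grows like $e^{\delta V}$, so one needs the full Airy-type decay $e^{-\kappa V}$ (valid for every $\kappa>0$) of the tilde kernels, guaranteed by the absence of the $\pm\delta$ poles as recorded in Lemma~\ref{lem:AsymptoticsKernelsTilde}, rather than the mere $e^{\delta V}$ decay of $\overline{\mathcal{A}}_{22}$. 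Once this is noticed, the remaining arguments are a routine application of dominated convergence together with the estimates of Lemmas~\ref{lem:AsymptoticsFunctions}--\ref{lem:AsymptoticsKernelsTilde}, entirely parallel to the full-space bookkeeping in~\cite{BFP09}.
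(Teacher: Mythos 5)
Your proposal is correct and follows essentially the same route as the paper: the paper's proof is a two-line assertion that the bounds of Lemmas~\ref{lem:AsymptoticsFunctions}, \ref{lem:AsymptoticsKernels}, \ref{lem:AsymptoticsKernelsTilde} with $\kappa > |\delta|$ allow passing $N\to\infty$ inside the two $V$-integrals, combined with the remaining function limits, and you have simply unpacked this term by term, explicitly stating the dominating functions for each integral and assembling the final bounds. Your added observation about why the tilde kernels $\widetilde{\mathcal{A}}_{12},\widetilde{\mathcal{A}}_{22}$ must appear (integrability against $\mathpzc{f}^{-\delta,u}$ for $\delta\geq 0$) is accurate and matches the remark the paper makes just before Lemma~\ref{lem:SimplificationBR}, though it is not stated inside the paper's own proof of this corollary.
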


\begin{proof}
The bounds of Lemmas~\ref{lem:AsymptoticsFunctions},~\ref{lem:AsymptoticsKernels}, and~\ref{lem:AsymptoticsKernelsTilde} imply that, taking $\kappa> |\delta|$, we can take $N\to\infty$ inside $\int_S^\infty dV \widetilde K^{\rm resc}_{22}(Y,V) f^{-\delta,\rm resc}_+(V)$ and inside $\int_S^\infty dV {\cal E}_1^{\rm resc}(Y,V) f^{-\delta,\rm resc}_+(V)$. Together with the bounds on the remaining terms, we get the stated result.
\end{proof}

With the above results, we are now ready to finish the proof of Theorem~\ref{thm:main_thm_asymp}.

\begin{proof}[Proof of Theorem~\ref{thm:main_thm_asymp}]
The result is now a direct consequence of Corollary~\ref{cor:AsymptoticsFredholmPfaffian}, the bounds and their limits on the rescaled kernel of Lemma~\ref{lem:AsymptoticsKernels}, of Lemma~\ref{lem:AsymptoticsFunctions} for the functions $g_1^{\rm resc}(X)$ and $g_2^{\rm resc}(X)$ entering in left hand side of the scalar product, and of Corollary~\ref{cor:AsymptoticsHs} for the functions $h_1^{\rm resc}(Y)$ and $h_2^{\rm resc}(Y)$ entering in the right hand side of the scalar product. The aforementioned bounds indeed imply that we can take the limit $N\to\infty$ inside the integrals which appear when writing the scalar products explicitly.

The derivatives in $s$ of the kernels and of the functions in the inner products, in fact, produce only polynomial factors, but the bounds are exponential, so by dominated convergence, we have also convergence of these terms to the corresponding derivatives. This leads to the claimed result. We remark that we do not need to worry about the inverse operator, since the derivative acts as
\begin{equation}
\partial_s(\Id-J^{-1}\overline{\mathsf{K}})^{-1}=(\Id-J^{-1}\overline{\mathsf{K}})^{-1} \cdot J^{-1}\partial_s\overline{\mathsf{K}}\cdot (\Id-J^{-1}\overline{\mathsf{K}})^{-1}
\end{equation}
and the resolvent, when multiplied by the Fredholm Pfaffian in front, can be rewritten as a linear combination of two Fredholm Pfaffians, see Remark~\ref{rem:Inverse}.
\end{proof}

\section{Limit to the Baik--Rains distribution: proof of Theorem~\ref{thm:LimitToBR}} \label{sec:limitToBR}

In the $u\to\infty$ limit, we want to take $\delta=-u+\tau$ with $\tau$ fixed. Thus for $u$ large enough we also have $\delta<0$. In this case, i.e.\,for $u>0$ and $\delta<0$, there are some simplifications in the expression of the distribution.

\begin{lem}\label{lem:simplifications}
Consider $u>0$ and $\delta<0$. Then the following equality holds:
\begin{equation}\label{eq:234}
\overline{\mathcal{A}}_{22} (X, Y) = -\int\limits_{-\mu+\I\R} \frac{d \zeta}{2\pi\I} \int\limits_{\mu+\I\R} \frac{d \omega}{2\pi\I} \frac{ e^{\frac{\zeta^3}{3} + \zeta^2 u - \zeta X} }{ e^{\frac{\omega^3}{3} - \omega^2 u - \omega Y} } \frac{1}{\zeta - \omega} \left(\frac{1}{ \zeta + \delta}+\frac{1}{\omega-\delta}\right)
\end{equation}
for any choice of $0<\mu<\min\{-\delta,u\}$ (the contours for $\zeta,\omega$ are oriented with increasing imaginary parts).

Furthermore we have:
\begin{equation}\label{eq:236}
\begin{split}
\widetilde{\mathcal{A}}_{22} (X, Y)+{\cal E}_1(X,Y) =& \overline{\mathcal{A}}_{22}(X,Y)-\mathpzc{g}_2^{\delta, u} (X) e^{-\frac{\delta^3}{3}+\delta^2 u+\delta Y}+ \Id_{X>Y} e^{2\delta^2 u}(e^{\delta(X-Y)}+e^{-\delta(X-Y)}).
\end{split}
\end{equation}
As a consequence of this representation we have:
\begin{equation}\label{eq:237}
\int_S^\infty dV \Id_{Y>V} e^{2\delta^2 u}(e^{\delta(Y-V)}+e^{-\delta(Y-V)}) \mathpzc{f}^{-\delta,u}(V) = \mathpzc{j}^{\delta, u} (S,Y).
\end{equation}

Finally we also have:
\begin{equation}\label{eq:238}
\widetilde{\mathcal{A}}_{12} (X, Y) = \overline{\mathcal{A}}_{12} (X, Y) + \mathpzc{g}_1^{\delta, u} (X) e^{-\frac{\delta^3}{3}+\delta^2 u+\delta Y}.
\end{equation}
\end{lem}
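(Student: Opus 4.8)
All four identities are instances of one mechanism: the two sides are contour integrals with the \emph{same} integrand — here one uses repeatedly the partial fraction $\frac{\zeta+\omega}{(\zeta+\delta)(\omega-\delta)}=\frac1{\zeta+\delta}+\frac1{\omega-\delta}$, so that the double integral of $\widetilde{\mathcal A}_{22}$ and that of $\overline{\mathcal A}_{22}$ agree term by term — and they differ only in the placement of the $\zeta$- and $\omega$-contours relative to the simple poles $\zeta=-\delta$, $\omega=\delta$, $\zeta=\omega$, and in orientation. The plan is to deform contours into one another and to account for the residues crossed. The hypotheses $u>0$, $\delta<0$ enter exactly here: since $u>0$, on a vertical line $c+\I\R$ with $|c|<u$ the term $e^{\pm\zeta^2u}$ dominates $e^{\pm\zeta^3/3}$ and the integrand decays Gaussianly, so vertical-contour integrals converge and the deformations to/from the Airy contours $\zcd$, $\wcu$ are legitimate (of the standard type used in the asymptotic analysis); and since $\delta<0$, choosing $0<\mu<\min\{-\delta,u\}$ places both lines $-\mu+\I\R$, $\mu+\I\R$ to the right of $\omega=\delta$ and to the left of $\zeta=-\delta$, so a single common pair of vertical contours can serve every term.

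For \eqref{eq:234}, write $\overline{\mathcal A}_{22}=\mathcal E+I$ with $I$ the double integral in \eqref{eq:227}, and deform the $\zeta$-contour to $-\mu+\I\R$ and the $\omega$-contour to $\mu+\I\R$. The one delicate point: the arms of $\zcd$ run to $\Re\zeta\to+\infty$ while those of $\wcu$ run to $\Re\omega\to-\infty$, so turning the down-oriented $\zeta$-contour into the up-oriented vertical line (an orientation flip plus a leftward swing of its arms) passes the pole $\zeta=\omega$ lying on the left-reaching $\omega$-arms. Its residue is the single $\zeta$-integral of $-e^{2\zeta^2u-\zeta(X-Y)}\bigl(\tfrac1{\zeta+\delta}+\tfrac1{\zeta-\delta}\bigr)=-e^{2\zeta^2u-\zeta(X-Y)}\tfrac{2\zeta}{\zeta^2-\delta^2}$, and once the abscissa is fixed this reconstructs $-\mathcal E$: the residue at $\zeta=-\delta$ (crossed because $\delta<0$) gives $\mathcal E_0$, the leftover vertical-line integral gives $\mathcal E_1$. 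Combined with the orientation sign, $\mathcal E+I$ becomes the right-hand side of \eqref{eq:234}.

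For \eqref{eq:238}, $\widetilde{\mathcal A}_{12}$ and $\overline{\mathcal A}_{12}$ share integrand and $\zeta$-contour ${}_0\zcd$; only the $\omega$-contour moves, from ${}_\delta\wcu\,{}_\zeta$ (right of $\delta$) to $\wcu\,{}_{\delta,\zeta}$ (left of $\delta$), crossing the single pole $\omega=\delta$. Its residue $e^{\zeta^3/3-\zeta^2u-\zeta X}e^{-\delta^3/3+\delta^2u+\delta Y}\tfrac{\zeta+\delta}{2\zeta}$, integrated in $\zeta$ over ${}_0\zcd$, is $\mathpzc g_1^{\delta,u}(X)\,e^{-\delta^3/3+\delta^2u+\delta Y}$ by the definition of $\mathpzc g_1^{\delta,u}$ — giving \eqref{eq:238} after the sign check. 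For \eqref{eq:236} one does the analogous comparison between $\widetilde{\mathcal A}_{22}$ and the double-integral part of $\overline{\mathcal A}_{22}$: moving the $\omega$-contour across $\omega=\delta$ yields $-\mathpzc g_2^{\delta,u}(X)\,e^{-\delta^3/3+\delta^2u+\delta Y}$ (matching the definition of $\mathpzc g_2^{\delta,u}$), and moving it across $\omega=\zeta$ yields a single $\zeta$-integral which, together with the $\mathcal E_1$ added on the left, the $\mathcal E_0$ already inside $\overline{\mathcal A}_{22}$, and one further residue at $\zeta=-\delta$, collapses to the one-sided term $\Id_{X>Y}e^{2\delta^2u}\bigl(e^{\delta(X-Y)}+e^{-\delta(X-Y)}\bigr)$ (its ``$X<Y$'' counterpart having been absorbed in $\mathcal E$). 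Finally \eqref{eq:237} is a one-line computation: for $Y>S$ the indicator restricts $V$ to $(S,Y)$, and evaluating $\int_S^Y e^{2\delta^2u}\bigl(e^{\delta(Y-V)}+e^{-\delta(Y-V)}\bigr)\mathpzc f^{-\delta,u}(V)\,dV$ gives $\mathpzc f^{-\delta,-u}(S)\bigl[\tfrac{\sinh(\delta(Y-S))}{\delta}+(Y-S)e^{\delta(Y-S)}\bigr]=\mathpzc j^{\delta,u}(S,Y)$; for $Y\le S$ both sides vanish.

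The hard part is the residue bookkeeping behind \eqref{eq:234} and \eqref{eq:236}: pinning down all orientation signs, checking that the residues at $\zeta=\omega$ and $\zeta=\pm\delta$ reassemble exactly into $\mathcal E=\mathcal E_0+\mathcal E_1$ with the correct $\sgn(X-Y)$ weights, and verifying that no poles beyond $\zeta=-\delta$, $\omega=\delta$, $\zeta=\omega$ intervene between the Airy contours and the vertical lines — this last being precisely what forces $0<\mu<\min\{-\delta,u\}$ together with the sign condition on $\delta$.
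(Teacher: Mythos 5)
Your plan matches the paper's own proof of this lemma: deform the Airy contours of \eqref{eq:227} to vertical lines (the constraints $\Re\zeta>-u$ and $\Re\omega<u$, hence $0<\mu<\min\{-\delta,u\}$, are exactly what makes the $e^{\pm\zeta^2u}$ factor give Gaussian decay on a vertical line), exchange their relative positions picking up the $\omega=\zeta$ residue which reassembles $-\mathcal E$ (itself split at $\zeta=-\delta$ into $-\mathcal E_0$ and $-\mathcal E_1$, checked for $X>Y$ and extended by anti-symmetry), and obtain \eqref{eq:238} and \eqref{eq:236} from the residue at $\omega=\delta$, with \eqref{eq:237} a direct integration. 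The printed proof in the paper is in fact silent on \eqref{eq:236}, so your attempt to spell that step out is additional content, not a deviation.

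One concrete slip in your bookkeeping for \eqref{eq:236}. If you compare $\widetilde{\mathcal A}_{22}$ with the double-integral part $I$ of $\overline{\mathcal A}_{22}$ directly — as you say you do — then after the $\frac1{\zeta+\delta}+\frac1{\omega-\delta}$ split the $\frac1{\zeta+\delta}$-pieces coincide outright, and for the $\frac1{\omega-\delta}$-piece the $\omega$-contour moves from left of $\delta$ (in $\widetilde{\mathcal A}_{22}$) to the strip between $\delta$ and $\zeta$ (in $I$), staying to the left of $\zeta$ the whole time; so no $\omega=\zeta$ crossing occurs here, contrary to what you write. The leftover discrepancy is a residue at $\zeta=\delta$, not at $\zeta=-\delta$: the $\omega=\delta$ residue carries a factor $\frac1{\zeta-\delta}$ and its $\zeta$-integral runs over a contour inherited from $I$, which must cross the real axis to the \emph{right} of $\delta$ (since $\delta<\Re\omega<\Re\zeta$ there), whereas $\mathpzc g_2^{\delta,u}$ is defined with its $\zeta$-contour to the \emph{left} of $\delta$. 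The mismatch is the residue at $\zeta=\delta$, namely $e^{2\delta^2u}e^{-\delta(X-Y)}$, and the identity $-\mathcal E_0(X,Y)+e^{2\delta^2u}e^{-\delta(X-Y)}=\Id_{X>Y}e^{2\delta^2u}\bigl(e^{\delta(X-Y)}+e^{-\delta(X-Y)}\bigr)$, valid for both signs of $X-Y$, then closes \eqref{eq:236}. This is a repair of the sketch, not a change of method.
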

\begin{proof}
First notice that for $\delta<0$, the contours in the double integral of \eqref{eq:227} can be chosen to be the same for the two cases, with $\delta<\Re(\omega)<\Re(\zeta)<-\delta$. Next, notice that we can deform the contours to be vertical provided $\Re(\zeta)>-u$ and $\Re(\omega)<u$. Finally, we exchange the positions of $\zeta$ and $\omega$, so now $\Re(\zeta)<\Re(\omega)$, which is the formula \eqref{eq:234}, minus the pole at $\omega=\zeta$. This pole gives as residue
\begin{equation}
-\int_{\I\R} \frac{d\zeta}{2\pi\I} e^{2\zeta^2 u-\zeta(X-Y)}\frac{2\zeta}{(\zeta+\delta)(\zeta-\delta)}
\end{equation}
which is equal to $-{\cal E}(X,Y)$. To verify this identity, it is enough by anti-symmetry to consider $X>Y$. Extracting the pole at $\zeta=-\delta$ leads to $-{\cal E}_0(X,Y)$, while the remaining integral is $-{\cal E}_1(X,Y)$. Finally, \eqref{eq:237} is an elementary computation and \eqref{eq:238} follows by taking the residue at $\omega=\delta$.
\end{proof}

With the above decomposition we can prove Lemma~\ref{lem:SimplificationBR}.

\begin{proof}[Proof of Lemma~\ref{lem:SimplificationBR}]
Using the representations \eqref{eq:236}---\eqref{eq:238}, our claim holds if we can show that
\begin{equation}
\braket{-\mathpzc{g}_1^{\delta, u} \quad \mathpzc{g}_2^{\delta, u} } { (\Id-J^{-1}\overline{\mathcal{A}})^{-1} \begin{pmatrix} \mathpzc{g}_2^{\delta, u}  \braket{\mathpzc{f}^{-\delta,-u}}{\mathpzc{f}^{-\delta,u}} \\ \mathpzc{g}_1^{\delta, u} \braket{\mathpzc{f}^{-\delta,-u}}{\mathpzc{f}^{-\delta,u}} \end{pmatrix} }=0.
\end{equation}
The proof of this is the same as proving that $\eqref{eq:diff_inner}=0$ in Lemma~\ref{lem:GbarToGhat}. Notice that for $\delta<0$ (but not for $\delta\geq 0$) the scalar product $\braket{\mathpzc{f}^{-\delta,-u}}{\mathpzc{f}^{-\delta,u}}$ is well-defined.
\end{proof}

In order to analyze the $u\to\infty$ limit with $u+\delta=\tau$ constant, we need to consider a conjugation in the kernel entries, but also to shift the positions by $\delta(2u+\delta)$ as discussed above in Remark~\ref{rem:simple_scaling}. Finally, to clearly see the limit $u\to\infty$, we shift the $\zeta,\omega$ integration variables to remove the $\zeta^2,\omega^2$ terms in the exponential.

\begin{lem} \label{lem:A_conj}
Let us consider $u>0$, $\delta<0$ and $u+\delta=\tau$. Shifting the positions as $X=x+\delta(2u+\delta)$ and $Y=y+\delta(2u+\delta)$, we have:
\begin{align} \label{eq:A_conj}
 \frac{e^{\frac23 u^3+u X}}{e^{-\frac23 u^3-u Y}} \overline{\mathcal{A}}_{11} (X, Y) &= - \int\limits_{{ }_{-u}\zcd} \frac{d z}{2\pi\I} \int\limits_{\wcu\, {}_{u,z+2u}} \frac{d w}{2\pi\I}\frac{ e^{\frac{z^3}{3} - z(x+\tau^2)} }{ e^{\frac{w^3}{3} - w(y+\tau^2)} } \frac{(w+z) (w+\tau -2u) (z-\tau +2 u)}{4 (w-u) (z+u) (z-w+2 u)}, \nonumber \\
\frac{e^{\frac23 u^3+u X}}{e^{\frac23 u^3+u Y}} \overline{\mathcal{A}}_{12} (X, Y) &= -\int\limits_{{}_{-u}{\zcd}} \frac{d z}{2\pi\I} \int\limits_{{}_{\tau-2u}\wcu\, {}_z} \frac{d w}{2\pi\I} \frac{ e^{\frac{z^3}{3}- z (x+\tau^2)} }{ e^{\frac{w^3}{3} - w (y+\tau^2)} } \frac{(w+z+2 u) (z-\tau +2 u)}{2 (z+u) (z-w) (w-\tau +2 u)}, \\
\frac{e^{-\frac23 u^3-u X}}{e^{\frac23 u^3+u Y}} \overline{\mathcal{A}}_{22} (X, Y) &= -\int\limits_{r+\I\R} \frac{d z}{2\pi\I} \int\limits_{r+\I\R} \frac{d w}{2\pi\I} \frac{ e^{\frac{z^3}{3}- z(x+\tau^2)} }{ e^{\frac{w^3}{3}  - w(y+\tau^2)} } \frac{z+w}{(w-\tau +2 u) (z-w-2 u) (z+\tau -2 u)} \nonumber
\end{align}
where for $\overline{\mathcal{A}}_{22}$ the integration contours for $z,w$ are oriented with increasing imaginary part and $0<r<\min\{u,2u-\tau\}$.
\end{lem}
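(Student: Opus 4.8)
The plan is to establish all three identities by a single device: in each entry of $\overline{\mathcal A}$ we perform an affine change of integration variables $\zeta=z\pm u$, $\omega=w\mp u$, the two signs being chosen so that the quadratic term $\pm\zeta^{2}u$ (respectively $\pm\omega^{2}u$) in the exponent is cancelled, and then we read off the asserted integrand. For $\overline{\mathcal A}_{11}$ and $\overline{\mathcal A}_{12}$ I would start directly from the definitions in~\eqref{eq:227}, using $\zeta=z+u,\ \omega=w-u$ for $\overline{\mathcal A}_{11}$ and $\zeta=z+u,\ \omega=w+u$ for $\overline{\mathcal A}_{12}$. For $\overline{\mathcal A}_{22}$ I would first pass to the representation~\eqref{eq:234} of Lemma~\ref{lem:simplifications}, which is available precisely because $\delta<0$ and in which both contours are already vertical lines, and there substitute $\zeta=z-u,\ \omega=w+u$.

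First I would handle the exponential factors. With $\delta=\tau-u$ one has $\delta(2u+\delta)=\tau^{2}-u^{2}$, so the shift $X=x+\delta(2u+\delta)$, $Y=y+\delta(2u+\delta)$ gives $X+u^{2}=x+\tau^{2}$ and $Y+u^{2}=y+\tau^{2}$. Completing the cube — for instance $\tfrac{\zeta^{3}}{3}-\zeta^{2}u-\zeta X=\tfrac{z^{3}}{3}-z(x+\tau^{2})-\tfrac{2}{3}u^{3}-uX$ when $\zeta=z+u$, and analogously in the other cases — one checks that the numerator and denominator exponents each produce a $z$- or $w$-independent constant $e^{\mp\frac{2}{3}u^{3}\mp uX}$, respectively $e^{\pm\frac{2}{3}u^{3}\pm uY}$, that is exactly the reciprocal of the conjugating prefactor $\tfrac{e^{\pm\frac{2}{3}u^{3}\pm uX}}{e^{\mp\frac{2}{3}u^{3}\mp uY}}$ on the left-hand side; after cancellation only the $u$-independent weights $e^{z^{3}/3-z(x+\tau^{2})}/e^{w^{3}/3-w(y+\tau^{2})}$ remain.

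Next I would track the rational prefactors, which transform by the trivial substitutions $\zeta-\delta\mapsto z-\tau+2u$, $\omega+\delta\mapsto w+\tau-2u$, $\omega-\delta\mapsto w-\tau+2u$, $\zeta\mapsto z+u$, $\zeta-\omega\mapsto z-w$ or $z-w+2u$, and $\zeta+\omega\mapsto z+w$ or $z+w+2u$, according to the entry; for $\overline{\mathcal A}_{22}$ a one-line computation puts $\tfrac{1}{\zeta+\delta}+\tfrac{1}{\omega-\delta}$ over a common denominator and reproduces $\tfrac{z+w}{(w-\tau+2u)(z-w-2u)(z+\tau-2u)}$. Then the contours: being bijective affine maps, the substitutions carry the down-oriented $\zeta$-contour passing to the right of the origin to the one passing to the right of $-u$, and the up-oriented $\omega$-contours to the stated $w$-contours; a quick check of the side on which each image contour lies relative to each translated pole ($z=-u$, $w=\mp u$, $z=2u-\tau$, $w=\tau-2u$, $z-w\pm 2u=0$) yields precisely the decorated contours in the statement for $\overline{\mathcal A}_{11}$ and $\overline{\mathcal A}_{12}$. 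For $\overline{\mathcal A}_{22}$ the substitution turns the vertical contours of~\eqref{eq:234} (at $\Re\zeta=-\mu$, $\Re\omega=\mu$, $0<\mu<\min\{-\delta,u\}$) into vertical lines at $\Re z=u-\mu$, $\Re w=\mu-u$, which I would then deform — without crossing any of the poles $z-w-2u=0$, $z+\tau-2u=0$, $w-\tau+2u=0$ — to their final positions, the only surviving constraint being $0<r<\min\{u,2u-\tau\}$.

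The step I expect to be the main obstacle is this last contour analysis for $\overline{\mathcal A}_{22}$. Once the Gaussian-type factors $e^{\pm\zeta^{2}u}$, $e^{\mp\omega^{2}u}$ that produced convergence in~\eqref{eq:234} have been absorbed into prefactors, convergence of the double integral is controlled solely by the cubic exponents, so the real parts of the two vertical contours are genuinely constrained, and one must verify both that the deformation remains in the region where $e^{z^{3}/3-z(x+\tau^{2})}$ and $e^{-w^{3}/3+w(y+\tau^{2})}$ still decay along vertical lines and that it does not sweep across any of the three poles above. This is elementary but must be done with care; by contrast $\overline{\mathcal A}_{11}$ and $\overline{\mathcal A}_{12}$ raise no such issue, the $\zcd$/$\wcu$ contours already being the natural steepest-descent contours for the cubic integrands.
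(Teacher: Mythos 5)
Your proof is correct and takes exactly the same approach as the paper's, which states only the three substitutions $\zeta=z+u,\ \omega=w-u$ (for $\overline{\mathcal A}_{11}$); $\zeta=z+u,\ \omega=w+u$ (for $\overline{\mathcal A}_{12}$); and $\zeta=z-u,\ \omega=w+u$ applied to the representation~\eqref{eq:234} (for $\overline{\mathcal A}_{22}$), with no further detail — you simply fill in the exponent, rational-factor, and contour bookkeeping. One remark: the stated $w$-contour for $\overline{\mathcal A}_{22}$ should read $-r+\I\R$ rather than $r+\I\R$ (compare the form appearing in Lemma~\ref{lem:gh_conj}); your own observation that $e^{-w^3/3+w(y+\tau^2)}$ must decay along the vertical $w$-line already forces this sign.
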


\begin{proof}
The first equality is obtained by the change of variables $\zeta=z+u$ and $\omega=w-u$; the second is obtained by the change of variables $\zeta=z+u$ and $\omega=w+u$; finally, the third comes from substituting $\zeta=z-u$ and $\omega=w+u$ in the representation \eqref{eq:234}.
\end{proof}

We have a similar result for $\mathpzc{e}_1^{\delta, u}$, $\mathpzc{g}_1^{\delta, u}, \mathpzc{g}_2^{\delta, u}, \tilde{\mathpzc{h}}_1^{\delta, u}$, and $\tilde{\mathpzc{h}}_2^{\delta, u}$.

\begin{lem} \label{lem:gh_conj}
Let us consider $u>0$, $\delta<0$ and $u+\delta=\tau$. Shifting the positions as $X=x+\delta(2u+\delta)$, $Y=y+\delta(2u+\delta)$, and $S=s+\delta(2u+\delta)$, we have:
\begin{equation}
\begin{split}
\mathpzc{e}^{\delta, u} (S) =& \,\,{\cal R}_{-\tau}(s),  \\
 e^{\frac23 u^3+u X} \mathpzc{g}_1^{\delta, u} (X) =& \int\limits_{{}_{-u}\zcd} \frac{dz}{2 \pi \I} e^{\frac{z^3}{3} - z (x+\tau^2)} \frac{z+\tau}{2 (z+u)}, \\
 e^{-\frac23 u^3 - u X} \mathpzc{g}_2^{\delta, u} (X) =& \int\limits_{\zcd\, {}_\tau} \frac{dz}{2 \pi \I} e^{\frac{z^3}{3} - z (x+\tau^2)} \frac{1}{z-\tau}
\end{split}
\end{equation}
as well as
\begin{equation}
\begin{split}
 e^{-\frac23 u^3 - u Y} \tilde{\mathpzc{h}}_1^{\delta, u} (Y) =& \ e^{\frac23 \tau^3 + s \tau} \!\!\! \int\limits_{r + \I \R} \!\!\! \frac{d z}{2\pi\I} \!\!\!\! \int\limits_{ -r + \I \R} \!\!\!\! \frac{d w}{2\pi\I} \frac{ e^{\frac{z^3}{3} - z (y+\tau^2)} }{ e^{\frac{w^3}{3} - w (s+\tau^2)} } \frac{(z \! + \! w)} {(w \! - \! \tau \! + \! 2 u) (z \! - \! w \! - \! 2 u) (z \! + \! \tau \! - \! 2 u) (w \! + \! \tau)} \\
 & - \!\!\! \int\limits_{\zcd\, {}_{\tau, 2 u - \tau}} \!\!\! \frac{dz}{2 \pi \I} e^{\frac{z^3}{3} - z (y+\tau^2)} \frac{2 (z-u)} {(z-\tau) (z-2 u + \tau)}, \\
 e^{\frac23 u^3 + u Y} \tilde{\mathpzc{h}}_2^{\delta, u} (Y) =& \ e^{\frac23 \tau^3 + s \tau} \int\limits_{{}_{-u}\zcd } \frac{d z}{2\pi\I} \!\!\! \int\limits_{{}_{\tau-2u}\wcu\, {}_{z,-\tau}} \!\!\! \frac{d w}{2\pi\I} \frac{ e^{\frac{z^3}{3} - z (y+\tau^2)} }{ e^{\frac{w^3}{3} - w (s+\tau^2)} } \frac{(w+z+2 u) (z-\tau +2 u)}{2 (z+u) (z-w) (w-\tau +2 u) (w + \tau)} \\
 & + \int\limits_{{}_{-\tau}\zcd } \frac{dz}{2 \pi \I} e^{\frac{z^3}{3} - z (y+\tau^2)} \frac{1}{z+\tau}
\end{split}
\end{equation}
where $0 < r < \min \{u, 2u-\tau\}$.
\end{lem}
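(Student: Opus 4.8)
Here is how I would establish Lemma~\ref{lem:gh_conj}.

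The plan is to obtain all five identities by the same change of variables used in the proof of Lemma~\ref{lem:A_conj}, applied to the definitions of $\mathpzc{e}^{\delta,u}$, $\mathpzc{g}_1^{\delta,u}$, $\mathpzc{g}_2^{\delta,u}$, and — after invoking Lemma~\ref{lem:SimplificationBR} to replace $\mathpzc{h}_k^{\delta,u}$ by $\tilde{\mathpzc{h}}_k^{\delta,u}$, with $\tilde{\mathpzc{h}}_1^{\delta,u}=\int_S^\infty\overline{\mathcal{A}}_{22}(\cdot,V)\mathpzc{f}^{-\delta,u}(V)\,dV-\mathpzc{g}_4^{\delta,u}$ and $\tilde{\mathpzc{h}}_2^{\delta,u}=\int_S^\infty\overline{\mathcal{A}}_{12}(\cdot,V)\mathpzc{f}^{-\delta,u}(V)\,dV+\mathpzc{g}_3^{\delta,u}$ — to these latter two. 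In each $\zeta$-integral I would substitute $\zeta=z-u$ whenever the exponent carries $+\zeta^2u$ and $\zeta=z+u$ whenever it carries $-\zeta^2u$ (and similarly $\omega=w\pm u$ in the double integrals), the sign being chosen precisely so that $\tfrac{\zeta^3}{3}\pm\zeta^2u$ collapses to $\tfrac{z^3}{3}$ plus a constant multiple of $u^3$ and a term $\mp zu^2$. Absorbing the explicit prefactor $e^{\pm\frac23u^3\pm uX}$ — which is either already built into the definition, as in $\mathpzc{e}^{\delta,u}$, or supplied by the conjugation, as in $\tilde{\mathpzc{h}}_k^{\delta,u}$ — kills the constant $u^3$ term; and using $\delta=\tau-u$, so that $X=x+\delta(2u+\delta)=x+\tau^2-u^2$ and hence $X+u^2=x+\tau^2$, converts the linear exponent into $-z(x+\tau^2)$. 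The rational factors then transform by direct substitution, e.g.\ $\tfrac{\zeta+\delta}{2\zeta}\mapsto\tfrac{z+\tau}{2(z+u)}$ and $\tfrac{1}{(\zeta-\delta)^2}\mapsto\tfrac{1}{(z-\tau)^2}$, and the Airy contours relabel: ${}_0\zcd\mapsto{}_{-u}\zcd$, $\zcd\,{}_\delta\mapsto\zcd\,{}_\tau$, ${}_{-\delta}\zcd\mapsto{}_{-\tau}\zcd$, $\zcd\,{}_{\pm\delta}\mapsto\zcd\,{}_{\tau,2u-\tau}$. This yields the formulas for $\mathpzc{g}_1^{\delta,u}$ and $\mathpzc{g}_2^{\delta,u}$ immediately, and for $\mathpzc{e}^{\delta,u}$ one reads off exactly the definition of ${\cal R}_{-\tau}(s)$.

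For $\tilde{\mathpzc{h}}_1^{\delta,u}$ and $\tilde{\mathpzc{h}}_2^{\delta,u}$ the one genuinely new step is the $V$-integral. I would first pass $\overline{\mathcal{A}}_{22}(Y,V)$ to its vertical-contour representation \eqref{eq:234} (legitimate since $u>0,\ \delta<0$), respectively keep the double-contour form of $\overline{\mathcal{A}}_{12}(Y,V)$; in either case the dependence on $V$ sits only in $e^{\omega V}$, so together with $\mathpzc{f}^{-\delta,u}(V)=e^{-\frac{\delta^3}{3}-\delta^2u+\delta V}$ the $V$-dependence is $e^{(\omega+\delta)V}$, and $\int_S^\infty e^{(\omega+\delta)V}\,dV=-e^{(\omega+\delta)S}/(\omega+\delta)$ converges because the $\omega$-contour has $\Re\omega<-\delta$. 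This produces exactly one extra simple factor $1/(\omega+\delta)$, which after $\omega=w+u$ becomes the $1/(w+\tau)$ appearing in the double integrals of $\tilde{\mathpzc{h}}_1^{\delta,u}$ and $\tilde{\mathpzc{h}}_2^{\delta,u}$; the residual $e^{(\omega+\delta)S}$ combines with the prefactor to give the overall $e^{\frac23\tau^3+s\tau}$ times functions of $s$. The remaining pieces $\mathpzc{g}_4^{\delta,u}$ and $\mathpzc{g}_3^{\delta,u}$ transform exactly as in the first part and furnish the single-integral terms (and, for $\tilde{\mathpzc{h}}_2^{\delta,u}$, the extra pole label $-\tau$ on the $w$-contour). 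As in Lemma~\ref{lem:A_conj}, before writing \eqref{eq:234} I would also record that, after swapping $\zeta$ and $\omega$, the vertical contours can be deformed to $r+\I\R$ with $0<r<\min\{u,2u-\tau\}$ without crossing the poles at $\omega=-\delta$, $z=w+2u$, $z=-\delta$.

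The step requiring the most care is exactly this contour bookkeeping, together with the collapse of the double pole of $\mathpzc{g}_4^{\delta,u}$ at $\zeta=-\delta$, i.e.\ at $z=2u-\tau$, into a simple pole: after the shift $\mathpzc{g}_4^{\delta,u}$ contributes $2(z-u)/\big((z-\tau)(z+\tau-2u)^2\big)$, and one must verify — by partial fractions and a residue computation combining this with the factor $1/(\omega+\delta)=1/(w+\tau)$ produced by the $V$-integral and the $1/(\zeta+\delta)$ already present in $\overline{\mathcal{A}}_{22}$ — that the net contribution at that point is $2(z-u)/\big((z-\tau)(z-2u+\tau)\big)$ as stated. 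One must also check that all the contour deformations (to the vertical lines, and the $\zeta\leftrightarrow\omega$ swap as in the proof of Lemma~\ref{lem:simplifications}) stay legal for all large $u$, which follows from $\tau=u+\delta<u$ and the range $0<r<\min\{u,2u-\tau\}$. No new estimate beyond those in Lemmas~\ref{lem:A_conj} and~\ref{lem:simplifications} is needed; the rest is matching rational factors and contour labels.
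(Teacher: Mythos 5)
Your proposal follows the same route as the paper: apply the affine substitutions of Lemma~\ref{lem:A_conj} to each factor, perform the elementary integral $\int_S^\infty e^{(\omega+\delta)V}\,dV$ inside $\tilde{\mathpzc{h}}_1^{\delta,u}$ and $\tilde{\mathpzc{h}}_2^{\delta,u}$, and carry the rational factors and contour relabellings through; and your handling of $\mathpzc{e}^{\delta,u}$, the $\mathpzc{g}_k^{\delta,u}$'s and the exponential prefactors is correct. The step you flag as ``requiring the most care'' --- the $\mathpzc{g}_4^{\delta,u}$ pole --- is, however, a genuine gap that your proposal leaves open. The definition carries $(\zeta+\delta)^{-2}$, so the substitution $\zeta=z-u$ yields $2(z-u)/\bigl((z-\tau)(z-2u+\tau)^2\bigr)$, a \emph{double} pole at $z=2u-\tau$, whereas the single-integral term of $\tilde{\mathpzc{h}}_1^{\delta,u}$ stated in the lemma has only a \emph{simple} one. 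You suggest the correction comes from combining with the factor $1/(\omega+\delta)$ produced by the $V$-integral and the $1/(\zeta+\delta)$ inside $\overline{\mathcal{A}}_{22}$, but those factors live in the double integral over distinct contours; there is no pointwise cancellation against the single-integral term, and the resolution must instead come from carefully tracking a residue picked up when the $w$-contour is moved past $w=-\tau$ (note the lemma's $r$-range $0<r<\min\{u,2u-\tau\}$ is wider than the range $\max\{\tau,0\}<r<u$ that eq.~\eqref{eq:234} naturally delivers). The paper's own proof is itself silent here --- eq.~\eqref{eq:g34_conj} quotes $e^{-\frac23 u^3 - uY}\mathpzc{g}_4^{\delta,u}(Y)$ with a simple pole, which does not match the square in the definition of $\mathpzc{g}_4^{\delta,u}$ --- and the stated form of the lemma is exactly what the $u\to\infty$ limit in Theorem~\ref{thm:LimitToBR} requires, so presumably the algebra does work out; but your proposal, like the paper, asserts this rather than demonstrating it.
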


\begin{proof}
The proofs for $\mathpzc{e}^{\delta, u}$, $\mathpzc{g}_1^{\delta, u}$ and $\mathpzc{g}_2^{\delta, u}$ consist, as above, respectively in the following changes of variables: $\zeta = z-u$, $\zeta = z+u$ and $\zeta = z-u$. The last two also work for $\mathpzc{g}_3^{\delta, u}$ and $\mathpzc{g}_4^{\delta, u}$ respectively, as summands for $\tilde{\mathpzc{h}}_2^{\delta, u}$ and $\tilde{\mathpzc{h}}_1^{\delta, u}$:
\begin{equation} \label{eq:g34_conj}
\begin{split}
 e^{\frac23 u^3+u Y} \mathpzc{g}_3^{\delta, u} (Y) &= \int\limits_{{}_{-\tau}\zcd} \frac{dz}{2 \pi \I} e^{\frac{z^3}{3} - z (y+\tau^2)} \frac{1}{z+\tau}, \\
 e^{-\frac23 u^3 - u Y} \mathpzc{g}_4^{\delta, u} (Y) &= \!\!\! \int\limits_{\zcd\, {}_{\tau, 2 u - \tau}} \!\!\! \frac{dz}{2 \pi \I} e^{\frac{z^3}{3} - z (y+\tau^2)} \frac{2 (z-u)} {(z-\tau) (z-2 u + \tau)}.
 \end{split}
 \end{equation}
For the term $\overline{\mathcal{A}}_{12} \mathpzc{f}^{-\delta, u}$ we have:
\begin{equation}
\begin{split}
 & e^{\frac23 u^3+u Y} \left( \overline{\mathcal{A}}_{12} \mathpzc{f}^{-\delta, u} \right) (Y) = \\
 & \qquad \qquad \qquad = \ e^{\frac23 \tau^3 + s \tau} \int\limits_{{}_{-u}\zcd } \frac{d z}{2\pi\I} \!\!\! \int\limits_{{}_{\tau-2u}\wcu\, {}_{z,-\tau}} \!\!\! \frac{d w}{2\pi\I} \frac{ e^{\frac{z^3}{3} - z (y+\tau^2)} }{ e^{\frac{w^3}{3} - w (s+\tau^2)} } \frac{(w+z+2 u) (z-\tau +2 u)}{2 (z+u) (z-w) (w-\tau +2 u) (w + \tau)}
 \end{split}
\end{equation}
which can be obtained by explicitly performing the integration $\int_S^\infty dV e^{(\delta+\omega) V}$ in the product and then changing variables $(\zeta,\omega) = (z+u, w+u)$. The computation for $e^{-\frac23 u^3-uY}\overline{\mathcal{A}}_{22} \mathpzc{f}^{-\delta, u}$ is similar. Combining them with the $\mathpzc{g}_3^{\delta, u}$ term, respectively the $\mathpzc{g}_4^{\delta, u}$ term, we obtain the result for $\tilde{\mathpzc{h}}_2^{\delta, u}$ and respectively $\tilde{\mathpzc{h}}_1^{\delta, u}$.
\end{proof}

\begin{proof}[Proof of Theorem~\ref{thm:LimitToBR}]
The finiteness of the Fredholm Pfaffian and of the scalar products depends on the behavior in $x,y$ in the above expressions. The $u$ dependence is only marginal and, with the chosen conjugation, all the terms remain bounded as $u \to \infty$. By dominated convergence we can take the $u \to \infty$ limit inside both the Fredholm Pfaffians and the scalar product. We have the following limits:
\begin{equation}
 \lim_{u \to \infty} \frac{e^{\frac23 u^3+u X}}{e^{-\frac23 u^3-u Y}} \overline{\mathcal{A}}_{11} (X, Y) = \lim_{u \to \infty} \frac{e^{-\frac23 u^3-u X}}{e^{\frac23 u^3+u Y}} \overline{\mathcal{A}}_{22} (X, Y) = 0
\end{equation}
and
\begin{equation}
 \lim_{u \to \infty} \frac{e^{\frac23 u^3 + u X}}{e^{\frac23 u^3 + u Y}} \overline{\mathcal{A}}_{12} (X, Y) = \mathpzc{K}_{\ \rm Ai,\tau} (x,y).
\end{equation}
Dominated convergence then implies that
\begin{equation}
 \lim_{u \to \infty} \pf(J - \overline{\mathcal{A}})_{L^2(S, \infty) \times L^2(S,\infty)} = \det(\Id - \mathpzc{K}_{\ \rm Ai,\tau})_{L^2 (s, \infty)} = F_{\rm GUE}(s+\tau^2)
\end{equation}
and that
\begin{equation}
 \lim_{u \to \infty} J^{-1} \overline{\mathcal{A}} (X, Y) = \begin{pmatrix} \mathpzc{K}_{\ \rm Ai,\tau}(x, y) & 0 \\ 0 & \mathpzc{K}_{\ \rm Ai,\tau}(x,y) \end{pmatrix}.
\end{equation}
The latter limit extends to resolvents as well.

The $\mathpzc{g}$ and  $\tilde{\mathpzc{h}}$ functions have the following limits:
\begin{equation}
 \lim_{u \to \infty} e^{\frac23 u^3+u X} \mathpzc{g}_1^{\delta, u} (X) = 0, \quad \lim_{u \to \infty} e^{-\frac23 u^3 - u X} \mathpzc{g}_2^{\delta, u} (X) =  \Psi_{-\tau}(x)
\end{equation}
and
\begin{equation}
  \begin{aligned}
\lim_{u \to \infty} e^{-\frac23 u^3 - u Y} \tilde{\mathpzc{h}}_1^{\delta, u} (Y) &= -\Psi_{-\tau}(y), \\
\lim_{u \to \infty} e^{\frac23 u^3 + u Y} \tilde{\mathpzc{h}}_2^{\delta, u} (Y) &= e^{\frac23 \tau^3 + s \tau} \int\limits_{\zcd} \frac{d z}{2\pi\I} \int\limits_{\wcu\, {}_{z,-\tau}} \frac{d w}{2\pi\I} \frac{ e^{\frac{z^3}{3} - z (y+\tau^2)} }{ e^{\frac{w^3}{3} - w (s+\tau^2)} } \frac{1}{(z-w) (w + \tau)} \\
 & \quad + \int\limits_{{}_{-\tau}\zcd} \frac{dz}{2 \pi \I} e^{\frac{z^3}{3} - z (y+\tau^2)} \frac{1}{z+\tau} =  \Phi_{-\tau}(y)
 \end{aligned}
\end{equation}
where the last equality is obtained by computing the residue at $w=-\tau$.

To conclude, the inner product on the right of~\eqref{eq:final_dist_asymp} collapses, in the limit $u \to \infty$, to $\braket{\Psi_{-\tau}}{ (\Id - \mathpzc{K}_{\ \rm Ai,\tau})^{-1} \Phi_{-\tau} }$: as $\mathpzc{g}_{1}^{\delta, u} \to 0$ with $\tilde{\mathpzc{h}}_1^{\delta, u}$ staying finite, the respective inner product is zero in the limit; moreover $\mathpzc{g}_{2}^{\delta, u}, \tilde{\mathpzc{h}}_2^{\delta, u}$ and $J^{-1} \overline{\mathcal{A}}$ converge to the desired quantities. Combining with $\mathpzc{e}^{\delta, u} \to \mathcal{R}_{-\tau}$, we conclude that
\begin{equation}
\lim_{u\to\infty} F^{(\delta, u)}_{0,\,\mathrm{half}} (S) = F_{{\rm BR},-\tau}(s) = F_{{\rm BR},\tau}(s)
\end{equation}
with the last equality coming from the fact that the Baik--Rains distribution is symmetric under $\tau \to -\tau$.
\end{proof}

\appendix

\section{On Pfaffians and point processes} \label{sec:pfaff}

Given an anti-symmetric $2n \times 2n$ matrix $(a_{i,j})$, its Pfaffian is defined as:
\begin{equation}
 \pf [a_{i,j}]_{1\leq i < j\leq 2n} = \frac{1}{2^{n} n!}\sum_{\sigma \in S_{2n}}\sgn(\sigma)a_{\sigma(1),\sigma(2)} a_{\sigma(3),\sigma(4)} \cdots a_{\sigma(2n-1),\sigma(2n)}
\end{equation}
where $S_{2n}$ is the permutation group on $2n$ letters. Remark that the Pfaffian is determined entirely by the upper triangular part. One can show that \begin{equation}\label{eq:pf_det}
\left(\pf[a_{i,j}]_{1\leq i < j\leq 2n}\right)^2=\det [a_{i,j}]_{1\leq i,j\leq 2n}.
\end{equation}

Suppose that one has a $2 \times 2$ anti-symmetric matrix kernel $K(x, y)$, i.e.\,$K$ is a $2 \times 2$ matrix function of $(x, y)$ satisfying $K(x, y) = -K^t(y, x)$ with $t$ denoting transposition. Notice the interchange of $x$ and $y$. Given such a kernel and points (variables or numbers) $x_1, \dots, x_n$, one can define a $2n \times 2n$ anti-symmetric matrix $K^{(n)}$ block-wise as follows: its $2 \times 2$ block $(i,j)$ for $1 \leq i, j \leq n$ is given by the matrix $K(x_i, x_j)$. Because $K(x, y) = -K^t(y, x)$, $K^{(n)}$ thus defined is even-dimensional anti-symmetric and its Pfaffian well-defined.

Throughout we are interested in probability distributions on configuration spaces $\Lambda$ equipped with a measure $d \mu$ as follows: either $\Lambda = \Z$ (or a semi-infinite interval in $\Z$) is discrete and $d \mu$ is counting measure, or $\Lambda = \R$ (or a semi-infinite interval in $\R$) is continuous and $d \mu = d x$ is Lebesgue measure. A point process (measure)\footnote{For more on point processes in general and determinantal ones in particular, see e.g.\,\cite{Jo05,BOO00,Bor10}} on such a configuration space $\Lambda$ is called \emph{Pfaffian with $2 \times 2$ matrix correlation kernel $K$} if there exists a $2 \times 2$ matrix kernel $K$ satisfying $K(x, y) = -K^t(y, x)$ such that
the $n$-point correlation functions $\rho_n(x_1, x_2, \dots, x_n)$ of the process, for all $n \geq 1$, are Pfaffians of the associated $2n \times 2n$ matrix $K^{(n)}$:
\begin{equation}
 \rho_n(x_1, x_2, \dots, x_n) = \pf[K^{(n)} (x_i, x_j)]_{1\leq i<j\leq n}.
\end{equation}
We recall that if $\Lambda$ is discrete, $\rho_n(x_1, x_2, \dots, x_n) = \Pb(S : x_i \in S,\ \forall\,1 \leq i \leq n)$ is the probability of $S$ containing all the $x$'s; if $\Lambda$ is continuous, $\rho_n(x_1, x_2, \dots, x_n) d x_1 \dots d x_n$ is the probability of sets $S$ having non-empty intersection with all $[x_i, x_i + d x_i]$. A simple observation states that the one-point function is the $K_{12}$ entry: $\Pb(S:x \in S) = \rho_1(x) = K_{12}(x, x)$ (in the discrete setting; the obvious modification is needed in the continuous setting).

Given a $2 \times 2$  anti-symmetric matrix kernel $K$ defined on a configuration space $\Lambda$ equipped with a measure $d \mu$ (either counting measure or Lebesgue measure depending on the underlying space), the \emph{Fredholm Pfaffian} of $K$ restricted to the subspace $U \subset \Lambda$ is defined as
 \begin{equation}\label{eq:fredholm_pf_def}
\pf(J + \lambda K)_{L^2(U) \times L^2(U)} = \sum_{n=0}^{\infty}\frac{\lambda^n}{n!} \int_{U^n} \pf[ K^{(n)} (x_i, x_j)]_{1\leq i<j\leq n} \prod_{i=1}^n d \mu(x_i)
\end{equation}
assuming the sum is finite. Here $J$ is the anti-symmetric matrix kernel $J(x,y)=\delta_{x,y} \left( \begin{smallmatrix} 0 & 1 \\ -1 & 0 \end{smallmatrix} \right)$, but as is oftentimes the case in the literature, this technicality is overlooked and we think of $J$ just as the corresponding $2 \times 2$ matrix. We note that oftentimes a sufficient condition for the Fredholm Pfaffian $\pf(J+\lambda K)_{L^2(U) \times L^2(U)}$ to be finite finite is that $K$ is a trace class operator on $L^2(U) \times L^2(U)$ (or rather it has trace class entries). Moreover, Fredholm Pfaffians are defined up to conjugation, in the following sense. Suppose $\tilde{K}$ is the anti-symmetric matrix kernel
\begin{equation}
\tilde{K}(x,y)=
\left(\begin{smallmatrix} e^{f(x)} & 0 \\ 0 & e^{-f(x)} \end{smallmatrix}\right)
K(x,y)
\left(\begin{smallmatrix} e^{f(y)} & 0 \\ 0 & e^{-f(y)} \end{smallmatrix}\right)
\end{equation}
for a $d\mu$-measurable function $f$. Then it is not hard to check that $\pf [K^{(n)} (x_i, x_j)]_{1\leq i<j\leq 2n} = \pf [\tilde{K}^{(n)} (x_i, x_j)]_{1\leq i<j\leq 2n}$ and so $\pf(J+\lambda K) = \pf(J+\lambda \tilde{K})$ whenever both are defined. Importantly, we can use this to define $\pf(J+\lambda K)$ even if $K$ is not a trace class operator provided we find an appropriate $f$ which makes $\tilde{K}$ trace class.

Owing to identity~\eqref{eq:pf_det}, we have the following relation between Fredholm Pfaffians with $2 \times 2$ matrix kernels $K$ and block Fredholm determinants with related kernel $J^{-1}K$:
\begin{equation}\label{eq:fred_pf_det}
  \pf(J + \lambda K)^2_{L^2(U) \times L^2(U)} = \det(\Id + \lambda J^{-1} K)_{L^2(U) \times L^2(U)}
\end{equation}
where we remark the Fredholm determinant on the right hand side is defined as in~\eqref{eq:fredholm_pf_def} with pf replaced by det and $K^{(n)}$ by $(J^{-1}K)^{(n)}$.

Finally, for any two bounded $2 \times 2$ matrix operators $A:L^2(U) \times L^2(U) \to L^2(V) \times L^2(V)$, $B:L^2(V) \times L^2(V) \to L^2(U) \times L^2(U)$ for which both sides below are defined, we have
  \begin{equation} \label{eq:pfaff_id}
   \pf(J+\lambda JAB)_{L^2(V) \times L^2(V)}=\pf(J+\lambda JBA)_{L^2(U) \times L^2(U)}.
  \end{equation}
For example, this could happen if the entries of either $A$ or $B$ are trace class (as such operators form an ideal inside the bounded ones), or if the entries of both $A$ and $B$ are Hilbert--Schmidt (as products of two Hilbert--Schmidt operators are trace class), but it could happen in more generality too. We note that oftentimes $AB$ is infinite dimensional while $BA$ is finite dimensional. This fact is immediately implied by the corresponding Fredholm determinant identity $\det(\Id + \lambda AB) =\det(\Id + \lambda BA)$ and~\eqref{eq:fred_pf_det}.

For more on Pfaffians, the reader is referred to the the Appendix of~\cite{OQR16} for the analytic side and to~\cite{Ste90} for the algebraic and combinatorial ones.

\section{Some determinantal computations} \label{sec:det_comp}

In this section we prove~\eqref{eq:F_det}. For this we need to show that
\begin{equation}
\braket{Y_1}{Z_2} = \braket{Y_2}{Z_1}=0 \text{ and } \braket{Y_1}{Z_1}= \braket{Y_2}{Z_2}
\end{equation}
where $Z_i, Y_i, i=1,2$ are defined in~\eqref{eq:S_fact}. For brevity and consistency, let us rename
\begin{equation}
  A=(\Id-\overline G)^{-1}, \quad h_1= g_1, \quad h_2=g_2, \quad f_1 = f_{+}^\beta.
\end{equation}

We have
\begin{equation}\label{eq:inner1}
 \braket{Y_1}{Z_2} = \braket{f_1}{A_{12} f_1}
\end{equation}
and
\begin{equation}\label{eq:inner2}
 \braket{Y_2}{Z_1} = \braket{-h_1 \quad h_2} {\begin{array}{c} A_{11}h_2+A_{12}h_1 \\ A_{21}h_2+A_{22} h_1 \end{array}}.
\end{equation}
To show that both are zero, we need the following result.
\begin{prop}\label{prop:kernel_symm}
We have:
\begin{equation}
 \begin{aligned}
  A_{12}(x,y)=-A_{12}(y,x),\quad A_{21}(x,y)=-A_{21}(y,x),\quad A_{11}(x,y)=A_{22}(y,x).
 \end{aligned}
\end{equation}
\end{prop}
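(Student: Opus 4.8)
The plan is to package the three identities into the single statement that a suitable involution of matrix kernels fixes the resolvent $A=(\Id-\overline{G})^{-1}$. For a $2\times 2$ matrix kernel $M(x,y)$ I would set $\mathcal{T}(M)(x,y):=J\,M(y,x)^t J^{-1}$, where $t$ is matrix transposition and $J=\left(\begin{smallmatrix}0&1\\-1&0\end{smallmatrix}\right)$. Writing $M(y,x)^t=\left(\begin{smallmatrix}a&b\\c&d\end{smallmatrix}\right)$ one computes $J\,M(y,x)^t J^{-1}=\left(\begin{smallmatrix}d&-c\\-b&a\end{smallmatrix}\right)$, so the assertion of the proposition is precisely $\mathcal{T}(A)=A$, read off componentwise.

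First I would record that $\mathcal{T}$ is an anti-homomorphism of the relevant operator algebra fixing the identity: $\mathcal{T}(\Id)=\Id$, and, by swapping the order of the inner integration, $\mathcal{T}(MN)=\mathcal{T}(N)\,\mathcal{T}(M)$ (here one uses $J^t=-J$ and $J^2=-\Id$). Consequently $\mathcal{T}(M^{-1})=\mathcal{T}(M)^{-1}$ for invertible $M$, by applying $\mathcal{T}$ to $MM^{-1}=\Id$. Next I would check that $\mathcal{T}(\overline{G})=\overline{G}$: recalling $\overline{G}=J^{-1}\overline{K}$ and the anti-symmetry of $\overline{K}$ as a matrix kernel — i.e.\ anti-symmetry of $\overline{K}_{11},\overline{K}_{22}$ in $(x,y)$ together with $\overline{K}_{12}(x,y)=-\overline{K}_{21}(y,x)$, the facts already used in the proof of Lemma~\ref{lem:GbarToGhat} — and using $(J^{-1})^t=-J^{-1}$, $J^{-2}=-\Id$, $J^{-1}=-J$, one gets
\[
\mathcal{T}(\overline{G})(x,y)=J\big(J^{-1}\overline{K}(y,x)\big)^t J^{-1}=J\big(-\overline{K}(x,y)\big)(-J^{-1})J^{-1}=-J\,\overline{K}(x,y)=\overline{G}(x,y).
\]
Then $\mathcal{T}(\Id-\overline{G})=\Id-\overline{G}$, and the inversion property yields $\mathcal{T}(A)=\mathcal{T}\big((\Id-\overline{G})^{-1}\big)=(\Id-\overline{G})^{-1}=A$; unwinding $\mathcal{T}(A)=A$ componentwise via the formula above gives exactly $A_{11}(x,y)=A_{22}(y,x)$, $A_{12}(x,y)=-A_{12}(y,x)$, and $A_{21}(x,y)=-A_{21}(y,x)$.

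The main point to be careful about is the operator-theoretic bookkeeping rather than any computation: I would invoke the exponential bounds of Lemma~\ref{lem:g_hat} to note that, after the standard diagonal conjugation described in Appendix~\ref{sec:pfaff}, $\overline{G}$ has trace-class entries and $\Id-\overline{G}$ is invertible on the relevant space (so that $A$, which already appears in the main formula, is well-defined), and that $\mathcal{T}$ maps this operator algebra into itself and is compatible with composition and with inversion there. All of this is routine given the estimates already established, so I expect the proof to be short.
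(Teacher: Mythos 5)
Your argument is correct and takes a genuinely different route from the paper's. The paper proceeds by assuming $\|\overline{G}\|<1$, expanding $A=(\Id-\overline{G})^{-1}$ as a Neumann series, and proving the three identities for every power $\overline{G}^n$ by induction; it then removes the norm assumption by replacing $\overline{G}$ with $\omega\overline{G}$, $\omega$ small, and analytically continuing in $\omega$ using the discreteness of the spectrum of a trace-class operator. You instead package the three identities as a single fixed-point equation $\mathcal{T}(A)=A$ for the anti-involution $\mathcal{T}(M)(x,y)=J\,M(y,x)^t J^{-1}$, observe that $\mathcal{T}$ reverses composition, fixes $\Id$ (hence commutes with inversion), and fixes $\overline{G}$ precisely because $\overline{K}$ is anti-symmetric as a matrix kernel; applying $\mathcal{T}$ to $(\Id-\overline{G})(\Id-\overline{G})^{-1}=\Id$ then yields the result directly, with no series expansion and no analytic continuation. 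Your approach is cleaner and more robust: it uses only that $\Id-\overline{G}$ is invertible on the conjugated space and that $\mathcal{T}$, being conjugation of the transpose-kernel by a constant matrix, preserves the bounded operators there. The one small omission is that you do not explicitly record that $\mathcal{T}$ is additive, which you need in passing from $\mathcal{T}(\overline{G})=\overline{G}$ to $\mathcal{T}(\Id-\overline{G})=\Id-\overline{G}$; this is immediate from the definition, but it belongs alongside $\mathcal{T}(\Id)=\Id$ and $\mathcal{T}(MN)=\mathcal{T}(N)\mathcal{T}(M)$ in the list of properties you invoke.
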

\begin{proof}
 Assume for now that we know the norm of $\overline{G}$ is less than 1. Then we can expand $A$ as a Neumann series $(\Id-\overline G)^{-1}=\Id+\overline G+\overline G^2+\overline G^3+\dots$. We will prove by induction that, for any $n\geq1$
 \begin{equation}\label{eq:induction}
  \begin{aligned}
  \overline G^n_{12}(x,y)=-\overline G^n_{12}(y,x),\quad \overline G^n_{21}(x,y)=-\overline G^n_{21}(y,x),\quad \overline G^n_{11}(x,y)=\overline G^n_{22}(y, x).
  \end{aligned}
 \end{equation}
The base case is true by the definition of $\overline G=J^{-1}\overline K$ and by the fact that
 \begin{equation}
  \begin{aligned}
  \overline K_{12}(x,y)=-\overline K_{21}(y,x),\quad \overline K_{11}(x,y)=-\overline K_{11}(y,x),\quad \overline K_{22}(x,y)=-\overline K_{22}(y,x).
  \end{aligned}
 \end{equation}
Now we proceed with the inductive step: assuming that~\eqref{eq:induction} holds for $B=\overline G^n$, for $C=\overline G^{n+1}$ we have
\begin{equation}
 \begin{aligned}
  C_{11}(x,y)&=\left(B_{11}\overline G_{11}+B_{12}\overline G_{21}\right)(x,y)\\
  &=\int_s^{\infty} dz B_{11}(x,z)\overline G_{11}(z,y)+\int_s^{\infty} dz B_{12}(x,z)\overline G_{21}(z,y)\\
  &=\int_s^{\infty} dz B_{22}(z,x)\overline G_{22}(y,z)+\int_s^{\infty} dz B_{12}(z,x)\overline G_{21}(y,z)\\
  &=C_{22}(y,x).
 \end{aligned}
\end{equation}
Moreover,
\begin{equation}
 \begin{aligned}
  C_{12}(x,y)&=\left(B_{11}\overline G_{12}+B_{12}\overline G_{22}\right)(x,y)\\
  &=\int_s^{\infty} dz B_{11}(x,z)\overline G_{12}(z,y)+\int_s^{\infty} dz B_{12}(x,z)\overline G_{22}(z,y)\\
  &=-\int_s^{\infty} dz B_{22}(z,x)\overline G_{12}(y,z)-\int_s^{\infty} dz B_{12}(z,x)\overline G_{11}(y,z)\\
  &=-C_{12}(y,x)
 \end{aligned}
\end{equation}
where we have used that $C = B \overline{G} = \overline{G} B$. An analogous computation holds for $C_{21}$.

Now it may happen that the norm of $\overline{G}$, of which we remain ignorant, is $\geq 1$. We then replace $\overline{G}$ by $\omega \overline{G}$ for $\omega>0$ small enough to make $\omega \overline{G}$ of subunit norm. The argument above applies to the corresponding $A(\omega) = (\Id-\omega \overline{G})^{-1}$. We then analytically continue in $\omega$, as the spectrum of any trace class operator (and in particular of $\overline{G}$) is discrete without non-zero accumulation points.
\end{proof}
Using Proposition~\ref{prop:kernel_symm} in equation~\eqref{eq:inner1}, we have
\begin{equation}
 \begin{aligned}
  \braket{Y_1}{Z_2} &=\int_s^{\infty} \int_s^{\infty} dx dy f_1(x)A_{12}(x,y)f_1(y)\\
  &=-\int_s^{\infty} \int_s^{\infty} dx dy f_1(x)A_{12}(y,x)f_1(y) \\
  &=0.
 \end{aligned}
\end{equation}
For the same reason,~\eqref{eq:inner2} becomes:
\begin{equation}
  \braket{Y_1}{Z_2} = -\braket{h_1}{A_{11}h_2} - \braket{h_1}{A_{12}h_1} + \braket{h_2}{A_{22}h_1} + \braket{h_2}{A_{21}h_2} = 0
\end{equation}
since the last two terms are zero, and the first two are equal by Proposition~\ref{prop:kernel_symm}:
\begin{equation}
 \begin{aligned}
  \braket{h_2}{A_{22}h_1} &= \int_s^{\infty} \int_s^{\infty} dx dy h_2(x)A_{22}(x,y)h_1(y)\\
  &=\int_s^{\infty} \int_s^{\infty} dx dy h_2(x)A_{11}(y,x)h_1(y)\\
  &=\braket{h_1}{A_{11}h_2}.
 \end{aligned}
\end{equation}
Now, we show that $\braket{Y_1}{Z_1} = \braket{Y_2}{Z_2}$. The two are explicitly given by
\begin{equation}
 \braket{Y_1}{Z_1} = \braket{f_1}{A_{11} h_2} + \braket{f_1}{A_{12}h_1}, \quad \braket{Y_2}{Z_2} = -\braket{h_1}{A_{12} f_1} + \braket{h_2}{A_{22}f_1}.
\end{equation}
By Proposition~\ref{prop:kernel_symm}, as $A_{11}(x, y) = A_{22}(y, x)$ and $A_{12}(x, y) = - A_{12}(y, x)$, it follows that $\braket{f_1}{A_{11} h_2} = \braket{h_2}{A_{22} f_1}$ and that $\braket{f_1}{A_{12} h_1} = -\braket{h_1}{A_{12} f_1}$ proving the desired equality.

\section{Correlations for geometric weights} \label{sec:geom_wts}

\subsection{Correlation kernel for generic geometric weights}

In this section we briefly review the main result on integrable last passage percolation with \iid geometric random variables. It is by passing to the exponential limit in this result that we obtain Theorem~\ref{thm:exp_corr}.

Consider half-space last passage percolation with \iid\,geometric random variables $(W_{i,j})_{1 \leq j \leq i \leq N}$ distributed as follows:
\begin{equation}
 W_{i,j} = \begin{cases}
            \mathrm{Geom}(a x_i), & i = j, \\
            \mathrm{Geom}(x_i x_j), & i > j
           \end{cases}
\end{equation}
where $a, x_1, \dots, x_N$ are real parameters satisfying
\begin{equation}
0 \leq a < \min_i \tfrac{1}{x_i}, \quad 0 < x_1, \dots, x_N < 1
\end{equation}
and a random variable $X$ is \textit{geometric} $\mathrm{Geom}(q)$ if $\Pb(X = k) = (1-q)q^k, \forall k \in \N$. We depict this in Figure~\ref{fig:geom_lpp}. It is helpful to visualize the $x$'s as indexing the rows and columns of the half-space.

\begin{figure}[t!]
  \centering
   \includegraphics[height=5.5cm]{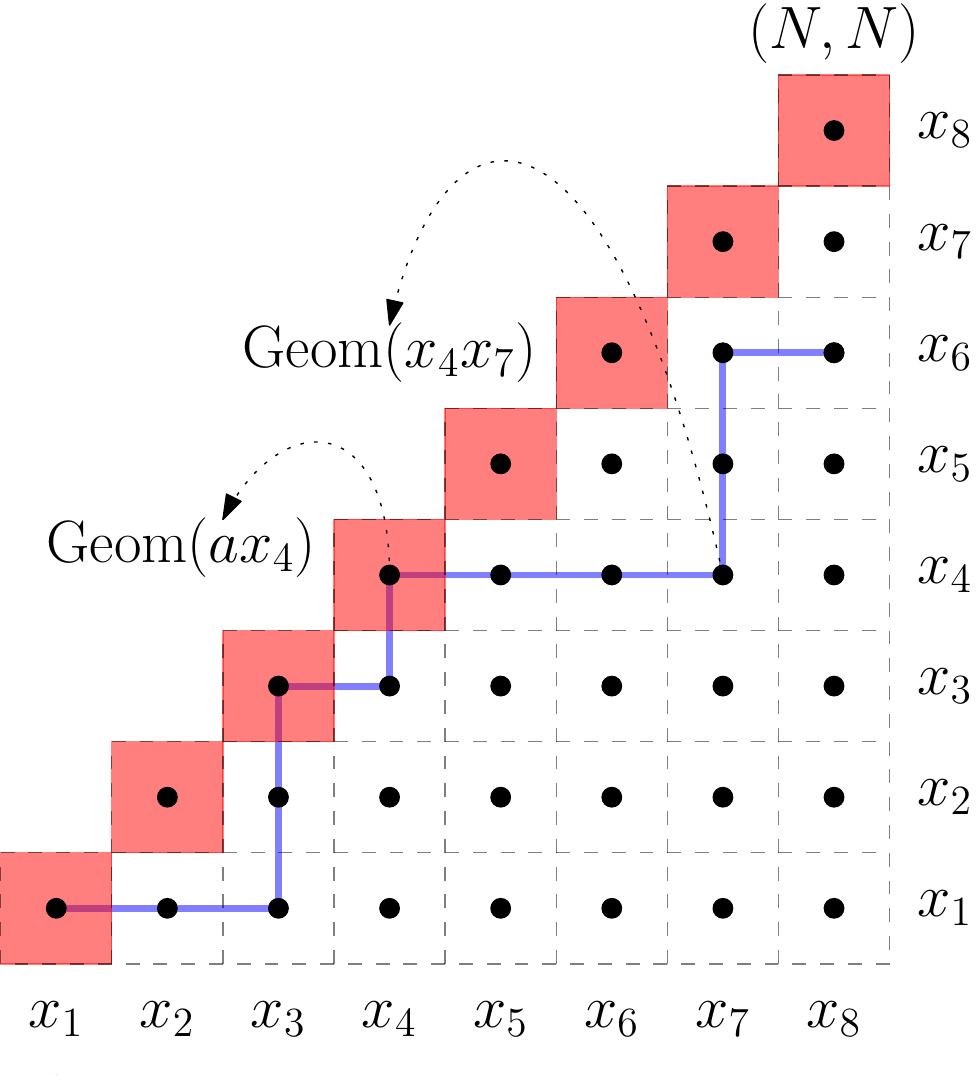}
   \caption{A possible LPP path (polymer) from $(1, 1)$ to $(N, N-n)$ for $(N, n) = (8, 2)$. The non-diagonal dots are independent geometric random variables with parameters assigned by multiplying the row and column $x$ parameters; the diagonal has an extra parameter $a$.}
   \label{fig:geom_lpp}
\end{figure}

We have the following result for the last passage percolation time $L_{N,N-n}$ from $(1, 1)$ to \mbox{$(N,N-n)$} (for fixed $0 \leq n \leq N-1$). The precise statement as stated below is first rigorously proven and explicitly stated in~\cite{BBCS17}. Previously the kernel was derived in~\cite{SI03}, but not in a completely rigorous way, specifically for the $n=0$ case. The Pfaffian structure for $n=0$ was first derived by Rains~\cite{Ra00}, and subsequently extended for generic $n$ and/or as multi-point joint distribution in~\cite{SI03, RB04, BBCS17, Gho17, BBNV18}. See also the pioneering algebraic work of Baik--Rains~\cite{BR99, BR01b} for an alternative but equivalent approach via Toeplitz+Hankel determinants and matrix integrals, as well as the more combinatorial approach of Forrester--Rains~\cite{FR07}. We follow~\cite{BBNV18} for the exposition hereinafter.

\begin{thm} \label{thm:geom_corr}
 The distribution of $L_{N,N-n}$ is a Fredholm Pfaffian
 \begin{equation}\label{eqC3}
  \Pb(L_{N,N-n} \leq s) = \pf (J-K^{\rm geo})_{\ell^2 (\{s+1, s+2,\dots\})}
 \end{equation}
 with $2 \times 2$ matrix correlation kernel $K^{\rm geo}: \Z^2 \to \mathrm{Mat}_2(\R)$ given by
 \begin{equation}
  \begin{split}
   K^{\rm geo}_{11}(k, \ell) &= \ics \int \frac{dz}{z^{k}} \int \frac{dw}{w^{\ell}} F(z) F(w) \frac{(z-w) (z-a) (w-a)} {(z^2-1) (w^2-1) (zw-1)}, \\
   K^{\rm geo}_{12}(k, \ell) = -K^{\rm geo}_{21}(\ell, k) &= \ics \int \frac{dz}{z^{k}} \int \frac{dw}{w^{-\ell+1}} \frac{F(z)} {F(w)} \frac{(zw-1) (z-a)}{(z-w) (z^2-1) (w-a)}, \\
   K^{\rm geo}_{22}(k, \ell) &= \ics \int \frac{dz}{z^{-k+1}} \int \frac{dw}{w^{-\ell+1}} \frac{1}{F(z) F(w)} \frac{(z-w)}{ (zw-1) (z-a) (w-a)}
  \end{split}
 \end{equation}
 where
 \begin{equation}
  F(z)= \frac{\prod_{k=1}^N (1-x_k/z)}{\prod_{k=1}^{N-n} (1-x_k z)}
 \end{equation}
and where the contours are positively oriented circles centered around the origin satisfying the following conditions:
  \begin{itemize}
   \item for $K^{\rm geo}_{11}$, $1 < |z|, |w| < \min_i \frac{1}{x_i}$;
   \item for $K^{\rm geo}_{12}$, $\max \{ \max_i x_i, a\} < |w| < |z|$ and $1 < |z| < \min_i \frac{1}{x_i}$;
   \item for $K^{\rm geo}_{22}$, $\max \{ \max_i x_i, a\} < |w|, |z|$ and $1 < |zw|$.
  \end{itemize}
\end{thm}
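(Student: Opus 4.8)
The plan is to deduce Theorem~\ref{thm:geom_corr} from the theory of Pfaffian (equivalently, free boundary) Schur processes, following~\cite{BBCS17, BBNV18}. Since the statement is proven in full in those works, the task here is mainly to recall the chain of reductions and to match conventions; I sketch how one arrives at the formula, and where the real work lies.

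First I would set up the combinatorial bijection. The array $(W_{i,j})_{1\le j\le i\le N}$ is the lower-triangular part of a symmetric array, so a symmetrized Robinson--Schensted--Knuth correspondence converts the last passage time into the length of the first row of a random partition: $L_{N,N-n}$ has the same law as $\lambda_1$ for $\lambda$ sampled from a Pfaffian Schur measure. Its row specializations are the single variables $x_1,\dots,x_N$; its column specializations are $x_1,\dots,x_{N-n}$, the truncation to $N-n$ columns encoding that the endpoint is $(N,N-n)$ rather than the full corner $(N,N)$; and the free boundary carries the extra diagonal parameter $a$. Verifying that the pushforward of the geometric weights under symmetrized RSK is \emph{exactly} this measure --- with the precise role of $a$ on the diagonal and of the column truncation --- is the one step requiring genuine care; it is the content of~\cite{Ra00} for $n=0$ and of~\cite{SI03, BBCS17, BBNV18} in general.

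Next I would invoke the known Pfaffian structure: such a Schur process has correlation functions given by Pfaffians of a $2\times2$ matrix kernel whose entries are double contour integrals built from the generating function $F(z)=\prod_{k=1}^N(1-x_k/z)\big/\prod_{k=1}^{N-n}(1-x_k z)$, together with the boundary factor responsible for the $(z-a),(w-a),z^2-1,w^2-1$ and $zw-1$ denominators. Specializing the general correlation-kernel formula of~\cite{BBNV18} (equivalently~\cite{BBCS17}) and simplifying produces exactly the three entries $K^{\rm geo}_{11},K^{\rm geo}_{12},K^{\rm geo}_{22}$ as stated, with $K^{\rm geo}_{21}(\ell,k)=-K^{\rm geo}_{12}(k,\ell)$, whence $\Pb(L_{N,N-n}\leq s)$ becomes the Fredholm Pfaffian over $\ell^2(\{s+1,s+2,\dots\})$. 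I would then check the admissible contour radii, so that the poles at $|z|=1$, $|z|=1/x_i$, $|z|=a$ and the locus $|zw|=1$ lie on the sides that make the Pfaffian expansion converge and represent the point process; this is routine given $0\le a<\min_i 1/x_i$ and $0<x_i<1$.

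The main obstacle is the first step: rigorously identifying the image of the geometric weights under symmetrized RSK with the claimed Pfaffian Schur measure, simultaneously handling the endpoint truncation and the diagonal parameter $a$ --- the aspect that earlier derivations (e.g.~\cite{SI03}) treated only heuristically, in particular for $n=0$. Since this identification is carried out rigorously in~\cite{BBCS17, BBNV18}, the proof here consists of citing those results and matching notations; the remaining manipulations (kernel simplification and contour bookkeeping) are mechanical, and it is precisely this geometric-weights kernel that we then push to the exponential limit to obtain Theorem~\ref{thm:exp_corr}.
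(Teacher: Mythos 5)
Your proposal matches the paper's treatment exactly: the paper does not prove Theorem~\ref{thm:geom_corr} itself but attributes it to~\cite{BBCS17, BBNV18} (with antecedents~\cite{Ra00, SI03, RB04, Gho17}), which is precisely what you do. Your sketch of the symmetrized-RSK-to-Pfaffian-Schur-process reduction correctly identifies where the genuine work resides in those references and is consistent with their statements, so there is nothing to add.
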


\subsection{Reformulation of the correlation kernel for our model}

Now consider the case $x_1=b\in (0,1)$, $x_2=x_3=\ldots=x_N=\sqrt{q}$ for some $q\in (0,1)$ and $a>0$ such that $a\sqrt{q}<1$ and $ab<1$. We will manipulate the kernel so that the limit to the exponential case becomes straightforward. In particular, it is not necessary to do any steepest descent analysis (Laplace method) as was done in~\cite{BBCS17}.

Let us define the following functions:
\begin{equation}
H(z)=\frac{(1-\sqrt{q}/z)^{N-1}}{(1-\sqrt{q}z)^{N-1}},\quad B(z)= \frac{1-b/z}{1-bz}
\end{equation}
which when combined yield
\begin{equation}
F(z)=H(z) B(z) (1-\sqrt{q} z)^n.
\end{equation}
Notice that
\begin{equation}
H(z^{-1}) = H(z)^{-1},\quad B(z^{-1})=B(z)^{-1},\quad F(z^{-1})=H(z)^{-1} B(z)^{-1} (1-\sqrt{q}/z)^n.
\end{equation}
Since in the Fredholm Pfaffian~\eqref{eqC3} the smallest value to be considered is $s=0$, the $k,\ell$ entries of the kernel will be in $\{1,2,3,\ldots\}$ only. Below we only consider the case $b\neq \sqrt{q}$, since the $b=\sqrt{q}$ case is simpler: just set $B(z)=0$ and replace $N-1$ with $N$ in $H(z)$. To see this, take for instance the $b\to\sqrt{q}$ limit in~\eqref{eqC13}.

\begin{lem}
  Let $b\neq \sqrt{q}$ and $k,\ell\geq 1$. The kernel entry $K^{\rm geo}_{11}$ can be expressed as
\begin{equation}\label{eqC9}
\begin{aligned}
K^{\rm geo}_{11}(k,\ell) = &\frac{-1}{(2\pi\I)^2}\oint\limits_{\Gamma_{\sqrt{q}}} dw\!\!\! \oint\limits_{\Gamma_{1/\sqrt{q}}}\!\!\! dz \frac{w^{\ell-1}}{z^{k}}\frac{h^{\rm geo}_{11}(z,w)}{z-w} +
\frac{-1}{(2\pi\I)^2}\oint\limits_{\Gamma_{b}} dw\!\!\! \oint\limits_{\Gamma_{1/\sqrt{q}}}\!\!\! dz \frac{w^{\ell-1}}{z^{k}}\frac{h^{\rm geo}_{11}(z,w)}{z-w}\\
&+\frac{-1}{(2\pi\I)^2}\oint\limits_{\Gamma_{\sqrt{q}}} dw \oint\limits_{\Gamma_{1/b}} dz \frac{w^{\ell-1}}{z^{k}}\frac{h^{\rm geo}_{11}(z,w)}{z-w}
\end{aligned}
\end{equation}
with
\begin{equation}
h^{\rm geo}_{11}(z,w)=\frac{H(z)B(z)}{H(w)B(w)}(1-\sqrt{q} z)^n(1-\sqrt{q}/w)^n \frac{(zw-1)(z-a)(1-w a)}{(z^2-1)(1-w^2)}.
\end{equation}
\end{lem}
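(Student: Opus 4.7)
The plan is to obtain the three-term representation by applying the involution $w\mapsto 1/w$ to the $w$-contour in the formula of Theorem~\ref{thm:geom_corr}, and then deforming both resulting contours to collect residues at the desired simple poles, showing that one of the four naively appearing cross terms in fact vanishes. Concretely, I first substitute $w=1/u$: using $(z-1/u)=(zu-1)/u$, $(1/u-a)=(1-au)/u$, $1/u^2-1=(1-u^2)/u^2$, $z/u-1=(z-u)/u$, and $F(1/u)=H(u)^{-1}B(u)^{-1}(1-\sqrt{q}/u)^n$, the Jacobian together with the transformed integrand produces a net factor of $u^{\ell-1}$ (the $-1/u^2$ from $dw=-du/u^2$ being cancelled by the orientation reversal accompanying $w\mapsto 1/w$). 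Relabelling $u\to w$ yields
\[
K^{\rm geo}_{11}(k,\ell)=\frac{1}{(2\pi\I)^2}\oint dz\oint dw\,\frac{w^{\ell-1}}{z^k}\,\frac{h^{\rm geo}_{11}(z,w)}{z-w}
\]
with contours $1<|z|<\min(1/b,1/\sqrt{q})$ and $\max(b,\sqrt{q})<|w|<1$.

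Next I would deform the contours. The only poles of the $w$-integrand inside the starting annulus are $w=\sqrt{q}$ (of order $N-1-n$, from $1/H(w)$ combined with the numerator factor $(1-\sqrt{q}/w)^n$) and $w=b$ (simple, from $1/B(w)$); the would-be pole at $w=0$ coming from $(1-\sqrt{q}/w)^n$ is cancelled by the zero of order $N-1$ at the origin inherited from $1/H(w)$, valid in the assumed range $0\le n\le N-1$. Hence the $w$-contour splits as $\oint_{\Gamma_{\sqrt{q}}}+\oint_{\Gamma_b}$. For the $z$-integrand the poles outside the starting contour are $z=1/\sqrt{q}$ and $z=1/b$, while a direct estimate shows the integrand decays like $z^{n-N-k-1}$ as $|z|\to\infty$, which is integrable for $k\ge 1$ and $n\le N-1$. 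Pushing the $z$-contour outward and closing at infinity therefore gives $\oint_{|z|=r_z}=-\oint_{\Gamma_{1/\sqrt{q}}}-\oint_{\Gamma_{1/b}}$, the overall minus sign coming from the outward deformation with vanishing boundary at infinity.

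Expanding the product of the two decompositions yields four candidate double integrals, each preceded by a global minus sign; three of them are exactly the terms claimed in~\eqref{eqC9}. The remaining point is that the $\Gamma_b\times\Gamma_{1/b}$ term vanishes. Only $B(z)$ and $1/B(w)$ are singular at $(z,w)=(1/b,b)$, each with a simple pole, and every other factor is regular there; crucially, the numerator factor $(zw-1)$ of $h^{\rm geo}_{11}$ evaluates to $(1/b)\cdot b-1=0$ at exactly this point, killing the double residue. This removes the fourth cross term and leaves the three stated integrals. The main delicate steps I expect are the sign bookkeeping under $w\mapsto 1/w$ and the tail estimate justifying closure of the $z$-contour at infinity; the vanishing of the $\Gamma_b\times\Gamma_{1/b}$ contribution, once the $(zw-1)$ factor is noticed, is immediate.
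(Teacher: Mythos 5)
Your proof follows essentially the same route as the paper: change variables $w\mapsto 1/w$, verify there is no residue at $w=0$ nor at $z=\infty$, shrink the $w$-contour to small circles about $b,\sqrt q$ and push the $z$-contour out to $1/b,1/\sqrt q$ (picking up the minus sign), then discard the $\Gamma_b\times\Gamma_{1/b}$ cross term. The one place you add value is the last step: the paper merely asserts $\oint_{\Gamma_b}\oint_{\Gamma_{1/b}}\cdots=0$ without comment, while you supply the reason — both singularities are simple (from $B(z)$ at $z=1/b$ and $1/B(w)$ at $w=b$) and the numerator factor $(zw-1)$ vanishes exactly at $(1/b,b)$, so the iterated residue is zero — which is a correct and helpful justification.
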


\begin{proof}
Let us start with
\begin{equation}
K^{\rm geo}_{11}(k,\ell) = \oint \frac{dw}{2\pi\I} \oint \frac{dz}{2\pi\I} \frac{H(z)(1-\sqrt{q} z)^nB(z) \, H(w) (1-\sqrt{q}w)^n B(w)}{z^{k}w^{\ell}}\frac{(z-w)(z-a)(w-a)}{(z^2-1)(w^2-1)(zw-1)}
\end{equation}
where the integrals can be taken over circles with radii larger than $1$ and smaller than $\min\{1/b,1/\sqrt{q}\}$. The change of variable $w\to w^{-1}$ leads to
\begin{equation}
K^{\rm geo}_{11}(k,\ell) =\frac{1}{(2\pi\I)^2}\oint dw \oint dz \frac{w^{\ell-1}}{z^{k}}\frac{h^{\rm geo}_{11}(z,w)}{z-w}
\end{equation}
where we can take $|w|\in (\max\{\sqrt{q},b\},1)$ and $|z|\in (1,\min\{1/b,1/\sqrt{q}\})$.

The integrand behaves like $w^{N-n+\ell-1}$ as $w\to 0$ and thus there is no pole at $w=0$ for any $\ell\geq 1$ and $n\leq N$. Also, it behaves like $1/(z^{N-n+k+1})\lesssim z^{-2}$ as $z\to\infty$ for all $n\leq N$ and $k\geq 1$. Thus there is no pole in $z$ at $\infty$. This means that we can deform the integration contours to include only the poles at $b$ and $\sqrt{q}$. Moreover, we can open up the $z$ contour and close it down around the poles at $1/b$ and $1/\sqrt{q}$. By doing this we change of orientation of the contour, which then gives a minus sign as all the closed contours we consider as anticlockwise oriented. Thus we get
\begin{equation}\label{eqC13}
K^{\rm geo}_{11}(k,\ell) =\frac{-1}{(2\pi\I)^2}\oint\limits_{\Gamma_{b,\sqrt{q}}} dw\!\!\!\!\! \oint\limits_{\Gamma_{1/b,1/\sqrt{q}}}\!\!\!\!\! dz \frac{w^{\ell-1}}{z^{k}}\frac{h^{\rm geo}_{11}(z,w)}{z-w}
\end{equation}
The claimed result follows from the observation that
\begin{equation}
\oint\limits_{\Gamma_b} dw \oint\limits_{\Gamma_{1/b}}dz  \frac{w^{\ell-1}}{z^{k}}\frac{h^{\rm geo}_{11}(z,w)}{z-w} =0.
\end{equation}
\end{proof}

\begin{lem}
  Let $a,b\sqrt{q}$ be all different and $k,\ell\geq 1$. The kernel entry $K^{\rm geo}_{12}$ can be expressed as
\begin{equation}
K^{\rm geo}_{12}(k,\ell) =\frac{-1}{(2\pi\I)^2}\oint\limits_{\Gamma_{\sqrt{q},a,b}}\!\! dw\oint\limits_{\Gamma_{1/\sqrt{q}}} dz \frac{w^{\ell-1}}{z^{k}}\frac{h^{\rm geo}_{12}(z,w)}{z-w} +
\frac{-1}{(2\pi\I)^2}\oint\limits_{\Gamma_{\sqrt{q},a}}  dw \oint\limits_{\Gamma_{1/b}}  dz \frac{w^{\ell-1}}{z^{k}}\frac{h^{\rm geo}_{12}(z,w)}{z-w}
\end{equation}
where
\begin{equation}
h^{\rm geo}_{12}(z,w)=\frac{H(z)B(z)}{H(w)B(w)}\frac{(1-\sqrt{q} z)^n}{(1-\sqrt{q}/w)^n} \frac{(zw-1)(z-a)}{(z^2-1)(w-a)}.
\end{equation}
\end{lem}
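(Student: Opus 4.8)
The plan is to mirror, almost verbatim, the contour-deformation argument just carried out for $K^{\rm geo}_{11}$. Writing $F(z)=H(z)B(z)(1-\sqrt{q}z)^n$ and $1/F(w)$ in the same factored form, the defining formula for $K^{\rm geo}_{12}$ in Theorem~\ref{thm:geom_corr} already has the shape
\[
K^{\rm geo}_{12}(k,\ell)=\frac{1}{(2\pi\I)^2}\oint\frac{dz}{z^k}\oint w^{\ell-1}\,dw\;\frac{h^{\rm geo}_{12}(z,w)}{z-w}
\]
on circles with $\max\{b,\sqrt q,a\}<|w|<|z|$ and $1<|z|<\min\{1/b,1/\sqrt q\}$; in contrast to the $K^{\rm geo}_{11}$ case, no change of variables $w\mapsto w^{-1}$ is needed here, since the $w$-integral is already ``reflected''. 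So the first step is just this rewriting, together with a careful accounting of which singularities of the integrand lie inside and which outside each circle.

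For the $w$-variable, a circle $|w|=r_w$ with $r_w$ slightly above $\max\{b,\sqrt q,a\}$ encircles exactly the poles $w=\sqrt q$ (of order $N-1$), $w=b$ and $w=a$; the pole $w=z$ lies outside since $|z|>r_w$, and there is no pole at $w=0$ because the integrand vanishes there like $w^{N+\ell-1}$ (using $n\le N-1$, $\ell\ge1$). Hence $\oint_{|w|=r_w}=\oint_{\Gamma_{\sqrt q,a,b}}$ with no deformation. For the $z$-variable I will check that the integrand decays at $z=\infty$ at least like $z^{-2}$ — this uses $n\le N-1$, $k\ge1$ and the explicit rational prefactor — so that expanding the circle $|z|=r_z$ past $1/b$ and $1/\sqrt q$, its only exterior poles, and reversing orientation gives $\oint_{|z|=r_z}=-\oint_{\Gamma_{1/b}}-\oint_{\Gamma_{1/\sqrt q}}$, exactly as in the $K^{\rm geo}_{11}$ computation. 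This already produces
\[
K^{\rm geo}_{12}(k,\ell)=\frac{-1}{(2\pi\I)^2}\Big[\oint_{\Gamma_{\sqrt q,a,b}}\!dw\oint_{\Gamma_{1/\sqrt q}}\!dz+\oint_{\Gamma_{\sqrt q,a,b}}\!dw\oint_{\Gamma_{1/b}}\!dz\Big]\frac{w^{\ell-1}}{z^k}\frac{h^{\rm geo}_{12}(z,w)}{z-w}.
\]

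The last step is to split $\Gamma_{\sqrt q,a,b}=\Gamma_{\sqrt q,a}\cup\Gamma_b$ in the second bracketed term and to show the leftover piece $\oint_{\Gamma_b}dw\oint_{\Gamma_{1/b}}dz(\cdots)$ vanishes. Inside $\Gamma_b\times\Gamma_{1/b}$ the only singularities are the simple poles $w=b$ (from $1/B(w)$) and $z=1/b$ (from $B(z)$) — all other factors are analytic and nonzero there because $a,b,\sqrt q$ are pairwise distinct and $b\ne\pm1$ — so the double integral is a constant times $\mathrm{Res}_{w=b}\mathrm{Res}_{z=1/b}$ of the integrand, which is annihilated by the factor $(zw-1)$ in $h^{\rm geo}_{12}$ since $zw-1=0$ at $(z,w)=(1/b,b)$. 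This is precisely the mechanism behind the vanishing of the analogous cross term in the $K^{\rm geo}_{11}$ lemma. Dropping this term leaves exactly the claimed two-term expression.

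I do not foresee any real obstacle: the argument is a residue-and-orientation bookkeeping identical in structure to the $K^{\rm geo}_{11}$ case. The only new feature is the extra pole at $w=a$ from the factor $1/(w-a)$, which simply rides along on every $w$-contour; the one point deserving a moment's care is that the $z\to\infty$ decay estimate still holds with the $K_{12}$-specific rational prefactor, which it does with the same margin as before. The hypothesis that $a$, $b$, $\sqrt q$ are pairwise distinct is what guarantees that the poles being extracted are simple and well separated and that $\Gamma_{\sqrt q,a,b}$, $\Gamma_{1/b}$ and $\Gamma_{1/\sqrt q}$ can be drawn disjointly.
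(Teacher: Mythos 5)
Your argument is correct and follows the paper's own route step for step: keep the variables as given (no $w\mapsto w^{-1}$ substitution), check that $w=0$ is not a pole and that the integrand decays at $z=\infty$, deform the $w$-contour around $\{\sqrt q,a,b\}$ and open the $z$-contour around $\{1/\sqrt q,1/b\}$ with a sign flip, and finally discard the $(w,z)=(b,1/b)$ cross term. Your observation that this cross term vanishes because the $(zw-1)$ factor in $h^{\rm geo}_{12}$ annihilates the iterated residue at $(1/b,b)$ is exactly the mechanism the paper relies on (stated there without elaboration), so nothing is missing.
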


\begin{proof}
This time the change of variables is not necessary. Initially we have integration paths such that $\max\{\sqrt{q},a,b\}<|w|<|z|$ and $1<|z|<\min\{1/b,1/\sqrt{q}\}$. As before, $0$ is not a pole for $w$ and $\infty$ is not a pole for $z$. Thus we can deform the $w$ and $z$ contours to include only $\sqrt{q},a,b$ and $1/b,1/\sqrt{q}$ respectively. Also in this case the contribution of the poles at $(w=b,z=1/b)$ is equal to zero. We then get the claimed result.
\end{proof}

\begin{lem}
  Let $a,b,\sqrt{q}$ be all different and $k,\ell\geq 1$. The kernel entry $K^{\rm geo}_{22}$ can be expressed as
\begin{equation}
\begin{aligned}
K^{\rm geo}_{22}(k,\ell) =&\, E(k,\ell)+\frac{-1}{(2\pi\I)^2}\oint\limits_{\Gamma_{\sqrt{q}}}\ \,dw\!\!\!\!\!\! \oint\limits_{\Gamma_{1/\sqrt{q},1/a,1/b}}\!\!\!\!\!\! dz \frac{w^{\ell-1}}{z^{k}}\frac{h^{\rm geo}_{22}(z,w)}{z-w} \\
&+ \frac{-1}{(2\pi\I)^2}\oint\limits_{\Gamma_{a}}\, dw\!\!\!\!\!\!\oint\limits_{\Gamma_{1/\sqrt{q},1/b}}\!\!\!\!\!\! dz \frac{w^{\ell-1}}{z^{k}}\frac{h^{\rm geo}_{22}(z,w)}{z-w}
+\frac{-1}{(2\pi\I)^2}\oint\limits_{\Gamma_{b}}\, dw\!\!\!\!\!\! \oint\limits_{\Gamma_{1/\sqrt{q},1/a}}\!\!\!\!\!\! dz \frac{w^{\ell-1}}{z^{k}}\frac{h^{\rm geo}_{22}(z,w)}{z-w}
\end{aligned}
\end{equation}
where
\begin{equation}
h^{\rm geo}_{22}(z,w)=\frac{H(z)B(z)}{H(w)B(w)}\frac{1}{(1-\sqrt{q}/z)^n(1-\sqrt{q}/w)^n} \frac{(zw-1)}{(w-a)(1-az)}
\end{equation}
and
\begin{equation}
E(k,\ell)=-\sgn(k-\ell)\!\!\! \oint\limits_{\Gamma_{1/a,1/\sqrt{q}}}\!\!\! \frac{dz}{2\pi\I} z^{\ell-k} \frac{z^{-1}-z}{(1-az)(z-a)}\frac{1}{(1-\sqrt{q}/z)^n (1-\sqrt{q} z)^n}.
\end{equation}
\end{lem}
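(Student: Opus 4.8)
The plan is to follow the same strategy as in the proofs of the two preceding lemmas (for $K^{\rm geo}_{11}$ and $K^{\rm geo}_{12}$), the genuinely new ingredient being the diagonal residue at $z=w$ responsible for the summand $E(k,\ell)$. I would start from the representation of $K^{\rm geo}_{22}$ in Theorem~\ref{thm:geom_corr}, insert $\tfrac{1}{F(z)}=\tfrac{1}{H(z)B(z)(1-\sqrt{q}z)^{n}}$, so that
\[
K^{\rm geo}_{22}(k,\ell)=\ics\oint dz\oint dw\; z^{k-1}w^{\ell-1}\,\frac{1}{F(z)F(w)}\,\frac{(z-w)}{(zw-1)(z-a)(w-a)} .
\]
In $K^{\rm geo}_{22}$ both spectral variables appear ``upstairs'' (as $z^{k-1}$, $w^{\ell-1}$, rather than as $z^{-k}$, $w^{-\ell}$ in $K^{\rm geo}_{11}$), so the counterpart of the substitution $w\to w^{-1}$ used for $K^{\rm geo}_{11}$ is here the substitution $z\to z^{-1}$, with no substitution in $w$ (as for $K^{\rm geo}_{12}$). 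Performing it, keeping track of the Jacobian $-dz/z^{2}$ and of the resulting orientation reversal, and using $H(z^{-1})=H(z)^{-1}$, $B(z^{-1})=B(z)^{-1}$ together with $(z-w)\mapsto(1-zw)/z$, $(zw-1)\mapsto(w-z)/z$, $(z-a)\mapsto(1-az)/z$, converts the numerator $(z-w)$ into a genuine kernel singularity $1/(z-w)$ and yields precisely $\frac{w^{\ell-1}}{z^{k}}\frac{h^{\rm geo}_{22}(z,w)}{z-w}$. After the substitution the $z$-contour is a small loop around the origin lying inside the $w$-contour, which encircles $\sqrt{q},a,b$.

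I would then invoke the same two elementary analyticity facts that drive the deformation in the previous proofs: the integrand is regular in $w$ at $w=0$ (it vanishes there, since $n\leq N-1$ and $\ell\geq1$), and after the substitution it decays at $z=\infty$ fast enough that there is no residue at infinity. Consequently the small $z$-loop may be opened up and re-closed, with a change of orientation (the usual minus sign), around the remaining poles in $z$ --- namely $z=1/\sqrt{q},1/a,1/b$ and, unavoidably since the $z$-contour must be dragged past the $w$-contour, the point $z=w$. The residues at $z=1/\sqrt{q},1/a,1/b$, integrated in $w$ over $\Gamma_{\sqrt{q},a,b}$, give the double contour integral of $\frac{w^{\ell-1}}{z^{k}}\frac{h^{\rm geo}_{22}(z,w)}{z-w}$ over $\Gamma_{\sqrt{q},a,b}\times\Gamma_{1/\sqrt{q},1/a,1/b}$; writing $\Gamma_{\sqrt{q},a,b}=\Gamma_{\sqrt{q}}+\Gamma_{a}+\Gamma_{b}$ and checking that the pole pairs $(z,w)=(1/a,a)$ and $(z,w)=(1/b,b)$ contribute nothing --- in each case the residue at $z=1/a$ (resp.\ $z=1/b$) produces a function of $w$ whose would-be pole at $w=a$ (resp.\ $w=b$) is cancelled by the zero of $(zw-1)$ --- gives exactly the three double-integral terms in the statement.

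It remains to identify the $z=w$ residue with $E(k,\ell)$. Taking $\mathrm{Res}_{z=w}$ of the transformed integrand, the $B$-factors cancel, $(zw-1)\to w^{2}-1=-w(w^{-1}-w)$, and one is left with
\[
\mathrm{Res}_{z=w}\!\left[\frac{w^{\ell-1}}{z^{k}}\frac{h^{\rm geo}_{22}(z,w)}{z-w}\right]=-\,w^{\ell-k}\,\frac{w^{-1}-w}{(1-aw)(w-a)}\,\frac{1}{(1-\sqrt{q}/w)^{n}(1-\sqrt{q}w)^{n}},
\]
so the $z=w$ contribution to $K^{\rm geo}_{22}(k,\ell)$ is $\tfrac{1}{2\pi\I}\oint_{\Gamma_{\sqrt{q},a,b}}dw$ of this function (up to the orientation sign). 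This integrand has no pole at $w=b$ and --- depending on the sign of $\ell-k$ --- a pole at exactly one of $w=0$, $w=\infty$; using that the residue at whichever of $\{0,\infty\}$ is regular vanishes, the sum-of-residues relation trades the loop $\Gamma_{\sqrt{q},a,b}$ for $-\sgn(k-\ell)$ times the loop $\Gamma_{1/a,1/\sqrt{q}}$, which is exactly $E(k,\ell)$. I expect this last step to be the main obstacle: unlike the $K^{\rm geo}_{11}$ and $K^{\rm geo}_{12}$ cases, in which the $z$- and $w$-contours remain separated and no diagonal residue appears, here one must correctly extract the $z=w$ residue, track its poles at $0$ and $\infty$, and recover both $\sgn(k-\ell)$ and the exact form of $E$ via a delicate contour-closing argument; the remaining book-keeping --- which pole lies inside which contour, and the vanishing of the two spurious pole pairs --- is routine but has to be carried out carefully.
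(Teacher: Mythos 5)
Your overall strategy coincides with the paper's: substitute $z\to z^{-1}$ so that the numerator $(z-w)$ becomes a genuine $1/(z-w)$ singularity, then deform contours (paying the price of a diagonal residue at $z=w$), then split the resulting poles into the three clean double-integral pieces plus the diagonal term $E(k,\ell)$. Your handling of the double-integral part — in particular the observation that the residues at the pole pairs $(z,w)=(1/a,a)$ and $(1/b,b)$ vanish because the factor $(zw-1)$ cancels the would-be poles at $w=a$ and $w=b$ — is correct and is exactly what the paper does. The difference that you deform $z$ outward past $w$ (rather than the paper's choice of pushing $w$ inward past $z$) is immaterial.

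However, there is a genuine gap in the last step. You assert that, after extracting the diagonal residue $g(w)=w^{\ell-k-1}(w^2-1)\big/\bigl[(1-\sqrt{q}/w)^n(1-\sqrt{q}w)^n(w-a)(1-aw)\bigr]$, ``the sum-of-residues relation trades the loop $\Gamma_{\sqrt{q},a,b}$ for $-\sgn(k-\ell)$ times $\Gamma_{1/a,1/\sqrt{q}}$,'' the only ingredient being that the regular one of $\{0,\infty\}$ contributes no residue. That is not enough. For $k>\ell$ one indeed checks $\mathrm{Res}_{\infty}g=0$ (since $g(w)\sim w^{\ell-k-1-n}$ as $w\to\infty$), and then the residue theorem gives $\oint_{\Gamma_{0,\sqrt{q},a}}g=-\oint_{\Gamma_{1/\sqrt{q},1/a}}g$, which is the claimed formula. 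But for $\ell>k$, the pole at $\infty$ is generically present (it vanishes only when $\ell-k\le n-1$), so the same residue-trading gives $\oint_{\Gamma_{\sqrt{q},a}}g=-\mathrm{Res}_{\infty}g-\oint_{\Gamma_{1/\sqrt{q},1/a}}g$, which is \emph{not} $+\oint_{\Gamma_{1/\sqrt{q},1/a}}g$. The correct way to close the argument — and what the paper does — is to prove the formula only for $k>\ell$ by residue trading, verify directly that $E(k,k)=0$, and then obtain $k<\ell$ from the anti-symmetry $E(k,\ell)=-E(\ell,k)$, which comes out of the $w\to 1/w$ substitution in the single-variable integral. You flag this as ``the main obstacle'' and ``a delicate contour-closing argument,'' which is the right instinct, but the specific mechanism you describe would not produce the $\sgn(k-\ell)$ structure; you need the $w\to 1/w$ anti-symmetry, which is absent from your writeup. (A minor point: the contour over which the $z=w$ residue is integrated is the original big $w$-circle, which encircles $0$ as well; for $k>\ell$ there is a pole at $w=0$ and it must be included, so writing the diagonal contribution as $\oint_{\Gamma_{\sqrt{q},a,b}}$ rather than $\oint_{\Gamma_{0,\sqrt{q},a}}$ is off by precisely that residue.)
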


\begin{rem}
The condition that $a,b,\sqrt{q}$ are all different is not really a restriction. If two or more of them are equal, one just has to use a different contour decomposition starting from the integration contours for $(w,z)$ as $\Gamma_{\sqrt{q},a,b}\times\Gamma_{1/\sqrt{q},1/a,1/b}$. For example, if $a=b$ and recalling that $b\in (0,1)$, the given contours are already fine. The decomposition we mention is useful since we want to study the case $b\to 1/a$ later.
\end{rem}

\begin{proof}
We start with
\begin{equation}
K^{\rm geo}_{22}(k,\ell)=\oint \frac{dw}{2\pi\I}\oint \frac{dz}{2\pi\I} \frac{z^{k-1} w^{\ell-1}}{H(z)B(z)H(w)B(w)}\frac{1}{(1-\sqrt{q}z)^n(1-\sqrt{q}w)^n}\frac{z-w}{(zw-1)(z-a)(w-a)}
\end{equation}
where the integration contours can be taken as circles with $|z|=|w|>\max\{\sqrt{q},a,b,1\}$ (the value $1/\sqrt{q}$ is not a pole). The change of variable $z\to z^{-1}$ leads to
\begin{equation}
K^{\rm geo}_{22}(k,\ell)=\frac{1}{(2\pi\I)^2} \oint dw \oint dz \frac{w^{\ell-1}}{z^{k}} \frac{h^{\rm geo}_{22}(z,w)}{z-w}
\end{equation}
where now the integration contours need to satisfy $\max\{\sqrt{q},a,b\}<|z|<|w|<\min\{1/b,1/a,1/\sqrt{q}\}$. Notice however that this time we cannot directly shrink the $w$ contour around $\sqrt{q},a,b$ since $z$ is also a pole for $w$ and lies inside the $w$ contour. We can exchange the contours, that is get $w$ inside $z$, by correcting with the residue at $w=z$. We get
\begin{equation}
K^{\rm geo}_{22}(k,\ell)=E(k,\ell)+\frac{1}{(2\pi\I)^2} \oint dw \oint dz \frac{w^{\ell-1}}{z^{k}} \frac{h^{\rm geo}_{22}(z,w)}{z-w}
\end{equation}
where the integration contours now satisfy $\max\{\sqrt{q},a,b\}<|w|<|z|<\min\{1/b,1/a,1/\sqrt{q}\}$, and where
\begin{equation}
E(k,\ell) = \!\!\oint\limits_{\Gamma_{0, a, \sqrt{q}}}\!\! \frac{dz}{2\pi\I} \frac{z^{\ell-k-1}(1-z^2)}{(1-\sqrt{q}/z)^n(1-\sqrt{q} z)^n}\frac{1}{(1-az)(z-a)}.
\end{equation}

For the double integral, we verify that $w=0$ and $z=\infty$ are not poles and then deform them to include only the poles at $\sqrt{q},a,b$ for $w$ and $1/\sqrt{q},1/a,1/b$ for $z$ (the minus sign comes from the exchange of the orientation in the $z$ path). Then we notice that the contributions of the poles at $(w=a,z=1/a)$ and at $(w=b,z=1/b)$ are $0$. We are left with $9-2=7$ contributions which we put together as three integrals in such a way that the paths can be taken as single contours and do not cross (recall the conditions $ab<1$, $a\sqrt{q}<1$ and that $a,b,\sqrt{q}$ are all different).

Concerning $E(k,\ell)$, it is an easy computation to verify that $E(k,k)=0$ (the $n=0$ and $n \geq 1$ are two separate computations), which is actually a consequence of the anti-symmetry of the entry $K_{22}(k, \ell)$. Furthermore, for $k>\ell$ we have
\begin{equation}
E(k,\ell)=-\!\!\!\!\!\!\oint\limits_{\Gamma_{1/a,1/\sqrt{q}}}\!\!\!\!\!\! \frac{dz}{2\pi\I} z^{\ell-k} \frac{z^{-1}-z}{(1-az)(z-a)}\frac{1}{(1-\sqrt{q}/z)^n (1-\sqrt{q} z)^n}
\end{equation}
and for $k<\ell$ we have ($0$ being no longer a pole)
\begin{equation}
E(k,\ell)=\!\!\oint\limits_{\Gamma_{a,\sqrt{q}}}\!\! \frac{dz}{2\pi\I} z^{\ell-k} \frac{z^{-1}-z}{(1-az)(z-a)}\frac{1}{(1-\sqrt{q}/z)^n (1-\sqrt{q} z)^n} = -E(\ell,k).
\end{equation}
\end{proof}

\subsection{From the geometric to the exponential model: proof of Theorem~\ref{thm:exp_corr}}

The exponential model is obtained from the geometric one by taking the $\epsilon \to 0$ limit with the following variables:
\begin{equation}
(a,b)=(1-\epsilon \alpha,1-\epsilon \beta),\quad q=1-\epsilon ,\quad (k,\ell)=\epsilon ^{-1}(x,y).
\end{equation}
Let us consider the accordingly rescaled kernel and conjugated kernel
\begin{alignat}{2}
K^\epsilon _{11}(x,y)& =\epsilon^{-2-2n}K^{\rm geo}_{11}(k,\ell), &\quad K^\epsilon_{12}(x,y) &= \epsilon^{-1}K^{\rm geo}_{12}(k,\ell), \nonumber \\
K^\epsilon_{21}(x,y)& =\epsilon^{-1}K^{\rm geo}_{21}(k,\ell), &\quad K^\epsilon_{22}(x,y) &= \epsilon^{2n}K^{\rm geo}_{22}(k,\ell).
\end{alignat}
Then the following result is straightforward and does not require complicated asymptotic analysis.

\begin{prop}\label{prop:ConvGeom}
Uniformly for $x,y$ in a bounded set of $\R_+$, we have
\begin{equation}
\lim_{\epsilon\to 0} K^\epsilon_{ij}(x,y)= K_{ij}(x,y)
\end{equation}
where the kernel $K$ is the one of Theorem~\ref{thm:exp_corr} for the exponential model.
\end{prop}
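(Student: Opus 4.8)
The plan is to substitute the geometric parameters $(a,b,q)=(1-\epsilon\alpha,1-\epsilon\beta,1-\epsilon)$ and the rescaled indices $(k,\ell)=\epsilon^{-1}(x,y)$ directly into the contour-integral reformulations of $K^{\rm geo}_{11}$, $K^{\rm geo}_{12}$ and $K^{\rm geo}_{22}$ (together with the extra term $E(k,\ell)$) established in the three preceding lemmas, and to pass to the limit integrand by integrand. The key observation is that in those reformulations all integration contours are small circles around the six points $\sqrt{q},a,b$ (for the ``$w$'' variable) and $1/\sqrt{q},1/a,1/b$ (for the ``$z$'' variable), and under the scaling these collapse to $1$ at speed $\epsilon$: indeed $\sqrt{q}=1-\tfrac\epsilon2+O(\epsilon^2)$, $a=1-\epsilon\alpha$, $b=1-\epsilon\beta$, with reciprocals $1+\tfrac\epsilon2+O(\epsilon^2)$, $1+\epsilon\alpha+O(\epsilon^2)$, $1+\epsilon\beta+O(\epsilon^2)$. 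I would therefore take these circles of radius of order $\epsilon$ and change variables $z=1+\epsilon\zeta$, $w=1+\epsilon\omega$; the circle around $1/\sqrt q$ then becomes a fixed small circle $\Gamma_{1/2}$, the one around $a$ becomes $\Gamma_{-\alpha}$, the one around $1/b$ becomes $\Gamma_{\beta}$, and so on, so that in the limit one recovers exactly the $\Gamma$-contours of Theorem~\ref{thm:exp_corr}.

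With this substitution every building block has an elementary limit: $1-\sqrt q z=\epsilon(\tfrac12-\zeta)+O(\epsilon^2)$, $1-\sqrt q/z=\epsilon(\tfrac12+\zeta)+O(\epsilon^2)$, $1-bz=\epsilon(\beta-\zeta)+O(\epsilon^2)$, $1-b/z=\epsilon(\beta+\zeta)+O(\epsilon^2)$, and likewise with $a$ in place of $b$; hence $H(z)\to\phi(\zeta)$ with $\phi$ as in~\eqref{eq:phi}, the ratios $B(z)/B(w)$ reorganise into $\tfrac{(z+\beta)(w-\beta)}{(z-\beta)(w+\beta)}$-type factors (up to signs that cancel against those of the rational prefactors), $(1-\sqrt q z)^n=\epsilon^n(\tfrac12-\zeta)^n(1+O(\epsilon))$, and $z^{\mp k}=e^{\mp(x/\epsilon)\log(1+\epsilon\zeta)}\to e^{\mp x\zeta}$, the last limit being uniform for $x$ in a bounded set with error $O(\epsilon)$ (and similarly $w^{\pm\ell}\to e^{\pm y\omega}$, using the $w\mapsto w^{-1}$ substitution already present in the reformulations). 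Collecting the powers of $\epsilon$ coming from the factors $(1-\sqrt q z)^n$, $(1-\sqrt q w)^n$, from the rational prefactors such as $\tfrac{(z-w)(z-a)(w-a)}{(z^2-1)(w^2-1)(zw-1)}$, and from the Jacobians $dz=\epsilon\,d\zeta$, $dw=\epsilon\,d\omega$, one checks that they combine to exactly $\epsilon^{2+2n}$ for the $(11)$ entry, $\epsilon$ for the $(12)$ entry, and $\epsilon^{-2n}$ for the $(22)$ entry, precisely cancelled by the normalisations $\epsilon^{-2-2n},\epsilon^{-1},\epsilon^{2n}$ defining $K^\epsilon_{ij}$, and the surviving rational functions of $\zeta,\omega$ are the integrands of $K_{11},K_{12},K_{22}$ in~\eqref{eq:kernel}. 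For the term $E(k,\ell)$ the same bookkeeping gives $\epsilon^{2n}E(k,\ell)\to-\sgn(x-y)\oint_{\Gamma_{1/2,\alpha}}\tfrac{d\zeta}{2\pi\I}\tfrac{2\zeta\,e^{-\zeta|x-y|}}{(\zeta^2-\alpha^2)(\tfrac14-\zeta^2)^n}$ (it suffices to treat $x>y$, both sides being antisymmetric), and pulling out the residue at $\zeta=\alpha$ splits this into $\varepsilon_1+\varepsilon_2=\tilde\varepsilon$.

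Since after rescaling each contour is a fixed compact curve on which the integrand converges uniformly with an $O(\epsilon)$ rate, uniform in $x,y$ on bounded sets, one may interchange the limit with the (finite) contour integral, yielding $\lim_{\epsilon\to0}K^\epsilon_{ij}(x,y)=K_{ij}(x,y)$ uniformly on bounded sets; no Laplace/steepest-descent analysis is required. I expect the only point needing genuine care to be the contour bookkeeping: one must verify that, for all sufficiently small $\epsilon$, the $O(\epsilon)$-radius circles around $\sqrt q,a,b$ and around $1/\sqrt q,1/a,1/b$ can be chosen simultaneously non-intersecting, correctly nested for the pieces carrying a $1/(z-w)$ factor, and in the same relative order as the limiting $\Gamma$-contours. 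This is guaranteed by the standing hypotheses $\beta\in(0,1/2)$, $\alpha\in(-1/2,1/2)$, $\alpha+\beta>0$, which fix the order of $\sqrt q,a,b$ (and of their reciprocals) near $1$, together with the elementary fact that the circles shrink faster than the gaps between their centres; for parameter values where two of $a,b,\sqrt q$ coincide one instead starts from the single common decomposition with contours $\Gamma_{\sqrt q,a,b}\times\Gamma_{1/\sqrt q,1/a,1/b}$, as in the remark following the reformulation of $K^{\rm geo}_{22}$, and the argument goes through verbatim. This proves Proposition~\ref{prop:ConvGeom}, and combined with Theorem~\ref{thm:geom_corr} it establishes Theorem~\ref{thm:exp_corr} after noting that the Fredholm Pfaffian passes to the limit by the trace-class bounds on the rescaled kernel.
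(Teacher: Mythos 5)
Your proof is correct and follows essentially the same approach as the paper: change variables $z=1+\epsilon Z$, $w=1+\epsilon W$ so that the contours become fixed curves near the limiting poles, compute the elementary pointwise limits of each factor, collect the powers of $\epsilon$ against the rescaling normalizations, and pass the limit inside the (compact) contour integrals. The only cosmetic difference is that you invoke uniform convergence with an $O(\epsilon)$ rate on each fixed contour, whereas the paper invokes dominated convergence after observing the integrands stay uniformly bounded — but on a compact contour these are the same argument, and your extra care in tracking which of the six shrinking circles around $\sqrt q,a,b,1/\sqrt q,1/a,1/b$ maps to which limiting $\Gamma$-contour is a welcome elaboration of a point the paper leaves implicit.
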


\begin{proof}
Let us also change the integration variables as $z=1+Z \epsilon$, $w=1+W \epsilon$. Notice that all the variables are in an $\epsilon$-neighborhood of $1$ and thus the contours for $Z$ and $W$ can be fixed independently of $\epsilon$; they just need to pass/be in the correct position with respect to the poles.

Therefore we can easily get the following point-wise limits:
\begin{equation}
\begin{aligned}
\lim_{\epsilon\to 0} H(z) z^{-k}& =\left[ \frac{\tfrac12+z}{\tfrac12-z} \right]^{N-1} e^{-x Z}=\Phi(x,Z),\\
\lim_{\epsilon\to 0} (1-\sqrt{q} z)^n \epsilon^{-n} &= (\tfrac12-Z)^n,\\
\lim_{\epsilon\to 0} (1-\sqrt{q} /w)^n \epsilon^{-n} &= (\tfrac12+W)^n
\end{aligned}
\end{equation}
and
\begin{equation}
\begin{aligned}
\lim_{\epsilon\to 0} \frac{1-b/z}{1-b z}\frac{1-b w}{1-b/w}\frac{(z-a)(1-w a)(zw-1)}{(z^2-1)(1-w^2)(z-w)}&=\frac{(Z+\alpha)(Z+\beta)(W-\alpha)(W-\beta)(Z+W)}{4Z W (Z-W)(W+\beta)(Z-\beta)},\\
\lim_{\epsilon\to 0} \frac{1-b/z}{1-b z}\frac{1-b w}{1-b/w}\frac{(z-a)(zw-1)}{(z^2-1)(w-a)(z-w)} \epsilon &=\frac{(Z+\alpha)(Z+\beta)(W-\beta)(Z+W)}{2Z (Z-W)(W+\alpha)(W+\beta)(Z-\beta)},\\
\lim_{\epsilon\to 0} \frac{1-b/z}{1-b z}\frac{1-b w}{1-b/w}\frac{zw-1}{(1-a z)(w-a)(z-w)} \epsilon^{2} &=\frac{-(Z+\beta)(W-\beta)(Z+W)}{(Z-W)(W+\alpha)(W+\beta)(Z-\alpha)(Z-\beta)},\\
\lim_{\epsilon\to 0} z^{\ell-k} \frac{z^{-1}-z}{(1-a z)(z-a)} \epsilon &=e^{-Z(x-y)}\frac{2Z}{Z^2-\alpha^2}.
\end{aligned}
\end{equation}
By choosing the paths for $Z,W$ so that their distances to any of the poles are bounded away from $0$ uniformly for all $0<\epsilon\ll 1$, we have that the integrands (appropriately multiplied by some powers of $\epsilon$) are uniformly bounded. Thus we can use dominated convergence to take the $\epsilon\to 0$ limit inside the integration.
\end{proof}

It remains for us to find appropriate bounds in order to prove the convergence of the Fredholm Pfaffian to its corresponding limit. We do this next.

\begin{lem}\label{lem:Bounds}
Let $\beta\in (0,1/2)$, $\alpha\in (-1/2,1/2)$ with $\alpha+\beta>0$. Let us set
\begin{equation}
\delta=(\beta-\max\{0,-\alpha\})/4,\textrm{ and }\mu =\max\{0,-\alpha\}+2\delta.
\end{equation}
Then the following bounds hold uniformly in $x,y\in\R_+$,
\begin{alignat}{2}
|K^\epsilon_{11}(x,y) e^{\mu (x+y)}| &\leq C e^{-\delta (x+y)}, &\quad |K^\epsilon_{12}(x,y) e^{\mu x} e^{-\mu y}| &\leq C e^{-\delta (x+y)}, \nonumber \\
|K^\epsilon_{21}(x,y) e^{-\mu x} e^{\mu y}| &\leq C e^{-\delta (x+y)}, &\quad |K^\epsilon_{22}(x,y)e^{-\mu (x+y)}| &\leq C e^{-\delta (x+y)}.
\end{alignat}
\end{lem}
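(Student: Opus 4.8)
\emph{Proof strategy.} The plan is to bound, one by one, the explicit double contour integrals expressing $K^{\rm geo}_{11},K^{\rm geo}_{12},K^{\rm geo}_{22}$ in the reformulated form derived above, after carrying out the very same change of variables used in the proof of Proposition~\ref{prop:ConvGeom}: $z=1+Z\epsilon$, $w=1+W\epsilon$, $(k,\ell)=\epsilon^{-1}(x,y)$ and $(a,b,q)=(1-\epsilon\alpha,1-\epsilon\beta,1-\epsilon)$. After this substitution all of the $x,y$-dependence of each integrand sits in the factors $z^{-k}$, $w^{\ell-1}$ (and $z^{\ell-k}$ inside the $E(k,\ell)$ term), and one has the elementary estimates, uniform over any fixed bounded contour for $Z,W$ and for $0<\epsilon\ll1$,
\begin{equation}
|z^{-k}|\leq e^{-x(\Re Z-\delta/2)},\qquad |w^{\ell-1}|\leq C\,e^{y(\Re W+\delta/2)},
\end{equation}
coming from $\log|1+Z\epsilon|=\epsilon\Re Z+\Or(\epsilon^2)$. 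Thus the whole task reduces to placing, piece by piece, the $Z$- and $W$-contours so that their real parts are bounded, respectively, below and above by the correct thresholds, while staying at a fixed positive distance from all the poles (which after the substitution lie within $\Or(\epsilon)$ of $\pm\tfrac12,\pm\alpha,\pm\beta$), so that the remaining rational prefactors $\phi(Z)/\phi(W)$, $B(z)/B(w)$, $\tfrac1{z-w}$, $\tfrac1{1-az}$, etc.\ are bounded by a constant depending only on $N,n$ and the contours, hence uniform in $x,y,\epsilon$.

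The numerology I would use is that $\alpha+\beta>0$ and $\beta\in(0,1/2)$ force $\max\{0,-\alpha\}<\beta\leq\tfrac12$, so that with $\delta=\tfrac14(\beta-\max\{0,-\alpha\})>0$ one gets $\mu+\tfrac32\delta<\beta$ (a gap of $\tfrac\delta2$), $\mu+2\delta=\beta\leq\tfrac12$, and $-(\mu-\delta)=-\max\{0,-\alpha\}-\delta\leq\alpha-\delta$. For $K^\epsilon_{11},K^\epsilon_{12},K^\epsilon_{21}$, which must decay like $e^{-(\mu+\delta)x}$ in $x$, the relevant $z$-poles are near $\tfrac12$ and $\beta$, both $>\mu+\tfrac32\delta$, so I would take the $z$-contour as small circles (radius a fixed number in $(\tfrac\delta4,\tfrac\delta2)$, also small compared to the pairwise distances of the relevant poles) around them, on which $\Re Z\geq\mu+\tfrac32\delta$; symmetrically the $w$-poles lie near $-\tfrac12,-\alpha,-\beta$, around which I would put circles with $\Re W\leq-(\mu+\tfrac32\delta)$ when $y$-decay is required and $\Re W\leq\mu-\delta$ otherwise. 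Collecting the exponents and inserting the conjugation factors $e^{\mu(x+y)}$, $e^{\mu x-\mu y}$, $e^{-\mu x+\mu y}$ then produces exactly the stated bounds for these three entries. For $K^\epsilon_{22}$, for which growth up to $e^{(\mu-\delta)(x+y)}$ is allowed, I would split the $z$-contour (around $\tfrac12,\alpha,\beta$) into small circles: on the one around $\alpha$ one has $\Re Z\geq\alpha-\tfrac\delta4\geq-(\mu-\delta)+\tfrac\delta4$, and $\Re Z>0$ on the others, whence $|z^{-k}|\leq e^{(\mu-\delta)x}$, and symmetrically for $w$. Finally $E(k,\ell)$ depends only on $k-\ell$: for $k\geq\ell$ I would use its single-integral representation and bound $|z^{\ell-k}|\leq e^{-(x-y)\Re Z}$ on a contour around $\tfrac1a,\tfrac1{\sqrt q}$ with $\Re Z\geq\alpha-\tfrac\delta4$, giving $|E^\epsilon(x,y)|\leq Ce^{(\mu-\delta)|x-y|}\leq Ce^{(\mu-\delta)(x+y)}$ (using $E(k,\ell)=-E(\ell,k)$ and $E(k,k)=0$ for $k<\ell$), which is absorbed into the $K^\epsilon_{22}$ bound. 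When some of $\alpha,\beta,\tfrac12$ coincide one uses the modified contour decomposition noted in the remark above, with no change to the estimates.

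The hard part is not any individual estimate — each is the two-line computation displayed above — but the bookkeeping: for every one of the finitely many pieces of each entry one must check that the contours demanded above can be realized \emph{simultaneously}, i.e.\ that the chosen $z$- and $w$-contours stay disjoint (this is where $\alpha+\beta>0$ re-enters, so that the $w$-pole at $-\alpha$ and the $z$-pole at $\beta$ can be separated), that no contour encircles an unwanted pole (e.g.\ $Z=0$ for $K^\epsilon_{11},K^\epsilon_{12}$), and that each contour can be kept a fixed positive distance from every pole it does encircle, so that the prefactors are genuinely bounded uniformly in $x,y$ and in $0<\epsilon\ll1$. Once that configuration is fixed, summing the finitely many pieces yields the four bounds with a common constant $C$.
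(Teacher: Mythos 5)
Your proof is correct and follows essentially the same strategy as the paper: all of the $x,y$-dependence lives in the factor $w^{\ell-1}/z^k$ (and $z^{\ell-k}$ for $E$), so the bounds follow by placing contours appropriately; the paper states this directly in the $z,w$ variables via constraints of the form $|z|\geq 1+c\epsilon$, $|w|\leq 1+c'\epsilon$, which after the rescaling $z=1+Z\epsilon$, $w=1+W\epsilon$ is the same as your constraints $\Re Z\geq c$, $\Re W\leq c'$ (the paper uses $c=\mu+\delta$, $c'=-(\mu+\delta)$ for $K^\epsilon_{11}$, etc., with exactly the same numerology you write down). Your closing discussion of the bookkeeping — contour disjointness, pole separation, and uniformity of the prefactor bounds — is if anything more explicit than the paper's short proof.
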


\begin{proof}
The proof is quite simple, since the $k,\ell$ dependence is only through the term $w^\ell/z^k$ and the rest of the integrand remains nicely bounded and converges as we saw in Proposition~\ref{prop:ConvGeom}.

For $K^\epsilon_{11}(k, \ell)$, choose contours satisfying $|z|\geq 1+(\mu+\delta)\epsilon$ and $|w|\leq 1-(\mu+\delta)\epsilon$. This gives $|w^\ell/z^k|\leq \frac{(1-(\mu+\delta)\epsilon)^{y/\epsilon}}{(1+(\mu+\delta)\epsilon)^{x/\epsilon}}\simeq e^{-\mu(x+y)} e^{-\delta(x+y)}$.

For $K^\epsilon_{12}(k, \ell)$, choose the contours such that $|z|\geq 1+(\mu+\delta)\epsilon$ and $|w|\leq 1+(\mu-\delta)\epsilon$. This gives $|w^\ell/z^k|\lesssim e^{-\mu(x-y)} e^{-\delta(x+y)}$.

For the double integrand in $K^\epsilon_{22}(k, \ell)$ there are several terms. In the first and third one, we choose contours satisfying $|z|\geq 1-(\mu-\delta)\epsilon$ and $|w|\leq 1-(\mu+\delta)\epsilon$, while in the second one $|z|\geq 1+(\mu+\delta) \epsilon$ and $|w|\leq 1+(\mu-\delta)\epsilon$. Combining the cases, we have $|w^\ell/z^k|\lesssim e^{\mu(x+y)} e^{-\delta(x+y)}$. Finally, for the term coming from $E(k,\ell)$, for $k>\ell$ we take a contour with $|z|\geq 1-(\mu-\delta)\epsilon$, which gives $|z^{\ell-k}|\lesssim e^{(\mu-\delta)(x-y)}\leq e^{\mu x} e^{-\delta(x+y)}$ by using $\mu-\delta\geq \delta$. For $k<\ell$ the bound follows from the anti-symmetry of $E(k,\ell)$.
\end{proof}

\begin{proof}[Proof of Theorem~\ref{thm:exp_corr}]
With Proposition~\ref{prop:ConvGeom} and Lemma~\ref{lem:Bounds} in hand, the proof of Theorem~\ref{thm:exp_corr} is standard. One writes the Fredholm Pfaffian as a series expansion and then uses the Hadamard bound. Exactly the same steps are made in the proof of the convergence of the large time limit of Proposition~\ref{prop:cvgOfFredPf} and thus we omit them here for brevity.
\end{proof}


\end{document}